\theoremstyle{plain} 
\newtheorem{thm}{Theorem}[section]
\newtheorem{prop}[thm]{Proposition}
\newtheorem{cor}[thm]{Corollary}
\newtheorem{lem}[thm]{Lemma}
\theoremstyle{definition}
\newtheorem{dfn}[thm]{Definition}
\newtheorem{eg}[thm]{Example}
\newtheorem{rmk}[thm]{Remark}
\newtheorem{ex}[thm]{Example}
\newtheorem{defnot}[thm]{Definition and Notation}
\numberwithin{equation}{section}
\newcommand{\fm}{\mathfrak{m}}
\newcommand{\fp}{\mathfrak{p}}
\newcommand{\fq}{\mathfrak{q}}
\newcommand{\fa}{\mathfrak{a}}
\newcommand{\fb}{\mathfrak{b}}
\newcommand{\fc}{\mathfrak{c}}
\newcommand{\C}{\mathrm{c}}
\newtheorem{chunk}[thm]{\hspace*{-1.065ex}\bf}
\DeclareMathOperator{\ann}{Ann} \DeclareMathOperator{\Ass}{Ass}
\DeclareMathOperator{\V}{V} \DeclareMathOperator{\hh}{H}
\DeclareMathOperator{\E}{E} \DeclareMathOperator{\Att}{Att}
\DeclareMathOperator{\cm}{CM} \DeclareMathOperator{\X}{X}
\DeclareMathOperator{\Y}{Y}
\DeclareMathOperator{\Sn}{S}
\def\gd{\operatorname{\mathcal{G}-\mathsf{dim}}}
\def\gid{\operatorname{\mathsf{Gid}}}
\def\cid{\operatorname{\mathsf{CI-dim}}}
\def\gcpd{\operatorname{\mathcal{G}_{\it C_\fp}\mathsf{-dim}}}
\def\gkkd{\operatorname{\mathcal{G}_{\it K}\mathsf{-dim}}}
\def\gkbd{\operatorname{\mathcal{G}_{\it\overline{K}}\mathsf{-dim}}}
\def\gkkpd{\operatorname{\mathcal{G}_{\it{K_{\fp}}}\mathsf{-dim}}}
\def\gkkp0d{\operatorname{\mathcal{G}_{\it{K_{\fp_0}}}\mathsf{-dim}}}
\def\gkkP0d{\operatorname{\mathcal{G}_{\it{\widehat{K}_{P_0}}}\mathsf{-dim}}}
\def\gkppd{\operatorname{\mathcal{G}_{\it{K^\prime}}}}
\def\gkkkd{\operatorname{\mathcal{G}_{\it K}}}
\def\pd{\operatorname{\mathsf{pd}}}
\def\gpkd{\operatorname{\mathsf{G}(\mathcal{P}_K)\mathsf{-dim}}}
\def\pkd{\operatorname{\mathcal{P}_K\mathsf{-dim}}}
\def\refnk{\operatorname{\mathsf{ref}_n(K)}}
\def\corefnk{\operatorname{\mathsf{coref}_n(K)}}
\def\gr{\operatorname{\mathsf{grade}}}
\def\Tr{\mathsf{Tr}}
\def\Epi{\mathsf{Epi}}
\def\L{\mathbb{L}}
\def\ak{\mathcal{A}_K}
\def\bk{\mathcal{B}_K}
\def\pk{\mathcal{P}_K}
\def\mp{\mathcal{P}}
\def\ml{\mathcal{L}}
\def\trk{\mathsf{Tr}_{K}}
\def\gkdp{\operatorname{\mathsf{G}_{\it {K}_\fp}\mathsf{-dim}}}
\def\Min{\mathsf{Min}}
\def\mx{\mathcal{X}}
\def\my{\mathcal{Y}}
\def\G{\mathsf{G}}
\def\C{\mathsf{C}}
\def\Dn{\mathcal{D}^n}
\def\DnK{\mathcal{D}^n_K}
\def\P{\mathcal{P}}
\DeclareMathOperator{\coker}{coker}
\DeclareMathOperator{\md}{mod}
\def\depth{\operatorname{\mathsf{depth}}}
\def\Ext{\operatorname{\mathsf{Ext}}}
\DeclareMathOperator{\hei}{height}
\def\Hom{\operatorname{\mathsf{Hom}}}
\DeclareMathOperator{\id}{injdim} \DeclareMathOperator{\im}{im}
\DeclareMathOperator{\Supp}{Supp} \DeclareMathOperator{\Spec}{Spec}
\def\Tor{\operatorname{\mathsf{Tor}}}
\DeclareMathOperator{\cod}{codim}
\def\urltilda{\kern -.15em\lower .7ex\hbox{\~{}}\kern .04em}
\def\urldot{\kern -.10em.\kern -.10em}\def\urlhttp{http\kern -.10em\lower -.1ex
	\hbox{:}\kern -.12em\lower 0ex\hbox{/}\kern -.18em\lower
	0ex\hbox{/}}
\begin{document}
\baselineskip=15pt

\title[Linkage of modules by reflexive morphisms]
 {Linkage of modules by reflexive morphisms}

\bibliographystyle{amsplain}
\author[Dehghani-Zadeh]{Fatemeh Dehghani-Zadeh$^1$}
\author[Dibaei]{Mohammad-T. Dibaei$^{2,3}$}
\author[Sadeghi]{Arash Sadeghi$^3$}
\address{$^{1}$ Department of Mathematics, Islamic Azad University, Yazd Branch. Yazd, Iran.}
\address{$^{2}$ Faculty of Mathematical Sciences and Computer,
	Kharazmi University, Tehran, Iran.}
\address{$^{3}$ School of Mathematics, Institute for Research in Fundamental Sciences (IPM), P.O. Box: 19395-5746, Tehran, Iran }
\email{fdzadeh@gmail.com}
\email{dibaeimt@ipm.ir}
\email{sadeghiarash61@gmail.com}

\keywords{linkage of modules, homological dimensions, local cohomology}
\subjclass[2010]{13C40, 13D05, 13D45, 13C14}
\thanks{Sadeghi's research was supported by a grant from IPM, Iran.}
\thanks{Dibaei thanks IPM, Iran, for providing him office and facilities during this research.}
\thanks{Dehghani-Zadeh thanks the Mosaheb Mathematical Institute, Tehran, where she visited during this research for one year starting October 2017. }
\begin{abstract}	
In this paper, we introduce and study the notion of linkage of modules by reflexive homomorphisms. This notion unifies and generalizes several known concepts of linkage of modules and enables us to study the theory of linkage of modules over Cohen-Macaulay rings rather than the more restrictive Gorenstein rings. It is shown that several known results for Gorenstein linkage are still true in the more general case of module linkage over Cohen-Macaulay rings. We also introduce the notion of colinkage of modules and establish an adjoint equivalence between the linked and colinked modules.
\end{abstract}
\maketitle
\section{Introduction}
The theory of linkage is a classical subject in both commutative algebra and algebraic geometry. Its roots go back to the late 19th and early 20th century, when M. Noether, Halphen and Severi used it to study algebraic curves in $\mathbb{P}^3$. The modern theory of linkage for subschemes of projective space was introduced by Peskine and Szpiro \cite{PS} in 1974.
Since then, it has been studied extensively by several authors
including  Huneke, Rao, Ulrich and many others
\cite{Hu,Hu1,HuUl,HuU,HuU1,HuU2,KMMNP,KM,KM1,KMU,Mi,MiNa,MiNa1,Na1,Ra1,Ra,Ra2}. Recall that 
two ideals $I$ and $J$ in a Gorenstein local ring $R$ are said to be linked if there is a regular sequence ${\bf x}$ in their intersection such that $I=(({\bf x}):J)$ and $J=(({\bf x}):I)$. The notion of linkage by a complete intersection has been extended by some authors to include linkage by more general ideals. For instance, Gorenstein linkage has been studied by Golod \cite{G1}, Schenzel \cite{Sc}, Kustin and Miller \cite{KM}, Nagel \cite{Na1} and Kleppe et al.\cite{KMMNP}; linkage by generically Gorenstein, Cohen-Macaulay ideals has been investigated by Martin \cite{Mar1}.

The classical linkage theory has been extended to modules in different ways by several authors in recent years, for instance Martin \cite{Mar}, Yoshino and Isogawa \cite{YI}, Martsinkovsky and Strooker \cite{MS}, Nagel \cite{Na}, and Iima and Takahashi \cite{IT}. Based on these generalizations, several works have been done on studying the linkage of modules; see for example \cite{BY,CDGST,DGHS,DS,DS1,DS2,N,Sa,Sa1,TJP,TP}. 

Inspired by Nagel's work, we introduce and study the notion of linkage of modules by reflexive homomorphisms. This is a new notion of linkage for modules which includes the concepts of linkage due to Yoshino and Isogawa \cite{YI}, Martsinkovsky and Strooker \cite{MS}, Nagel \cite{Na}, and Iima and Takahashi \cite{IT} (see Remark \ref{r}). Moreover, this notion enables us to study the theory of linkage of modules over Cohen-Macaulay rings rather than the more restrictive Gorenstein rings.

Throughout $R$ denotes a commutative Noetherian ring and $\md R$ denotes the category of all finitely generated $R$-modules. For a semidualizing $R$-module $K$ and an integer $n\geq0$, fix $\mx$ as an $n$-reflexive subcategory of $\md R$ with respect to $K$ (see Definition \ref{def1}).
The subcategory of perfect modules of grade $n$, denoted by $\mp^n(R)$, is an example of $n$-reflexive subcategory with respect to $R$. Also, over a Cohen-Macaulay local ring $R$ with dualizing module $\omega$, the subcategory of Cohen-Macaulay modules of codimension $n$, denoted by $\cm^n(R)$, is an $n$-reflexive subcategory with respect to $\omega$ (see also Example \ref{ex} and Proposition \ref{ccc} for more examples of such subcategories).

We denote by $\Epi(\mx)$ the set of $R$-epimorphisms $\phi:X\twoheadrightarrow M$, where $X\in\mx$ and $M\in\md R$ with $\gr_R(M)=\gr_R(X)=n$. Such homomorphisms are called \emph{reflexive homomorphisms} with respect to $\mx$. Given a reflexive homomorphism $\phi\in\Epi(\mx)$, we construct a new reflexive homomorphism $\L_K^n(\phi)\in\Epi(\mx)$ (see Definition \ref{d2} and Theorem \ref{t1}). Two reflexive homomorphisms $\phi:X\twoheadrightarrow M$ and $\psi:Y\twoheadrightarrow N$ in $\Epi(\mx)$ are said to be \emph{equivalent} and denoted by $\phi\equiv\psi$, provided that there exist isomorphisms $\alpha:M\overset{\cong}{\to} N$ and $\beta:X\overset{\cong}{\to} Y$ such that $\alpha\circ\phi=\psi\circ\beta$ (see Definition \ref{def2}). 

Let $\phi$ and $\psi$ be reflexive homomorphisms in $\Epi(\mx)$. We say that $R$-modules $M$, $N$ are \emph{linked with respect to} $\mx$ in one step by the pair
$(\phi,\psi)$, provided that $M=\im\phi$, $N=\im\psi$, $\phi\equiv\L_K^n(\psi)$ and $\psi\equiv\L_K^n(\phi)$ (see Definition \ref{def4}). 
The $R$-modules $M$ and $N$ are said to be in the same \emph{liaison class with respect to $\mx$} provided that $M$ and $N$ are linked in $m$ steps for some integer $m>0$, i.e. there exist $R$-modules $N_0 = M, N_1, \cdots, N_{m-1},N_m=N$ such that $N_i$ and $N_{i+1}$ are directly linked with respect to $\mx$ for all $i=0,\cdots,m-1$. If $m$ is even, then $M$ and $N$ are said to be \emph{evenly linked}. 

It should be noted that, an ideal $I$ of grade $n$ over a Gorenstein local ring $R$ is linked by a Gorenstein ideal if and only if $R/I$ is linked with respect to the subcategory $\cm^n(R)$, consisting of all Cohen–Macaulay $R$-modules
of codimension $n$, in the sense of the new definition (see Corollary \ref{c5}). More generally, over a Cohen-Macaulay local ring $R$ which is a homomorphic image of a Gorenstein local ring, every unmixed $R$-module of codimension $n$ is linked with respect to $\cm^n(R)$ (see Corollary \ref{c1} and Example \ref{ee} for more examples of linked modules).

One of the first main results in the classical theory of linkage, due to Peskine and Szpiro, indicates that the Cohen-Macaulay property is preserved under linkage over Gorenstein rings. 
Note that this result is no longer true if the base ring is Cohen-Macaulay but not Gorenstein. Attempts to extend this theorem lead to several developments in the theory of linkage (see for example \cite{Hu,HVV}). In our first main result, it is shown that
the Cohen-Macaulay property is preserved under linkage over a Cohen-Macaulay local ring which is a homomorphic image of a Gorenstein local ring. In fact, we show that several results known for Gorenstein linkage are still true
in the more general case of module linkage over Cohen-Macaulay rings. For example,
properties such as being perfect, Cohen-Macaulay, generalized Cohen-Macaulay are preserved in module liaison classes. Also, we show that the homological dimensions and local cohomology modules are preserved in even module liaison classes. 
There is an interesting generalization of the Peskine-Szpiro Theorem to the vanishing of certain local cohomology modules,
due to Schenzel \cite{Sc}. Over a Gorenstein local ring, he studied the relationship between ideals $I$ satisfying Serre's condition $(S_t)$ and the vanishing condition of local cohomology modules of ideals $J$ that are linked to $I$ \cite[Theorem 4.1]{Sc}. Recall that for an $R$-module $M$ the $(S_t)$ locus of $M$, denoted by $S_t(M)$, is the subset of $\Spec R$ consisting of all prime ideals $\fp$ of $R$ such that $M_\fp$ satisfies the Serre's condition $(S_t)$. Inspired by the Schenzel's result, we study the connection of $(S_t)$ locus of a linked module and the set of attached primes of certain local cohomology modules of its linked module. More precisely, we prove the following result (see Corollary \ref{c3}).
\begin{thm}\label{it}
Let $(R,\fm,k)$ be a Cohen-Macaulay local ring of dimension $d$ which is a homomorphic image of a Gorenstein local ring. Assume that $M$ and $N$ are $R$-modules which are in the same liaison class with respect to $\cm^{n}(R)$. The following statements hold true.
	\begin{enumerate}[\rm(i)]
		\item  $M$ is Cohen-Macaulay if and only if $N$ is so.
        \item If $\X$ is an open subset of $\Spec R$ and $M$, $N$ are linked in an odd number of steps, then
        $$\X\subseteq\Sn_t(M) \text{ if and only if } \Att_R(\hh^j_\fm(N))\subseteq\Spec R\setminus\X \text{ for all } j,\ \dim_R(N)-t<j<\dim_R(N).$$
        \item If $M$ and $N$ are in the same even liaison class, then $\hh^i_{\fm}(M)\cong\hh^i_{\fm}(N)$ for all $i$, $0<i< d-n$.
		\item If $M$ and $N$ are linked in an odd number of steps and $M$ is generalized Cohen-Macaulay, then $$\hh^i_{\fm}(M)\cong\Hom_R(\hh^{d-n-i}_{\fm}(N),\E_R(k)) \text{ for all } i, \  0<i<d-n.$$
		In particular, $N$ is generalized Cohen-Macaulay.
	\end{enumerate}
\end{thm}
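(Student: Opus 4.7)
The plan is to specialize the general structural theorems developed earlier in the paper (for arbitrary $n$-reflexive subcategories $\mx$) to the case $\mx = \cm^n(R)$ with $K = \omega$, where $\omega$ is the dualizing module of $R$. This is legitimate because $R$ is Cohen--Macaulay and a homomorphic image of a Gorenstein local ring, so $R$ admits a dualizing module and $\cm^n(R)$ is an $n$-reflexive subcategory with respect to $\omega$. The central translation tool throughout will be Grothendieck local duality: for any finitely generated $R$-module $L$,
$$H^i_{\mathfrak{m}}(L) \cong \Hom_R(\Ext^{d-i}_R(L,\omega),\E_R(k)),$$
which converts statements about middle-range local cohomology into statements about $\Ext$ modules to which the linkage machinery directly applies.

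Parts (i), (iii), (iv) then reduce to iterating one-step statements. Part (i) follows by induction on the number of linkage steps once one knows that the construction $\L_\omega^n$ sends $\cm^n(R)$ into itself, which is the ``Cohen--Macaulay is preserved'' result highlighted in the introduction. For (iii), the analogous preservation in the Ext-world under one direct linkage supplies an isomorphism of the form $\Ext^{n+i}_R(M,\omega)\cong\Ext^{n+i}_R(N,\omega)$ for $0<i<d-n$; composing two such steps and applying local duality yields the desired $H^i_{\mathfrak{m}}(M)\cong H^i_{\mathfrak{m}}(N)$. For (iv), a direct (odd) linkage should produce a Matlis-type duality relating $\Ext^{n+i}_R(M,\omega)$ to $\Ext^{n+(d-n-i)}_R(N,\omega)$; the generalized Cohen--Macaulay hypothesis on $M$ forces the intermediate local cohomology modules of $M$ to have finite length, Matlis duality is an involution on modules of finite length, and the hypothesis transfers to $N$ via this duality, giving both the stated isomorphism and the conclusion that $N$ is generalized Cohen--Macaulay.

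The main obstacle is part (ii), which couples the $\Sn_t$ locus of $M$ with the attached primes of the local cohomology of $N$. The plan is to localize: since linkage with respect to $\cm^n(R)$ is stable under localization, $M_{\mathfrak{p}}$ and $N_{\mathfrak{p}}$ remain linked in the same (odd) number of steps over $R_{\mathfrak{p}}$ with respect to $\cm^{n}(R_{\mathfrak{p}})$. Serre's condition $(S_t)$ at $\mathfrak{p}$ is encoded by the vanishing of $H^i_{\mathfrak{p} R_{\mathfrak{p}}}(M_{\mathfrak{p}})$ for $i<\min(t,\dim M_{\mathfrak{p}})$, and the duality underlying (iv) (which does not itself require the global generalized Cohen--Macaulay hypothesis) converts each such vanishing into the vanishing of the Matlis dual of $H^{d-n-i}_{\mathfrak{p} R_{\mathfrak{p}}}(N_{\mathfrak{p}})$. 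Combining this with the standard identification $\mathfrak{p}\in\Att_R(H^j_{\mathfrak{m}}(N))$ if and only if the relevant localization (equivalently: the associated prime of the Matlis dual) is nonzero gives the equivalence $\X\subseteq\Sn_t(M)\iff\Att_R(H^j_{\mathfrak{m}}(N))\subseteq\Spec R\setminus\X$ in the stated range $d-n-t<j<d-n=\dim_R(N)$, with the openness of $\X$ used to glue the prime-by-prime equivalences into the clean global biconditional.
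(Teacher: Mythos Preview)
Your plan for parts (i), (iii), and (iv) is correct and matches the paper's route: it proves these as Theorem \ref{cc}(i), Theorem \ref{ttt}(i) combined with local duality, and Corollary \ref{c6}, respectively, all specialized to $K=\omega$ and $\mx_n=\cm^n(R)$.

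Part (ii) has a genuine gap. You write that ``the duality underlying (iv) \ldots\ does not itself require the global generalized Cohen--Macaulay hypothesis'' and propose to use it pointwise at each $\fp$. But that duality (Proposition \ref{l8} / Corollary \ref{c6}) is \emph{conditional}: the isomorphism $H^i_{\fm}(M)\cong H^{d-n-i}_{\fm}(N)^\vee$ in the range $0<i<t$ is only available once one knows $M$ satisfies $(S_t)$ on the punctured spectrum. Localized at $\fp$, you would need $M_\fq$ to satisfy $(S_t)$ for every $\fq\subsetneq\fp$ before invoking it, which is exactly what you are trying to establish in the direction $(\mathrm{ii})\Rightarrow(\mathrm{i})$. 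So as written the argument is circular in that direction.

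The fix is to replace the conditional duality by the \emph{unconditional equivalence} of Proposition \ref{t3} (equivalently Corollary \ref{c4}): $M$ satisfies the depth bound $\min\{t,\depth R_\fp-n\}$ everywhere if and only if $\Ext^i_R(N,\omega)=0$ for $n<i<n+t$. Since $R$ has a dualizing module, local duality identifies $\Att_R(H^j_\fm(N))$ with $\Ass_R(\Ext^{d-j}_R(N,\omega))$, a finitely generated module; openness of $\X$ then gives $\Ass\cap\X=\emptyset\iff\Supp\cap\X=\emptyset$, and localizing Proposition \ref{t3} at each $\fp\in\X$ finishes both directions cleanly. The paper instead deduces (ii) from the more general Theorem \ref{t5}, whose proof is substantially more delicate (passage to completion, minimal bad primes, the Nhan--Quy shifted-localization principle) because it does \emph{not} assume a global dualizing module; under the hypotheses of the present theorem your corrected route is in fact shorter.
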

Recall that the Cohen-Macaulay locus of an $R$-module $M$, denoted by $\cm_R(M)$, is the subset of $\Spec R$ consisting of all prime ideals $\fp$ of $R$ such that $M_\fp$ is a Cohen-Macaulay $R_\fp$-module.

As a consequence of Theorem \ref{it}, we obtain the following interesting result for self-linked modules.
\begin{cor}\label{self}
	Let $(R,\fm)$ be a Cohen-Macaulay local ring which is a homomorphic image of a Gorenstein local ring and let $\X$ be an open subset of $\Spec R$. Assume that $M$ is an $R$-module of dimension $d$ which is self-linked with respect to $\cm^n(R)$. The following are equivalent.
	\begin{enumerate}[\rm(i)]
		\item $\Att_R(\hh^i_{\fm}(M))\subseteq\Spec R\setminus\X$ for all $i$, $\lfloor d/2\rfloor\leq i<d$.
		\item $\Att_R(\hh^i_{\fm}(M))\subseteq\Spec R\setminus\X$ for all $i$, $0<i<d$.
		\item $\X\subseteq\cm_R(M)$. 
	\end{enumerate} 
\end{cor}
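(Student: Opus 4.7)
Since $M$ is self-linked with respect to $\cm^n(R)$, it is linked to $N:=M$ in one (odd) step, so Theorem \ref{it}(ii) supplies the biconditional that drives the proof:
\[
\X \subseteq \Sn_t(M) \iff \Att_R(\hh^j_\fm(M)) \subseteq \Spec R \setminus \X \text{ for every } j \text{ with } d-t < j < d,
\]
valid for each integer $t\ge 1$. Specialising to $t=d$ yields (ii) $\Leftrightarrow$ (iii) at once: (iii) forces $\X\subseteq\Sn_t(M)$ for every $t$, hence (ii); conversely, (ii) produces $\X\subseteq \Sn_d(M)$, so for $\fp\in\X$ one has $\depth M_\fp\ge \min(d,\dim M_\fp) = \dim M_\fp$, i.e.\ $M_\fp$ is Cohen-Macaulay. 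The implication (ii) $\Rightarrow$ (i) is a trivial range restriction.

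For (i) $\Rightarrow$ (iii), the decisive observation is the choice $t=\lceil d/2\rceil+1$: then $d-t=\lfloor d/2\rfloor-1$, so the range $d-t<j<d$ becomes exactly $\lfloor d/2\rfloor\le j\le d-1$, matching (i). The biconditional then translates (i) into $\X\subseteq \Sn_{\lceil d/2\rceil+1}(M)$. I would now argue by induction on $d$, the case $d\le 1$ being immediate. Fix $\fp\in\X$. If $\fp\ne\fm$, then by catenarity of $R$ one has $\dim M_\fp\le d-\dim R/\fp<d$. Localising the self-linkage of $M$ to $R_\fp$ (a Cohen-Macaulay local ring and still a homomorphic image of a Gorenstein local ring), $M_\fp$ inherits self-linkage with respect to $\cm^{n'}(R_\fp)$ where $n'=\gr_{R_\fp}(M_\fp)$. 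The inclusion $\X\cap\Spec R_\fp\subseteq \Sn_{\lceil d/2\rceil+1}(M_\fp)\subseteq \Sn_{\lceil \dim M_\fp/2\rceil+1}(M_\fp)$, together with the engine applied to $M_\fp$, recovers hypothesis (i) at the $R_\fp$-level, and the inductive hypothesis forces $M_\fp$ to be Cohen-Macaulay.

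The remaining case is $\fm\in\X$ with $\fp=\fm$. Hypothesis (i) now states $\fm\notin \Att_R(\hh^i_\fm(M))$ for $\lfloor d/2\rfloor\le i<d$; since each $\hh^i_\fm(M)$ is Artinian and the maximal ideal is always attached to any nonzero Artinian module over a local ring, this forces $\hh^i_\fm(M)=0$ for $\lfloor d/2\rfloor\le i<d$. Simultaneously, $\fm\in\X\subseteq \Sn_{\lceil d/2\rceil+1}(M)$ gives $\depth M\ge \lceil d/2\rceil+1$, so $\hh^i_\fm(M)=0$ for $0\le i\le \lceil d/2\rceil$ as well. A short parity check shows the two vanishing ranges cover $\{0,1,\dots,d-1\}$, so $\depth M=d$ and $M$ is Cohen-Macaulay. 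The main obstacle I foresee is verifying that the self-linkage of $M$ descends cleanly to $M_\fp$ over $R_\fp$ with the grade shift $n\rightsquigarrow n'$, so that Theorem \ref{it}(ii) remains available in the inductive step; this should follow from the functoriality of the reflexive-homomorphism construction $\L_K^n$ developed earlier in the paper.
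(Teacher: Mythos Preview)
Your argument is correct, but it takes a different route from the paper. The paper proves this corollary as an immediate consequence of Theorems~\ref{t5} and~\ref{tc1}: the equivalence (ii)$\Leftrightarrow$(iii) comes from Theorem~\ref{t5} with $t=d$ (exactly as you do), while (i)$\Rightarrow$(ii) follows in one stroke from Theorem~\ref{tc1} applied with $M=N$ and $m=\lceil d/2\rceil+1$, which converts the vanishing of attached primes in the upper half $\lfloor d/2\rfloor\le j<d$ into the same vanishing in the lower half $0<i\le\lceil d/2\rceil$, and the two ranges overlap.

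Your route for (i)$\Rightarrow$(iii) instead applies Theorem~\ref{it}(ii) once to obtain $\X\subseteq S_{\lceil d/2\rceil+1}(M)$, then localizes and inducts on $\dim M$. This works, and the obstacle you flag---that self-linkage descends to $M_\fp$---is genuine but easily resolved: the construction $\L_K^n(\phi)$ commutes with localization since $\Ext$ and images do, so $(\im\L_K^n(\phi))_\fp\cong\im\L_{K_\fp}^n(\phi_\fp)$; moreover the grade stays equal to $n$ by Proposition~\ref{p2}, so there is no grade shift. The only edge case is when $\phi_\fp$ becomes an isomorphism, but then $M_\fp\cong X_\fp$ is already Cohen--Macaulay. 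What you gain is independence from Theorem~\ref{tc1}; what you pay is the induction and the need to verify localization of linkage. The paper's path is shorter because Theorem~\ref{tc1} has already packaged the passage from the upper-half attached-prime condition to the lower-half one, itself via a localization argument on attached primes.
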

It should be noted that, even if we are restricted to classical linkage theory, i.e. linkage theory for ideals, the above result is new and can be viewed as a generalization of \cite[Proposition 4.3]{Sc}.

For a semidualizing $R$-module $K$, we denote by $\mp_K^n(R)$ the subcategory of $\mp_K$-perfect modules of grade $n$ (see Section 5 for more details). Note that, in the trivial case, i.e. $K=R$, we omit the subscript and recover the subcategory of perfect modules of grade $n$, $\mp^n(R)$.
Moreover, we denote by $\ak(R)$ (resp. $\bk(R)$) the Auslander class (resp. Bass class) with respect to $K$ (see Definition \ref{def}).

In the second part of the paper, we introduce and study the notion of colinkage of modules by coreflexive homomorphisms. This can be viewed as a generalization of the notion of the linkage with respect to a semidualizing module \cite{DS2} (see Proposition \ref{ccp}). This notion enables us to study the theory of linkage for modules in the Bass class with respect to $K$.
Note that the notions of linkage and colinkage are the same over a Gorenstein local ring. It is shown that every grade-unmixed module in the Bass class with respect to $K$ can be colinked with respect to the category of $\mp_K$-perfect modules (see Example \ref{cee} for more examples of colinked modules). Several properties such as being $\mp_K$-perfect, Cohen-Macaulay, generalized Cohen-Macaulay are preserved in module coliaison classes (see Theorem \ref{ct} and Corollaries \ref{ccc2} and \ref{ccc3}). We also establish an adjoint equivalence between the linked modules with respect to the category of perfect modules and the colinked modules with respect to the category of $\mp_K$-perfect modules. More precisely, we prove the following (see also Theorem \ref{ctt} for a more general case).  

\begin{thm}\label{icctt}
	Let $R$ be a commutative Noetherian ring and let $K$ be a semidualizing $R$-module. There is an adjoint equivalence
		\begin{center}		
			$\left\{\begin{array}{llll}
			M\in\ak(R){\bigg |} \begin{array}{lll} M\mathrm{\ is\ linked\ with } \\
			\mathrm{\ \ respect\ to } \ \mp^n(R) \\
			\end{array} \end{array}
			\right\}
			\begin{array}{ll}\overset{-\otimes_{R}K}{\xrightarrow{\hspace*{2cm}}}\\ \underset{\Hom_{R}(K,-)}{\xleftarrow{\hspace*{2cm}}}\\ \end{array}
			\left\{\begin{array}{lll}
			N\in\bk(R){\bigg |} \begin{array}{lll} N\mathrm{\ is\ colinked\ with }\\
			\mathrm{\ \ respect\ to }	\ \mp_K^n(R)\\ \end{array}\end{array}
			\right\}.$
		\end{center}	
\end{thm}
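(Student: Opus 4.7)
The plan is to derive the asserted adjoint equivalence by restricting the classical Foxby equivalence
\[
\ak(R) \xrightarrow{-\otimes_{R}K} \bk(R), \qquad \bk(R) \xrightarrow{\Hom_{R}(K,-)} \ak(R)
\]
to the specified subcategories of linked and colinked modules. Since the Foxby functors are already a pair of quasi-inverse equivalences between $\ak(R)$ and $\bk(R)$, and so in particular an adjoint pair, the work reduces to verifying that these functors take linked modules to colinked modules and vice versa.

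First I would observe that $-\otimes_{R}K$ carries $\mp^{n}(R)$ into $\mp_{K}^{n}(R)\cap\bk(R)$, while $\Hom_{R}(K,-)$ carries $\mp_{K}^{n}(R)\cap\bk(R)$ back into $\mp^{n}(R)\cap\ak(R)$; this is essentially the definition of $\mp_{K}^{n}(R)$ recalled in Section~5. Consequently, if $\phi\colon X\twoheadrightarrow M$ lies in $\Epi(\mp^{n}(R))$ with $X,M\in\ak(R)$, then $\phi\otimes_{R}K\colon X\otimes_{R}K\twoheadrightarrow M\otimes_{R}K$ is a coreflexive epimorphism for $\mp_{K}^{n}(R)$, because the Auslander-class hypotheses ensure that all higher $\Tor_{R}^{i}(K,-)$ vanish on the relevant syzygies and that the grade is unchanged.

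The main obstacle is to show that the linkage operator $\L_{K}^{n}$ on $\Epi(\mp^{n}(R))$ corresponds, under Foxby equivalence, to the colinkage operator on the coreflexive side. I would unpack both constructions in parallel: each is built from a syzygy together with a duality, namely $\Hom_{R}(-,R)$ on the linkage side and $\Hom_{R}(-,K)$ on the colinkage side. Using the natural isomorphism $\Hom_{R}(-\otimes_{R}K,K)\cong\Hom_{R}(-,R)$ valid on $\ak(R)$-objects, together with the unit and counit isomorphisms of Foxby equivalence on $\ak(R)$ and $\bk(R)$, one shows that $\L_{K}^{n}(\phi)\otimes_{R}K$ is equivalent, as a coreflexive epimorphism, to the image of $\phi\otimes_{R}K$ under the colinkage operator. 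The delicate point is that $\L_{K}^{n}$ (Definition~\ref{d2}) involves several passes through $\Hom$'s and syzygies, and one must verify compatibility at each intermediate stage; this is precisely where the defining vanishing conditions of $\ak(R)$ and $\bk(R)$ are used.

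Once this intertwining is established, the rest is formal. If $M$ and $M'$ are directly linked with respect to $\mp^{n}(R)$ by the pair $(\phi,\psi)$ and both lie in $\ak(R)$, then $M\otimes_{R}K$ and $M'\otimes_{R}K$ are directly colinked with respect to $\mp_{K}^{n}(R)$ by $(\phi\otimes_{R}K,\psi\otimes_{R}K)$, and conversely $\Hom_{R}(K,-)$ returns a colinked module to a linked module. Iterating over an arbitrary number of steps yields a bijection on liaison classes, and the adjoint equivalence structure is inherited directly from the ambient Foxby equivalence. I would close by remarking that this is the special case $\mx=\mp^{n}(R)$ of the more general Theorem~\ref{ctt}, in which the same argument is carried out for an arbitrary $n$-reflexive subcategory $\mx$ paired with its image under $-\otimes_{R}K$.
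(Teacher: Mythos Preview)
Your overall strategy---restrict Foxby equivalence and show the linkage/colinkage operators intertwine under $-\otimes_R K$---is different from the paper's and runs into a real obstacle. The paper (Theorem~\ref{ctt}) does \emph{not} compare $\L_R^n(\phi)\otimes_R K$ with $\ml_K^n(\phi\otimes_R K)$ directly. Instead it uses the characterisations in Corollaries~\ref{c} and~\ref{cc1}: $M$ is linked by $\phi$ iff $\eta_R^R(M)$ is injective, and $N$ is colinked by $\psi$ iff $\eta_K^R(N)$ is injective. The proof then reduces modulo a regular sequence, observes that injectivity of $\eta_R^R(M)$ makes $M$ a first syzygy over $S=R/{\bf x}$, tensors with $\overline{K}$, and uses a short diagram chase (with the $\ak$/$\bk$ vanishing) to see that $M\otimes_R K$ is a first $\overline{K}$-syzygy, whence $\eta_K^R(M\otimes_R K)$ is injective. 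The converse direction is symmetric.

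Your proposed intertwining has two gaps. First, unwinding the definitions one finds $\ml_K^n(\phi\otimes_R K)$ is the cokernel of $\Ext^n_R(M,K)\hookrightarrow\Ext^n_R(X,K)$, while $\L_R^n(\phi)\otimes_R K$ is the cokernel of $\Ext^n_R(M,R)\otimes_R K\to\Ext^n_R(X,R)\otimes_R K\cong\Ext^n_R(X,K)$. These agree precisely when the natural map $\Ext^n_R(M,R)\otimes_R K\to\Ext^n_R(M,K)$ is surjective; the adjunction $\Hom_R(-\otimes_R K,K)\cong\Hom_R(-,R)$ gives this only at the level of $\Hom$, not $\Ext^n$, and $M\in\ak(R)$ does not force it. Second, even granting the intertwining for $\phi$, to deduce $\phi\otimes_R K\equiv\ml_K^n\ml_K^n(\phi\otimes_R K)$ you must apply it again with $\L_R^n(\phi)$ in place of $\phi$, which requires $\im\L_R^n(\phi)\in\ak(R)$; membership in $\ak(R)$ is preserved only in \emph{even} liaison classes (Corollary~\ref{ccc1}(i)), so this is not available. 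The paper's $\eta$-criterion route avoids both issues entirely.
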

The organization of the paper is as follows. In section 2, we collect some  definitions and results which will be used in this paper. In section 3, we introduce the notion of linkage by reflexive homomorphisms and establish its basic properties. We also obtain a criterion for linkage of modules by reflexive homomorphisms. In section 4, we study the $\mathcal{G}_K$-perfect linkage which is a generalization of the notion of Gorenstein linkage. We prove our first main result, Theorem \ref{it},
in this section. Finally, in the last section, we introduce and study the notion of colinkage by coreflexive homomorphisms and give a proof for Theorem \ref{icctt}. 
\section*{Convention}
Throughout the paper $R$ is a commutative Noetherian ring, all modules over $R$ are assumed to be finitely generated, and the category of all such modules is denoted by $\md R$. All subcategories are assumed to be full and closed under isomorphism. If $R$ is local, then the Matlis dual functor is denoted by  $(-)^\vee:=\Hom_R(-,E_R(k))$, where $E_R(k)$ is the injective envelope of the residue field $k$. For an $R$-module $M$, the number $\cod(M):=\dim R-\dim M$ is called the \emph{codimension} of $M$. Over a Cohen-Macaulay local ring $R$, we denote by $\cm^n(R)$ the subcategory of $\md R$ consisting of all Cohen-Macaulay $R$-modules of codimension $n$. The letter $K$ is always used for a semidualizing $R$-module. The $K$-dual functor is denoted by $(-)^{\triangledown}:=\Hom_R(-,K)$. For an integer $n$, we denote by $\mp^n(R)$ (resp. $\G^n_K(\mp)$) the category of perfect ( resp. $\gkkkd$-perfect ) $R$-modules of grade $n$.
\section{Preliminaries}
In this section we collect some definitions and results which will be used throughout the paper.

Let $\cdots\to P_i\overset{\partial_{i}}{\to}\cdots\to P_0\overset{\partial_0}{\to}M\to0$ be a projective resolution of an $R$-module $M$. For an integer $i\geq0$, the image of $\partial_{i}$, is called the $i$-\emph{syzygy} of $M$ and denoted by $\Omega^i_RM$ (or simply $\Omega^i M$) which is unique up to projective equivalence. The \emph{transpose} of $M$, denoted by $\Tr M$, is defined to be $\coker \partial_1^*$, where
$(-)^* := \Hom_R(-,R)$, which satisfies the exact
sequence $$0\rightarrow M^*\rightarrow
P_0^{*}\overset{\partial_1^*}{\rightarrow} P_1^{*}\rightarrow \Tr M\rightarrow 0.$$
Note that the transpose of $M$ is unique up to projective equivalence. However, if $M$ has a minimal presentation (e.g. $R$ is semiperfect), then $\Tr M$ is defined uniquely up to isomorphism.
\begin{chunk}\textbf{Gorenstein dimension with respect to a semidualizing module.}\label{d3}
The notion of Gorenstein dimension (or $\mathcal{G}$-dimension) for finitely generated modules was introduced by Auslander \cite{A}
and deeply developed by Auslander and Bridger \cite{AB}. It is a refinement of the classical projective dimension
in the sense that they are equal when the latter is finite, but the Gorenstein dimension may be finite without the projective one being so.
A commutative Noetherian local ring is Gorenstein precisely when the Gorenstein dimension of any finitely generated module is finite.

Recall that a finitely generated $R$-module $K$ is \emph{semidualizing} if the homothety morphism $R\to\Hom_R(K,K)$ is an isomorphism and $\Ext^i_R(K,K)=0$ for all $i>0$. Semidualizing modules are initially studied by Foxby \cite{F}, Vasconcelos \cite{V} and Golod \cite{G}. Examples of such modules
include all finitely generated projective modules of rank one and a dualizing module, when one exists. The notion of Gorenstein dimension has been extended to Gorenstein dimension with respect to a semidualizing module by Foxby \cite{F} and Golod \cite{G}.

From now on, we fix $K$ as a semidualizing $R$-module. An $R$-module $M$ is called {\it totally $K$-reflexive} if $M$ is
$K$-reflexive, i.e. the natural evaluation map $M\rightarrow\Hom_R(\Hom_R(M,K),K)$ is bijective and
$$\Ext^i_R(M,K)=0=\Ext^i_R(\Hom_R(M,K),K), \text{  for all }\ i>0.$$
A $\gkkkd$-resolution of an $R$-module $M$ is a right acyclic complex
of totally $K$-reflexive modules whose $0$th homology is $M$. The minimum lengths of all $\gkkkd$-resolutions of $M$ is denoted by
$\gkkd_R(M)$. Note that, over a local ring $R$, a semidualizing $R$-module $K$ is
a dualizing module if and only if $\gkkd_R(M)<\infty$ for all
finitely generated $R$-modules $M$ (see \cite[Proposition
1.3]{Ge}).

The Auslander's transpose has been generalized by Foxby \cite{F} as follows.
Let $\pi: P_1\overset{\partial}{\rightarrow}P_0\rightarrow M\rightarrow 0$ be a
projective presentation of an $R$-module $M$. The \emph{transpose of $M$ with respect to} $K$, denoted by $\trk^{\pi} M$, is defined to be $\coker \partial^{\triangledown}$, where
$(-)^{\triangledown} := \Hom_R(-,K)$, which satisfies the following exact
sequence
\begin{equation}\tag{\ref{d3}.1}
0\rightarrow M^{\triangledown}\rightarrow
P_0^{\triangledown}\overset{\partial^{\triangledown}}{\rightarrow} P_1^{\triangledown}\rightarrow \trk^{\pi}
M\rightarrow 0.
\end{equation}
Denote  $\lambda_K^{\pi}M:=\im(\partial^{\triangledown})$.
Hence one has the exact sequences
\begin{equation}\tag{\ref{d3}.2}
0\to M^{\triangledown}\to P_0^{\triangledown}\to\lambda_K^{\pi}M\to0 \text{ and } 
0\to\lambda_K^{\pi}M\to P_1^{\triangledown}\to\Tr^{\pi}_KM\to0.
\end{equation}
Note that the transpose of $M$ with respect to $K$ depends on the choice of the projective presentation
of $M$, but it is unique up to $K$-projective equivalence. In other words, if $T$ and $T'$ are transposes of $M$ with respect to $K$, then there exist projective modules $P$ and $Q$ such that $T\oplus(P\otimes_RK)\cong T'\oplus(Q\otimes_RK)$.
Hence, unless otherwise specified, we simply denote the transpose of $M$ with respect to $K$ by $\Tr_{K}M$. 

For an $R$-module $M$, there exists the following exact
sequence
\begin{equation}\tag{\ref{d3}.3}
0\rightarrow\Ext^1_R(\trk M,K)\rightarrow M\rightarrow
M^{\triangledown\triangledown}\rightarrow\Ext^2_R(\trk
M,K)\rightarrow0,
\end{equation}
where $M\to M^{\triangledown\triangledown}$ is the natural evaluation map (see \cite[Proposition 3.1]{F}).
\end{chunk}
In the following, we collect some basic properties about $\gkkkd$-dimension which will be used throughout the paper (see \cite{AB}, \cite{G} for more details).
\begin{thm}\label{G3}
Let $M$ be an $R$-module. The following statements hold true.
\begin{enumerate}[\rm(i)]
	\item If $\gkkd_R(M)<\infty$, then $\gkkd_R(M)=\sup\{i\mid\Ext^i_R(M,K)\neq0\}$.
	\item $\gkkd_R(M)=0$ if and only if $\gkkd_R(\trk M)=0$.
	\item If $R$ is local and $\gkkd_R(M)<\infty$, then $\gkkd_R(M)=\depth R-\depth_R(M)$.
\end{enumerate}
\end{thm}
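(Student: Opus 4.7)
The plan is to handle the three items in order; each reduces via dimension-shifting along a $\mathcal{G}_K$-resolution, adapted to the semidualizing module $K$ from the classical Auslander--Bridger machinery.

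For (i), I would compare $\gkkd_R(M)$ with $s:=\sup\{i:\Ext^i_R(M,K)\neq 0\}$. Since every totally $K$-reflexive module $G$ satisfies $\Ext^i_R(G,K)=0$ for all $i>0$ by definition, a $\mathcal{G}_K$-resolution $0\to G_n\to\cdots\to G_0\to M\to 0$ of length $n=\gkkd_R(M)$ combined with dimension shifting along the short exact sequences of syzygies yields $\Ext^{n+j}_R(M,K)\cong\Ext^j_R(G_n,K)=0$ for $j\geq 1$, giving $s\leq n$. For the reverse inequality, a standard trick shows that if $\Ext^n_R(M,K)$ also vanished, then the $n$-th syzygy would itself be totally $K$-reflexive, producing a shorter $\mathcal{G}_K$-resolution of $M$ and contradicting minimality.

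For (ii), I would exploit the two short exact sequences in (\ref{d3}.2). Assuming $M$ is totally $K$-reflexive, $M^{\triangledown}$ is also totally $K$-reflexive, and the class of totally $K$-reflexive modules is closed under kernels of surjections onto totally $K$-reflexive modules; feeding the sequences $0\to M^{\triangledown}\to P_0^{\triangledown}\to \lambda_K^\pi M\to 0$ and $0\to \lambda_K^\pi M\to P_1^{\triangledown}\to \trk M\to 0$ through this closure property yields $\lambda_K^\pi M$ and hence $\trk M$ totally $K$-reflexive. The converse follows by symmetry, since $\trk\trk M$ is stably isomorphic to $M$ up to $K$-projective summands.

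For (iii), the Auslander--Bridger formula, I would proceed by induction on $n=\gkkd_R(M)$. The base case $n=0$ reduces to showing $\depth_R G=\depth R$ for every totally $K$-reflexive $G$; the semidualizing hypothesis ($\depth_R K=\depth R$ and $\Ext^{i}_R(K,K)=0$ for $i>0$) together with the vanishing of $\Ext^{i}_R(G,K)$ for $i>0$ allows one to transport depth between $G$, $G^{\triangledown}$, and $R$ by an Ischebeck-style argument. For the inductive step, pick a short exact sequence $0\to L\to G\to M\to 0$ with $G$ totally $K$-reflexive and $\gkkd_R(L)=n-1$, and apply the depth lemma. The main obstacle, as in the absolute case $K=R$, is this base case: translating the Ext vanishing into the depth equality $\depth_R G=\depth R$ without circularity; once in place, the rest is formal.
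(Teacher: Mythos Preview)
The paper does not prove this theorem; it is stated in the preliminaries as a collection of known facts, with a pointer to \cite{AB} and \cite{G}. Your outline follows the standard arguments found in those sources and is essentially correct, but two steps are stated imprecisely.

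In (i), the module that must be shown totally $K$-reflexive when $\Ext^n_R(M,K)=0$ is the $(n{-}1)$st $\mathcal{G}_K$-syzygy $M_{n-1}$, not the $n$th: the $n$th syzygy is $G_n$, already totally $K$-reflexive by construction. The point is that from $0\to G_n\to G_{n-1}\to M_{n-1}\to 0$ together with $\Ext^1_R(M_{n-1},K)\cong\Ext^n_R(M,K)=0$ one deduces $M_{n-1}$ is totally $K$-reflexive, contradicting minimality of $n$.

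In (ii), the closure property you invoke (kernels of surjections onto totally $K$-reflexive modules) runs in the wrong direction for the sequences (\ref{d3}.2): you want to conclude that the \emph{cokernels} $\lambda_K^\pi M$ and $\trk M$ are totally $K$-reflexive, and the class of totally $K$-reflexive modules is not closed under arbitrary cokernels of monomorphisms. The missing ingredient is the vanishing $\Ext^1_R(\trk M,K)=\Ext^2_R(\trk M,K)=0$, which you get for free from the exact sequence (\ref{d3}.3) and the $K$-reflexivity of $M$; with that in hand, dualizing (\ref{d3}.1) and a five-lemma comparison finish the argument. Part (iii) is fine as sketched.
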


\begin{defnot}\label{perfect}
An $R$-module $M$ is called $\gkkkd$-{\it perfect} if $\gr(M)=\gkkd(M)$. Note that if $K$ is a dualizing module, then the category of $\gkkkd$-perfect modules coincides with the category of Cohen-Macaulay modules.
An $R$-module
$C$ is called $\gkkkd$-{\it Gorenstein} provided that $C$ is $\gkkkd$-perfect and
$\Ext^{n}_{R}(C, K) \cong C$, where $n=\gkkd(M)$. 
An ideal $I$ is called $\gkkkd$-perfect (resp. $\gkkkd$-Gorenstein) if $R/I$ is 
$\gkkkd$-perfect (resp. $\gkkkd$-Gorenstein) as an $R$-module.
For an integer $n$, we denote by $\G^n_K(\mathcal{P})$ (resp. $\G^n_K(\mathcal{G})$) the subcategory of $\md R$ consisting of all $\gkkkd$-perfect (resp. $\gkkkd$-Gorenstein) $R$-modules of grade $n$.	
\end{defnot}

 Here are some basic properties of $\gkkkd$-perfect modules.
	\begin{lem}\emph{\cite{G}}\label{ll1} Let $M$ be a $\gkkkd$-perfect $R$-module of grade $n$. Then the following statements hold true.
	\begin{enumerate}[\rm(i)]
		\item $\Ext^n_R(M,K)\in\G^n_K(\mathcal{P})$.
		\item  $M\cong\Ext^n_R(\Ext^n_R(M,K),K)$.
		\item $\ann_R(M)=\ann_R(\Ext^n_R(M,K))$.
	\end{enumerate}
\end{lem}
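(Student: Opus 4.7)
The plan is to produce a $\mathcal{G}_K$-resolution of $M$ of length exactly $n$, dualize it once to get the resolution needed for (i), dualize once more to obtain (ii), and then read off (iii). Since $M$ is $\mathcal{G}_K$-perfect of grade $n$, we have $\gkkd_R(M)=n<\infty$, and so there is an exact sequence
$$0\to G_n\to G_{n-1}\to\cdots\to G_0\to M\to 0$$
with each $G_i$ totally $K$-reflexive. The first thing to verify, and the only point where genuine care is required, is that $\Ext^i_R(M,K)=0$ for every $i\neq n$: vanishing for $i>n$ is immediate from Theorem~\ref{G3}(i) and the fact that $\gkkd_R(M)=n$, while vanishing for $i<n$ uses that $K$ is semidualizing so that any $R$-regular sequence inside $\ann_R(M)$ is automatically $K$-regular, i.e. the $K$-grade coincides with the ordinary grade.

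For (i), apply $(-)^{\triangledown}=\Hom_R(-,K)$ to the resolution. Because each $G_i$ is totally $K$-reflexive (higher Ext against $K$ vanishes) and $\Ext^{<n}_R(M,K)=0$, the resulting complex is exact except at the far right, yielding
$$0\to G_0^{\triangledown}\to G_1^{\triangledown}\to\cdots\to G_n^{\triangledown}\to\Ext^n_R(M,K)\to 0.$$
Read backwards, this is a $\mathcal{G}_K$-resolution of $\Ext^n_R(M,K)$ of length at most $n$, so $\gkkd_R(\Ext^n_R(M,K))\le n$. On the other hand $\ann_R(M)\subseteq\ann_R(\Ext^n_R(M,K))$, which forces $\gr_R(\Ext^n_R(M,K))\ge n$. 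Since $\gr\le\gkkd$ in general, equality holds and $\Ext^n_R(M,K)\in\G^n_K(\mathcal P)$.

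For (ii), dualize the sequence above once more. Part (i) tells us that $\Ext^n_R(M,K)$ is itself $\mathcal{G}_K$-perfect of grade $n$, so the same $\Ext$-vanishing applies to it, and the identifications $G_i^{\triangledown\triangledown}\cong G_i$ make the doubly dualized complex exact:
$$0\to \Ext^n_R(M,K)^{\triangledown}\to G_n\to G_{n-1}\to\cdots\to G_0\to \Ext^n_R(\Ext^n_R(M,K),K)\to 0.$$
Comparing this with the original resolution of $M$, which carries the same differentials $G_n\to\cdots\to G_0$ and has $G_n\hookrightarrow G_{n-1}$ injective, forces $\Ext^n_R(M,K)^{\triangledown}=0$ and identifies $\coker(G_1\to G_0)\cong M$ with $\Ext^n_R(\Ext^n_R(M,K),K)$.

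Part (iii) is a direct corollary: $\ann_R(M)\subseteq\ann_R(\Ext^n_R(M,K))$ always, and applying (ii) to $\Ext^n_R(M,K)$ in the role of $M$ gives the reverse inclusion. The main obstacle, as noted, is the grade-side vanishing of Ext in the opening step; once that is in hand, the rest is a clean double-dualization argument against the chosen $\mathcal{G}_K$-resolution.
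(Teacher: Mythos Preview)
Your argument is correct and is the standard proof of this result. Note that the paper does not actually supply a proof of Lemma~\ref{ll1}; it simply cites Golod~\cite{G}, so there is no in-paper argument to compare against. Your double-dualization of a length-$n$ totally $K$-reflexive resolution, together with the observation that $\gr_R^K(M)=\gr_R(M)$ because $\Ass_R(K)=\Ass_R(R)$ for a semidualizing $K$, is exactly the classical route. One cosmetic remark: in part~(ii) you can avoid the extra term $\Ext^n_R(M,K)^{\triangledown}$ altogether by first noting (from part~(i)) that $\gr_R(\Ext^n_R(M,K))=n$, so $\Hom_R(\Ext^n_R(M,K),K)=0$ when $n>0$; then the doubly dualized complex is immediately $0\to G_n\to\cdots\to G_0\to\Ext^n_R(\Ext^n_R(M,K),K)\to0$ with the original differentials, and the identification with $M$ is direct. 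The case $n=0$ is just the biduality isomorphism for the totally $K$-reflexive module $M=G_0$.
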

We recall the following results of Golod.
\begin{lem}\label{G1} \emph{\cite[Corollary]{G}}
	Let $I$ be an ideal of $R$ and let $n$ be an integer. Assume that $M$ is an $R$-module such that $\Ext^j_R(R/I,M)=0$ for
	all $j\neq n$. Then there is an isomorphism of functors $\Ext^i_{R/I}(-,\Ext^n_R(R/I,M))\cong\Ext^{n+i}_R(-,M)$ on the category of $R/I$-modules for all $i\geq0$.
\end{lem}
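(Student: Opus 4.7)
The plan is to deduce the lemma from the Grothendieck change-of-rings spectral sequence and then use the vanishing hypothesis to force it to collapse.

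First I would set up the relevant composition of functors. For any $R/I$-module $N$, an $R$-homomorphism $N\to M$ has its image annihilated by $I$, so it factors uniquely through $\Hom_R(R/I,M)=(0:_M I)$, and since both $N$ and this submodule are $R/I$-modules one obtains a natural isomorphism of functors in $M$
$$\Hom_R(N,-)\ \cong\ \Hom_{R/I}\bigl(N,\Hom_R(R/I,-)\bigr).$$
Moreover $\Hom_R(R/I,-)$ is right adjoint to the exact restriction-of-scalars functor, so it sends injective $R$-modules to injective $R/I$-modules. These are precisely the two ingredients needed to form the Cartan--Eilenberg (Grothendieck) spectral sequence for the composition of derived functors.

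Feeding $M$ into this machine produces a convergent first-quadrant spectral sequence
$$E_2^{p,q}\ =\ \Ext^p_{R/I}\bigl(N,\Ext^q_R(R/I,M)\bigr)\ \Longrightarrow\ \Ext^{p+q}_R(N,M),$$
functorial in the $R/I$-module $N$. By hypothesis $\Ext^q_R(R/I,M)=0$ for $q\neq n$, so every row of $E_2$ except the row $q=n$ vanishes identically. All differentials on the row $q=n$ have either source or target in a zero row, so the spectral sequence degenerates at $E_2$. The associated filtration of $\Ext^{p+n}_R(N,M)$ therefore reduces to a single nontrivial quotient, and one reads off a natural isomorphism
$$\Ext^p_{R/I}\bigl(N,\Ext^n_R(R/I,M)\bigr)\ \cong\ \Ext^{p+n}_R(N,M).$$
Relabeling $p=i$ gives the claimed isomorphism of functors on the category of $R/I$-modules for every $i\geq0$.

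If one prefers to avoid spectral sequences, the cleanest alternative I see is to choose an $R$-injective resolution $M\to I^\bullet$ and consider the complex $C^\bullet:=\Hom_R(R/I,I^\bullet)$ of $R/I$-modules. By the adjunction above, each $C^k$ is $R/I$-injective, while the hypothesis forces $H^j(C^\bullet)=0$ for $j\neq n$ and $H^n(C^\bullet)=\Ext^n_R(R/I,M)$. A standard truncation then converts $C^\bullet$ into an $R/I$-injective resolution of $\Ext^n_R(R/I,M)$ shifted by $n$, and the identity $\Hom_R(N,I^\bullet)=\Hom_{R/I}(N,C^\bullet)$ for $R/I$-modules $N$ yields the desired isomorphism after taking cohomology. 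The only real obstacle in either route is keeping careful track of the degree shift $n$; there is no conceptual difficulty, and naturality in $N$ is automatic in both proofs.
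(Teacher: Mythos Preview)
Your proof is correct. The paper does not give its own proof of this lemma; it simply cites it from Golod's paper \cite{G}, so there is nothing to compare against. Both of your approaches---the Grothendieck spectral sequence for the composite $\Hom_{R/I}(N,\Hom_R(R/I,-))$ and the direct argument via the complex $C^\bullet=\Hom_R(R/I,I^\bullet)$---are standard and valid. In the second route, the one point you might make explicit is why $C^n/B^n$ is injective: since $H^j(C^\bullet)=0$ for $j<n$, the sequence $0\to C^0\to\cdots\to C^{n-1}\to B^n\to 0$ is exact with all $C^j$ injective, so $B^n$ is injective and hence splits off $C^n$; this makes $C^n/B^n$ injective and the truncated complex a genuine injective resolution (shifted by $n$) of $\Ext^n_R(R/I,M)$.
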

\begin{thm}\label{G2}
	\emph{\cite[Proposition 5]{G}} Let $I$ be a $\gkkkd$-perfect ideal of grade $n$. Set $\overline{K}=\Ext^{n}_R(R/I,K)$. Then the following statements hold true.
	\begin{enumerate}[\rm(i)]
		\item $\overline{K}$ is a semidualizing $R/I$-module.
		\item For an $R/I$-module $M$, one has $\gkkd_R(M)<\infty$ if and only if $\gkbd_{R/I}(M)<\infty$, and
			if these dimensions are finite, then
			$\gkkd_R(M)=\gr(I)+\gkbd_{R/I}(M)$.
	\end{enumerate}
\end{thm}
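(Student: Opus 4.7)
The plan is to reduce everything to a single natural identification. The hypothesis that $R/I$ is $\gkkkd$-perfect of grade $n$ forces $\Ext^j_R(R/I,K)=0$ for all $j\neq n$: the vanishing for $j>n$ is Theorem \ref{G3}(i), and the vanishing for $j<n$ comes from the fact that any maximal $R$-regular sequence in $I$ (of length $n$) is also $K$-regular, a standard property of semidualizing modules. Lemma \ref{G1} then applies with $M=K$, yielding the fundamental natural isomorphism
\[
\Ext^{i}_{R/I}(-,\ov K)\ \cong\ \Ext^{\,n+i}_R(-,K)\qquad (i\geq 0)
\]
of functors on the category of $R/I$-modules.

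For (i), apply this isomorphism with the argument equal to $\ov K$. In degree zero it gives $\Hom_{R/I}(\ov K,\ov K)\cong\Ext^n_R(\ov K,K)$, and the right-hand side is isomorphic to $R/I$ by Lemma \ref{ll1}(ii) applied to the $\gkkkd$-perfect module $R/I$; a direct check that this composite agrees with the homothety shows the homothety map $R/I\to\Hom_{R/I}(\ov K,\ov K)$ is an isomorphism. In positive degrees $\Ext^i_{R/I}(\ov K,\ov K)\cong\Ext^{n+i}_R(\ov K,K)=0$, because $\ov K\in\G^n_K(\mp)$ by Lemma \ref{ll1}(i) together with Theorem \ref{G3}(i).

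Part (ii) rests on the following characterization, which I regard as the main lemma: an $R/I$-module $G$ is totally $\ov K$-reflexive over $R/I$ if and only if $G\in\G^n_K(\mp)$ as an $R$-module. Both directions are formal consequences of the fundamental isomorphism combined with the biduality $G\cong\Ext^n_R(\Ext^n_R(G,K),K)$ of Lemma \ref{ll1}(ii), using that any $R/I$-module has $R$-grade at least $n$ so that $\Ext^j_R(G,K)=0$ for $j<n$. Granted this characterization, the inequality $\gkkd_R(M)\leq n+\gkbd_{R/I}(M)$ follows at once by taking a finite resolution of $M$ by totally $\ov K$-reflexive $R/I$-modules, noting that each term lies in $\G^n_K(\mp)$ over $R$, and splicing with $\gkkkd$-resolutions of the terms.

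The main obstacle is the reverse direction: given $\gkkd_R(M)=\ell<\infty$, one must produce a bounded resolution of $M$ by totally $\ov K$-reflexive $R/I$-modules. I would take a projective resolution $\cdots\to P_1\to P_0\to M\to 0$ over $R/I$. Each $P_i$ is a summand of a free $R/I$-module, hence satisfies $\Ext^j_R(P_i,K)=0$ for all $j\neq n$ and has finite $\gkkkd$-dimension over $R$. Iterating the Ext dimension shift in the long exact sequences of the syzygy filtration, these vanishings give $\Ext^j_R(\Omega^k_{R/I}M,K)\cong \Ext^{j+k}_R(M,K)$ for $j>n$ in the relevant range; combined with the grade vanishing below $n$, we obtain $\Ext^j_R(\Omega^k_{R/I}M,K)=0$ for all $j\neq n$ whenever $k\geq\ell-n$. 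The characterization then shows $\Omega^k_{R/I}M$ is totally $\ov K$-reflexive over $R/I$, so $\gkbd_{R/I}(M)\leq\ell-n$; comparison with the first inequality yields the equality. In the local case, the numerical equality also reads off from the Auslander--Bridger formula of Theorem \ref{G3}(iii) applied to both sides, using $\depth R/I=\depth R-n$.
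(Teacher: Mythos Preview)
The paper does not give its own proof of this statement; it is quoted verbatim from Golod \cite[Proposition 5]{G}, so there is nothing to compare against. Your outline is the standard route to Golod's theorem and is essentially correct, but one step is underargued.

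The direction of your characterization lemma that says ``$G$ totally $\ov K$-reflexive over $R/I$ $\Rightarrow$ $G\in\G^n_K(\mp)$ over $R$'' is not merely a formal consequence of the fundamental isomorphism and biduality. From those two ingredients you obtain $\Ext^i_R(G,K)=0$ for $i\neq n$, $\Ext^i_R(\Ext^n_R(G,K),K)=0$ for $i\neq n$, and $G\cong\Ext^n_R(\Ext^n_R(G,K),K)$; what remains is to conclude $\gkkd_R(G)\leq n$, i.e.\ that the $n$th $R$-syzygy $\Omega^n_R G$ is totally $K$-reflexive. The vanishing $\Ext^{i>0}_R(\Omega^n_R G,K)=0$ is immediate by dimension shift, but the remaining conditions---$K$-reflexivity of $\Omega^n_R G$ and $\Ext^{i>0}_R((\Omega^n_R G)^{\triangledown},K)=0$---require an explicit computation. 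One dualizes the truncated resolution $0\to\Omega^n_R G\to P_{n-1}\to\cdots\to P_0\to G\to 0$, uses $\Ext^{<n}_R(G,K)=0$ to get an exact sequence relating $(\Omega^n_R G)^{\triangledown}$ to $\Ext^n_R(G,K)$ and copies of $K$, and then invokes the Ext-vanishing for $\Ext^n_R(G,K)$; the reflexivity of $\Omega^n_R G$ comes from comparing the double-dual sequence with the hypothesis $G\cong\Ext^n_R(\Ext^n_R(G,K),K)$. None of this is deep, but it is where the content lies, and your phrase ``formal consequences'' hides it. Once this is in hand, the rest of your argument for part~(ii) is fine.
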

An $R$-module $M$ is said to be \emph{grade-unmixed} if $\gr_R(M)=\depth R_\fp$ for all $\fp\in\Ass_R(M)$. Note that every $\gkkkd$-perfect module is grade-unmixed (see for example \cite[Proposition 1.4.16]{BH}). 
An $R$-module $M$ is said to be \emph{unmixed} if all its associated prime ideals have the same height. Hence, an $R$-module over a Cohen-Macaulay local ring $R$ is grade-unmixed if and only if it is unmixed.
\begin{lem}\label{l}
	Let $M$ be an $R$-module of grade $n$. Then $\Ext^n_R(M,K)$ is grade-unmixed. In particular, $\gr_R(\Ext^n_R(M,K))=n$.
\end{lem}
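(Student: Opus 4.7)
The plan is to reduce the computation of $\Ass_R(\Ext^n_R(M, K))$ to that of a $\Hom$ functor over the quotient ring $S = R/(\underline{x})$, where $\underline{x} = x_1, \dots, x_n$ is an $R$-regular sequence inside $\ann_R(M)$ (such a sequence exists because $\gr_R(M) = n$). Since $K$ is semidualizing, every $R$-regular sequence is automatically $K$-regular, so the Koszul complex on $\underline{x}$ gives $\Ext^i_R(S, K) = 0$ for $i \ne n$ and $\Ext^n_R(S, K) \cong K/\underline{x}K =: \overline{K}$, and Theorem \ref{G2}(i) then guarantees that $\overline{K}$ is a semidualizing $S$-module.

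Because $\underline{x}$ annihilates $M$, the module $M$ is naturally an $S$-module, and applying Lemma \ref{G1} (Golod's change-of-rings isomorphism) produces the crucial identification
$$\Ext^n_R(M, K) \;\cong\; \Hom_S(M, \overline{K}).$$
The right-hand side is an $S$-module, so its $R$-associated primes are exactly the preimages under $R \twoheadrightarrow S$ of its $S$-associated primes. I would then invoke the classical formula $\Ass_S(\Hom_S(M, \overline{K})) = \Supp_S(M) \cap \Ass_S(\overline{K})$, together with the standard fact that $\Ass_S(\overline{K}) = \Ass_S(S)$ for a semidualizing $S$-module (locally, a semidualizing module has the same depth as its base ring), to obtain the bijection
$$\Ass_R(\Ext^n_R(M, K)) \;\longleftrightarrow\; \Supp_S(M) \cap \Ass_S(S).$$

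For any $\fp \in \Ass_R(\Ext^n_R(M, K))$ the corresponding prime $\overline{\fp} = \fp/(\underline{x}) \in \Ass_S(S)$ satisfies $\depth S_{\overline{\fp}} = 0$; since $\underline{x}$ is $R_\fp$-regular and $S_{\overline{\fp}} \cong R_\fp/\underline{x}R_\fp$, this forces $\depth R_\fp = n$. This is exactly the grade-unmixedness claim. The ``in particular'' statement then falls out: the containment $\underline{x} \subseteq \ann_R(\Ext^n_R(M, K))$ gives $\gr_R(\Ext^n_R(M, K)) \geq n$, while $\gr_R(N) \leq \depth R_\fp$ for any $\fp \in \Ass_R(N)$ supplies the reverse bound. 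The main technical hurdle is the Koszul-based vanishing $\Ext^i_R(S,K) = 0$ for $i \ne n$ together with the Golod reduction; once $\Ext^n_R(M, K)$ has been rewritten as a $\Hom$ over $S$, the associated-prime analysis is completely standard.
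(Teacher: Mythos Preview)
Your proposal is correct and follows essentially the same route as the paper's proof: reduce modulo a regular sequence $\underline{x}\subseteq\ann_R(M)$, use Lemma~\ref{G1} to identify $\Ext^n_R(M,K)\cong\Hom_S(M,\overline{K})$ with $\overline{K}$ semidualizing over $S$, then read off the associated primes via $\Ass_S(\Hom_S(M,\overline{K}))\subseteq\Ass_S(\overline{K})\subseteq\Ass_S(S)$ and conclude $\depth R_\fp=n$. The only cosmetic differences are that the paper uses the inclusion $\Ass_S(\overline{K})\subseteq\Ass_S(S)$ rather than equality, and it cites \cite[Corollary~4.6]{AB} for the ``in particular'' clause where you argue directly.
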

\begin{proof}
	Let ${\bf x}$ be an ideal of $R$, generated by a regular sequence of length $n$, contained in $\ann_R(M)$. Set $S=R/{\bf x}$ and $\overline{K}=\Ext^n_R(S,K)$. By Theorem \ref{G2}, $\overline{K}$ is a semidualizing $S$-module. Note that, by Lemma \ref{G1}, $\Ext^n_R(M,K)\cong\Hom_S(M,\overline{K})$. Let $\fp\in\Ass_R(\Ext^n_R(M,K))$ and set $\overline{\fp}=\fp/{\bf x}$. It follows from \cite[Proposition 9.A]{Ma1} that
	$\overline{\fp}\in\Ass_S(\Hom_S(M,\overline{K}))\subseteq\Ass_S(\overline{K})\subseteq\Ass S$. Therefore
	$\depth R_\fp=\depth S_{\overline{\fp}}+n=n$ for all 
	$\fp\in\Ass_R(\Ext^n_R(M,K))$.	Hence $\Ext^n_R(M,K)$ is grade-unmixed and $\gr_R(\Ext^n_R(M,K))=n$ by \cite[Corollary 4.6]{AB}.
\end{proof}

For an $R$-module $M$ of grade $n$, there is a natural homomorphism $\eta_K^R(M):M\rightarrow\Ext^n_R(\Ext^n_R(M,K),K),$
which is a generalization of the natural evaluation map $M\to\Hom_R(\Hom_R(M,K),K)$. This homomorphism was studied by 
several authors, including Roos \cite{Roo}, Fossum \cite{Fos} and Foxby \cite{F}.
\begin{lem}\label{l5}
Let
$M$ be an $R$-module of grade $n$. The following statements hold true.
\begin{enumerate}[\rm(i)]
	\item There exists the exact sequence
	 $$0\to\Ext^{n+1}_R(\Tr_{K}\Omega^nM,K)\to M\overset{\eta_K^R(M)}{\longrightarrow}\Ext^n_R(\Ext^n_R(M,K),K)\to\Ext^{n+2}_R(\Tr_{K}\Omega^{n}M,K)\to0.$$
	\item Let $I\subseteq\ann_R(M)$ be a $\gkkkd$-perfect ideal of grade $n$. Set $S=R/I$ and $\overline{K}=\Ext^n_R(R/I,K)$. Then
    $$\Ext^{i}_S(\Tr_{\overline{K}}M,\overline{K})\cong\Ext^{i+n-2}_R(\Ext^n_R(M,K),K)$$
    for all $i>2$. Moreover,	$\ker\eta_K^R(M)\cong\Ext^1_S(\Tr_{\overline{K}}M,\overline{K})$ and $\coker\eta_K^R(M)\cong\Ext^2_S(\Tr_{\overline{K}}M,\overline{K})$.
\end{enumerate}	
\end{lem}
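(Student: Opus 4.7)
For part (ii), the plan is direct. First I would set $S=R/I$ and observe that Theorem~\ref{G2} makes $\overline{K}$ a semidualizing $S$-module, while the $\gkkkd$-perfection of $R/I$ forces $\Ext^j_R(R/I,K)=0$ for $j\ne n$, so Lemma~\ref{G1} supplies a natural isomorphism of functors $\Ext^i_S(-,\overline{K})\cong\Ext^{i+n}_R(-,K)$ on $S$-modules. Since $I\subseteq\ann_R M$ makes $M$ into an $S$-module, I would apply the fundamental sequence~(\ref{d3}.3) over $S$ with semidualizing $\overline{K}$ to $M$; this produces a four-term exact sequence involving the biduality $\Hom_S(\Hom_S(M,\overline{K}),\overline{K})$. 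Iterating the identification from Lemma~\ref{G1} rewrites this biduality as $\Ext^n_R(\Ext^n_R(M,K),K)$, and by naturality of evaluation the middle map is $\eta_K^R(M)$; this gives the ``Moreover'' assertion.

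For the isomorphism with $i>2$, I would take an $S$-projective presentation $S^{r_1}\to S^{r_0}\to M\to0$ and $\Hom_S(-,\overline{K})$-dualize to obtain $0\to\Ext^n_R(M,K)\to\overline{K}^{r_0}\to\overline{K}^{r_1}\to\Tr_{\overline{K}}M\to0$. Splitting this into two short exact sequences and performing a two-step dimension shift in $R$-Ext requires the vanishing $\Ext^j_R(\overline{K},K)=0$ for $j\ne n$, which holds because Lemma~\ref{ll1}(i) places $\overline{K}$ in $\G^n_K(\mathcal{P})$. The three ``bad'' degrees $j=n,n+1,n+2$ are exactly what account for the restriction $i>2$; translating the resulting $R$-Ext isomorphism back into $S$-Ext via Lemma~\ref{G1} finishes (ii).

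Part (i) I would prove in the same dimension-shifting spirit but without passing to a quotient. Starting from a free resolution $\cdots\to P_{n+1}\to P_n\to\cdots\to P_0\to M\to0$, I would form the $K$-dual truncated complex $C^\bullet:=[P_0^{\triangledown}\to\cdots\to P_{n+1}^{\triangledown}]$, with $P_i^{\triangledown}$ in cohomological degree $i$. By the vanishing of lower $\Ext$'s and the definition of the transpose, $C^\bullet$ has cohomology concentrated in degrees $n$ and $n+1$, equal to $\Ext^n_R(M,K)$ and $\Tr_K\Omega^n M$ respectively. Each $P_i^{\triangledown}\cong K^{r_i}$ satisfies $\Ext^j_R(K,K)=0$ for $j>0$, so applying $\Hom_R(-,K)$ term-by-term to $C^\bullet$ already computes $R\Hom_R(C^\bullet,K)$; the result is the truncated free resolution of $M$, whose cohomology is $M$ in degree $0$ and $0$ in degrees $\pm 1$. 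Using the canonical truncations of $C^\bullet$, the distinguished triangle
\[
\Ext^n_R(M,K)[-n]\to C^\bullet\to\Tr_K\Omega^n M[-n-1]\to\Ext^n_R(M,K)[-n+1]
\]
becomes, under $R\Hom_R(-,K)$, a long exact sequence in cohomology whose portion around degree $0$ is a five-term exact sequence; the only term beyond those in (i) is $\Ext^{n-1}_R(\Ext^n_R(M,K),K)$. Lemma~\ref{l} gives $\gr_R(\Ext^n_R(M,K))=n$, forcing this leftmost term to vanish and collapsing the sequence to the four-term one required.

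The main obstacle I anticipate is verifying that the middle map emerging from each long exact sequence is genuinely the natural homomorphism $\eta_K^R(M)$ specified in the paper. In (ii) this is a naturality chase through the Lemma~\ref{G1} identifications applied to the $\overline{K}$-biduality of $M$, while in (i) it amounts to identifying the connecting morphism of the truncation triangle with the Roos--Fossum--Foxby map; both are routine functoriality checks, but must be tracked carefully so that the recovered arrow does not differ from $\eta_K^R(M)$ by a sign or by the choice of free presentation.
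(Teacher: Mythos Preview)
Your treatment of part~(ii) is essentially the paper's own argument: the paper also invokes Theorem~\ref{G2} and Lemma~\ref{G1} to identify $\Hom_S(\Hom_S(M,\overline{K}),\overline{K})$ with $\Ext^n_R(\Ext^n_R(M,K),K)$, then compares the two biduality maps via a commutative diagram (exactly the naturality chase you flag as the main obstacle), and for $i>2$ it dualizes an $S$-projective presentation and dimension-shifts. The only cosmetic difference is that the paper shifts in $\Ext_S(-,\overline{K})$ using $\Ext^{>0}_S(\overline{K},\overline{K})=0$ and then applies Lemma~\ref{G1} at the end, whereas you shift in $\Ext_R(-,K)$ using $\Ext^{j\ne n}_R(\overline{K},K)=0$ and translate back; via Lemma~\ref{G1} these are the same computation.

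For part~(i) the paper does not argue at all: it simply cites Foxby \cite[Proposition~3.4]{F}. Your truncation-triangle argument is a correct self-contained proof and is, in substance, a derived-category repackaging of Foxby's dimension-shifting proof. The advantage of your route is that it makes the role of the hypothesis $\gr_R(M)=n$ transparent (it kills the lower cohomology of $C^\bullet$ and, together with Lemma~\ref{l}, the stray term $\Ext^{n-1}_R(\Ext^n_R(M,K),K)$); the paper's citation is of course shorter. One small wording point: you write $P_i^{\triangledown}\cong K^{r_i}$, which literally assumes the $P_i$ are free; for projective $P_i$ the needed facts ($\Ext^{>0}_R(P_i^{\triangledown},K)=0$ and $\Hom_R(P_i^{\triangledown},K)\cong P_i$) still hold because $P_i^{\triangledown}$ is a summand of a finite direct sum of copies of $K$, so the argument goes through unchanged.
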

\begin{proof}
Part (i) is a consequence of \cite[Proposition 3.4]{F}.
	
(ii). First note that, by Theorem \ref{G2}(i), $\overline{K}$ is a semidualizing $S$-module. By Lemma \ref{G1} and \cite[Proposition 3.1]{F}, one has the commutative diagram
$$\begin{array}{llllllll}
0\longrightarrow\ \ \ \ \ \ker\eta_K^R(M) &\longrightarrow &M &\overset{\eta_K^R(M)}{\longrightarrow}&\Ext^n_R(\Ext^n_R(M,K),K)&\longrightarrow \ \ \ \coker\eta_K^R(M)&\longrightarrow 0  \\
&&\downarrow\shortparallel  &&\downarrow {\cong}\\
 0\longrightarrow\Ext^1_S(\Tr_{\overline{K}}M,\overline{K})&\longrightarrow &M &\longrightarrow&\Hom_S(\Hom_S(M,\overline{K}),\overline{K})&\longrightarrow \Ext^2_S(\Tr_{\overline{K}}M,\overline{K})&\longrightarrow 0 \\
\end{array}$$\\
with exact rows. Hence we obtain the following isomorphisms
\begin{equation}\tag{\ref{l5}.1}
\ker\eta_{K}^R(M)\cong\Ext^1_S(\Tr_{\overline{K}}M,\overline{K}) \text{ and } \coker\eta_{K}^R(M)\cong\Ext^{2}_R(\Tr_{\overline{K}}M,\overline{K}).
\end{equation}
Let $P_1\to P_0\to M\to0$ be an $S$-projective presentation of $M$. Dualizing with respect to $\overline{K}$, we get the exact sequence
$0\to\Hom_S(M,\overline{K})\to\Hom_S(P_0,\overline{K})\to\Hom_S(P_1,\overline{K})\to\Tr_{\overline{K}}M\to0.$
Breaking into short exact sequences and using the fact that $\Ext_S^{i>0}(\overline{K}, \overline{K})=0$ and Lemma \ref{G1}, one obtains
\[\begin{array}{rl}\tag{\ref{l5}.2}
\Ext^{i+2}_S(\Tr_{\overline{K}}M,\overline{K})&\cong\Ext^{i}_S(\Hom_S(M,\overline{K}),\overline{K})\\ &\cong{\Ext^{n+i}_R(\Hom_S(M, \overline{K}),K)}\\ &{\cong\Ext^{n+i}_R(\Ext^n_R(M, K), K)},
\end{array}\]
for all $i>0$. 
\end{proof}
For an integer $n$, we set $\X^n(R)=\{\fp\in\Spec R\mid \depth R_\fp\leq n\}$. Let $M$ be an $R$-module. We say that $M$ has finite $\gkkkd$-dimension on $\X^{n}(R)$, provided that $\gkdp_{R_\fp}(M_\fp)<\infty$ for all $\fp\in\X^{n}(R)$.
For a positive integer $n$, an $R$-module $M$ is called an $n$-$K$-{\it syzygy} if there exists an exact sequence of $R$-modules
$0\longrightarrow M\longrightarrow P_{0}\otimes_RK\longrightarrow\cdots\longrightarrow P_{n-1}\otimes_RK,$
where all $P_i$'s are projective $R$-modules. This property was investigated by Bass \cite{B} for modules over Gorenstein rings and was extensively studied by Auslander and Bridger \cite{AB} for modules of finite Gorenstein dimension. The following is a generalization of \cite[Theorem 4.25]{AB} and \cite[Theorem 43]{M1}.

\begin{thm}\label{th}
Let $n$ be a
positive integer, $M$ an $R$-module. Consider the statements
\begin{enumerate}[\rm(i)]
	\item $\Ext^i_R(\trk M,K)=0$ for all $i$, $1\leq i\leq n$;
	\item $M$ is an $n$-$K$-syzygy module;
	\item $\depth_{R_\fp}(M_\fp)\geq\min\{n,\depth R_\fp\}$ for all $\fp\in\Spec R$.
\end{enumerate}
Then the following implications hold true.
\begin{itemize}
	\item[(a)] (i)$\Rightarrow$(ii)$\Rightarrow$(iii).
	\item[(b)] If $M$ has finite $\gkkkd$-dimension on $\X^{n-2}(R)$, then (ii)$\Rightarrow$(i).
	\item[(c)]If $M$ has finite $\gkkkd$-dimension on   $\X^{n-1}(R)$, then (iii)$\Rightarrow$(i).
\end{itemize}
\end{thm}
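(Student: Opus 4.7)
The plan is to prove the three implications of (a) in sequence, and then handle (b) and (c) by localization combined with the Auslander--Bridger formula from Theorem \ref{G3}(iii). The central computational tool will be the shifted isomorphism
$$\Ext^i_R(M^{\triangledown},K)\cong \Ext^{i+2}_R(\trk M,K)\quad\text{for all }i\geq 1,$$
obtained by applying $\Hom_R(-,K)$ to the two short exact sequences in (\ref{d3}.2) and invoking the semidualizing vanishing $\Ext^{\geq 1}_R(K^a,K)=0$. This lets one transport cohomological information between $\trk M$ and $M^{\triangledown}$ in a controlled way.

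For (i) $\Rightarrow$ (ii), I would fix a projective resolution $\cdots\to R^{b_1}\to R^{b_0}\to M^{\triangledown}\to 0$ and dualize by $K$ to obtain a complex $0\to M^{\triangledown\triangledown}\to K^{b_0}\to K^{b_1}\to\cdots$ whose cohomology at position $i\geq 1$ is $\Ext^i_R(M^{\triangledown},K)$. The shifted isomorphism converts (i) into exactness of this complex up to position $n-2$, producing an $n$th $K$-syzygy sequence for $M^{\triangledown\triangledown}$. Then (\ref{d3}.3) together with $\Ext^1_R(\trk M,K)=\Ext^2_R(\trk M,K)=0$ (both present in (i) once $n\geq 2$) yields $M\cong M^{\triangledown\triangledown}$, so $M$ itself is an $n$th $K$-syzygy. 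In the degenerate case $n=1$ only $\Ext^1_R(\trk M,K)=0$ is needed, since that already delivers the injection $M\hookrightarrow M^{\triangledown\triangledown}\hookrightarrow K^{b_0}$ via any finite surjection $R^{b_0}\twoheadrightarrow M^{\triangledown}$.

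For (ii) $\Rightarrow$ (iii), I would induct on $n$. After localizing at $\fp$ and splitting the $n$th $K$-syzygy sequence at its first differential as $0\to M_{\fp}\to(P_0\otimes_R K)_{\fp}\to C_{\fp}\to 0$, where $C$ is an $(n-1)$th $K$-syzygy, the semidualizing property gives $\depth_{R_{\fp}}(P_0\otimes K)_{\fp}=\depth R_{\fp}$, while the inductive hypothesis gives $\depth_{R_{\fp}}C_{\fp}\geq\min\{n-1,\depth R_{\fp}\}$. The depth lemma then delivers $\depth_{R_{\fp}}M_{\fp}\geq\min\{n,\depth R_{\fp}\}$.

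For (b) and (c), the localization template is uniform. At any $\fp\in\X^{n-1}(R)$ (respectively $\X^{n-2}(R)$ for (b)) the finite $\gkkd$-dimension hypothesis combined with the depth bound from (iii) (or from (ii)$\Rightarrow$(iii) for (b)) forces $\gkkd_{R_{\fp}}(M_{\fp})=\depth R_{\fp}-\depth_{R_{\fp}}M_{\fp}\leq 0$ via Theorem \ref{G3}(iii). Thus $M_{\fp}$ is totally $K_{\fp}$-reflexive, and Theorem \ref{G3}(ii) transfers this to $\trk M_{\fp}$, giving $\Ext^i_R(\trk M,K)_{\fp}=0$ for every $i\geq 1$. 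In particular, $\Supp_R \Ext^i_R(\trk M,K)$ misses $\X^{n-1}(R)$ (resp.\ $\X^{n-2}(R)$). The main obstacle, and the step I expect to be the most delicate, is upgrading this local vanishing to the global statement $\Ext^i_R(\trk M,K)=0$ for $1\leq i\leq n$. The route I would take is a grade-style argument: the support condition produces $\gr_R\Ext^i_R(\trk M,K)\geq n$ by iterated prime avoidance against the filtration $\Ass R\subseteq\X^0\subseteq\X^1\subseteq\cdots\subseteq\X^{n-1}$, while for any associated prime $\fp$ of $\Ext^i_R(\trk M,K)$, the finiteness of $\gkkd$-dimension of $\trk M_{\fp}$ (inherited from that of $M_{\fp}$ via Theorem \ref{G3}(ii)) combined with Theorem \ref{G3}(i) and the Auslander--Bridger formula places $\depth R_{\fp}\geq i$; juxtaposing these constraints against the depth inequality $\depth_{R_{\fp}}M_{\fp}\geq\min\{n,\depth R_{\fp}\}$ and the support bound forces no such $\fp$ to exist, so the Ext modules vanish globally.
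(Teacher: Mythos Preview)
Your treatment of part (a) is sound: the shifted isomorphism $\Ext^i_R(M^{\triangledown},K)\cong\Ext^{i+2}_R(\trk M,K)$ together with (\ref{d3}.3) handles (i)$\Rightarrow$(ii), and the depth-lemma induction for (ii)$\Rightarrow$(iii) is standard.

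For (b) and (c), however, your ``global upgrade'' argument has a genuine gap. You correctly establish that $\Ext^i_R(\trk M,K)_\fp=0$ for every $\fp\in\X^{n-1}(R)$ (respectively $\X^{n-2}(R)$), so any associated prime $\fp$ of a surviving $\Ext^i_R(\trk M,K)$ satisfies $\depth R_\fp\geq n$. But at precisely such a prime $\fp$ you no longer have the hypothesis $\gkkpd_{R_\fp}(M_\fp)<\infty$: that was only assumed on $\X^{n-1}(R)$. Consequently the line ``the finiteness of $\gkkkd$-dimension of $\trk M_\fp$ (inherited from that of $M_\fp$)'' is unjustified, and the constraints you list---$\depth R_\fp\geq n$, $\depth_{R_\fp}(M_\fp)\geq n$, and $\Ext^i_R(\trk M,K)_\fp\neq 0$---are not mutually contradictory. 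Local vanishing on $\X^{n-1}(R)$ simply does not force global vanishing by a support/grade count alone.

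The paper circumvents this by a genuinely different route. For (b) it inducts on $n$: the base case $n=1$ uses only the injection $M\hookrightarrow P_0\otimes_R K$ and the reflexivity of the target to see that $\eta_K^R(M)$ is injective (no $\gkkkd$-hypothesis needed, and indeed $\X^{-1}(R)=\emptyset$). For the inductive step one splits the $K$-syzygy sequence as $0\to M\to P\otimes_R K\to N\to 0$ with $N$ an $(n-1)$th $K$-syzygy, and then analyzes the six-term transpose sequence $0\to N^\triangledown\to C^\triangledown\to M^\triangledown\to\trk N\to\trk C\to\trk M\to 0$. The local $\gkkkd$-hypothesis enters only to prove the single grade bound $\gr_R(\Ext^1_R(N,K))\geq n-1$; this feeds into the long exact sequences to kill $\Ext^{n-1}_R(Z,K)$ and hence $\Ext^n_R(\trk M,K)$. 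The point is that the finiteness hypothesis is applied to a \emph{different} module ($N$ rather than $\trk M$) and at a stage where the associated primes in question genuinely lie in $\X^{n-2}(R)$.
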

\begin{proof}
As parts (a) and (c) follow from \cite[Proposition 2.4]{DS1}, we only need to prove part (b). We argue by induction on $n$. Assume that $n=1$ and so $M$ is a first $K$-syzygy module. We obtain the commutative diagram
$$\begin{CD}
\ \ &&&& 0@>>> M @>>> C&  \\
&&&&&& @VV{\eta_K^R(M)}V @VV{\eta_K^R(C)}V \\
\ \  &&&&&&M^{\triangledown\triangledown}@>>>C^{\triangledown\triangledown}, &\\
\end{CD}$$\\
where $C=P_0\otimes_RK$ for some projective module $P_0$. Note that $C$ is totally $K$-reflexive. Hence, by definition, the map $\eta_K^R(C)$ is an isomorphism which implies that $\eta_K^R(M)$ is injective. Now the assertion follows from Lemma \ref{l5}(i). Assume that $n>1$.
Hence, by the induction hypothesis, $\Ext^i_R(\trk M,K)=0$ for $1\leq i\leq n-1$.
There exists an exact sequence $0\to M\overset{f}{\to} C\to N\to0$ such that  $C=P\otimes_R K$ for some projective $R$-module $P$ and $N$ is an $(n-1)$-$K$-syzygy module. Hence we get the exact sequence
\begin{equation}\tag{\ref{th}.1}
0\to N^\triangledown\to C^\triangledown\overset{f^{\triangledown}}{\to} M^\triangledown\to\trk N\to\trk C\to\trk M\to0,
\end{equation}
(see for example \cite[Lemma 2.2]{DS1}).
As $C$ is totally reflexive, $\Ext^1_R(C,K)=0$ and so $\coker(f^{\triangledown})\cong\Ext^1_R(N,K)$. Therefore we obtain the following exact sequences, induced by (\ref{th}.1)
\begin{equation}\tag{\ref{th}.2}
0\to\Ext^1_R(N,K)\to\trk N\to Z\to0,
\end{equation}
\begin{equation}\tag{\ref{th}.3}
0\to Z\to\trk C\to \trk M\to0.
\end{equation}

Next we show that $\gr_R(\Ext^1_R(N,K))\geq n-1$. Assume contrarily that
$\gr_R(\Ext^1_R(N,K))=\depth R_\fp$ for some $\fp\in\X^{n-2}(R)$. As $N$ is an $(n-1)$-$K$-syzygy, $\depth_{R_\fp}(N_\fp)\geq\min\{n-1,\depth R_\fp\}=\depth R_\fp$. Note that $N$ has finite $\gkkkd$-dimension on $\X^{n-2}(R)$. Hence by Theorem \ref{G3}(iii) $\gkkpd_{R_\fp}(N_\fp)=0$ and so $\Ext^1_R(N,K)_\fp=0$ which is a contradiction.
Therefore $\gr_R(\Ext^1_R(N,K))\geq n-1$. In other words, $\Ext^i_R(\Ext^1_R(N,K),K)=0$ for $i<n-1$. Applying the functor $(-)^{\triangledown}$ to the exact sequence (\ref{th}.2), we get the following long exact sequence
\begin{equation}\tag{\ref{th}.4}
\cdots\to\Ext^{n-2}_R(\Ext^1_R(N,K),K)\to\Ext^{n-1}_R(Z,K)\to\Ext^{n-1}_R(\trk N,K)\to\cdots.
\end{equation}
By the induction hypothesis, $\Ext^i_R(\trk N,K)=0$ for $i= n-1$ so that, by (\ref{th}.4), $\Ext^{n-1}_R(Z,K)=0$. 
As $C$ is totally $K$-reflexive, by Theorem \ref{G3}, $\Ext^i_R(\trk C,K)=0$ for $i>0$. Hence,
applying the functor $(-)^{\triangledown}$ to the exact sequence (\ref{th}.3), we get $\Ext^{n}_R(\trk M,K)=0$ as desired.
\end{proof}
In the following, we give a necessary and sufficient condition for $\eta_K^R(M)$ to be an isomorphism.
\begin{cor}\label{l4}
For an $R$-module $M$ of grade $n$ and an integer $t>1$, the following statements hold true.
\begin{enumerate}[\rm(i)]
	\item  If $M$ has finite $\gkkkd$-dimension on $X^n(R)$, then  $\depth_{R_\fp}(M_\fp)\geq\min\{1,\depth R_\fp-n\}$ for all $\fp\in\Spec R$ if and only if $\eta_K^R(M)$ is a monomorphism.
	
	\item If $M$ has finite $\gkkkd$-dimension on $\X^{n+t-1}(R)$, then $\depth_{R_\fp}(M_\fp)\geq\min\{t,\depth R_\fp-n\}$ for all $\fp\in\Spec R$ if and only if $\eta_K^R(M)$ is an isomorphism and $\Ext^i_R(\Ext^n_R(M,K),K)=0$ for all $i$, $n+1\leq i\leq n+t-2$.
\end{enumerate}
\end{cor}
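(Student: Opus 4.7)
The plan is to reduce both parts to Theorem \ref{th} applied to $M$ regarded as a module over a suitable quotient ring. Since $\gr_R(M)=n$, one can find an $R$-regular sequence $\bsx=x_1,\ldots,x_n$ inside $\ann_R(M)$. Set $I=(\bsx)$, $S=R/I$ and $\overline{K}=\Ext^n_R(S,K)$. Because $I$ is perfect (hence $\gkkkd$-perfect) of grade $n$, Theorem \ref{G2}(i) makes $\overline{K}$ a semidualizing $S$-module, while Theorem \ref{G2}(ii) yields the grade shift $\gkkd_R(-)=n+\gkbd_S(-)$; localizing this at $\fp\supseteq I$ one obtains the correspondence $\X^{m-1}_S(S)\leftrightarrow \X^{n+m-1}(R)\cap V(I)$, so that ``$M$ has finite $\gkbkd$-dimension on $\X^{m-1}_S(S)$'' is equivalent to ``$M$ has finite $\gkkkd$-dimension on $\X^{n+m-1}(R)$''. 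Moreover, for $\fp\supseteq I$ with $\overline{\fp}=\fp/I$ one has $\depth S_{\overline{\fp}}=\depth R_\fp-n$ and $\depth_{S_{\overline{\fp}}}(M_{\overline{\fp}})=\depth_{R_\fp}(M_\fp)$, while $M_\fp=0$ whenever $\fp\not\supseteq I$, so the depth inequalities to be proved are vacuous off $V(I)$ and correspond exactly to their $S$-analogues on $V(I)$.

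For (i), apply Theorem \ref{th} to $M$ as an $S$-module with respect to $\overline{K}$ with parameter $1$. The finiteness hypothesis of Theorem \ref{th}(c) translates under the dictionary to the given finite $\gkkkd$-dimension on $\X^n(R)$, and the depth inequality $\depth_{R_\fp}(M_\fp)\geq\min\{1,\depth R_\fp-n\}$ becomes equivalent to $\Ext^1_S(\Tr_{\overline{K}}M,\overline{K})=0$; by Lemma \ref{l5}(ii) the latter is exactly the injectivity of $\eta_K^R(M)$.

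For (ii), run the same reduction with parameter $t$. Theorem \ref{th}(c) now requires finite $\gkbkd$-dimension on $\X^{t-1}_S(S)$, which corresponds to the assumed finite $\gkkkd$-dimension on $\X^{n+t-1}(R)$, and the depth inequality becomes equivalent to the full vanishing $\Ext^i_S(\Tr_{\overline{K}}M,\overline{K})=0$ for all $1\leq i\leq t$. Reading this off via Lemma \ref{l5}(ii): the cases $i=1$ and $i=2$ encode the injectivity and surjectivity of $\eta_K^R(M)$ respectively, and for $3\leq i\leq t$ the isomorphism $\Ext^i_S(\Tr_{\overline{K}}M,\overline{K})\cong\Ext^{i+n-2}_R(\Ext^n_R(M,K),K)$ rewrites the vanishing as $\Ext^j_R(\Ext^n_R(M,K),K)=0$ for $n+1\leq j\leq n+t-2$. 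Assembling these gives the claimed equivalence.

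The main point demanding care is that this quotient-by-regular-sequence dictionary localizes cleanly: one has to verify that the semidualizing structure of $\overline{K}$, the $\Ext$-comparisons underlying Lemma \ref{l5}(ii), and the depth and dimension correspondences all pass to every $R_\fp$ and $S_{\overline{\fp}}$ with $\fp\supseteq I$. Each ingredient follows from the standard behavior of a regular sequence contained in the annihilator together with the results collected in Section 2, and the identity $M_\fp=0$ for $\fp\not\supseteq I$ ensures that no prime outside $V(I)$ needs to be checked.
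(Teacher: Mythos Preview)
Your proposal is correct and follows essentially the same route as the paper: both reduce modulo a regular sequence ${\bf x}\subseteq\ann_R(M)$ of length $n$, translate the depth and $\gkkkd$-dimension conditions to the quotient $S=R/{\bf x}$ with semidualizing module $\overline{K}=\Ext^n_R(S,K)$, apply Theorem \ref{th} over $S$, and then use Lemma \ref{l5}(ii) to reinterpret the vanishing of $\Ext^i_S(\Tr_{\overline{K}}M,\overline{K})$ in terms of $\eta_K^R(M)$ and $\Ext^i_R(\Ext^n_R(M,K),K)$. Your write-up is more explicit than the paper's, but the strategy is identical.
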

\begin{proof}
Let ${\bf x}\subseteq\ann_R(M)$ be an ideal generated by an $R$-sequence of length $n$. Hence ${\bf x}$ is a $\gkkkd$-perfect ideal of grade $n$.
Set $S=R/{\bf x}$ and $K'=\Ext^n_R(S,K)$. Note that, for an integer $m>0$, $\depth_{R_\fp}(M_\fp)\geq\min\{m,\depth R_\fp-n\}$	for all $\fp\in\Spec R$ if and only if $\depth_{S_\fq}(M_\fq)\geq\min\{m,\depth S_\fq\}$ for all $\fq\in\Spec S$.
Also, by Theorem \ref{G2}, $M$ has finite $\gkkkd$-dimension on $\X^{m}(R)$ for some $m\geq n$ if and only if it has finite $\gkppd$-dimension on $\X^{m-n}(S)$.
Now the assertion follows from Lemma \ref{l5} and Theroem \ref{th}.
\end{proof}
\section{Definition and basic properties of linkage}
In this section, we define the notion of linkage by a reflexive homomorphism. Throughout $K$ is a semidualizing $R$-module and $n$ is a non-negative integer. We start by introducing some notations.

\begin{defnot}\label{def1}
Set $\refnk$ as the subcategory of $\md R$ consisting of all $R$-modules $M$ with grade $n$ such that the natural map $\eta_K^R(M):M\to\Ext^n_R(\Ext^n_R(M,K),K)$ is an isomorphism.

A subcategory $\mx\subseteq\refnk$ is called $n$-{\em reflexive subcategory with respect to $K$} if it is closed under $\Ext^n_R(-,K)$ (i.e. for all $M\in\mx$, we have $\Ext^n_R(M,K)\in\mx$).	
\end{defnot}

Note that $\mp^n(R)$, the subcategory of $\md R$ consisting of all perfect $R$-modules of grade $n$, is a classical example of an $n$-reflexive subcategory with respect to $R$. Clearly, $\refnk$ itself is an $n$-reflexive subcategory with respect to $K$. The following provides more examples of $n$-reflexive subcategories with respect to $K$ (see Definition \ref{perfect} and Lemma \ref{ll1}).
\begin{eg}\label{ex}
The following subcategories of $\md R$ are $n$-reflexive with respect to $K$.
\begin{enumerate}[(i)]
   \item The subcategory $\G^n_K(\mathcal{P})$ of $\md R$, consisting of all $\gkkkd$-perfect modules of grade $n$.
	\item The subcategory $\G^n_K(\mathcal{G})$ of $\md R$, consisting of all $\gkkkd$-Gorenstein modules of grade $n$.
	\item The subcategory $\cm^n(R)$ of $\md R$, consisting of all Cohen-Macaulay modules of codimension $n$, where $R$ is a Cohen-Macaulay local ring and $K$ is a dualizing module.
\end{enumerate}	
\end{eg}
Here is another example of an $n$-reflexive subcategory with respect to $K$.
\begin{prop}\label{ccc}
    The subcategory $\mx$ of $\md R$ consisting of all $R$-modules $M$ of grade $n$ such that
	\begin{enumerate}[\rm(i)]
		\item $\gkdp_{R_\fp}(M_\fp)<\infty$ for all $\fp\in\X^{n+1}(R)$ (e.g. $\id_{R_\fp}(K_\fp)<\infty$ for all $\fp\in\X^{n+1}(R)$),
		\item $\depth_{R_\fp}(M_\fp)\geq\min\{2,\depth R_\fp-n\}$, for all $\fp\in\Spec R$.
	\end{enumerate}
	is an $n$-reflexive subcategory with respect to $K$.
\end{prop}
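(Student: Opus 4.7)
The plan is to verify two separate things: that every $M\in\mx$ lies in $\refnk$, and that $\Ext^n_R(M,K)$ again belongs to $\mx$. For the first, I would apply Corollary \ref{l4}(ii) with $t=2$: hypothesis (i) of the proposition provides finite $\gkkkd$-dimension on $\X^{n+1}(R)=\X^{n+t-1}(R)$, hypothesis (ii) matches the depth inequality of the corollary verbatim, and the required $\Ext$-vanishing range $n+1\le i\le n+t-2=n$ is empty. Hence $\eta_K^R(M)$ is an isomorphism, so $M\in\refnk$.

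For the second, set $N:=\Ext^n_R(M,K)$; then $\gr_R(N)=n$ by Lemma \ref{l}. To verify hypothesis (i) for $N$, I would first argue that $M_\fp$ itself is $\gkkkd$-perfect of grade $n$ for every $\fp\in\X^{n+1}(R)\cap\Supp M$: the inequality $\gkdp(M_\fp)\ge\gr_{R_\fp}(M_\fp)\ge n$ (the latter coming from the fact that an $R$-sequence of length $n$ in $\ann_R M$ localises to an $R_\fp$-sequence in $\ann_{R_\fp} M_\fp$), together with the Auslander--Bridger formula $\gkdp(M_\fp)=\depth R_\fp-\depth_{R_\fp} M_\fp$ of Theorem \ref{G3}(iii), reduces matters to a short case analysis on $\depth R_\fp\in\{n,n+1\}$; in the boundary case $\depth R_\fp=n+1$, hypothesis (ii) forces $\depth_{R_\fp} M_\fp\ge 1$, and either way $\gkdp(M_\fp)=n$. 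Lemma \ref{ll1}(i) then delivers that $N_\fp=\Ext^n_{R_\fp}(M_\fp,K_\fp)$ is itself $\gkkkd$-perfect, in particular of finite $\gkkkd$-dimension.

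The remaining condition, hypothesis (ii) for $N$, is the most delicate step, and the plan is to descend along a regular sequence: pick $\bsx\subseteq\ann_R M$ of length $n$, set $S:=R/\bsx$ and $\ov K:=\Ext^n_R(S,K)$, which is a semidualizing $S$-module by Theorem \ref{G2}(i). Lemma \ref{G1} identifies $N$ with $\Hom_S(M,\ov K)$, and dualising any $S$-free presentation $S^a\to S^b\to M\to 0$ by $\Hom_S(-,\ov K)$ realises $N$ as a second $\ov K$-syzygy over $S$. Applying Theorem \ref{th}(a) to $N$ over $S$ with $\ov K$ and $n'=2$ yields $\depth_{S_{\ov\fp}}(N_{\ov\fp})\ge\min\{2,\depth S_{\ov\fp}\}$ for every $\ov\fp\in\Spec S$; translating via $\depth S_{\ov\fp}=\depth R_\fp-n$ and $\depth_{R_\fp} N_\fp=\depth_{S_{\ov\fp}} N_{\ov\fp}$ (and noting $N_\fp=0$ whenever $\fp\not\supseteq\bsx$) recovers hypothesis (ii) for $N$. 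The main obstacle I anticipate is the case analysis in the second paragraph, specifically ruling out $\gkdp(M_\fp)=n+1$ at depth-$(n+1)$ primes by invoking the depth hypothesis; once this is settled, the rest is a direct application of the preparatory lemmas combined with the standard reduction modulo a regular sequence.
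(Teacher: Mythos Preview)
Your proposal is correct and follows essentially the same approach as the paper's own proof: the same invocation of Corollary~\ref{l4}(ii) for $\mx\subseteq\refnk$, the same use of Lemma~\ref{l} for the grade of $\Ext^n_R(M,K)$, the same $\gkkkd$-perfectness argument via Theorem~\ref{G3}(iii) and Lemma~\ref{ll1}(i) for condition~(i), and the same descent modulo a regular sequence combined with the second-$K'$-syzygy observation for condition~(ii). The only cosmetic difference is that the paper compresses your case analysis on $\depth R_\fp$ into the single chain $n\le\gr_{R_\fp}(M_\fp)\le\gkdp_{R_\fp}(M_\fp)=\depth R_\fp-\depth_{R_\fp}(M_\fp)\le n$, where the final inequality silently uses hypothesis~(ii) exactly as you spell out.
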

\begin{proof}
	By Corollary \ref{l4}(ii), $\mx\subseteq\refnk$. Let $M\in\mx$. We show that $\Ext^n_R(M,K)\in\mx$. By Lemma \ref{l}, $\gr_R(\Ext^n_R(M,K))=n$. Now we prove that $\Ext^n_R(M,K)$ satisfies condition (i).
	It follows from our assumptions and Theorem \ref{G3}(iii) that
	\begin{equation}\tag{\ref{ccc}.1}
	n\leq\gr_{R_\fp}(M_\fp)\leq\gkdp_{R_\fp}(M_\fp)=\depth R_\fp-\depth_{R_\fp}(M_\fp)\leq n,
	\end{equation}
	for all $\fp\in\X^{n+1}(R)$. In other words, for all $\fp\in\X^{n+1}(R)$, $M_\fp$ is a $\mathcal{G}_{K_\fp}$-perfect $R_\fp$-module of grade $n$ and so, by Lemma \ref{ll1}(i), $\Ext^n_R(M,K)_\fp$ is a $\mathcal{G}_{K_\fp}$-perfect $R_\fp$-module of grade $n$. In particular,  $\gkdp_{R_\fp}(\Ext^n_R(M,K)_\fp)<\infty$.
  Now, we show that $\Ext^n_R(M,K)$ satisfies in the condition (ii). Choose an ideal ${\bf x}\subseteq\ann_R(M)$, generated by an $R$-sequence of length $n$. Set $S=R/{\bf x}$ and $K'=\Ext^n_R(S,K)$. Note that, by Theorem \ref{G2}, $K'$ is a semidualizing $S$-module.
  Let $P_1\to P_0\to M\to0$ be a $S$-projective presentation of $M$. Dualizing with respect to $K'$ one gets the following exact sequence
  \begin{equation}\tag{\ref{ccc}.2}
  0\to\Hom_S(M,K')\to\Hom_S(P_0,K')\to\Hom_S(P_1,K').
  \end{equation}
  By Lemma \ref{G1}, $\Ext^n_R(M,K)\cong\Hom_S(M,K')$. Hence, it follows from (\ref{ccc}.2) that $\Ext^n_R(M,K)$ is a second $K'$-syzygy $S$-module. Therefore,
   $\depth_{S_\fp}(\Ext^n_R(M,K)_\fp)\geq\min\{2,\depth S_\fp\}$, for all $\fp\in\Spec S$. This is equivalent to what we want to achieve.
\end{proof}
For a subcategory $\mx$ of $\md R$, let $\Epi(\mx)$ to be the set of $R$-epimorphisms $\phi:X\twoheadrightarrow M$, where  $X\in\mx$ and $M\in\md R$ with $\gr_R(M)=\gr_R(X)$. Whenever $\mx$ is an $n$-reflexive subcategory with respect to $K$, we call $\phi$ as a \emph{reflexive homomorphism} (with respect to $\mx$).

Given a homomorphism $\phi\in\Epi(\mx)$ we want to construct a new reflexive homomorphism $\L_K^n(\phi)$.

\begin{dfn}\label{d2}
	Let $\mx\subseteq\md (R)$ be an $n$-reflexive subcategory with respect to $K$ and let $\phi\in\Epi(\mx)$. Consider the exact sequence $0\rightarrow\ker\phi\overset{j}{\rightarrow} X\rightarrow\im\phi \rightarrow 0$, where $X\in\mx$. Applying the functor $(-)^{\triangledown}=\Hom_R(-,K)$ and using the fact that $\gr_R(\ker\phi)=n$, imply the exact sequence
	$$0\rightarrow\Ext^{n}_R(\im\phi, K)\rightarrow\Ext^{n}_{R}(X, K)\rightarrow \Ext^{n}_R(\ker\phi, K)\rightarrow\Ext^{n+1}_R(\im\phi, K)\to\Ext^{n+1}_R(X, K).$$ 	
	Define $\L_K^n(\phi):\Ext^{n}_{R}(X, K)\twoheadrightarrow\im(\Ext^n_R(j, K))$ as the epimorphism induced by $\Ext^n_R(j, K)$. Therefore one has exact sequences
	\begin{equation}\tag{\ref{d2}.1}
	0\rightarrow\Ext^{n}_R(\im\phi, K)\rightarrow\Ext^n_R(X,K)  \rightarrow\im \L_K^n(\phi)\rightarrow 0,
	\end{equation}
	\begin{equation}\tag{\ref{d2}.2}
	0\rightarrow\im\L_K^n(\phi)\to\Ext^{n}_R(\ker\phi, K)\rightarrow\Ext^{n+1}_R(\im\phi,K)  \rightarrow\Ext^{n+1}_R(X,K).
	\end{equation}	
\end{dfn}
We summarize some basic properties of $\L_K^n(\phi)$ in the following theorem which is a generalization of \cite[Proposition 3.4]{Na}.
\begin{thm}\label{t1}
	Let $\mx$ be an $n$-reflexive subcategory with respect to $K$ and let $\phi\in\Epi(\mx)$ be a homomorphism
	which is not injective. Then the following statements hold true.
	\begin{enumerate}[\rm(i)]
		\item $\L_K^n(\phi)\in\Epi(\mx)$.
		\item $\im\L_K^n(\phi)$ is a grade-unmixed $R$-module.
		\item The image of the canonical map $\eta_K^R(\im\phi)$ is isomorphic to
		$\im\L_K^n(\L_K^n(\phi))$. In particular, there exists the exact sequence
		$0\to\Ext^{n+1}_R(\Tr_K\Omega^n\im\phi,K)\to \im\phi\to\im\L_K^n(\L_K^n(\phi))\to0.$
		\item If $\eta_K^R({\im\phi})$ is injective, then
		$\ker\phi\cong\Ext^n_R(\im\L_K^n(\phi),K)$.
	\end{enumerate}	
\end{thm}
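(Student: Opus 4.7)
The plan is to derive all four parts from the exact sequences (\ref{d2}.1) and (\ref{d2}.2), together with one further application of $\Hom_R(-,K)$ to (\ref{d2}.1).

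For (i) and (ii), I will first note that $\Ext^n_R(X,K)\in\mx$ by $n$-reflexivity of $\mx$, while (\ref{d2}.2) exhibits $\im\L_K^n(\phi)$ as a submodule of $\Ext^n_R(\ker\phi,K)$. Since $\phi$ is not injective and $X\in\refnk$ is itself grade-unmixed of grade $n$ (apply Lemma \ref{l} to the isomorphism $X\cong\Ext^n_R(\Ext^n_R(X,K),K)$, using that $\Ext^n_R(X,K)$ has grade $n$), the nonzero submodule $\ker\phi$ inherits grade exactly $n$. Then Lemma \ref{l} shows $\Ext^n_R(\ker\phi,K)$ is grade-unmixed of grade $n$, and hence so is its submodule $\im\L_K^n(\phi)$. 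This is (ii), and it forces $\L_K^n(\phi)\in\Epi(\mx)$, which is (i).

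For (iii), my strategy is to apply $\Hom_R(-,K)$ to (\ref{d2}.1), whose three terms all have grade $n$. The resulting cohomology long exact sequence begins
\[0\to\Ext^n_R(\im\L_K^n(\phi),K)\to\Ext^n_R(\Ext^n_R(X,K),K)\xrightarrow{\alpha}\Ext^n_R(\Ext^n_R(M,K),K),\]
and unwinding the definition of $\L_K^n$ applied to the epimorphism $\L_K^n(\phi)$ (whose kernel is $\Ext^n_R(M,K)$ by (\ref{d2}.1)) identifies $\im\L_K^n(\L_K^n(\phi))$ with $\im\alpha$. Naturality of the evaluation map $\eta_K^R$ places $\alpha$ in a commutative square with $\phi$ on top, $\eta_K^R(X)$ on the left and $\eta_K^R(M)$ on the right; since $X\in\refnk$ the left vertical arrow is an isomorphism and $\phi$ is surjective, so $\im\alpha=\im\eta_K^R(M)$. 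Combining this with Lemma \ref{l5}(i), which identifies $\ker\eta_K^R(M)$ with $\Ext^{n+1}_R(\Tr_K\Omega^nM,K)$, yields exactly the short exact sequence asserted in (iii).

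For (iv), I will read off the leftmost term of the same long exact sequence: $\Ext^n_R(\im\L_K^n(\phi),K)\cong\ker\alpha$. Transporting across the naturality square (using that $\eta_K^R(X)$ is an isomorphism) converts $\ker\alpha$ into $\ker(\eta_K^R(M)\circ\phi)$, which collapses to $\ker\phi$ whenever $\eta_K^R(M)$ is injective. The most delicate point in this plan will be verifying that the map $\alpha$ appearing in the long exact sequence is exactly the functorial double-$\Ext^n$ map induced by $\phi$, and that $\eta_K^R$ really is natural in $M$ so that the crucial square commutes; both points follow by a careful but routine unwinding of the definitions of $\L_K^n$ and $\eta_K^R$.
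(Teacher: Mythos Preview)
Your proposal is correct and follows essentially the same route as the paper's proof: parts (i)--(ii) via Lemma~\ref{l} applied to $\Ext^n_R(\ker\phi,K)$ and the embedding (\ref{d2}.2), and parts (iii)--(iv) via applying $\Hom_R(-,K)$ to (\ref{d2}.1) and using the naturality square for $\eta_K^R$ together with Lemma~\ref{l5}(i). Your treatment of (iv) is marginally more direct---you read off $\ker\alpha$ from the long exact sequence and transport across the square, whereas the paper first invokes (iii) and then compares two short exact sequences---but the underlying argument is the same.
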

\begin{proof}
	(i) and (ii). First note that $\ker\phi$ is a non-zero module of grade $n$.
	Hence, by Lemma \ref{l}, $\Ext^n_R(\ker\phi,K)$ is grade-unmixed of grade $n$. It follows from the exact sequence (\ref{d2}.2) that $\depth R_\fp=n$ for all
	$\fp\in\Ass_R(\im\L_K^n(\phi))$. Hence $\im\L_K^n(\phi)$ is also a grade-unmixed module of grade $n$. As $\mx$ is $n$-reflexive with respect to $K$, $\Ext^n_R(X,K)\in\mx$ for all $X\in\mx$ and so $\L_K^n(\phi)\in\Epi(\mx)$.	
	
	(iii). Consider the exact sequence $0\to\ker\phi\to X\overset{\phi}{\to} \im\phi\to0$, where  $X\in\mx$. Applying the functor $(-)^{\triangledown}:=\Hom_R(-,K)$, implies the exact sequence
	$0\to\Ext^n_R(\im\phi,K)\to\Ext^n_R(X,K)\to\im\L_K^n(\phi)\to0$, which induces the following exact sequence
	$$0\to\Ext^n_R(\im\L_K^n(\phi),K)\to \Ext^n_R(\Ext^n_R(X,K),K)\overset{\phi''}{\to}\Ext^n_R(\Ext^n_R(\im\phi,K),K),$$
	where $\phi''=\Ext^n_R(\Ext^n_R(\phi,K),K)$. Moreover, one has the commutative diagram
	$$\begin{CD}
	&&&&&&&&\\
	\ \ &&&& X @>\phi>> \im\phi &  \\
	&&&& @VV{\eta_K^R(X)}V @VV{\eta_K^R(\im\phi)}V \\
	\ \  &&&&\Ext^n_R(\Ext^n_R(X,K),K)@>\phi''>>\Ext^n_R(\Ext^n_R(\im\phi,K),K).&\\
	\end{CD}$$\\
	As $X\in\mx$, we have $\eta_K^R(X)$ is an isomorphism. It follows from the above commutative diagram that $\im(\eta_K^R(\im\phi))=\im(\phi'')=\im(\L_K^n(\L_K^n(\phi)))$. Now the assertion follows from Lemma \ref{l5}(i).
	
	(iv). By part (iii), $\im\phi\cong\im(\L_K^n(\L_K^n(\phi)))$ and we have the commutative diagram
	$$\begin{CD}
	&&&&&&&&\\
	\ \ &&&& 0@>>>\ker\phi @>>>X @>\phi>> \im\phi@>>>0 &  \\
	&&&&&&&&@VV{\cong}V @VV{\cong}V \\
	\ \  &&&&0@>>>\Ext^n_R(\im\L_K^n(\phi),K)@>>>\Ext^n_R(\Ext^n_R(X,K),K)@>>>\im(\L_K^n(\L_K^n(\phi)))@>>>0,&\\
	\end{CD}$$\\
with exact rows. Now, by the commutativity of the above diagram, the assertion follows.
\end{proof}
\begin{dfn}\label{def2}
	Let $\mx$ be a subcategory of $\md R$. Two epimorphisms $\phi:X\twoheadrightarrow M, \psi:Y\twoheadrightarrow N$ in $\Epi(\mx)$ are said to be \em{equivalent} and denoted by $\phi\equiv\psi$, provided that there exist isomorphisms $\alpha:M\overset{\cong}{\to} N$ and $\beta:X\overset{\cong}{\to} Y$ such that the diagram
	$$\begin{CD}
	\ \ &&&& X @>\phi>> M &  \\
	&&&& @VV{\beta}V @VV{\alpha}V \\
	\ \  &&&& Y@>\psi>>N &\\
	\end{CD}$$\\
	is commutative. It is easy to see that $\equiv$ is an equivalence relation in $\Epi(\mx)$.
\end{dfn}
\begin{lem}\label{lll}
	Let $\mx$ be an $n$-reflexive subcategory with respect to $K$ and let $\phi, \psi\in\Epi(\mx)$. Then the following statements hold true.
	\begin{enumerate}[\rm(i)]
		\item{If $\phi\equiv\psi$, then
			$\L_K^n(\phi)\equiv\L_K^n(\psi)$. In particular, $\im\L_K^n(\phi)\cong\im\L_K^n(\psi)$.}
		\item{$\phi\equiv\L_K^n(\L_K^n(\phi))$ if and only if
			there exists $\mu\in\Epi(\mx)$ such that
			 $\phi\equiv\L_K^n(\mu)$ and $\mu\equiv\L_K^n(\phi)$.}
	\end{enumerate}
\end{lem}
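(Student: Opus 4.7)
The plan is to prove both parts by straightforward diagram chasing, exploiting the functoriality of $\Ext^n_R(-,K)$ together with the construction of $\L^n_K(-)$ given in Definition \ref{d2}.

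For part (i), I would start with the hypothesis $\phi\equiv\psi$, that is, isomorphisms $\alpha:M\xrightarrow{\cong}N$ and $\beta:X\xrightarrow{\cong}Y$ with $\alpha\circ\phi=\psi\circ\beta$. Since $\psi\circ\beta$ annihilates $\ker\phi$, the restriction of $\beta$ induces an isomorphism $\gamma:\ker\phi\xrightarrow{\cong}\ker\psi$ making the diagram of short exact sequences
\[
\begin{CD}
0 @>>> \ker\phi @>j>> X @>\phi>> M @>>> 0\\
@. @VV\gamma V @VV\beta V @VV\alpha V @.\\
0 @>>> \ker\psi @>j'>> Y @>\psi>> N @>>> 0
\end{CD}
\]
commute with vertical isomorphisms. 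Applying the contravariant functor $(-)^\triangledown$ and using naturality of the connecting maps produces a commutative square whose horizontal arrows are $\Ext^n_R(j,K)$ and $\Ext^n_R(j',K)$ and whose vertical arrows $\Ext^n_R(\beta,K)$ and $\Ext^n_R(\gamma,K)$ are isomorphisms. Consequently $\Ext^n_R(\gamma,K)$ restricts to an isomorphism $\delta:\im\L^n_K(\psi)\xrightarrow{\cong}\im\L^n_K(\phi)$, and the square
\[
\begin{CD}
\Ext^n_R(Y,K) @>\L^n_K(\psi)>> \im\L^n_K(\psi)\\
@VV\Ext^n_R(\beta,K)V @VV\delta V\\
\Ext^n_R(X,K) @>\L^n_K(\phi)>> \im\L^n_K(\phi)
\end{CD}
\]
commutes. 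Inverting the vertical maps witnesses the equivalence $\L^n_K(\phi)\equiv\L^n_K(\psi)$, from which $\im\L^n_K(\phi)\cong\im\L^n_K(\psi)$ is immediate.

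For part (ii), the forward direction is essentially by construction: if $\phi\equiv\L^n_K(\L^n_K(\phi))$, then the choice $\mu:=\L^n_K(\phi)$ works, since $\mu\equiv\L^n_K(\phi)$ trivially and $\L^n_K(\mu)=\L^n_K(\L^n_K(\phi))\equiv\phi$ by hypothesis. For the converse, suppose $\mu\in\Epi(\mx)$ satisfies $\phi\equiv\L^n_K(\mu)$ and $\mu\equiv\L^n_K(\phi)$. Applying part (i) to the second equivalence yields $\L^n_K(\mu)\equiv\L^n_K(\L^n_K(\phi))$. Combining with $\phi\equiv\L^n_K(\mu)$ and using that $\equiv$ is an equivalence relation on $\Epi(\mx)$ (noted in Definition \ref{def2}) gives $\phi\equiv\L^n_K(\L^n_K(\phi))$.

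I do not anticipate any serious obstacle: the construction of $\L^n_K(\phi)$ is entirely functorial, so the only thing to verify with care is that the restriction of $\Ext^n_R(\gamma,K)$ really carries $\im\L^n_K(\psi)$ onto $\im\L^n_K(\phi)$, which follows at once from the commutativity of the $\Ext^n_R(j,K)$, $\Ext^n_R(j',K)$ square. The rest is a formal application of the definition of $\equiv$ and transitivity.
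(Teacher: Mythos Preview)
Your proposal is correct and follows essentially the same approach as the paper: both arguments pass from the commuting square $\alpha\circ\phi=\psi\circ\beta$ to an induced isomorphism on kernels, then apply the functor $\Ext^n_R(-,K)$ and read off the equivalence $\L_K^n(\phi)\equiv\L_K^n(\psi)$ from the resulting commutative diagram; for part (ii) the paper simply says ``follows from part (i),'' which is exactly your transitivity argument spelled out. The only cosmetic difference is that the paper phrases part (i) via the exact sequence (\ref{d2}.1) with kernel $\Ext^n_R(M,K)$ and concludes by the five lemma on cokernels, whereas you view $\im\L_K^n(\phi)$ as a submodule of $\Ext^n_R(\ker\phi,K)$ and restrict $\Ext^n_R(\gamma,K)$; these are two presentations of the same diagram chase.
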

\begin{proof}
	(i). Let $\phi:X\to M$ and $\psi:Y\to N$, where $X,Y\in\mx$. There exist isomorphisms $\alpha:M\overset{\cong}{\to} N$ and $\beta:X\overset{\cong}{\to} Y$ such that $\psi\circ\beta=\alpha\circ\phi$. Hence, there is a map $\gamma:\ker\phi\to\ker\psi$ making the diagram 
	$$\begin{CD}
	&&&&&&&&\\
	\ \ &&&&0@>>>\ker\phi@>>> X @>\phi>> M@>>>0 &  \\
	&&&&&&@VV{\gamma}V @VV{\beta}V @VV{\alpha}V \\
	\ \  &&&&0@>>>\ker\psi@>>> Y@>\psi>>N@>>>0.&\\
	\end{CD}$$\\
	commutative. Now, by the snake lemma, $\gamma$ is an isomorphism. Applying the functor $\Ext^n_R(-,K)$ to the above commutative diagram, gives the commutative diagram (see (\ref{d2}.1))
	$$\begin{CD}
	&&&&&&&&\\
	\ \  &&&&0@>>>\Ext^n_R(N,K)@>>>\Ext^n_R(Y,K)@>\L_K^n(\psi)>>\im\L_K^n(\psi)@>>>0.&\\
	&&&&&&@VV{\alpha'}V @VV{\beta'}V @VV{\gamma'}V \\
	\ \ &&&&0@>>>\Ext^n_R(M,K)@>>>\Ext^n_R(X,K) @>\L_K^n(\phi)>>\im\L_K^n(\phi)@>>>0 &  \\
	\end{CD}$$\\
	As $\alpha'=\Ext^n_R(\alpha,K)$, $\beta'=\Ext^n_R(\beta,K)$ are isomorphisms, so is $\gamma'$. Hence $\L_K^n(\phi)\equiv\L_K^n(\psi)$.
	
	(ii). follows from part (i).
\end{proof}
We are now in the position to introduce the notion of linkage of modules by reflexive morphisms.
\begin{dfn}\label{def4}
	Let $\mx$ be an $n$-reflexive subcategory with respect to $K$ and let $\phi,\psi\in\Epi(\mx)$. Two $R$-modules
	$M$, $N$ are said to be \emph{linked with respect to }$\mx$, in one step (or directly), by the pair
	$(\phi,\psi)$ provided that
	the following conditions hold.
	\begin{enumerate}[\rm(i)]
		\item $M=\im\phi$ and $N=\im\psi$.
		\item $\phi\equiv\L_K^n(\psi)$ and $\psi\equiv\L_K^n(\phi)$.
	\end{enumerate}
  In this situation, we write $M\underset{(\phi,\psi)}{\sim}N$ (or simply $M\sim N$).
	Equivalently, an $R$-module $M$ is said to be \emph{linked} by a reflexive homomorphism $\phi$, if $M=\im\phi$ and $M\cong\im\L_K^n(\L_K^n(\phi))$ (see Lemma \ref{lll}). 
\end{dfn}
Notice that the above definition is symmetric in the following sense:
$M\underset{(\phi,\psi)}{\sim} N$ if and only if $N\underset{(\psi,\phi)}{\sim} M$.

Here is a characterization of a linked module in terms of the homomorphism $\eta_K^R(-)$. The following should be compared with \cite[Theorem 2]{MS}.
\begin{cor}\label{c}
	Let $\mx$ be an $n$-reflexive subcategory with respect to $K$ and let $M$ be an $R$-module. Assume that $\phi\in\Epi(\mx)$
	is a non-injective homomorphism such that $\im(\phi)=M$. The following statements are equivalent.
	\begin{enumerate}[\rm(i)]
		\item $M$ is linked by $\phi$.
		\item $\Ext^{n+1}_R(\trk\Omega^nM,K)=0$.
		\item $\eta_K^R(M)$ is injective.
	\end{enumerate}
\end{cor}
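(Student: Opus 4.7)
The plan is to read off both equivalences from two exact sequences that have already been established, both of which identify $\Ext^{n+1}_R(\Tr_K\Omega^n M,K)$ with the kernel of a natural map out of $M$.

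First I would show (ii)$\Leftrightarrow$(iii). This is essentially immediate from Lemma \ref{l5}(i), which provides the exact sequence
$$0\to\Ext^{n+1}_R(\Tr_K\Omega^n M,K)\to M\xrightarrow{\eta_K^R(M)}\Ext^n_R(\Ext^n_R(M,K),K).$$
Thus $\ker\eta_K^R(M)\cong\Ext^{n+1}_R(\Tr_K\Omega^n M,K)$, and (ii) and (iii) are clearly equivalent.

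Next I would handle (i)$\Leftrightarrow$(ii) using Theorem \ref{t1}(iii). Since $M=\im\phi$, that theorem provides a natural surjection
$$\pi\colon M\twoheadrightarrow\im\L_K^n(\L_K^n(\phi))$$
with $\ker\pi\cong\Ext^{n+1}_R(\Tr_K\Omega^n M,K)$. If (ii) holds, then $\pi$ is an isomorphism, so in particular $M\cong\im\L_K^n(\L_K^n(\phi))$, which (together with $M=\im\phi$) gives (i). Conversely, if (i) holds, fix an abstract isomorphism $\sigma\colon\im\L_K^n(\L_K^n(\phi))\xrightarrow{\cong}M$; then $\sigma\circ\pi$ is a surjective endomorphism of $M$, and the standard fact that a surjective endomorphism of a finitely generated module over a Noetherian ring is an isomorphism forces $\pi$ itself to be an isomorphism. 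Hence $\ker\pi=0$, which is (ii).

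The main (very minor) obstacle is precisely this last step: the definition of ``linked by $\phi$'' only asserts an abstract isomorphism $M\cong\im\L_K^n(\L_K^n(\phi))$, whereas to conclude vanishing of the Ext one needs the \emph{natural} map $\pi$ to be an isomorphism. Appealing to the surjective-endomorphism principle bridges this gap, and no further ingredients are required.
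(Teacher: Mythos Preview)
Your argument is correct and essentially identical to the paper's: the paper simply cites Theorem~\ref{t1}(iii) (which already encodes both the exact sequence from Lemma~\ref{l5}(i) and the identification of $\im\eta_K^R(M)$ with $\im\L_K^n(\L_K^n(\phi))$) together with \cite[Theorem~2.4]{Ma}, which is precisely the surjective-endomorphism principle you invoke.
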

\begin{proof}
This is an immediate consequence of Theorem \ref{t1}.(iii) and \cite[Theorem 2.4]{Ma}.
\end{proof}
\begin{cor}\label{c1}
	Let $\mx$ be an $n$-reflexive subcategory with respect to $K$ and let $M$ be an $R$-module. Assume that $\phi$ is a non-injective homomorphism in $\Epi(\mx)$ such that $\im(\phi)=M$.
	If $M$ has finite $\gkkkd$-dimension on $\X^{n}(R)$ (e.g. $\id_{R_\fp}(K_\fp)<\infty$ for all $\fp\in\X^{n}(R)$), then the following statements are equivalent.
	\begin{enumerate}[\rm(i)]
		\item $M$ is linked by $\phi$.
		\item $M$ is grade-unmixed.
		\item $\depth_{R_\fp}(M_\fp)\geq\min\{1,\depth R_\fp-n\}$ for all $\fp\in\Spec R$.
	\end{enumerate}
\end{cor}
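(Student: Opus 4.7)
The plan is to derive this as a streamlined consequence of the two preceding results, Corollary \ref{c} and Corollary \ref{l4}(i), together with a short depth-theoretic unwinding of the definition of grade-unmixedness.

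First I would handle the equivalence (i)$\Leftrightarrow$(iii). By Corollary \ref{c}, $M$ is linked by $\phi$ if and only if the natural map $\eta_K^R(M)$ is injective. On the other hand, the hypothesis that $M$ has finite $\gkkkd$-dimension on $\X^n(R)$ is precisely what is required to invoke Corollary \ref{l4}(i), which asserts that under this hypothesis the depth condition $\depth_{R_\fp}(M_\fp)\geq\min\{1,\depth R_\fp-n\}$ for all $\fp\in\Spec R$ is equivalent to $\eta_K^R(M)$ being a monomorphism. Chaining these two equivalences immediately yields (i)$\Leftrightarrow$(iii).

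For the equivalence (ii)$\Leftrightarrow$(iii), I would argue by inspecting the depth inequality case by case. For any prime $\fp$ with $\depth R_\fp\leq n$, the right-hand side $\min\{1,\depth R_\fp-n\}$ is non-positive and so condition (iii) is vacuous at $\fp$. For primes $\fp$ with $\depth R_\fp>n$, condition (iii) reduces to $\depth_{R_\fp}(M_\fp)\geq 1$, i.e.\ $\fp\notin\Ass_R(M)$. Hence (iii) holds if and only if every $\fp\in\Ass_R(M)$ satisfies $\depth R_\fp\leq n$. Since $\gr_R(M)=n$ forces $\depth R_\fp\geq n$ for all $\fp\in\Supp_R(M)$ (and in particular all $\fp\in\Ass_R(M)$), this is in turn equivalent to the equality $\depth R_\fp=n$ for every $\fp\in\Ass_R(M)$, which is the definition of grade-unmixedness of $M$. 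Thus (ii)$\Leftrightarrow$(iii).

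I do not expect any real obstacle here: once Corollaries \ref{c} and \ref{l4} are in hand, (i)$\Leftrightarrow$(iii) is automatic, and (ii)$\Leftrightarrow$(iii) is a purely formal manipulation of depths and associated primes. The only point requiring a little care is the trivial case $\depth R_\fp\leq n$ in the depth inequality, where one must notice that the minimum is non-positive so that the inequality is automatically satisfied and contributes nothing.
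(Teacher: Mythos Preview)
Your proposal is correct and essentially follows the paper's approach. The paper organizes the argument as a cycle (i)$\Rightarrow$(ii)$\Rightarrow$(iii)$\Rightarrow$(i), using Theorem~\ref{t1}(i),(ii) for (i)$\Rightarrow$(ii) and Corollary~\ref{l4}(i) together with Corollary~\ref{c} for (iii)$\Rightarrow$(i); you instead exploit the full biconditional in Corollary~\ref{l4}(i) to obtain (i)$\Leftrightarrow$(iii) directly and handle (ii)$\Leftrightarrow$(iii) by the same elementary depth argument, but the underlying ideas and tools are identical.
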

\begin{proof}
	(i)$\Rightarrow$(ii). As $M$ is linked by $\phi$,  $M\cong\im\L_K^n(\L_K^n(\phi)))$. Hence $M$ is grade-unmixed by Theorem \ref{t1}(i), (ii).
	
	(ii)$\Rightarrow$(iii). Let $\depth_{R_\fp}(M_\fp)=0$ for some $\fp\in\Spec R$. Hence $\fp\in\Ass_R(M)$ and so $\depth R_\fp=n$, because $M$ is grade-unmixed.
	
	(iii)$\Rightarrow$(i). The assertion follows from Corollary \ref{l4}(i), Lemma \ref{l5}(i) and Corollary \ref{c}.
\end{proof}	
The following is an immediate consequence of Corollary \ref{c1} and
\cite[Remarks 0.1, 0.2]{Hu}.
\begin{cor}
	Let $R$	be a Gorenstein local ring and let $I$ be an ideal of grade $n$. Assume that ${\bf x}\subsetneq I$ is an ideal generated by a regular sequence of length $n$ and that $\phi:R/{\bf x}\twoheadrightarrow R/I$ is the natural epimorphism.
	Then  $R/I$ is linked by $\phi$ with respect to $\mp^n(R)$ if and only if $I=({\bf x}:_R({\bf x}:_RI))$.
\end{cor}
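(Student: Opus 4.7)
The plan is to specialize Corollary~\ref{c1} to the Gorenstein case with $K=R$ and $\mx=\mp^n(R)$, and then translate the resulting grade-unmixedness condition on $R/I$ into the colon-ideal identity via two basic remarks from \cite{Hu}.

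First I verify that $\phi$ fits the framework of Corollary~\ref{c1}. Since ${\bf x}$ is an $R$-sequence of length $n$, the quotient $R/{\bf x}$ is perfect of grade $n$, so $R/{\bf x}\in\mp^n(R)$; the strict inclusion ${\bf x}\subsetneq I$ makes $\phi$ non-injective; and the assumption $\gr_R(R/I)=n=\gr_R(R/{\bf x})$ places $\phi$ in $\Epi(\mp^n(R))$. Because $R$ is Gorenstein, every finitely generated $R$-module has finite $\gd$, and with $K=R$ this is exactly the finiteness of $\gkkkd$; in particular the hypothesis that $M=R/I$ have finite $\gkkkd$-dimension on $\X^n(R)$ is automatically satisfied.

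Corollary~\ref{c1} then yields that $R/I$ is linked by $\phi$ with respect to $\mp^n(R)$ if and only if $R/I$ is grade-unmixed, i.e.\ $\depth R_\fp=n$ for every $\fp\in\Ass_R(R/I)$. Since $R$ is Cohen-Macaulay, $\depth R_\fp=\htt\fp$, so this is equivalent to every associated prime of $I$ having height exactly $n$, that is, $I$ is unmixed of height $n$.

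Finally, Remarks~0.1 and 0.2 of \cite{Hu} supply the classical fact: over a Gorenstein local ring, for an ideal $I$ of grade $n$ containing a regular sequence ${\bf x}$ of length $n$, one always has $I\subseteq ({\bf x}:_R({\bf x}:_R I))$, with equality precisely when $I$ is unmixed of height $n$. Chaining this with the equivalence of the previous paragraph closes the argument. I do not expect any serious obstacle: the nontrivial content is bundled into Corollary~\ref{c1} and into the classical Huneke remarks, and the only small check is the identification of grade-unmixedness (in the sense fixed before Lemma~\ref{l}) with height-$n$ unmixedness, which is immediate since $R$ is Cohen-Macaulay.
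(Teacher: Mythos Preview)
Your proof is correct and follows exactly the paper's own argument: the paper's proof is the single line ``This is an immediate consequence of Corollary~\ref{c1} and \cite[Remarks 0.1, 0.2]{Hu},'' and you have simply unpacked the verifications needed to invoke Corollary~\ref{c1} (that $R/{\bf x}\in\mp^n(R)$, that $\phi$ is non-injective in $\Epi(\mp^n(R))$, and that the Gorenstein hypothesis guarantees the finiteness of $\gkkkd$-dimension) together with the Cohen--Macaulay identification of grade-unmixedness with height-unmixedness before appealing to Huneke's remarks.
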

In the following we collect some examples of linked modules (see also Example \ref{e}).
\begin{eg}\label{ee}
	\begin{enumerate}[\rm(i)]
		\item Let $\mx$ be an $n$-reflexive subcategory with respect to $K$ which is closed under direct sum (i.e. if $M, N\in\mx$, then $M\oplus N\in\mx$). Every two modules in $\mx$ are directly linked with respect to $\mx$. This is an immediate consequence of the exact sequence
		$$0\rightarrow \Ext^n_R(N,K)\rightarrow M \oplus\Ext^n_R(N,K) \rightarrow M\rightarrow0$$ and Corollary \ref{c}. In particular, $M$ is directly linked to $\overset{t}{\oplus}M$ for every $M\in\mx$ and an integer $t>0$. Also, every $M\in\mx$ is self-linked with respect to $\mx$.
		For example, every $M\in\refnk$ is self-linked with respect to $\refnk$.
		\item Let $M$ be a grade-unmixed $R$-module of finite $\gkkkd$-dimension with grade $n$. Assume that $I\subseteq\ann_R(M)$ is an ideal of $R$ such that $R/I\in\refnk$.
		If $\phi:F\twoheadrightarrow M$ is a non-injective epimorphism where $F$ is a free $R/I$-module, then $M$ is linked by $\phi$ with respect to $\refnk$. This is an immediate consequence of Corollary \ref{c1}.
		\item
		Let $R_\fp$ be Gorenstein for all $\fp\in\X^{n}(R)$ and let $M$ be a grade-unmixed $R$-module. Assume that $\gr_R(M)=n$ and that $\fc\subseteq\ann_R(M)$ is an ideal generated by a regular sequence of length $n$. If $\phi:F\twoheadrightarrow M$ is a non-injective epimorphism where $F$ is a free $R/\fc$-module, then $M$ is linked by $\phi$ with respect to $\mathcal{P}^n(R)$. This is an immediate consequence of Corollary \ref{c1}.
	\end{enumerate}	
\end{eg}
\begin{rmk}\label{r} 
There are several notions of module linkage in the literature that have been
developed independently. Yoshino and Isogawa \cite{YI} introduced the notion of linkage for Cohen-Macaulay modules over a Gorenstein local ring. Martsinkovsky and Strooker \cite{MS} generalized the notion of linkage for modules over semiperfect rings by using the operator $\lambda_R:=\Omega_R\Tr_R$. This notion includes the concept of linkage due to Yoshino and Isogawa (see \cite[Remark on page 594]{MS}). Here we observe that the notion of the linkage of modules by reflexive homomorphisms includes the notions of linkage due to Martsinkovsky and Strooker \cite{MS}, Nagel \cite{Na}, and Iima and Takahashi \cite{IT}.
\begin{enumerate}[\rm(I)]
	\item\textbf{Horizontal linkage} (Martsinkovsky-Strooker).\label{d1}
	Let $R$ be a semiperfect ring. Two $R$-modules $M$ and $N$ are said to be \emph{horizontally linked} if $M\cong\lambda_R N$ and $N\cong\lambda_R M$.  An $R$-module $M$ is horizontally linked (to $\lambda_R M$) if and only if it has no projective summands and $\Ext^1_R(\Tr M,R)=0$ \cite[Theorem 2]{MS}. 	
	Two $R$-modules $M$ and $N$ are said to be \emph{linked by an ideal} $\fc$, if $\fc\subseteq\ann_R(M)\cap\ann_R(N)$ and $M$, $N$ are horizontally linked as $R/\fc$-modules. 
		
	Let $R$ be a semiperfect ring and let $M$ be an $R$-module. Assume that $P\overset{\phi}{\to} M\to0$ is a projective cover of $M$. It is clear that $M$ is horizontally linked, in the sense of Martsinkovsky-Strooker definition, if and only if $M$ is linked by $\phi$ with respect to $\mp^0(R)$.
	
	\item \textbf{Linkage of modules by a quasi-Gorenstein module} (Nagel).\label{d}
	Let $R$ be a Gorenstein local ring. An $R$-module $M$ is said to be \emph{quasi-Gorenstein}
	if it is perfect and
	there is an isomorphism $M\overset{\simeq}{\to}\Ext^n_R(M,R)$, where $n=\pd_R(M)$ (see also \cite[Definition 2.1]{Na} for the graded version). 
	
	Let $C$ be a quasi-Gorenstein $R$-module of codimension $n$. Following \cite{Na}, the set of $R$-homomorphisms
	$\phi:C\to M$ where $M$ is an $R$-module and $\im\phi$ has the same dimension as $C$ is denoted by $\Epi(C)$. For $\phi\in\Epi(C)$, the exact sequence $0\to\ker\phi\to C\to\im\phi\to0$ induces the long exact sequence  $0\to\Ext^n_R(\im\phi,R)\to\Ext^n_R(C,R)\overset{f}{\to}\Ext^n_R(\ker\phi,R)\to\cdots.$
	By assumption there is an isomorphism $\alpha:C\overset{\simeq}{\to}\Ext^n_R(C,R)$. Hence we obtain the homomorphism
	$L_C(\phi):= f\circ\alpha:C\to\Ext^n_R(\ker\phi,R)$. Now we are ready to recall the definition of linkage of modules by a quasi-Gorenstein module (see \cite[Definition 3.7]{Na}).
	
	Let $R$ be a Gorenstein local ring.	
	Two $R$-modules $M$, $N$ are said to be \emph{linked in one step} by the quasi-Gorenstein module $C$ if there are homomorphisms $\phi, \psi\in\Epi(C)$ such that
	\begin{enumerate}[\rm(i)]
		\item $M=\im\phi$ and $N=\im\psi$.
		\item $M\cong\im L_C(\psi)$ and $N\cong\im L_C(\phi)$.
	\end{enumerate}
	Note that, over a Gorenstein local ring, this notion includes the concept of linkage due to Martsinkovsky and Strooker (see \cite[Remark 3.20]{Na} for more details).
	
	Let $R$ be a Gorenstein local ring and let $C$ be a quasi-Gorenstein $R$-module. It follows from the definition that $\mx=\{C\}$ is an $n$-reflexive subcategory with respect to $R$, where $n=\gr_R(C)$. Assume that $M$ is an $R$-module and that $\phi\in\Epi(C)$ is an $R$-homomorphism with $\im\phi=M$. Now it is easy to see that $M$ is linked by $C$, in the sense of Nagel's definition, if and only if it is linked by $\phi$ with respect to $\mx$.
	\item \textbf{Perfect linkage} (Iima-Takahashi).\label{d4}
	Let $R$ be a Cohen-Macaulay local ring with dualizing module $\omega$ and let $M$ be a Cohen-Macaulay $R$-module of codimension $n$. A surjective homomorphism $f:P\twoheadrightarrow M$ is called a \emph{perfect morphism} of $M$ if $P$ is a perfect R-module of codimension $n$. For a perfect
	morphism $f$ of $M$, we define the perfect link $\L_fM$ of $M$ with respect to $f$ by $\L_fM =\Ext^n_R(\ker f,\omega)$.
	Note that $\L_fM$ is again a Cohen-Macaulay $R$-module of codimension $n$, and there exists a perfect morphism
	$g$ of $\L_fM$ such that $\L_g\L_fM\cong M$. Two Cohen-Macaulay $R$-modules $M$ and $N$ of codimension $n$ are called \emph{perfectly linked} if there exists a perfect morphism $f$ of $M$ such that $\L_fM\cong N$.	
	This notion includes the concept of linkage due to Yoshino and Isogawa.
	
	Let $R$ be a Cohen-Macaulay local ring with dualizing module $\omega$ and let $M$ be a Cohen-Macaulay $R$-module of codimension $n$. Assume that $P\overset{\phi}{\to} M\to0$ is a perfect morphism of $M$. It is clear that $M$ is perfectly linked, in the sense of Iima-Takahashi definition, if and only if it is linked by $\phi$ with respect to $\cm^n(R)$.
\end{enumerate}	
\end{rmk}
\begin{dfn}
	Let $m>0$ be an integer. Two $R$-modules $M$ and $N$ are said to be linked in $m$ steps if there are modules $N_0 = M, N_1, \cdots, N_{m-1},N_m=N$ such that $N_i$ and $N_{i+1}$ are directly linked for all $i=0,\cdots,m-1$. If $m$ is even, then $M$ and $N$ are said to be {\it evenly linked}.
	{\it Module liaison} is the equivalence relation generated by directly linkage. Its equivalence classes are called liaison classes. More precisely,
	we say that $R$-modules $M$ and $N$ belong to the same \emph{liaison class}  if they are linked in some positive number of steps.
	Even linkage also generates an equivalence relation. Its equivalence classes are called \emph{even liaison classes}.
\end{dfn}
In the following, we recall the definitions of two important classes of modules related to a fixed semidualizing $R$-module, due to Foxby \cite{F} (see also \cite{AvF}, \cite{Ch}).
\begin{dfn}(Foxby Classes)\label{def}	
	\begin{enumerate}[(a)]
		\item The {\em Auslander class
			with respect to} $K$, denoted by $\mathcal{A}_{K}(R)$, consists of all
		$R$-modules $M$ satisfying the conditions.
		\begin{enumerate}[\rm(i)]
			\item  The natural map $\mu_K^R(M):M\longrightarrow\Hom_R(K,M\otimes_RK)$ is an isomorphism.
			\item $\Tor_i^R(M,K)=0=\Ext^i_R(K,M\otimes_RK)$ for all $i>0$.
		\end{enumerate}		
		\item The {\em Bass class with respect to} $K$, denoted by $\bk(R)$, consists of all $R$-modules
		$M$ satisfying the conditions.
		\begin{enumerate}[\rm(i)]
			\item   The evaluation map $\nu_K^R(M):K\otimes_R\Hom_R(K,M)\longrightarrow M$ is an isomorphism.
			\item  $\Tor_i^R(\Hom_R(K,M),K)=0=\Ext^i_R(K,M)$ for all $i>0$.
		\end{enumerate}
	\end{enumerate}
\end{dfn}
Note that the Auslander class $\ak(R)$ (resp. Bass class $\bk(R)$) contains every $R$-module of finite projective (resp. injective) dimension.
Recall that a subcategory $\mx$ of $\md R$ is called {\em thick} 
if two terms of any exact sequence $0\to X\to  Y\to Z\to 0$ in $\md R$ are in $\mx$, then so is the third one.
Here are some examples of thick subcategories.
\begin{eg}\label{exaa}
	Let $N$ be an $R$-module. The following subcategories of $\md R$ are thick.
	\begin{enumerate}[(i)]
		\item The subcategory of $\md R$ consisting of all modules of finite $\gkkkd$-dimension.
		\item The subcategory of $\md R$ consisting of all modules of finite projective (injective) dimension.
		\item $\{M\in\md R\mid\Ext^i_R(M,N)=0 \text{ for } i\gg0\}$.
		\item $\{M\in\md R\mid\Ext^i_R(N,M)=0 \text{ for } i\gg0\}$.
		\item $\{M\in\md R\mid\Tor_i^R(M,N)=0 \text{ for } i\gg0\}$.
		\item $\{M\in\md R\mid M\in\mathcal{A}_K(R)\}.$
		\item $\{M\in\md R\mid M\in\mathcal{B}_K(R)\}.$
	\end{enumerate}
\end{eg}
\begin{prop}\label{p}
	Let $\mx$ be an $n$-reflexive subcategory. The following statements hold true.
	\begin{enumerate}[\rm(i)]
		\item Let $M$ and $N$ be $R$-modules in the same liaison class with respect to $\mx$. If $\mx$ is a thick subcategory, then	$ M\in \mathcal{X} \text{ if and only if } N\in\mathcal{X}.$
		\item Assume that $\mathcal{Y}$ is a thick subcategory of $\md R$ containing $\mx$. Let $M$ and $N$ be $R$-modules in the same even liaison class with respect to $\mx$. Then	$M\in \mathcal{Y}$ if and only if $N\in\mathcal{Y}$.
	\end{enumerate}
\end{prop}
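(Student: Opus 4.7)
The plan is to reduce to a single direct link for part (i) and to a pair of consecutive direct links for part (ii), then exploit the short exact sequences produced by Definition \ref{d2}. Both parts will follow by an easy induction on the number of links in the given chain, so the content is entirely in the base case.

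For part (i), suppose $M\underset{(\phi,\psi)}{\sim} N$ in one step, with $\phi\colon X\twoheadrightarrow M$ and $\psi\colon Y\twoheadrightarrow N$, where $X,Y\in\mx$. The equivalences $\phi\equiv\L_K^n(\psi)$ and $\psi\equiv\L_K^n(\phi)$ combined with (\ref{d2}.1) supply exact sequences
\[
0\to\Ext^n_R(M,K)\to\Ext^n_R(X,K)\to N\to 0,\qquad 0\to\Ext^n_R(N,K)\to\Ext^n_R(Y,K)\to M\to 0.
\]
Since $\mx$ is $n$-reflexive with respect to $K$, the modules $\Ext^n_R(X,K)$ and $\Ext^n_R(Y,K)$ lie in $\mx$. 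If $\mx$ is thick and $M\in\mx$, then closure of $\mx$ under $\Ext^n_R(-,K)$ forces $\Ext^n_R(M,K)\in\mx$, and the first sequence together with thickness gives $N\in\mx$; the converse is obtained symmetrically from the second sequence. Iterating this over a chain of direct links proves (i).

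For part (ii) the essential difficulty is that $\mathcal{Y}$ is only thick, not assumed closed under $\Ext^n_R(-,K)$, so the one-step argument above does not suffice on its own. The remedy is to digest two consecutive links simultaneously. Suppose $M\sim P$ via $(\phi_1,\psi_1)$ with $\psi_1\colon Y_1\twoheadrightarrow P$, and $P\sim N$ via $(\phi_2,\psi_2)$ with $\phi_2\colon X_2\twoheadrightarrow P$. Applying (\ref{d2}.1) first to $\psi_1$ (using $M\cong\im\L_K^n(\psi_1)$) and then to $\phi_2$ (using $N\cong\im\L_K^n(\phi_2)$) yields
\[
0\to\Ext^n_R(P,K)\to\Ext^n_R(Y_1,K)\to M\to 0,
\]
\[
0\to\Ext^n_R(P,K)\to\Ext^n_R(X_2,K)\to N\to 0,
\]
whose middle terms lie in $\mx\subseteq\mathcal{Y}$. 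The crucial point is that the \emph{same} module $\Ext^n_R(P,K)$ occurs as the kernel in both sequences, so one never needs to apply $\Ext^n_R(-,K)$ to $M$ or $N$ themselves. If $M\in\mathcal{Y}$, thickness applied to the first sequence gives $\Ext^n_R(P,K)\in\mathcal{Y}$, whence thickness applied to the second gives $N\in\mathcal{Y}$; the reverse implication is symmetric. Induction on the number of consecutive pairs in an even liaison chain finishes (ii). The main obstacle, avoided precisely by this two-step argument, is the lack of compatibility between $\mathcal{Y}$ and the functor $\Ext^n_R(-,K)$.
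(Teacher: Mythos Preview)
Your proof is correct and follows essentially the same approach as the paper: reduce to one direct link for (i) and two consecutive links for (ii), then use the short exact sequences with middle term in $\mx$ and common kernel $\Ext^n_R(P,K)$ in part (ii). The only cosmetic difference is that you extract the exact sequences directly from (\ref{d2}.1), whereas the paper identifies $\ker\phi\cong\Ext^n_R(\im\L_K^n(\phi),K)$ via Theorem \ref{t1}(iv) and Corollary \ref{c}; both routes place the middle term in $\mx$ and the argument proceeds identically.
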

\begin{proof}
	(i). Without loss of generality one may assume that $M$ is directly linked to $N$. Hence there exists $\phi\in\Epi(\mx)$ such that $N=\im(\phi)$ and $M\cong\im\L_K^n(\phi)$. By Theorem \ref{t1}(iv) and Corollary \ref{c}, we obtain the exact sequence $0\to\Ext^n_R(M,K)\to X\to N\to0,$
	where $X\in\mx$. Suppose that $M\in\mathcal{X}$. Hence $\Ext^n_R(M,K)\in\mx$. It follows from the above exact sequence that $N\in\mathcal{X}$, because $\mathcal{X}$ is thick. The other side follows from the symmetry.	
	
	(ii). Without loss of generality we may assume that $M$ is linked to $N$ in two steps. Hence there exists an $R$-module $C$ such that $M\sim C\sim N$. In other words, there exist $\phi, \psi\in\Epi(\mx)$ such that $M=\im(\phi)$,
	$N=\im(\psi)$ and $\im\L_K^n(\psi)\cong C\cong\im\L_K^n(\phi)$. By Theorem \ref{t1}(iv) and Corollary \ref{c}, we have the exact sequences
	$0\to\Ext^n_R(C,K)\to X\to M\to0$ and $0\to\Ext^n_R(C,K)\to Y\to N\to0,$
	where $X,Y\in\mx\subseteq\mathcal{Y}$. As $\mathcal{Y}$ is thick, it follows from the above exact sequences that
$M\in\mathcal{Y}\Longleftrightarrow\Ext^n_R(C,K)\in\mathcal{Y}\Longleftrightarrow N\in\mathcal{Y}.$
\end{proof}
\begin{cor}\label{ccc1}
	Let $M$ and $N$ be $R$-modules in the same even liaison class with respect to $\mathcal{P}^n(R)$. Assume that $X$ is an $R$-module. Then the following statements hold true.
	\begin{enumerate}[\rm(i)]
		\item $M\in\ak(R)$ if and only if $N\in\ak(R)$.
		\item $\pd_R(M)<\infty$ if and only if $\pd_R(N)<\infty$.
		\item $\Ext^{i\gg0}_R(M,X)=0$ if and only if $\Ext^{i\gg0}_R(N,X)=0$.
		\item $\Tor_{i\gg0}^R(M,X)=0$ if and only if $\Tor_{i\gg0}^R(N,X)=0$.
		\item If $\gd_R(X)<\infty$, then $\Ext^{i\gg0}_R(X,M)=0$ if and only if $\Ext^{i\gg0}_R(X,N)=0.$
		\item $\gkkd_R(M)<\infty$ if and only if $\gkkd_R(N)<\infty$.
		\item $\cid_R(M)<\infty$ if and only if $\cid_R(N)<\infty$.
	\end{enumerate}
\end{cor}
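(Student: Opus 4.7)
The proof reduces each of the seven equivalences to a single application of Proposition \ref{p}(ii). The uniform strategy is: for each assertion, exhibit a subcategory $\mathcal{Y} \subseteq \md R$ such that (a) $\mathcal{Y}$ is thick, and (b) $\mp^n(R) \subseteq \mathcal{Y}$. Once these two points are verified, Proposition \ref{p}(ii) forces $M \in \mathcal{Y}$ if and only if $N \in \mathcal{Y}$, which is precisely the desired statement. Thickness of every relevant subcategory has already been recorded in Example \ref{exaa}, so the only genuine task is the containment $\mp^n(R) \subseteq \mathcal{Y}$.

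For (i), (ii), (vi), (vii) the containment is essentially free. Every $P \in \mp^n(R)$ satisfies $\pd_R(P) = n < \infty$ by definition, and finite projective dimension implies (a) $P \in \ak(R)$, settling (i); (b) $\cid_R(P) < \infty$, settling (vii); and (c) $\gkkd_R(P) < \infty$, settling (vi). Item (ii) is a tautology. For (iii) and (iv), pick a finite free resolution $0 \to F_n \to \cdots \to F_0 \to P \to 0$; then $\Ext^i_R(P, X) = 0$ and $\Tor_i^R(P, X) = 0$ for all $i > n$, so $P$ lies in the thick subcategories of Example \ref{exaa}(iii) and (v), respectively.

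The only assertion requiring a small extra observation is (v), since a priori $\Ext^{i \gg 0}_R(X, P) = 0$ need not follow from $\pd_R(P) < \infty$ alone. Here one invokes the assumption $\gd_R(X) < \infty$: by Theorem \ref{G3}(i) applied with $K = R$, we have $\Ext^i_R(X, R) = 0$ for all $i > \gd_R(X)$. Using the finite free resolution of $P$ and dimension shifting (or induction on $\pd_R(P)$), this upgrades to $\Ext^i_R(X, P) = 0$ for $i > \gd_R(X) + \pd_R(P)$, placing $P$ in the thick subcategory of Example \ref{exaa}(iv).

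I do not foresee any substantive obstacle: the entire proof is a list of seven invocations of Proposition \ref{p}(ii), each verified by the same observation that perfect modules have finite projective dimension, plus the mild upgrade via Theorem \ref{G3}(i) needed for (v). The only thing to be mildly careful about is that in (v) the hypothesis $\gd_R(X) < \infty$ is used both to ensure thickness of the ambient subcategory and to ensure that $\mp^n(R)$ sits inside it.
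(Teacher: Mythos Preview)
Your proof is correct and follows essentially the same approach as the paper: both argue that every module in $\mp^n(R)$ has finite projective dimension and hence lies in each of the thick subcategories listed in Example \ref{exaa}, after which Proposition \ref{p}(ii) applies. The paper cites \cite[Theorem 4.13]{AB} for the containment in case (v) and \cite[Theorem 1.4]{AGP} for case (vii), whereas you give the direct dimension-shifting argument for (v); one minor inaccuracy in your final remark is that thickness of the subcategory in (v) does not actually require $\gd_R(X)<\infty$ (Example \ref{exaa}(iv) holds for arbitrary $N$), so that hypothesis is used only for the containment $\mp^n(R)\subseteq\mathcal{Y}$.
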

\begin{proof}
	Note that every module of finite projective dimension satisfies in all of the above conditions (see \cite[Theorem 4.13]{AB} for the case (v) and \cite[Theorem 1.4]{AGP} for the case (vii)). Hence the assertions (i)-(vi) follow from Proposition \ref{p}(ii) and Example \ref{exaa}. For the proof of part (vii), we may assume that $M$ is linked to $N$ in two steps. Hence there exists an $R$-module $C$ such that $M\sim C\sim N$. As we have seen in the proof of Proposition \ref{p}, there exist the exact sequences $0\to\Ext^n_R(C,K)\to X\to M\to0$ and $0\to\Ext^n_R(C,K)\to Y\to N\to0$
	where $X,Y\in\mathcal{P}^n(R)$. Now the assertion follows from \cite[Lemma 3.6]{Wa}.
\end{proof}
An ideal $\fa$ is said to be {\it grade-unmixed} provided $R/\fa$ is a grade-unmixed $R$-module. In the following we consider the linkage for cyclic modules and compare it with the classical linkage.
\begin{thm}\label{t}
	Let $I$ be a grade-unmixed ideal of finite $\gkkkd$-dimension and let $\fc\subsetneq I$ be a $\gkkkd$-perfect ideal with $\gr(\fc)=n=\gr(I)$. Set $\overline{K}=\Ext^n_R(R/\fc,K)$ and $\overline{I}=I/\fc$.	
	Then $R/I$ is directly linked to $\overline{K}/(0:_{\overline{K}}\overline{I})$ with respect to $\G^n_K(\mathcal{P})$.
\end{thm}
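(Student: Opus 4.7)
The plan is to exhibit the direct linkage through the canonical surjection $\phi : R/\fc \twoheadrightarrow R/I$. Since $\fc$ is a $\gkkkd$-perfect ideal of grade $n$, the quotient $R/\fc$ belongs to $\G^n_K(\mathcal{P})$; together with $\gr_R(R/I) = \gr(I) = n$ and the non-injectivity $\ker\phi = I/\fc \neq 0$ (because $\fc \subsetneq I$), this places $\phi$ in $\Epi(\G^n_K(\mathcal{P}))$. The strategy has two main pieces: first, compute $\L_K^n(\phi)$ explicitly and identify its image as $\overline{K}/(0:_{\overline{K}}\overline{I})$; second, verify that $R/I$ is itself linked by $\phi$, so that Lemma \ref{lll}(ii) delivers the desired pair $(\phi,\L_K^n(\phi))$.

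For the first piece, I will apply $\Hom_R(-,K)$ to $0 \to I/\fc \to R/\fc \to R/I \to 0$ and read off the defining sequences (\ref{d2}.1) and (\ref{d2}.2). The key input is the vanishing $\Ext^j_R(R/\fc, K) = 0$ for $j \neq n$: the upper vanishing comes from Theorem \ref{G3}(i) applied to $R/\fc$ (whose $\gkkkd$-dimension equals $n$), and the lower vanishing is the standard fact that an $R$-regular sequence of length $n$ inside $\fc$ is also $K$-regular since $K$ is semidualizing. This licenses Lemma \ref{G1} with $\fa=\fc$, which identifies
\[
\Ext^n_R(R/I, K) \;\cong\; \Hom_{R/\fc}(R/I,\overline{K}) \;=\; (0:_{\overline{K}}\overline{I}).
\]
Consequently $\ker \L_K^n(\phi) = (0:_{\overline{K}}\overline{I})$ and $\im \L_K^n(\phi) \cong \overline{K}/(0:_{\overline{K}}\overline{I})$. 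That $\L_K^n(\phi)$ itself lies in $\Epi(\G^n_K(\mathcal{P}))$ will follow from Lemma \ref{ll1}(i), which gives $\overline{K} = \Ext^n_R(R/\fc,K)\in\G^n_K(\mathcal{P})$, together with Theorem \ref{t1}(ii), which ensures the image has grade $n$.

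For the second piece, the module $R/I$ is grade-unmixed by hypothesis and has finite $\gkkkd$-dimension, hence in particular finite $\gkkkd$-dimension on $\X^n(R)$. Corollary \ref{c1} therefore applies to $\phi$ and shows that $R/I$ is linked by $\phi$; equivalently $\phi \equiv \L_K^n(\L_K^n(\phi))$ by Corollary \ref{c}. Setting $\psi := \L_K^n(\phi) : \overline{K} \twoheadrightarrow \overline{K}/(0:_{\overline{K}}\overline{I})$, Lemma \ref{lll}(ii) then produces the equivalences $\phi \equiv \L_K^n(\psi)$ and $\psi \equiv \L_K^n(\phi)$, which is exactly the condition (Definition \ref{def4}) for $R/I = \im\phi$ and $\overline{K}/(0:_{\overline{K}}\overline{I}) = \im\psi$ to be directly linked with respect to $\G^n_K(\mathcal{P})$.

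The main technical obstacle is the identification $\Ext^n_R(R/I, K) \cong (0:_{\overline{K}}\overline{I})$, which depends entirely on confirming the hypotheses of Lemma \ref{G1} for the pair $(R/\fc,K)$; once that identification is in place, the rest of the argument is a mechanical assembly of Theorem \ref{t1}, Corollary \ref{c1}, and Lemma \ref{lll}(ii).
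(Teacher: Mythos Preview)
Your proposal is correct and follows essentially the same route as the paper: both use the canonical surjection $\phi:R/\fc\twoheadrightarrow R/I$, invoke Corollary~\ref{c1} to see that $R/I$ is linked by $\phi$, and then identify $\im\L_K^n(\phi)$ with $\overline{K}/(0:_{\overline{K}}\overline{I})$ via Lemma~\ref{G1}. Your write-up is slightly more explicit than the paper's in spelling out why Lemma~\ref{G1} applies and in invoking Lemma~\ref{lll}(ii) for the final packaging, but the underlying argument is the same.
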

\begin{proof}	
	Applying the functor $\Hom_R(-,K)$ to the exact sequence $0\rightarrow I/\fc\rightarrow R/\fc\overset{\phi}{\rightarrow} R/I\rightarrow0$ implies 
	the exact sequence
	$0\longrightarrow\Ext^n_R(R/I,K)\longrightarrow\Ext^n_R(R/\fc,K)\overset{\L_K^n(\phi)}{\longrightarrow}\im\L_K^n(\phi)\longrightarrow0$.
	Note that, by Corollary \ref{c1}, $R/I$ is linked to $\im\L_K^n(\phi)$ by $\phi$. Now from Lemma \ref{G1}, we obtain the commutative diagram
	$$\begin{CD}
	&&&&&&&&\\
	\ \ &&&& 0@>>>\Ext^n_R(R/I,K) @>>> \Ext^n_R(R/\fc,K) @>>>\im \L_K^n(\phi) @>>>0&  \\
	&&&&&& @VV{\cong}V @VV{\cong}V \\
	\ \  &&&& 0@>>>\Hom_{R/\fc}(R/I,\overline{K}) @>>> \Hom_{R/\fc}(R/\fc,\overline{K}) @>f>>\Hom_{R/\fc}(\overline{I},\overline{K}). &\\
	\end{CD}$$\\
	It follows that $\im\L_K^n(\phi)\cong\im f$. Finally, using the commutative diagram
	$$\begin{CD}
	&&&&&&&&\\
	\ \ &&&& 0@>>>\Hom_{R/\fc}(R/I,\overline{K}) @>>> \Hom_{R/\fc}(R/\fc,\overline{K}) @>>>\im f@>>>0&  \\
	&&&&&& @VV{\cong}V @VV{\cong}V \\
	\ \  &&&& 0@>>>(0:_{\overline{K}}\overline{I}) @>>> \overline{K} @>>>\overline{K}/(0:_{\overline{K}}\overline{I}) @>>>0. &\\
	\end{CD}$$\\
	we get the isomorphism $\im f\cong\overline{K}/(0:_{\overline{K}}\overline{I})$.
\end{proof}
As an immediate consequence of Theorem \ref{t} we have the following result.
\begin{cor}\label{c5}
	Let $R$	be a Cohen-Macaulay local ring with dualizing  module $\omega_R$ and let $I$ be an unmixed ideal. Assume that $\fc\subseteq I$ is a Cohen-Macaulay ideal with $\gr(\fc)=n=\gr(I)$. Set $S=R/\fc$, $\omega_S=\Ext^n_R(S,\omega_R)$ and $\overline{I}=I/\fc$.
	Then  $R/I$ is directly linked to $\omega_S/(0:_{\omega_S}\overline{I})$ with respect to $\cm^n(R)$. In particular, if $\fc$ is a Gorenstein ideal, then $R/I$ is directly linked to $R/(\fc:I)$ with respect to $\cm^n(R)$.
\end{cor}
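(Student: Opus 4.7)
The plan is to apply Theorem~\ref{t} with the dualizing module $\omega_R$ playing the role of the semidualizing module $K$. Three simplifications make this immediate. First, since $\omega_R$ is dualizing, every finitely generated $R$-module has finite $\gkkkd$-dimension (with $K=\omega_R$), so the hypothesis that $I$ has finite $\gkkkd$-dimension holds automatically. Second, over a Cohen-Macaulay ring $R$, an ideal is grade-unmixed exactly when it is unmixed, so the hypothesis on $I$ is precisely the standing assumption. Third, a finitely generated $R$-module is $\gkkkd$-perfect of grade $n$ (with $K=\omega_R$) if and only if it is Cohen-Macaulay of codimension $n$, so $\G^n_{\omega_R}(\mathcal{P})=\cm^n(R)$, and in particular a $\gkkkd$-perfect ideal is the same as a Cohen-Macaulay ideal. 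With these identifications, Theorem~\ref{t} yields the first assertion at once, identifying $\overline{K}$ in that theorem with $\omega_S$ here.

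For the ``in particular'' part, I would use that $\fc$ being a Gorenstein ideal means that the quotient $S=R/\fc$ is a Gorenstein local ring, and hence the $S$-dualizing module $\omega_S=\Ext^n_R(S,\omega_R)$ is isomorphic to $S$ itself as an $S$-module. Fixing such an isomorphism, the submodule $(0:_{\omega_S}\overline{I})$ of $\omega_S$ corresponds to the ideal $(0:_S\overline{I})$ of $S$, which is exactly $(\fc:_R I)/\fc$. Passing to the quotient yields
$$\omega_S/(0:_{\omega_S}\overline{I}) \;\cong\; S/\bigl((\fc:_R I)/\fc\bigr) \;\cong\; R/(\fc:_R I),$$
as required.

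I do not expect any serious obstacle, since the argument reduces to unpacking definitions and invoking Theorem~\ref{t}. The only mild point of care is to check that the $S$-module isomorphism $\omega_S\cong S$, available because $S$ is Gorenstein, correctly converts the $\omega_S$-annihilator of $\overline{I}$ into the $S$-annihilator of $\overline{I}$; but this is an immediate consequence of the fact that multiplication by a fixed generator of the free rank-one $S$-module $\omega_S$ is $S$-linear.
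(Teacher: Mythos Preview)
Your proposal is correct and follows essentially the same approach as the paper: both deduce the corollary by specializing Theorem~\ref{t} to $K=\omega_R$, using that over a Cohen--Macaulay local ring with dualizing module the notions of grade-unmixed/unmixed and $\gkkkd$-perfect/Cohen--Macaulay coincide, so that $\G^n_{\omega_R}(\mathcal{P})=\cm^n(R)$. Your treatment of the ``in particular'' clause, identifying $\omega_S\cong S$ when $\fc$ is Gorenstein and tracing $(0:_{\omega_S}\overline{I})$ to $(\fc:_R I)/\fc$, is exactly the intended unpacking and is more detailed than what the paper records.
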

Let $R$ be a Gorenstein local ring and let $I$ be an ideal of grade $n$.
It follows from Corollary \ref{c5} that $I$ is linked by a Gorenstein ideal if and only if $R/I$ is linked with respect to $\cm^n(R)$ in the sense of the new definition.
The following is a generalization of \cite[Proposition 6]{MS} and \cite[Corollary 4.2]{Na}.
\begin{prop}\label{p2}
Let $\mx$ be an $n$-reflexive subcategory with respect to $K$ and let $M$, $N$ be $R$-modules that are directly linked with respect to $\mx$. The following statements hold true.
\begin{enumerate}[\rm(i)]
\item $\Ass_R(M)\cup\Ass_R(N)\subseteq\{\fp\in\Spec R \mid\depth R_\fp = n\}$.
\item If $R$ satisfies the Serre's conditions $(S_{n+1})$ and $\mx\subseteq\G^n_K(\mp)$, then
$$\Ass_R(M)\cup\Ass_R(N)=\{\fp\in\Supp_R(M)\cup\Supp_R(N)\mid\depth R_\fp = n\}.$$
\end{enumerate}
\end{prop}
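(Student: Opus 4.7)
For (i), the plan is to read grade-unmixedness directly off the $\L_K^n$ construction. Since $M$ and $N$ are directly linked via $(\phi,\psi)$, the equivalences $\phi\equiv\L_K^n(\psi)$ and $\psi\equiv\L_K^n(\phi)$ give $M\cong\im\L_K^n(\psi)$ and $N\cong\im\L_K^n(\phi)$. Theorem~\ref{t1}(ii) then forces both $M$ and $N$ to be grade-unmixed of grade $n$, and every associated prime of a grade-unmixed module of grade $n$ satisfies $\depth R_\fp=n$ by definition.

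For (ii), the inclusion $\subseteq$ is immediate from (i). For the reverse inclusion, the plan is to package the linkage data into two short exact sequences and then feed them to the depth lemma. Because $M$ and $N$ are linked, Corollary~\ref{c} makes $\eta_K^R(M)$ and $\eta_K^R(N)$ injective, so Theorem~\ref{t1}(iv) yields presentations
\begin{equation*}
0\to\Ext^n_R(N,K)\to X\to M\to 0,\qquad 0\to\Ext^n_R(M,K)\to Y\to N\to 0,
\end{equation*}
with $X,Y\in\mx\subseteq\G^n_K(\mp)$. Fix $\fp\in\Supp_R(M)\cup\Supp_R(N)$ with $\depth R_\fp=n$; by symmetry assume $\fp\in\Supp_R(M)\subseteq\Supp_R(X)$. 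Since $X$ is $\gkkkd$-perfect of grade $n$, the localization $X_\fp$ is $\mathcal{G}_{K_\fp}$-perfect of grade $n$ over $R_\fp$ (a regular sequence of length $n$ inside $\ann_R(X)$ stays regular after localizing at $\fp$, so $\gr_{R_\fp}(X_\fp)\geq n$, while $\gkdp_{R_\fp}(X_\fp)\leq\gkkd_R(X)=n$). Theorem~\ref{G3}(iii) then gives $\depth_{R_\fp}(X_\fp)=0$, so $\fp\in\Ass_R(X)$. Applying the depth lemma to the first sequence localized at $\fp$ now pins $\fp$ into $\Ass_R(M)\cup\Ass_R(\Ext^n_R(N,K))$.

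The main obstacle is the second alternative, $\fp\in\Ass_R(\Ext^n_R(N,K))$, which is exactly where the hypothesis $(S_{n+1})$ on $R$ enters. Since $\depth R_\fp=n$, Serre's condition forces $\htt\fp\leq n$. On the other hand, $\Supp_R(\Ext^n_R(N,K))\subseteq\Supp_R(N)$, so $\fp\supseteq\ann_R(N)$, and combined with $\gr_R(N)=n$ (from part (i)) this gives $\htt\fp\geq\htt(\ann_R(N))\geq\gr_R(\ann_R(N))=n$. Consequently $\htt\fp=n$, exhibiting $\fp$ as a minimal prime of $\Supp_R(N)$, and hence $\fp\in\Ass_R(N)$ by the standard fact that minimal primes of the support are associated. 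The symmetric case $\fp\in\Supp_R(N)$ is handled identically using the second exact sequence.
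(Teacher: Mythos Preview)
Your proof is correct and, for part~(ii), follows essentially the same route as the paper: both localize a linkage exact sequence at~$\fp$, use $\gkkkd$-perfection of the middle term to force $\depth_{R_\fp}(X_\fp)=0$, and then invoke $(S_{n+1})$ together with $\gr_R=n$ to squeeze $\htt\fp=n$ when $\fp$ lands in the $\Ext$-term. The only cosmetic difference is that you set up two exact sequences and argue by symmetry, whereas the paper handles both cases $\fp\in\Supp_R(M)$ and $\fp\in\Supp_R(N)$ with the single sequence $0\to\Ext^n_R(M,K)\to X\to N\to 0$ (which requires the extra observation that $\fp\in\Supp_R(M)$ with $\depth R_\fp=n$ already forces $\fp\in\Supp_R(\Ext^n_R(M,K))$); your version sidesteps that small subtlety.

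For part~(i) you take a genuinely shorter path: rather than redoing the associated-primes computation via the embedding $M\hookrightarrow\Hom_S(\Hom_S(M,\overline{K}),\overline{K})$ and tracing through $\Ass_S(\overline{K})=\Ass S$ as the paper does, you simply observe that $M\cong\im\L_K^n(\psi)$ and $N\cong\im\L_K^n(\phi)$ and invoke Theorem~\ref{t1}(ii), which already packages exactly that argument. This is a legitimate and cleaner reuse of established machinery.
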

\begin{proof} 
(i). Let $\fp\in\Ass_R(M)$ and let ${\bf x}$ be an ideal generated by a regular sequence of length $n$ contained in $\ann_R(M)$. Set $S=R/{\bf x}$, 
$\overline{K}=K\otimes_RS$ and $\overline{\fp}=\fp/{\bf x}$ so that $\overline{K}$ is a semidualizing $S$-module. It follows from Lemma
\ref{l5} and Corollary \ref{c} that 
$\Ext^1_S(\Tr_{\overline{K}}M,S)=0$. In other words, the natural homomorphism
$M\rightarrow\Hom_S(\Hom_S(M,\overline{K}),\overline{K})$ is injective (see  (\ref{d3}.3)). It follows from \cite[Proposition 9.A]{Ma1} that
$$\overline{\fp}\in\Ass_S(M)\subseteq\Ass_S(\Hom_S(\Hom_S(M,\overline{K}),\overline{K}))\subseteq \Ass_S(\overline{K})=\Ass S.$$
Therefore $\depth R_\fp-n=\depth S_{\overline{\fp}}=0$. By symmetry, $\Ass_R(N)\subseteq\{\fp\in\Spec R\mid\depth R_\fp=n\}$.

(ii). By Theorem \ref{t1}(iv)
and Corollary \ref{c}, one gets the exact sequence
\begin{equation}\tag{\ref{p2}.1}
0\to\Ext^n_R(M,K)\to X \to N\to 0,
\end{equation}
where $X\in\mx$. Let $\fp\in\Supp_R(M)\cup\Supp_R(N)$ such that $\depth R_\fp =n$. It follows from the exact sequence
(\ref{p2}.1) that $\fp\in\Supp_R(X)$. As $X\in\G^n_K(\mp)$, we have $n=\gkkpd_{R_\fp}(X_\fp)=\depth R_\fp-\depth_{R_\fp}(X_\fp)$ by Theorem \ref{G3}(iii). Therefore $\depth_{R_\fp}(X_\fp)=0$ and so $\fp\in\Ass_R(X)\subseteq\Ass_R(N)\cup\Ass_R(\Ext^n_R(M,K))$, where the inclusion follows from the exact sequence (\ref{p2}.1). If $\fp\in\Ass_R(N)$, then we have nothing to prove so let 
$\fp\in\Ass_R(\Ext^n_R(M,K))$. As $R$ satisfies $(S_{n+1})$ and $\gr_R(M)=n$, it is clear that $\fp\in\Min_R(M)\subseteq\Ass_R(M)$. Now the assertion follows from part (i). 
\end{proof}
\section{$\mathcal{G}_K$-perfect linkage}
In this section, we investigate the $\mathcal{G}_K$-perfect linkage which is a generalization of Gorenstein linkage. 
It is shown that several results known for Gorenstein linkage are still true in the more general case of $\mathcal{G}_K$-perfect linkage.
We prove our first main result, Theorem \ref{it}), in this section. Throughout this section, 
unless otherwise specified,  $\mx_n=\G_K^n(\mathcal{P})$ is the subcategory of $\md R$ consisting of all $\gkkkd$-perfect $R$-modules of grade $n$.
Note that if $K$ is a dualizing module, then $\mx_n=\cm^n(R)$, the category of Cohen-Macaulay $R$-modules of codimension $n$.
We first review some facts about the $\mathcal{G}_K$-perfect linkage.
\begin{eg}\label{e}
	The following statements hold true.
	\begin{enumerate}[(i)]
		\item {\it Every two $\gkkkd$-perfect modules are linked directly. In particular, $M$ is directly linked to $\overset{t}{\oplus}M$ for every $\gkkkd$-perfect module $M$ and an integer $t\geq0$.} 
		
		See Example \ref{ee}(i).
		\item {\it Let $M$ be a grade-unmixed $R$-module of finite $\gkkkd$-dimension. Assume that $I\subseteq\ann_R(M)$ is a $\gkkkd$-perfect ideal of $R$ such that $\gr(I)=n=\gr(M)$. If $\phi:F\twoheadrightarrow M$
		is a non-injective epimorphism where $F$ is an free $R/I$-module, then $M$ is linked by $\phi$ with respect to $\mx_n$.} 
	
	This is an immediate consequence of Corollary \ref{c1} and Example \ref{ex}(i).
		\item {\it Let $R$ be a Cohen-Macaulay local ring with dualizing module $\omega$ and let $M$ be an unmixed $R$-module. Assume that $I\subseteq\ann_R(M)$ is a Cohen-Macaulay ideal with the same grade $n$. If $\phi:F\twoheadrightarrow M$
		is a non-injective epimorphism where $F$ is a free $R/I$-module, then $M$ is linked by $\phi$ with respect to $\cm^n(R)$.} 
	
	This is a special case of part (ii).
		\item {\it Let $R$ be a Cohen-Macaulay local ring with dualizing module $\omega$ and let $M$ be an unmixed $R$-module. Assume that $0\to Y\to X\overset{\phi}{\to}M\to0$ is a maximal Cohen-Macaulay approximation of $M$, where $X$ is maximal Cohen-Macaulay $R$-module and $\id_R(Y)<\infty$ (see \cite{AuBu}). Let $I\subseteq\ann_R(M)$ be a complete intersection ideal with the same grade $n$ such that $\psi=\phi\otimes_RR/I$ is not injective. Then $M$ is linked by $\psi$ with respect to $\cm^n(R)$.}
		
		 This is an immediate consequence of Corollary \ref{c1} and Example \ref{ex}(iii).
	\end{enumerate}	
\end{eg}
\begin{lem}\label{ll22}
For every $\phi\in\Epi(\mx_n)$, $\coker(\eta_{K}^R(\im \phi))\cong\Ext^{n+1}_R(\im\L_K^n(\phi),K)$.
\end{lem}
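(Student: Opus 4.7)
The plan is to dualize the short exact sequence defining $\im\L_K^n(\phi)$ and identify the connecting map with $\eta_K^R(\im\phi)$ composed with the surjection $\phi$.

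First, I would write $M = \im\phi$ and start from the exact sequence
$$0 \to \Ext^n_R(M,K) \to \Ext^n_R(X,K) \to \im\L_K^n(\phi) \to 0$$
coming from (\ref{d2}.1). Since $X \in \mx_n = \G_K^n(\mathcal{P})$, Lemma \ref{ll1}(i) tells us that $\Ext^n_R(X,K)$ is also $\gkkkd$-perfect of grade $n$, so $\Ext^{n+1}_R(\Ext^n_R(X,K),K) = 0$. Applying $\Hom_R(-,K)$ to the short exact sequence above and using this vanishing yields a four-term exact sequence
$$0 \to \Ext^n_R(\im\L_K^n(\phi),K) \to \Ext^n_R(\Ext^n_R(X,K),K) \xrightarrow{\phi''} \Ext^n_R(\Ext^n_R(M,K),K) \to \Ext^{n+1}_R(\im\L_K^n(\phi),K) \to 0,$$
where $\phi'' = \Ext^n_R(\Ext^n_R(\phi,K),K)$.

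Next, I would invoke Lemma \ref{ll1}(ii), which gives the isomorphism $\eta_K^R(X): X \xrightarrow{\cong} \Ext^n_R(\Ext^n_R(X,K),K)$. Using naturality of $\eta_K^R(-)$ (exactly the commutative square used in the proof of Theorem \ref{t1}(iii)), one has $\phi'' \circ \eta_K^R(X) = \eta_K^R(\im\phi) \circ \phi$. Since $\eta_K^R(X)$ is an isomorphism and $\phi$ is surjective, the image of $\phi''$ coincides with $\im(\eta_K^R(\im\phi))$, hence the cokernel of $\phi''$ is exactly $\coker(\eta_K^R(\im\phi))$.

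Reading off the tail of the four-term exact sequence now gives $\Ext^{n+1}_R(\im\L_K^n(\phi),K) \cong \coker(\phi'') \cong \coker(\eta_K^R(\im\phi))$, which is the desired isomorphism. The only mildly delicate step is the identification of $\phi''$ with $\eta_K^R(\im\phi) \circ \phi$ up to the isomorphism $\eta_K^R(X)$; this is a naturality argument already implicit in Theorem \ref{t1}(iii), so no genuine obstacle is expected.
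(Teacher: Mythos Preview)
Your proof is correct and follows essentially the same route as the paper's: dualize the short exact sequence (\ref{d2}.1), use that $\Ext^n_R(X,K)\in\G_K^n(\mathcal{P})$ so the relevant $\Ext^{n+1}$ vanishes, and then read off the cokernel via the naturality square for $\eta_K^R$. The paper additionally treats the degenerate case $\ker\phi=0$ separately, but your argument handles it uniformly since both sides vanish.
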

\begin{proof}
First note that if $\phi$ is an isomorphism, then $\coker(\eta_{K}^R(\im \phi))=0=\Ext^{n+1}_R(\im\L_K^n(\phi),K)$. Hence we may assume that $\ker\phi\neq0$ . Consider the exact sequence $0\to\ker\phi\to X\overset{\phi}{\to}\im\phi\to0$, where $X\in\mx_n$.
Applying the functor $(-)^{\triangledown}=\Hom_R(-,K)$ implies the following exact sequence  (see \ref{d2}.1).
\begin{equation}\tag{\ref{ll22}.1}
0\to\Ext^n_R(\im\phi,K)\to\Ext^n_R(X,K)\overset{\L_K^n(\phi)}{\longrightarrow}\im\L_K^n(\phi)\to0.
\end{equation}
By Lemma \ref{ll1}, $\Ext^n_R(X,K)\in\mx_n$. In particular, by Theorem \ref{G3}(i), $\Ext^i_R(\Ext^n_R(X,K),K)=0$ for all $i>n$.
Again applying the functor $(-)^{\triangledown}$ to (\ref{ll22}.1) implies the commutative diagram
$$\begin{CD}
\\&&&&X@>\phi>> \im\phi@>>>0&  \\
&&&& @VV{\eta_K^R(X)}V @VV{\eta_K^R(\im\phi)}V \\
\ \  &&&& \Ext^n_R(\Ext^n_R(X,K),K)@>\phi'>>\Ext^n_R(\Ext^n_R(\im\phi,K),K) @>f>> \Ext^{n+1}_R(\im\L_K^n(\phi),K)@>>>0, &\\
\end{CD}$$\\
where $\phi'=\Ext^n_R(\Ext^n_R(\phi,K),K)$. As $X\in\mx_n$, $\eta_K^R(X)$ is an isomorphism. It follows from the commutativity of the above diagram that $\ker(f)=\im(\phi')=\im(\eta_K^R(\im\phi))$ and so
$\coker(\eta_{K}^R(\im \phi))\cong\Ext^{n+1}_R(\im\L_K^n(\phi),K)$.	
\end{proof}
\begin{cor}\label{l1}
Let $M$ and $N$ be $R$-modules which are directly linked with respect to $\mx_n$. Then the following statements hold true.
	\begin{enumerate}[\rm(i)]
		\item There exists the exact sequence
		$0\to M\overset{\eta_K^R(M)}{\longrightarrow}\Ext^n_R(\Ext^n_R(M,K),K)\to\Ext^{n+1}_R(N,K)\to0.$
		\item $\Ext^{n+i}_R(N,K)\cong\Ext^{n+i-1}_R(\Ext^{n}_R(M,K),K)$ for all $i>1$.
	\end{enumerate}	
\end{cor}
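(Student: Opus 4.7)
The plan is to deduce both parts from the defining short exact sequence (\ref{d2}.1) for $\L_K^n(\phi)$ together with the fact that the source object in that sequence lies in $\mx_n$. Since $M$ and $N$ are directly linked with respect to $\mx_n$, there exist $\phi,\psi\in\Epi(\mx_n)$ with $M=\im\phi$, $N=\im\psi$, and $\psi\equiv\L_K^n(\phi)$; in particular $N\cong\im\L_K^n(\phi)$. Let $X\in\mx_n$ denote the source of $\phi$, so that (\ref{d2}.1) takes the form
\begin{equation*}
0\to\Ext^n_R(M,K)\to\Ext^n_R(X,K)\to N\to 0. \tag{$\ast$}
\end{equation*}

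The key preparatory observation is that, by Lemma \ref{ll1}(i), the module $\Ext^n_R(X,K)$ also lies in $\mx_n=\G^n_K(\mp)$, hence is $\gkkkd$-perfect of grade $n$. Theorem \ref{G3}(i) therefore yields $\Ext^j_R(\Ext^n_R(X,K),K)=0$ for every $j>n$, while Lemma \ref{ll1}(ii) provides $\Ext^n_R(\Ext^n_R(X,K),K)\cong X$.

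For part (i), I would simply apply Lemma \ref{ll22} to $\phi$. This gives the isomorphism $\coker(\eta_K^R(M))\cong\Ext^{n+1}_R(\im\L_K^n(\phi),K)\cong\Ext^{n+1}_R(N,K)$. Because $M$ is linked by $\phi$, Corollary \ref{c} guarantees that $\eta_K^R(M)$ is injective. Combining the injectivity with the cokernel computation stitches together into the asserted four-term exact sequence.

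For part (ii), I would apply $\Hom_R(-,K)$ to ($\ast$) and inspect the resulting long exact Ext sequence. For every $j\geq n+1$ both $\Ext^j_R(\Ext^n_R(X,K),K)$ and $\Ext^{j+1}_R(\Ext^n_R(X,K),K)$ vanish by the preparatory observation, so the connecting homomorphism furnishes an isomorphism $\Ext^j_R(\Ext^n_R(M,K),K)\cong\Ext^{j+1}_R(N,K)$. Specializing to $j=n+i-1$ with $i>1$ yields the stated formula. The only delicate point is identifying the cokernel in part (i) with $\Ext^{n+1}_R(N,K)$ \emph{via} the canonical map $\eta_K^R(M)$ (rather than the composite $\eta_K^R(M)\circ\phi$ that comes out of the long exact sequence of ($\ast$)), and this is precisely the content of Lemma \ref{ll22}; so the remaining work is routine.
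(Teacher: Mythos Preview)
Your proposal is correct and follows essentially the same approach as the paper. For part (i) you invoke Lemma \ref{ll22} and Corollary \ref{c} exactly as the paper does; for part (ii) you apply $\Hom_R(-,K)$ to the short exact sequence $(\ast)$ coming from (\ref{d2}.1), whereas the paper obtains the analogous sequence via Theorem \ref{t1}(iv) and Corollary \ref{c}---but since $\psi\equiv\L_K^n(\phi)$ forces the source of $\psi$ to be isomorphic to $\Ext^n_R(X,K)$, the two sequences coincide and the remaining Ext-vanishing argument is identical.
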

\begin{proof}
	(i). By definition, there exists $\phi\in\Epi(\mx_n)$ such that $M=\im(\phi)$ and $N\cong\im\L_K^n(\phi)$. Now the assertion follows from Lemma \ref{ll22} and Corollary \ref{c}.

	(ii). By Theorem \ref{t1}(iv) and Corollary \ref{c}, we have the exact sequence
	\begin{equation}\tag{\ref{l1}.1}
	0\to\Ext^n_R(M,K)\to X\to N\to0,
	\end{equation}	
	where $X\in\mx_n=\G_K^n(\mathcal{P})$.
	Applying the functor $(-)^{\triangledown}=\Hom_R(-,K)$ to the exact sequence (\ref{l1}.1) implies the long exact sequence
   \begin{equation}\tag{\ref{l1}.2}
	\cdots\to\Ext^i_R(X,K)\to\Ext^i_R(\Ext^n_R(M,K),K)\to\Ext^{i+1}_R(N,K)\to\Ext^{i+1}_R(X,K)\to\cdots.
   \end{equation}
Note that, by Theorem \ref{G3}(i), $\Ext^i_R(X,R)=0$ for all $i>n$. Hence the assertion follows from the exact sequence (\ref{l1}.2).
\end{proof}
Recall that an $R$-module $M$ is called $\mathcal{C}$-{\it perfect} provided that $\gr_R(M)=\cid_R(M)$. Therefore, over complete intersection local rings, the category of $\mathcal{C}$-perfect modules coincides with the category of Cohen-Macaulay modules. Note that $M$ is $\mathcal{C}$-perfect if and only if it is $\mathcal{G}_R$-perfect and has finite complete intersection dimension (see \cite{AGP} for more details). We denote by $\C(\mathcal{P})$ the category of
$\mathcal{C}$-perfect modules.
\begin{lem}\label{ll2}
	Let $M\in\C(\mathcal{P})$ be an $R$-module of grade $n$. Then
	$\Ext^n_R(M,R)\in\C(\mathcal{P})$.
\end{lem}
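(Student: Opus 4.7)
The plan is to unravel the definition of complete intersection dimension via a quasi-deformation and reduce the statement to a classical fact about perfect modules over regular enough rings.

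Since $M$ is $\mathcal{C}$-perfect of grade $n$, we have $\cid_R(M)=n$. Thus there is a quasi-deformation $R\xrightarrow{f} R' \xleftarrow{\pi} Q$, where $f$ is faithfully flat and $\pi$ is surjective with $\ker\pi$ generated by a $Q$-regular sequence $\boldsymbol{x}=x_1,\ldots,x_c$, such that $\pd_Q(M')<\infty$, where $M':=M\otimes_R R'$. By the defining formula, $\pd_Q(M')=\cid_R(M)+\pd_Q(R')=n+c$. First I would check that $M'$ is in fact perfect as a $Q$-module of grade $n+c$: since $R\to R'$ is flat, $\gr_{R'}(M')=\gr_R(M)=n$, and since $\boldsymbol{x}$ is $Q$-regular and annihilates $M'$, $\gr_Q(M')=\gr_{R'}(M')+c=n+c=\pd_Q(M')$.

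Next I would transfer the information through the duality. By Lemma \ref{G1} applied with the ideal $(\boldsymbol{x})\subseteq Q$ (noting $\Ext^j_Q(R',Q)=0$ for $j\neq c$ and $\Ext^c_Q(R',Q)\cong R'$ because $\boldsymbol{x}$ is regular), one obtains a natural isomorphism
\[
\Ext^{n+c}_Q(M',Q)\;\cong\;\Ext^n_{R'}(M',\Ext^c_Q(R',Q))\;\cong\;\Ext^n_{R'}(M',R').
\]
Combined with the flat base change isomorphism $\Ext^n_R(M,R)\otimes_R R'\cong\Ext^n_{R'}(M',R')$, this yields
\[
\Ext^n_R(M,R)\otimes_R R'\;\cong\;\Ext^{n+c}_Q(M',Q).
\]

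Now I would invoke the classical fact that $\Ext^{n+c}_Q(M',Q)$ is a perfect $Q$-module of grade $n+c$ (as $M'$ is perfect over $Q$ of grade $n+c$), so its projective dimension over $Q$ equals $n+c$. Consequently, using the same quasi-deformation $R\to R'\leftarrow Q$ for the module $N:=\Ext^n_R(M,R)$,
\[
\cid_R(N)\;\leq\;\pd_Q(N\otimes_R R')-\pd_Q(R')\;=\;(n+c)-c\;=\;n.
\]
On the other hand, $\gr_R(N)=n$ by Lemma \ref{l} (applied with $K=R$), and $\cid_R(N)\geq\gr_R(N)=n$ since $\gr$ is a lower bound for any homological dimension of finite value. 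Therefore $\cid_R(N)=n=\gr_R(N)$, which is exactly the statement that $N=\Ext^n_R(M,R)$ is $\mathcal{C}$-perfect.

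The main obstacle is the identification in the middle step: verifying that the Koszul-type change-of-rings isomorphism $\Ext^{n+c}_Q(M',Q)\cong\Ext^n_{R'}(M',R')$ applies, which needs the correct vanishing $\Ext^{j\neq c}_Q(R',Q)=0$ and the triviality $\Ext^c_Q(R',Q)\cong R'$ guaranteed by regularity of $\boldsymbol{x}$; once this bookkeeping is in place, the computation of $\cid_R(N)$ follows immediately from the definition.
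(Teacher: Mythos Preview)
Your argument is correct and follows a genuinely different route from the paper. The paper first observes (via Lemma~\ref{ll1} and \cite[Theorem~1.4]{AGP}) that $\Ext^n_R(M,R)$ is automatically $\mathcal{G}_R$-perfect, so the only issue is finiteness of $\cid_R(\Ext^n_R(M,R))$; it then stays inside $\md R$, realising $\Ext^n_R(M,R)$ as the kernel in a short exact sequence $0\to\Ext^n_R(M,R)\to\Tr\Omega^{n-1}M\to X\to 0$ with $\pd_R(\Tr\Omega^{n-1}M)<\infty$ and $X$ stably isomorphic to $\Omega\Tr\Omega^nM$ (hence $\cid_R(X)=0$), and concludes by the two-out-of-three property for finite CI-dimension. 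Your approach instead passes through a quasi-deformation realising $\cid_R(M)=n$, identifies $\Ext^n_R(M,R)\otimes_R R'$ with the $Q$-dual of the perfect $Q$-module $M'$ via the Rees change-of-rings isomorphism, and reads off perfectness of this dual directly. Your method is arguably more transparent and avoids the transpose--syzygy bookkeeping; the paper's method has the virtue of never leaving $\md R$. One small point of exposition: you should make explicit that you choose a quasi-deformation \emph{achieving} the infimum $\cid_R(M)=n$, so that $\pd_Q(M')=n+c$ and $M'$ is genuinely perfect over $Q$; an arbitrary quasi-deformation with $\pd_Q(M')<\infty$ would not give perfectness of $M'$, and then the dual need not have finite projective dimension.
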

\begin{proof}
	By Lemma \ref{ll1} and \cite[Theorem 1.4]{AGP}, $\Ext^n_R(M,R)$	is a $\mathcal{G}_R$-perfect module. Hence it is enough to show that $\cid_R(\Ext^n_R(M,R))<\infty$.
	If $n=0$, then the assertion follows from \cite[Lemma 3.5]{BJ}. Hence we may assume that $n>0$. Applying the functor $(-)^*=\Hom_R(-,R)$ to a projective resolution ${\bf P}\to M\to0$ of $M$ implies the exact sequences
     \begin{equation}\tag{\ref{ll2}.1}
	0\to P^*_0\to P^*_1\to\cdots\to P^*_n\to\Tr\Omega^{n-1}M\to0,
	\end{equation}
	\begin{equation}\tag{\ref{ll2}.2}
	0\to\Ext^n_R(M,R)\to\Tr\Omega^{n-1}M\to X\to0,
	\end{equation}
	where $X$ is stably isomorphic to $\Omega\Tr\Omega^nM$. It follows from \cite[Lemma 1.9]{AGP} and \cite[Lemma 3.2]{CST} that  $\cid_R(X)=0$. By the exact sequence (\ref{ll2}.1), 
	$\pd_R(\Tr\Omega^{n-1}M)<\infty$. Hence, it follows from the exact sequence (\ref{ll2}.2) that $\cid_R(\Ext^n_R(M,R))<\infty$.
\end{proof}
The following should be compared with \cite[Theorems 1,15]{MS} and \cite[Corollary 6.10]{Na}.
\begin{thm}\label{cc}
	Let $M$ and $N$ be $R$-modules which are in the same liaison class with respect to $\mx_n$. Then the following statements hold true.
	\begin{enumerate}[\rm(i)]
		\item $M$ is $\gkkkd$-perfect if and only if $N$ is $\gkkkd$-perfect .
		\item If $\mx_n=\mathcal{P}^n(R)$, then $M$ is perfect (resp. $\mathcal{C}$-perfect) if and only if $N$ is perfect (resp. $\mathcal{C}$-perfect). 
	\end{enumerate}	
\end{thm}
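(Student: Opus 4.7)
The plan is to reduce to the case that $M$ and $N$ are directly linked by induction on the number of steps in the liaison, and then exploit the exact sequence
$$0 \to \Ext^n_R(M,K) \to X \to N \to 0 \qquad (\dagger)$$
produced by Theorem \ref{t1}(iv) together with Corollary \ref{c}, with $X \in \mx_n$. For part (i), suppose $M$ is $\gkkkd$-perfect. Then $\Ext^n_R(M,K) \in \mx_n$ by Lemma \ref{ll1}(i), so both $X$ and $\Ext^n_R(M,K)$ lie in the thick subcategory of modules of finite $\gkkkd$-dimension (Example \ref{exaa}(i)); applied to $(\dagger)$, thickness yields $\gkkd_R(N) < \infty$. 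To pin down this dimension I invoke Corollary \ref{l1}: part (i) identifies $\Ext^{n+1}_R(N,K)$ with $\coker \eta_K^R(M)$, which vanishes by Lemma \ref{ll1}(ii); part (ii) gives $\Ext^{n+i}_R(N,K) \cong \Ext^{n+i-1}_R(\Ext^n_R(M,K),K)$ for $i>1$, and this vanishes because $\Ext^n_R(M,K) \in \mx_n$ is $\gkkkd$-perfect of grade $n$ (Theorem \ref{G3}(i)). Together with $\gr_R(N) = n$ (forced by the definition of $\Epi(\mx_n)$) and a second application of Theorem \ref{G3}(i), this gives $\gkkd_R(N) = n$, so $N$ is $\gkkkd$-perfect. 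The reverse implication is symmetric.

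For part (ii), assume $\mx_n = \mp^n(R)$, so $K = R$. Part (i) already shows that $N$ is $\G_R$-perfect. For the perfect case, both $X$ and $\Ext^n_R(M,R)$ are perfect of grade $n$ (the latter by Lemma \ref{ll1}(i)), hence both have finite projective dimension; applying the thickness of $\{M : \pd_R(M) < \infty\}$ (Example \ref{exaa}(ii)) to $(\dagger)$ forces $\pd_R(N) < \infty$. Finite projective dimension combined with $\G_R$-perfection yields $\pd_R(N) = \gd_R(N) = n$, so $N$ is perfect. The $\mathcal{C}$-perfect case is parallel: Lemma \ref{ll2} places $\Ext^n_R(M,R)$ in $\C(\mathcal{P})$, and $X$ being perfect already has finite complete intersection dimension, so the thickness of $\{M : \cid_R(M) < \infty\}$ (Example \ref{exaa}(viii)) applied to $(\dagger)$ gives $\cid_R(N) < \infty$. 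Combined with the $\G_R$-perfection from (i) and the characterization recorded in the excerpt that $\mathcal{C}$-perfect equals $\G_R$-perfect of finite complete intersection dimension, $N$ is $\mathcal{C}$-perfect.

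I expect the main subtlety to be the two-stage finiteness argument: neither Corollary \ref{l1} alone nor thickness alone is enough. Thickness secures finiteness of the dimension in question ($\gkkd_R(N)$, $\pd_R(N)$, or $\cid_R(N)$), and only then does the Ext-vanishing coming from Corollary \ref{l1}, together with Theorem \ref{G3}(i), let one conclude that the finite dimension equals the grade $n$. The induction on liaison length and tracking of $\gr_R(N) = n$ throughout are routine bookkeeping.
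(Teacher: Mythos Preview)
Your proposal is correct and follows essentially the same route as the paper: reduce to a direct link, use the exact sequence $(\dagger)$ from Theorem~\ref{t1}(iv) and Corollary~\ref{c}, and extract the Ext-vanishing from Corollary~\ref{l1}. The paper is marginally more economical in part~(i): rather than invoking thickness to get $\gkkd_R(N)<\infty$ and then killing \emph{all} $\Ext^{n+i}_R(N,K)$ for $i\ge 1$ via both parts of Corollary~\ref{l1}, it reads off the bound $\gkkd_R(N)\le n+1$ directly from $(\dagger)$ (both outer terms have $\gkkkd$-dimension exactly $n$), so that only the single vanishing $\Ext^{n+1}_R(N,K)=0$ from Corollary~\ref{l1}(i) is needed. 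This is a cosmetic difference, not a substantive one.

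One small imprecision in part~(ii): you cite Lemma~\ref{ll1}(i) for ``$\Ext^n_R(M,R)$ is perfect of grade $n$,'' but that lemma (with $K=R$) only yields $\G_R$-perfect. What you actually need---and what is immediate---is the classical fact that if $M$ is perfect of grade $n$ then dualizing a finite free resolution of $M$ produces a finite free resolution of $\Ext^n_R(M,R)$, so the latter is perfect as well. This does not affect the validity of your argument, since all you use downstream is $\pd_R(\Ext^n_R(M,R))<\infty$.
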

\begin{proof}
	Without loss of generality one may assume that $M$ is directly linked to $N$. Hence there exists $\phi\in\Epi(\mx_n)$ such that $N=\im(\phi)$ and $M\cong\im\L_K^n(\phi)$. By Theorem \ref{t1}(iv) and Corollary \ref{c}, it follows the exact sequence
	\begin{equation}\tag{\ref{cc}.1}
	0\to\Ext^n_R(M,K)\to X\to N\to0,
	\end{equation}	
	where $X\in\mx_n$. Suppose that $M$ is $\gkkkd$-perfect. Hence, by Lemma \ref{ll1}, $\Ext^n_R(M,K)$	is $\gkkkd$-perfect module of $\gkkkd$-dimension $n$. It follows from the exact sequence (\ref{cc}.1) and Theorem \ref{G3} that $\gkkd_R(N)\leq n+1$. As $\eta_K^R(M)$ is an isomorphism, $\Ext^{n+1}_R(N,K)=0$ by Corollary \ref{l1}. Therefore, by Theorem \ref{G3}(i), $\gr_R(N)=n=\gkkd_R(N)$.
	Similarly, one can prove the second part by using Lemma \ref{ll2}.	
\end{proof}
\begin{prop}\label{t3}
	Let $M$ and $N$ be $R$-modules that are directly linked with respect to $\mx_n$. Assume that $t>1$ is an integer such that $\gkdp_{R_\fp}(M_\fp)<\infty$ for all $\fp\in\X^{n+t-1}(R)$ (e.g. $\id_{R_\fp}(K_\fp)<\infty$ for all $\fp\in \X^{n+t-1}(R)$).
	Then the following statements are equivalent.
	\begin{enumerate}[\rm(i)]
		\item $\depth_{R_\fp}(M_\fp)\geq\min\{t,\depth R_\fp-n\}$ for all $\fp\in\Spec R$.
		\item $\Ext^i_R(N,K)=0$ for all $i$, $n+1\leq i\leq n+t-1.$
	\end{enumerate}
\end{prop}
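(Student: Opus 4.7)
The plan is to combine Corollary \ref{l4}(ii) with Corollary \ref{l1}, so that the depth statement in (i) gets converted, via the $\Ext$-duality with respect to $K$, into the vanishing statement about $N$ in (ii). Since $M$ and $N$ are directly linked with respect to $\mx_n = \G_K^n(\mathcal{P})$, both corollaries are available.

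First I would apply Corollary \ref{l4}(ii) to $M$ itself. Since $\gr_R(M) = n$ and $M$ has finite $\gkkd$-dimension on $\X^{n+t-1}(R)$, this gives that condition (i) is equivalent to the conjunction: the map $\eta_K^R(M)$ is an isomorphism, and $\Ext^j_R(\Ext^n_R(M,K),K) = 0$ for all $j$ with $n+1 \leq j \leq n+t-2$.

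Next I would invoke Corollary \ref{l1}. Part (i) provides the exact sequence
$$0 \to M \xrightarrow{\eta_K^R(M)} \Ext^n_R(\Ext^n_R(M,K),K) \to \Ext^{n+1}_R(N,K) \to 0,$$
from which $\eta_K^R(M)$ is automatically injective, so it is an isomorphism if and only if $\Ext^{n+1}_R(N,K) = 0$. Part (ii) yields isomorphisms $\Ext^{n+i}_R(N,K) \cong \Ext^{n+i-1}_R(\Ext^n_R(M,K),K)$ for all $i > 1$, which converts the vanishing $\Ext^j_R(\Ext^n_R(M,K),K) = 0$ in the range $n+1 \leq j \leq n+t-2$ into the vanishing $\Ext^i_R(N,K) = 0$ in the range $n+2 \leq i \leq n+t-1$.

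Combining the two translations, condition (i) is equivalent to $\Ext^i_R(N,K) = 0$ for the full range $n+1 \leq i \leq n+t-1$, which is condition (ii). The argument is largely bookkeeping once Corollaries \ref{l4} and \ref{l1} are in hand; the only point to watch is that the index ranges line up without gap or overlap, with $i = n+1$ coming from part (i) of Corollary \ref{l1} and $i = n+2, \ldots, n+t-1$ coming from its part (ii). I do not foresee any genuine obstacle here, since the heavy lifting — namely the exact sequence relating $M$ and $N$ through $\eta_K^R(M)$, and the characterization of the depth inequality via $\eta_K^R$ and higher $\Ext$-vanishing — has already been established in the preliminaries.
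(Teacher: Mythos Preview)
Your proposal is correct and follows exactly the paper's approach: the paper's proof is the single line ``This is an immediate consequence of Corollary \ref{l4}(ii) and Corollary \ref{l1},'' and what you have written is precisely the unpacking of that sentence, with the index bookkeeping made explicit.
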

\begin{proof}
	This is an immediate consequence of Corollary \ref{l4}(ii) and Corollary \ref{l1}.
\end{proof}
Recall that an $R$-module $M$ satisfies Serre's condition $(S_t)$ provided  
$\depth_{R_\fp}(M_\fp)\geq\min\{t,\dim_{R_\fp}(M_\fp)\}$ for all $\fp\in\Spec R$.
The following is a generalization of \cite[Theorem 4.1]{Sc}.
\begin{cor}\label{c4}
Let $(R,\fm)$ be a Cohen-Macaulay local ring which is a homomorphic image of a Gorenstein local
ring. Assume that $M$ and $N$ are $R$-modules that are directly linked with respect to $\cm^n(R)$. The
following statements are equivalent.
\begin{enumerate}[\rm(i)]
	\item $M$ satisfies the Serre's condition $(S_t)$.
	 \item $\hh^i_\fm(N)=0$ for all $i$, $\dim_R(N)-t<i<\dim_R(N)$.	
\end{enumerate}
\end{cor}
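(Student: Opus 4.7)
The plan is to combine Proposition \ref{t3} with local duality. Since $R$ is a Cohen--Macaulay homomorphic image of a Gorenstein local ring, it admits a dualizing module $\omega$; I take $K=\omega$, so that $\cm^n(R)$ is the $n$-reflexive subcategory in play (Example \ref{ex}(iii)). Because $\id_{R_\fp}(\omega_\fp)<\infty$ for every $\fp\in\Spec R$, the finiteness hypothesis of Proposition \ref{t3} is satisfied automatically.

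First I would verify that $M$ and $N$ are grade-unmixed of grade $n$. By Proposition \ref{p2}(i), every associated prime of $M$ or $N$ satisfies $\depth R_\fp=n$; since $R$ is Cohen--Macaulay and therefore catenary, this yields $\dim_{R_\fp}(M_\fp)=\dim R_\fp-n$ for all $\fp\in\Supp_R(M)$, and similarly for $N$. In particular, $\dim_R(N)=d-n$ with $d=\dim R$. This identity lets me rewrite Serre's condition $(S_t)$ on $M$ as $\depth_{R_\fp}(M_\fp)\geq\min\{t,\depth R_\fp-n\}$ for every $\fp\in\Spec R$ (the inequality being vacuous outside $\Supp_R(M)$).

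Now Proposition \ref{t3} gives that this reformulated depth condition is equivalent to $\Ext^i_R(N,\omega)=0$ for $n+1\leq i\leq n+t-1$. Local duality then supplies the isomorphism $\hh^{d-i}_\fm(N)\cong\Hom_R(\Ext^i_R(N,\omega),\E_R(k))$, and faithfulness of the Matlis dual on finitely generated modules converts the $\Ext$-vanishing into vanishing of $\hh^{d-i}_\fm(N)$ over the same range. Setting $j=d-i$ and using $\dim_R(N)=d-n$, the range $n+1\leq i\leq n+t-1$ matches exactly $\dim_R(N)-t<j<\dim_R(N)$, the range in (ii). The excluded case $t=1$ is trivial, since (ii) is then vacuous and a grade-unmixed module over a Cohen--Macaulay ring automatically satisfies $(S_1)$.

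I do not anticipate a substantive obstacle; the only delicate point is the translation between the two forms of $(S_t)$, which rests on the grade-unmixedness of the linked module together with the catenary property of the Cohen--Macaulay base ring.
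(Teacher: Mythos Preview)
Your proposal is correct and follows essentially the same route as the paper: both use Proposition~\ref{p2} to obtain $\dim_{R_\fp}(M_\fp)=\depth R_\fp-n$ for all $\fp\in\Supp_R(M)$, then invoke Proposition~\ref{t3} and local duality. Your explicit treatment of the case $t=1$ (where Proposition~\ref{t3} does not apply) is a small bit of extra care that the paper omits.
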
 
\begin{proof}
First note that, by Proposition \ref{p2} and \cite[Corollary 4.6]{AB}, $\gr_{R_\fp}(M_\fp)=n$ for all $\fp\in\Supp_R(M)$.	
As $R$ is Cohen-Macaulay, we have $\dim_{R_\fp}(M_\fp)=\dim R_\fp-\gr_{R_\fp}(M_\fp)=\depth R_\fp-n$ for all $\fp\in\Supp_R(M)$. Now the assertion follows from Proposition \ref{t3} and the local duality theorem.
\end{proof}
The property of being an $m$-syzygy and its relation to the vanishing of certain cohomology modules was studied by Bass \cite{B}. In the following we investigate the connection of being an $m$-$K$-syzygy on a linked module with the vanishing of certain cohomology modules of its linked module. 

An $R$-module $M$ of codimension zero (i.e. $\dim_R(M)=\dim R$) is called {\em maximal}. Clearly the grade of a maximal module is zero.
\begin{prop}\label{p1}
Let $M$ and $N$ be maximal $R$-modules of that are directly linked with respect to $\mx_0$. Assume that $m>1$ is an integer such that $\gkdp_{R_\fp}(M_\fp)<\infty$ for all $\fp\in\X^{m-2}(R)$ (e.g. $\id_{R_\fp}(K_\fp)<\infty$ for all $\fp\in\X^{m-2}(R)$). Then the following statements are equivalent.
\begin{enumerate}[\rm(i)]
	\item $M$ is an $m$-$K$-syzygy module.
	\item $\Ext^i_R(N,K)=0$ for all $i$, $1\leq i\leq m-1.$
\end{enumerate}
\end{prop}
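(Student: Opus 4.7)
The plan is to translate condition (ii) into a vanishing statement for $\Ext^i_R(\trk M, K)$ and then invoke Theorem \ref{th}(b).

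First, since $M$ and $N$ are directly linked with respect to $\mx_0$, Corollary \ref{c} (applied with $n=0$, so $\Omega^0 M = M$) yields $\Ext^1_R(\trk M, K) = 0$; in particular, the canonical map $\eta_K^R(M)$ is injective. Corollary \ref{l1}(i), (ii) then give the short exact sequence $0 \to M \overset{\eta_K^R(M)}{\longrightarrow} \Hom_R(\Hom_R(M, K), K) \to \Ext^1_R(N, K) \to 0$ together with isomorphisms $\Ext^{i}_R(N, K) \cong \Ext^{i-1}_R(\Hom_R(M, K), K)$ for every $i > 1$.

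Next, I would apply Lemma \ref{l5}(ii) with the zero ideal $I = (0)$, which is $\gkkkd$-perfect of grade $0$ because $R$ itself is trivially totally $K$-reflexive. With $S = R$ and $\overline{K} = K$, the lemma delivers $\coker \eta_K^R(M) \cong \Ext^2_R(\trk M, K)$ and $\Ext^i_R(\trk M, K) \cong \Ext^{i-2}_R(\Hom_R(M, K), K)$ for $i > 2$. Chaining these two computations with the previous step yields the unified shift
\[
\Ext^i_R(N, K) \cong \Ext^{i+1}_R(\trk M, K) \quad \text{for every } i \geq 1.
\]
Combined with the initial vanishing $\Ext^1_R(\trk M, K) = 0$, this shows condition (ii) is equivalent to $\Ext^i_R(\trk M, K) = 0$ for all $1 \leq i \leq m$.

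Finally, the hypothesis that $M$ has finite $\gkkkd$-dimension on $\X^{m-2}(R)$ is exactly what Theorem \ref{th}(a) and (b) require in order to identify this last vanishing with condition (i), namely that $M$ is an $m$th $K$-syzygy module. The main bookkeeping step is the boundary case $i = 1$ of the shift: the linkage hypothesis is precisely what forces the injectivity of $\eta_K^R(M)$ and thereby lets us identify $\Ext^1_R(N, K)$ with the full $\Ext^2_R(\trk M, K)$ rather than with a mere quotient, so that the vanishing ranges align cleanly with those in Theorem \ref{th}.
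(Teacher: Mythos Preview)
Your proof is correct and follows essentially the same strategy as the paper: both establish the shift $\Ext^i_R(N,K)\cong\Ext^{i+1}_R(\trk M,K)$ for $i\geq1$, observe that $\Ext^1_R(\trk M,K)=0$ comes for free from the linkage hypothesis, and then invoke Theorem~\ref{th}(a),(b). The only cosmetic difference is that the paper treats $m=2$ and $m>2$ separately and derives the shift for $i>1$ directly from the defining exact sequence $0\to M^{\triangledown}\to X\to N\to 0$ (with $X\in\mx_0$) together with (\ref{d3}.1), whereas you obtain the same isomorphisms uniformly by quoting Corollary~\ref{l1}(i),(ii) and Lemma~\ref{l5}(ii) with $I=(0)$; your packaging is slightly cleaner but the underlying computations are identical.
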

\begin{proof}
By definition, there exists homomorphism $\phi\in\Epi(\mx_0)$ such that $M\cong\im\phi$ and $N\cong\im\L_K^0(\phi)$. For $m=2$, the assertion follows from Theorem \ref{th}(b), Lemma \ref{l5}(i) and Corollary \ref{l1}(i). Suppose that $m>2$.
By definition, there exists the exact sequence
\begin{equation}\tag{\ref{p1}.1}
0\longrightarrow M^{\triangledown}\longrightarrow X\longrightarrow N\longrightarrow0,
\end{equation}
where $X\in\mx_0$ (see \ref{d2}). The exact sequences (\ref{p1}.1) and (\ref{d3}.1) induce the isomorphisms
\begin{equation}\tag{\ref{p1}.2}
\Ext^i_R(N,K)\cong\Ext^{i-1}_R(M^{\triangledown},K)\cong\Ext^{i+1}_R(\Tr_KM,K),  i>1.
\end{equation}
 Now the assertion follows from (\ref{p1}.2) and Theorem \ref{th}(b).
\end{proof}
The following Lemmas play important roles in the proof of our main results.
\begin{lem}\label{lll1}
	Let $M$ and $N$ be maximal $R$-modules of that are directly linked with respect to $\mx_0$. Then
	there exists an exact sequence $0\to N\to\lambda^{\pi}_KM\to Y\to 0$ where $\pi$ is a projective presentation of $M$ (see (\ref{d3}.2)) and $Y$
	is an $R$-module contained in $\mx_0$.
\end{lem}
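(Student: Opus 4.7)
The plan is to unpack the direct linkage into a short exact sequence $0\to M^{\triangledown}\to X^{\triangledown}\to N\to 0$, to pick a projective presentation $\pi$ of $M$ that factors through $X$, and then to compare this with the defining sequence $0\to M^{\triangledown}\to P_0^{\triangledown}\to\lambda_K^{\pi}M\to 0$ from (\ref{d3}.2) to read off the middle sequence via a quotient.

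In detail, by Definition \ref{def4}, there exists $\phi\in\Epi(\mx_0)$ with $\im\phi=M$ and $N\cong\im\L_K^0(\phi)$; write $\phi:X\twoheadrightarrow M$ with $X\in\mx_0$. Since $X$ is totally $K$-reflexive, $\Ext^1_R(X,K)=0$, and dualizing the sequence $0\to\ker\phi\to X\xrightarrow{\phi} M\to 0$ yields $0\to M^{\triangledown}\to X^{\triangledown}\to N\to 0$. I would then choose any projective surjection $\mu:P_0\twoheadrightarrow X$, set $Q:=\ker\mu$, form $\epsilon:=\phi\mu:P_0\twoheadrightarrow M$, and take any projective $P_1$ mapping onto $\ker\epsilon$ to obtain a projective presentation $\pi:P_1\xrightarrow{\partial}P_0\to M\to 0$. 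Dualizing $0\to Q\to P_0\xrightarrow{\mu}X\to 0$ (again using $\Ext^1_R(X,K)=0$) produces $0\to X^{\triangledown}\xrightarrow{\mu^{\triangledown}}P_0^{\triangledown}\to Q^{\triangledown}\to 0$, and since $\epsilon^{\triangledown}=\mu^{\triangledown}\phi^{\triangledown}$, the embedding $M^{\triangledown}\hookrightarrow P_0^{\triangledown}$ factors through $\mu^{\triangledown}$. Passing to quotients therefore gives the exact sequence
\[
0\to N\to\lambda_K^{\pi}M\to Q^{\triangledown}\to 0.
\]

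The main delicate point is to verify that $Y:=Q^{\triangledown}$ belongs to $\mx_0=\G_K^0(\mathcal{P})$. The module $Q$ is a first syzygy of the totally $K$-reflexive module $X$ sitting inside a projective, so applying $(-)^{\triangledown}$ twice to $0\to Q\to P_0\to X\to 0$ and invoking the five lemma shows that $Q$ itself is totally $K$-reflexive; in particular $\gkkd_R(Q)=0$. When $Q\neq 0$, the reflexive isomorphism $Q\cong Q^{\triangledown\triangledown}$ forces $Q^{\triangledown}\neq 0$, and because $K$ is semidualizing the ordinary grade $\gr_R(-)$ and the grade measured by $\Ext_R^{\bullet}(-,K)$ coincide, giving $\gr_R(Q)=0$. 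Hence $Q\in\mx_0$, and the $0$-reflexivity of $\mx_0$, namely its closure under $(-)^{\triangledown}$ (see Example \ref{ex}(i) and Lemma \ref{ll1}(i)), finally yields $Y=Q^{\triangledown}\in\mx_0$, as required.
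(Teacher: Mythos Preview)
Your argument is correct and follows essentially the same strategy as the paper: factor a projective presentation of $M$ through the surjection $X\twoheadrightarrow M$, dualize, and identify $Y$ as the $K$-dual of the kernel of the resulting projective surjection onto $X$, which is totally $K$-reflexive. The only difference is cosmetic---the paper starts with an arbitrary surjection $P\twoheadrightarrow M$, lifts it to $X$, and then enlarges $P$ to force surjectivity onto $X$, whereas you go straight to a projective surjection $P_0\twoheadrightarrow X$ and compose with $\phi$; both constructions produce the same short exact sequence with $Y\cong(\ker\mu)^{\triangledown}$.
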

\begin{proof}
	By definition, there exists an epimorphism $\phi:X\twoheadrightarrow M$ in $\Epi(\mx_0)$ such that $M=\im\phi$ and $N\cong\im\L^0_K(\phi)$. Take an epimorphism $f:P\twoheadrightarrow M$ where $P$ is projective. Hence there exists a homomorphism $g:P\to X$ such that  $f=\phi\circ g$. Let $h:Q\twoheadrightarrow\coker(g)$ be an epimorphism where $Q$ is projective. It induces a homomorphism $\upsilon:Q\to X$ such that $h=j\circ\upsilon$ where $j:X\twoheadrightarrow\coker(g)$ is the natural epimorphism. Set $P_0=P\oplus Q$ and define homomorphisms $\alpha=\left(f\ \ \phi\circ\upsilon \right):P_0\to M$ and $\psi=\left(g \ \ \upsilon \right):P_0\to X$. It is easy to see that $\alpha$ and $\psi$ are epimorphisms and $\phi\circ\psi=\alpha$. Hence we obtain the commutative diagram
	\[\xymatrix{
		0 \ar[rr] &&\ker\alpha\ar@{-->}[d]^{\iota}\ar[rr] &&P_0\ar@{->>}[d]^{\psi}\ar[rr]^{\alpha} &&M\ar@{=}[d]\ar[rr]&&0	\\
		0 \ar[rr] &&\ker\phi\ar[rr] &&X\ar[rr]^{\phi} &&M\ar[rr]&&0.	
	}\]
	Applying the functor $(-)^{\triangledown}$ to the diagram implies the commutative diagram (see (\ref{d3}.2) and (\ref{d2}.1))
	\[\xymatrix{
		0 \ar[rr] &&M^{\triangledown}\ar@{=}[d]\ar[rr] &&X^{\triangledown}\ar@{^{(}->}[d]^{\psi^{\triangledown}}\ar[rr]^{\L^0_K(\phi)} &&\im\L^0_K(\phi)\ar@{-->}[d]^{\iota'}\ar[rr]&&0	\\
		0 \ar[rr] &&M^{\triangledown}\ar[rr] &&P_0^{\triangledown}\ar[rr] &&\lambda^{\pi}_KM\ar[rr]&&0,	
	}\]
	where $\pi: P_1\to P_0\overset{\alpha}{\to}M\to0$ is a projective presentation of $M$, induced by $\alpha$. As $X$ is totally $K$-reflexive, so is $\ker(\psi)$. Hence $\coker(\psi^{\triangledown})\cong(\ker\psi)^\triangledown$ is totally $K$-reflexive. It follows from the snake lemma that $\iota'$ is injective
	and $\coker(\psi^{\triangledown})\cong\coker(\iota')$. Note that $N\cong\im\L^0_K(\phi)$. Therefore we obtain an exact sequence of the form $0\to N\to\lambda^\pi_KM\to Y\to 0$, where $Y$ is a totally $K$-reflexive module as desired.
\end{proof}
\begin{lem}\label{l6}
	Let $M$ and $N$ be $R$-modules which are directly linked with respect to $\mx_n$. Then there exists an ideal ${\bf x}$ of $R$, generated by a regular sequence of length $n$ contained in $\ann_R(M)\cap\ann_R(N)$, such that $M$ and $N$ are directly linked with respect to the category of totally $C$-reflexive $R/{\bf x}$-modules, where $C=\Ext^n_R(R/{\bf x},K)$.
\end{lem}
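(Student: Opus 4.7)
The plan is to produce the ideal $\bf x$ as a regular sequence sitting inside the annihilator of the source of the reflexive homomorphism that witnesses the link, and then to transfer the whole link across the equivalence of functors $\Hom_{S}(-,C)\cong\Ext^{n}_{R}(-,K)$ provided by Lemma \ref{G1}. More precisely, by Definition \ref{def4} there exist $\phi\colon X\twoheadrightarrow M$ and $\psi\colon Y\twoheadrightarrow N$ in $\Epi(\mx_n)$ with $\phi\equiv\L^{n}_{K}(\psi)$ and $\psi\equiv\L^{n}_{K}(\phi)$, so that in particular $Y\cong\Ext^{n}_{R}(X,K)$ (from the construction of $\L_K^{n}(\phi)$, cf.\ (\ref{d2}.1)). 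Since $X\in\G^{n}_{K}(\mathcal{P})$, we have $\gr_{R}(X)=n$, and so by Lemma \ref{ll1}(iii),
\[
\ann_{R}(X)=\ann_{R}(\Ext^{n}_{R}(X,K))=\ann_{R}(Y),
\]
and this common annihilator contains an $R$-regular sequence of length $n$. Choose $\bf x$ to be such a sequence; since $M$, $N$ are quotients of $X$, $Y$, we obtain ${\bf x}\subseteq\ann_{R}(M)\cap\ann_{R}(N)$, as desired.

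Next, set $S=R/{\bf x}$ and $C=\Ext^{n}_{R}(S,K)$. Since $\bf x$ is generated by a regular sequence, $S$ is a $\gkkkd$-perfect $R$-module of grade $n$, so Theorem \ref{G2} applies: $C$ is a semidualizing $S$-module, and an $S$-module $Z$ has finite $\gkkkd$-dimension over $R$ iff it has finite $\gkbd$-dimension over $S$, with $\gkkd_{R}(Z)=n+\gkbd_{S}(Z)$. Applied to $X, Y\in\G^{n}_{K}(\mathcal{P})$, this gives $\gkbd_{S}(X)=0=\gkbd_{S}(Y)$; that is, $X$ and $Y$, viewed as $S$-modules, are totally $C$-reflexive, and $\phi,\psi$ lie in $\Epi(\mathcal{Y})$ for $\mathcal{Y}$ the category of totally $C$-reflexive $S$-modules (which is the $0$-reflexive subcategory $\mathcal{G}^{0}_{C}(\mathcal{P})$ of $\md S$ with respect to $C$).

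Now invoke Lemma \ref{G1} with $I={\bf x}$: because $S$ is $\gkkkd$-perfect of grade $n$, $\Ext^{j}_{R}(S,K)=0$ for $j\neq n$, and so there is an isomorphism of functors
\[
\Hom_{S}(-,C)\cong\Ext^{n}_{R}(-,K)
\]
on $\md S$. Applying this natural isomorphism to the defining exact sequences (\ref{d2}.1) and (\ref{d2}.2) for $\L^{n}_{K}(\phi)$ and $\L^{n}_{K}(\psi)$, computed from the sequences $0\to\ker\phi\to X\to M\to 0$ and $0\to\ker\psi\to Y\to N\to 0$ regarded in $\md S$, we obtain canonical identifications
\[
\im\L^{0}_{C}(\phi)\cong\im\L^{n}_{K}(\phi)\cong N,\qquad \im\L^{0}_{C}(\psi)\cong\im\L^{n}_{K}(\psi)\cong M.
\]
The naturality of the isomorphism in Lemma \ref{G1} promotes the equivalences $\phi\equiv\L^{n}_{K}(\psi)$ and $\psi\equiv\L^{n}_{K}(\phi)$ in $\Epi(\mx_n)$ to the corresponding equivalences $\phi\equiv\L^{0}_{C}(\psi)$ and $\psi\equiv\L^{0}_{C}(\phi)$ in $\Epi(\mathcal{Y})$, so $M$ and $N$ are directly linked with respect to $\mathcal{Y}$.

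The routine verifications are the identifications $\im\L^{0}_{C}(-)\cong\im\L^{n}_{K}(-)$; the only real care is to check that Lemma \ref{G1} carries over the isomorphisms $\alpha,\beta$ exhibiting the equivalence of reflexive homomorphisms, and this is immediate from naturality. The main conceptual step, and the step that makes the entire statement possible, is the observation $\ann_{R}(X)=\ann_{R}(Y)$ coming from Lemma \ref{ll1}(iii), which is what guarantees that a single ideal $\bf x$ can be found annihilating all four modules $X$, $Y$, $M$, $N$ simultaneously.
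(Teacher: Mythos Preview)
Your proof is correct and follows essentially the same approach as the paper: choose ${\bf x}\subseteq\ann_R(X)$, use Theorem \ref{G2} to see that $X$ (and hence $Y\cong\Ext^n_R(X,K)$) is totally $C$-reflexive over $S=R/{\bf x}$, and then invoke the natural isomorphism $\Hom_S(-,C)\cong\Ext^n_R(-,K)$ of Lemma \ref{G1} to identify $\L^0_C(\phi)$ with $\L^n_K(\phi)$. The only minor difference is that you transfer the equivalences $\phi\equiv\L^n_K(\psi)$ and $\psi\equiv\L^n_K(\phi)$ directly via naturality, whereas the paper instead verifies linkage over $S$ by appealing to the intrinsic criterion of Corollary \ref{c} (combined with Lemma \ref{l5}(ii)); both routes are valid and amount to the same thing.
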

\begin{proof}
	By definition, there exists an epimorphism $\phi:X\twoheadrightarrow M$ in $\Epi(\mx_n)$ such that $\im\L^n_K(\phi)\cong N$. 
	Choose an ideal ${\bf x}\subseteq\ann_R(X)$, generated by a regular sequence of length $n$.
	Set $S=R/{\bf x}$ and $C=\Ext^n_R(S,K)$. Note that by Theorem \ref{G2}, $X$ is a totally $C$-reflexive $S$-module. It follows from Lemma \ref{l5} and Corollary \ref{c} that $M$ is linked as an $S$-module by $\phi$ with respect to the category of totally $C$-reflexive $S$-modules. By the exact sequence (\ref{d2}.1) and Lemma \ref{G1}, we obtain the commutative diagram
	\[\xymatrix{
		0 \ar[rr] &&\Ext^n_R(M,K)\ar[d]^{\cong}\ar[rr] &&\Ext^n_R(X,K)\ar[d]^{\cong}\ar[rr]^{\L_K^n(\phi)} &&\im\L^n_K(\phi)\ar@{-->}[d]\ar[rr]&&0	\\
		0 \ar[rr] &&\Hom_S(M,C)\ar[rr] &&\Hom_S(X,C)\ar[rr]^{\L_C^0(\phi)} &&\im\L_C^0(\phi)\ar[rr]&&0	
	}\]
	with exact rows. It follows that $N\cong\im\L_K^n(\phi)\cong\im\L_C^0(\phi)$.
\end{proof}	
The theory of secondary representation has been introduced by Macdonald \cite{IGM}. Let $M$ be a nonzero Artinian $R$--module. Recall that $M$ is said to be \emph{secondary} if the multiplication map by $x$ on $M$ is either surjective or nilpotent for every $x\in R$. In this case, $\fp:=\sqrt{\ann_R(M)}$ is a
prime ideal of $R$ and $M$ is called a  $\fp$-secondary module. Note that every Artinian $R$-module $M$ has a minimal secondary representation $M=M_1+\cdots+M_n$, where $M_i$ is $\fp_i$-secondary, each $M_i$ is not redundant and $\fp_i\neq\fp_j$ for all $i\neq j $. In this situation, the set $\{\fp_1,\cdots,\fp_n\}$ is independent of the choice of a minimal secondary representation of $M$. This set is called the set of \emph{attached primes} of $M$ and denoted by $\Att_R(M)$. 

Recall that a ring $R$ is called \emph{equidimensional} if $\dim R/\fp=\dim R$ for every minimal prime ideal $\fp$ of $R$. A ring $R$ is said to be  \emph{formally equidimensional} if its completion $\widehat{R}$ is equidimensional. Clearly, every Cohen-Macaulay local ring is formally equidimensional. Also, every homomorphic image of a regular local ring that is
equidimensional is formally equidimensional (see\cite[Section 31]{Ma}). 

A subset $\X$ of $\Spec R$ is called \emph{stable under generalization} provided that if $\fp\in\X$ and
$\fq\in\Spec R$ with $\fq\subseteq\fp$, then $\fq\in\X$. Note that every open subset of $\Spec R$ is stable under generalization.

The following is a generalization of \cite[Theorem 3.3]{Sa1}.
\begin{thm}\label{t5}
	Let $(R,\fm)$ be a formally equidimensional local ring which is a homomorphic image of a Cohen-Macaulay local ring. Assume that the $R$-modules $M$, $N$ are linked with respect to $\mx_n$ in an
	odd number of steps and that $t$ is a positive integer. Assume further that $\X$ is a subset of $\Spec R$ which is stable under generalization and that $R_{\fp}$ is Cohen-Macaulay and $\gkkpd_{R_\fp}(M_\fp)<\infty$ for all $\fp\in X$ (e.g. $\id_{R_\fp}(K_\fp)<\infty$ for all $\fp\in\X$).
	Then the following are equivalent.
	\begin{enumerate}[\rm(i)]
		\item{$\X\subseteq \Sn_t(M)$.}
		\item{$\Att_R(\hh^i_{\fm}(N))\subseteq\Spec R\setminus\X$ for all $i$, $\dim_R(N)-t<i<\dim_R(N)$.}
	\end{enumerate}
\end{thm}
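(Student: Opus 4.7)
The plan is to reduce to the case of direct linkage, and then, prime-by-prime in $\X$, translate Proposition~\ref{t3} into a statement about attached primes of local cohomology via a shifted local duality.

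First I reduce to direct linkage. Since $M$ and $N$ are linked in an odd number of steps, there is an $R$-module $M'$ lying in the same even liaison class as $M$ and directly linked to $N$. Applying Lemma~\ref{l6} to localize along the chain, and the exact sequences of Theorem~\ref{t1}(iv) and Corollary~\ref{c} applied to two consecutive direct links, one obtains isomorphisms $\hh^i_{\fp R_\fp}(M_\fp)\cong\hh^i_{\fp R_\fp}(M'_\fp)$ for all $\fp$ and all $i$ in the relevant range (cf.\ the even-linkage mechanism behind Theorem~\ref{it}(iii)). In particular $\Sn_t(M)=\Sn_t(M')$, so we may assume $M$ is directly linked to $N$.

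Next, fix $\fp\in\X$. Stability of $\X$ under generalization propagates the hypotheses to every $\fq\subseteq\fp$, so by Lemma~\ref{l6} we have a direct linkage of $M_\fp$ with $N_\fp$ over $R_\fp$ relative to the analogue of $\mx_n$. Proposition~\ref{t3} applied over $R_\fp$ then yields
\[\depth_{R_\fq}(M_\fq)\ge\min\{t,\depth R_\fq-n\}\text{ for all }\fq\subseteq\fp\ \iff\ \Ext^i_{R_\fp}(N_\fp,K_\fp)=0\text{ for }n+1\le i\le n+t-1.\]
Since $R_\fp$ is Cohen-Macaulay and $\gr_{R_\fp}(M_\fp)=n$, the left-hand condition is precisely $\fp\in\Sn_t(M)$. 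Hence $\X\subseteq\Sn_t(M)$ if and only if the Ext vanishing on the right holds at every $\fp\in\X$.

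Finally, I convert the Ext vanishing into the attached primes statement. Because $R$ is formally equidimensional and a homomorphic image of a Cohen-Macaulay local ring, the shifted local duality is available in the form
\[\fp\in\Att_R(\hh^i_\fm(N))\iff\hh^{\,i-\dim R/\fp}_{\fp R_\fp}(N_\fp)\ne 0,\]
and on the Cohen-Macaulay locus (in particular, at each $\fp\in\X$) local duality at $\fp$ identifies the groups $\hh^{j}_{\fp R_\fp}(N_\fp)$ with suitable duals of $\Ext^{\dim R_\fp-j}_{R_\fp}(N_\fp,K_\fp)$. Using $\dim_R(N)=\dim R-n$ (from Proposition~\ref{p2} and formal equidimensionality) together with $\dim R=\dim R/\fp+\dim R_\fp$, the local index range $n+1\le\cdot\le n+t-1$ at each $\fp$ corresponds exactly to the uniform global range $\dim_R(N)-t<i<\dim_R(N)$, completing the equivalence.

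The main technical obstacle is the last step: the shifted local duality together with the conversion between the $\fp$-indexed local cohomology/Ext range and the single uniform global range of indices requires careful dimension bookkeeping, and it is precisely the formal equidimensionality paired with the homomorphic-image hypothesis that makes these identifications hold uniformly for $\fp\in\X$.
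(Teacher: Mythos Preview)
Your argument for (i)$\Rightarrow$(ii) is essentially the paper's: pick $\fp\in\Att_R(\hh^i_\fm(N))\cap\X$, push down to $R_\fp$ via \cite[Theorem 1.1]{NQ}, and contradict the local vanishing coming from Proposition~\ref{t3}/Corollary~\ref{c4}. The problem is (ii)$\Rightarrow$(i), where your ``shifted local duality'' biconditional
\[
\fp\in\Att_R(\hh^i_\fm(N))\iff\hh^{\,i-\dim R/\fp}_{\fp R_\fp}(N_\fp)\ne 0
\]
is false as stated. The forward implication is \cite[Theorem 1.1]{NQ}, but the reverse fails: nonvanishing of $\hh^{j}_{\fp R_\fp}(N_\fp)$ only says that this Artinian module has \emph{some} attached prime, not that the maximal ideal $\fp R_\fp$ is one. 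Consequently, from (ii) (i.e.\ $\fp\notin\Att_R(\hh^i_\fm(N))$ for every $\fp\in\X$) you cannot deduce $\hh^{j}_{\fp R_\fp}(N_\fp)=0$, so you cannot feed back into Proposition~\ref{t3} to recover $\fp\in\Sn_t(M)$. There is also a second gap hiding in the same step: you invoke local duality at $\fp$ against $K_\fp$, but the hypothesis is only $\gkkpd_{R_\fp}(M_\fp)<\infty$, not that $K_\fp$ is dualizing; the ``e.g.'' in the statement is an illustration, not the standing assumption.

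The paper's proof of (ii)$\Rightarrow$(i) is built precisely to overcome both issues. One argues by contradiction and takes $\fp_0$ \emph{minimal} in $\{\fp\in\X\mid M_\fp\notin(\Sn_t)\}$. Minimality forces $\fp_0\in\Min_R(\Ext^s_R(\Tr_KM,K))$ for some $1<s\le t$, so that after passing to the completion one can choose $P_0\in\Min_{\widehat R}(\Ext^s_{\widehat R}(\widehat{\Tr_KM},\widehat K))$ with $P_0\cap R=\fp_0$. The ring $\widehat R_{P_0}$ is Cohen--Macaulay (formal fibres are Cohen--Macaulay) and carries a genuine dualizing module $\omega_{\widehat R_{P_0}}$; this is where local duality becomes available regardless of $K$. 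Using Lemma~\ref{lll1} to pass from $\Tr_KM$ to $N$, one gets $P_0\widehat R_{P_0}\in\Att_{\widehat R_{P_0}}(\hh^{m-s+1}_{P_0\widehat R_{P_0}}(\widehat N_{P_0}))$, and then \cite[Theorem 1.1]{NQ} (over the complete ring $\widehat R$) together with \cite[Proposition 3.2]{Ml} lift this to $\fp_0\in\Att_R(\hh^{d-s+1}_\fm(N))$ with $d-s+1$ in the forbidden range, contradicting (ii). The minimal-prime choice is the device that turns a mere nonvanishing into an honest attached-prime statement; your sketch skips this and therefore does not close the implication.
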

\begin{proof}
	We may assume, without loss of generality, that $M$ is directly linked to $N$. 
	
	(i)$\Rightarrow$(ii). Assume,  contrarily,  that  $\fp\in\Att_R(\hh^i_\fm(N))\cap X$ for some $i$ with $\dim_R( N)-t<i<\dim_R(N)$. Therefore,
	$\fp R_\fp\in\Att_{R_\fp}(\hh^{i-\dim R/\fp}_{\fp R_\fp}( N_\fp))$ by \cite[Theorem 1.1]{NQ}.
	As $\fp\in X\subseteq\Sn_t(M)$, we have $\hh^{j}_{\fp R_\fp}(N_\fp)=0$ for all $j$, $\dim_{R_\fp}(N_\fp)-t<j<\dim_{R_\fp}(N_\fp)$ by Corollary \ref{c4}.
	Note that, by \cite[Theorem 31.5]{Ma}, $R$ is equidimensional and catenary. Hence
	$\hei(\fp_2/\fp_1)=\hei(\fp_2)-\hei(\fp_1)$ 
	for all $\fp_1, \fp_2\in\Spec R$ with $\fp_1\subseteq\fp_2$ (see \cite[Lemma 2 on page 250]{Ma}). There exists a prime ideal $\fq\in\Min_R(N)$ such that $\dim_{R_\fp}(N_\fp)=\hei(\fp/\fq)$. Therefore we have the following equalities
	\begin{equation}\tag{\ref{t5}.1}
	\dim_{R_\fp}(N_\fp)+\dim R/\fp=\hei(\fp/\fq)+\hei(\fm/\fp)=\dim R/\fq=\dim_R(N),
	\end{equation}
	where the last equality follows from Proposition \ref{p2}.
	It follows from (\ref{t5}.1) that $\dim_{R_\fp}(N_\fp)-t<i-\dim R/\fp<\dim_{R_\fp}(N_\fp)$.
	In particular, $\hh^{i-\dim R/\fp}_{\fp R_\fp}(N_\fp)=0$ and so $\Att_{R_\fp}(\hh^{i-\dim R/\fp}_{\fp R_\fp}( N_\fp))=\emptyset$ which is a contradiction.
	
	(ii)$\Rightarrow$(i). By Lemma \ref{l6}, there exists an ideal ${\bf x}\subseteq\ann_R(M)\cap\ann_R(N)$, generated by a regular sequence of length $n$ such that $M$ and $N$ are directly linked with respect to totally $C$-reflexive $S$-modules, where $S=R/{\bf x}$ and $C=\Ext^n_R(S,K)$. Consider the natural epimorphism $f:R\twoheadrightarrow S$ and set $\overline{\X}=f(\X)=\{\fp\in\Spec S\mid \fp=P/{\bf x} \text{ for some } P\in\X \}$. It is clear that $\overline{\X}$ is stable under generalization and $S_\fp$ is Cohen-Macaulay for all $\fp\in\overline{\X}$. Also, by Theorem \ref{G2}, $\gcpd_{S_\fp}(M_\fp)<\infty$ for all $\fp\in\overline{\X}$. On the other hand, by the independence Theorem \cite[Theorem 4.2.1]{BS}, $\hh^i_{\fm}(M)\cong\hh^i_{\fm/{\bf x}}(M)$ for all $i$. Also, by \cite[Proposition 4.1]{Ml}, $\Att_R(\hh^i_{\fm}(M))=\{\fp\cap R\mid\fp\in\Att_S(\hh^i_{\fm}(M))\}$. Hence, without loss of generality, we may assume that $M$ and $N$ are maximal $R$-modules which are directly linked with respect to $\mx_0$. 
	Assume contrarily that $\X\nsubseteq\Sn_t(M)$. Set $\Y=\{\fp\in\X\mid M_\fp \text{ does not satisfy } (\Sn_t) \}$. Let $\fp_0$ be a minimal element of $\Y$ with respect to the inclusion relation. Here is our first claim.
	
	\textbf{Claim (I)}. $\Ext^i_R(\Tr_KM,K)_{\fp_0}\neq0$ for some $i$, $1\leq i\leq t$.\\
	{\it Proof of Claim (I)}. Assume contrarily that $\Ext^i_R(\Tr_KM,K)_{\fp_0}=0$ for all $i$, $1\leq i\leq t$. By Theorem \ref{th} 
	\begin{equation}\tag{\ref{t5}.2}
	\depth_{R_{\fq}}(M_{\fq})\geq\min\{t,\depth R_{\fq}\} \text{ for all prime ideal } \fq\subseteq\fp_0.
	\end{equation}	
	Note that, for all prime ideal $\fq$ with $\fq\subseteq\fp_0$, the local ring $R_\fq$ is Cohen-Macaulay and so 
	\begin{equation}\tag{\ref{t5}.3}
	\dim M_\fq=\dim R_\fq-\gr_{R_\fq}(M_\fq)=\depth R_\fq,
	\end{equation}
	where the second equality follows from the fact that $M$ is maximal. It follows from (\ref{t5}.2) and (\ref{t5}.3) that $\fp_0\in\Sn_t(M)$ which is a contradiction. Thus, the proof of the claim (I) is complete.\\
	
	Set $s=\min\{j\mid\Ext^j_R(\Tr_K M,K)_{\fp_0}\neq0\}.$ It follows from Corollary \ref{c} and Claim (I) that $1<s\leq t$. Note that $\fq\in\Sn_t(M)$ for all prime ideal $\fq\in\Spec R$ with $\fq\subsetneq\fp_0$, because $\fp_0$ is a minimal element of $\Y$ and $\X$ is stable under generalization. It follows from (\ref{t5}.2) and Theorem \ref{th} that
	\begin{equation}\tag{\ref{t5}.4}
	\Ext^s_R(\Tr_K M,K)_{\fq}=0 \text{ for all prime ideal } \fq\text{ with } \fq\subsetneq\fp_0.
	\end{equation}
	Therefore $\fp_0\in\Min_R(\Ext^s_R(\Tr_K M,K))$. Note that by \cite[Proposition 9.A]{Ma1} 
	$$\Ass_R(\Ext^s_R(\Tr_K M,K))=\{P\cap R\mid P\in\Ass_{\widehat{R}}(\Ext^s_{\widehat{R}}(\widehat{\Tr_K M},\widehat{K}))\},$$
	where $\widehat{R}$ is the completion of $R$ in the $\fm$-adic topology.
	Choose $P_0\in\Min_{\widehat{R}}(\Ext^s_{\widehat{R}}(\widehat{\Tr_K M},\widehat{K}))$ such that $P_0\cap R=\fp_0$.
	Note that
	\begin{equation}\tag{\ref{t5}.5}
	\Ext^i_{\widehat{R}}(\widehat{\Tr_K M},\widehat{K})_{P_0}=0 \text{ for all } i,  0<i<s.
	\end{equation}
	As $\gkkp0d_{R_{\fp_0}}(M_{\fp_0})<\infty$ and the local homomorphism $R_{\fp_0}\rightarrow \widehat{R}_{P_0}$ is flat,
	$\gkkP0d_{\widehat{R}_{P_0}}(\widehat{M}_{P_0})<\infty$. As $R$ is the homomorphic image of a Cohen-Macaulay ring, all formal fibres of $R$ are Cohen-Macaulay.
	Therefore, $\widehat{R}_{P_0}/{\fp_0\widehat{R}_{P_0}}\cong \left((R_{\fp_0}/{\fp_0 R_{\fp_0}}\right)\otimes_R\widehat{R})_{P_0}$ is Cohen-Macaulay and so $\widehat{R}_{P_0}$ is Cohen-Macaulay (see \cite[Page 181]{Ma}). Now we assert our second claim.
	
	\textbf{Claim (II)}. $P_0\widehat{R}_{P_0}\in\Ass(\Ext^s_{\widehat{R}_{P_0}}(\Tr_{\widehat{R}_{P_0}}\widehat{M}_{P_0},\omega_{\widehat{R}_{P_0}})),$ where $\omega_{\widehat{R}_{P_0}}$ is the dualizing module of $\widehat{R}_{P_0}$.\\
	{\it Proof of Claim (II)}. By using the fact that 
	$P_0\in\Min_{\widehat{R}}(\Ext^s_{\widehat{R}}(\widehat{\Tr_K M},\widehat{K}))$, \cite[Theorem 5.2]{Sa} and (\ref{t5}.5) we have  $\Ext^s_{\widehat{R}_{P_0}}(\Tr_{\widehat{K}_{P_0}}\widehat{M}_{P_0},\omega_{\widehat{R}_{P_0}})\neq0$ and also
	 $\Ext^s_{\widehat{R}_{Q}}(\Tr_{\widehat{K}_{Q}}\widehat{M}_{Q},\omega_{\widehat{R}_{Q}})=0$ for all prime ideal $Q\subsetneq P_0$. In other words, $P_0\widehat{R}_{P_0}\in\Min_{\widehat{R}_{P_0}}(\Ext^s_{\widehat{R}_{P_0}}(\Tr_{\widehat{K}_{P_0}}\widehat{M}_{P_0},\omega_{\widehat{R}_{P_0}}))$ which completes the proof of the claim (II).\\

	By Lemma \ref{lll1} and (\ref{d3}.2), there exist exact sequences $$0\to N\to\lambda^{\pi}_KM\to Y\to 0  \text{ \ \ and \ \ } 0\to\lambda_K^{\pi}M\to F_1^{\triangledown}\to\Tr^{\pi}_KM\to0,$$
	where $F_1$ is a free $R$-module, $\pi$ is a projective presentation of $M$ and $Y$	is an $R$-module contained in $\mx_0$. The above exact sequences induce the following exact sequences
	$0\to\widehat{N}_{P_0}\to\widehat{\lambda^{\pi}_KM}_{P_0}\to \widehat{Y}_{P_0}\to 0$ and 
	$0\to\widehat{\lambda_K^{\pi}M}_{P_0}\to \widehat{F_1^{\triangledown}}_{P_0}\to\widehat{\Tr^{\pi}_KM}_{P_0}\to0$.
	Note that $\widehat{Y}_{P_0}$ is a maximal Cohen-Macaulay $\widehat{R}_{P_0}$-module. 
	As $s>1$, applying the functor $\Hom_{\widehat{R}_{P_0}}(-,\omega_{\widehat{R}_{P_0}})$ to the above exact sequences one obtains the isomorphisms
	\begin{equation}\tag{\ref{t5}.6}
	\Ext^s_{\widehat{R}_{P_0}}(\Tr_{\widehat{K}_{P_0}}\widehat{M}_{P_0},\omega_{\widehat{R}_{P_0}})\cong\Ext^{s-1}_{\widehat{R}_{P_0}}(\widehat{\lambda_K^{\pi} M}_{P_0},\omega_{\widehat{R}_{P_0}})\cong\Ext^{s-1}_{\widehat{R}_{P_0}}(\widehat{N}_{P_0},\omega_{\widehat{R}_{P_0}}).
	\end{equation}
	
	Therefore, by (\ref{t5}.6), Claim (II), \cite[3.4]{Sh} and the local duality theorem,  $P_0\widehat{R}_{P_0}\in\Att_{\widehat{R}_{P_0}}(\hh^{m-s+1}_{P_0\widehat{R}_{P_0}}(\widehat{N}_{P_0}))$ where $m=\dim \widehat{R}_{P_0}$. As $\widehat{R}$ is equidimensional and catenary, $m+\dim(\widehat{R}/{P_0})=\dim \widehat{R}$. 
	Set $d=\dim R=\dim\widehat{R}$. 
	Hence, by \cite[Theorem 1.1]{NQ}, $P_0\in\Att_{\widehat{R}}(\hh^{d-s+1}_{\widehat{\fm}}(\widehat{N}))$ and so
	$\fp_0\in\Att_R(\hh^{d-s+1}_{m}(N))$ by \cite[Proposition 3.2]{Ml}, which is a contradiction because $d-t<d-s+1<d$ and $\fp_0\in\X$.
\end{proof}
Note that $M\neq0$ if and if $\Att_R(M)\neq\emptyset.$
Hence, Theorem \ref{t5} can be seen as a generalization of Schenzel's result \cite[Theorem 4.1]{Sc}. As an immediate consequence of Theorem \ref{t5}, we state the following results.
\begin{cor}
	Let $(R,\fm)$ be a Cohen-Macaulay local ring and let $t$ be a positive integer such that $\id_\fp(K_\fp)<\infty$ for all $\fp\in\X^{t-1}(R)$. Assume that $M$ and $N$
	are $R$-modules of dimension $d$ which are linked with respect to $\mx_n$ in an odd number of steps. The following are equivalent.
	\begin{enumerate}[\rm(i)]
		\item{$M_\fp$ is Cohen-Macaulay for all $\fp\in\X^{t-1}(R)$.}
		\item{$\depth R_\fp\geq t$ for all $\fp\in\underset{\tiny{0<i< d}}{\bigcup}\Att_R(\hh^i_{\fm}(N))$.}
	\end{enumerate}
\end{cor}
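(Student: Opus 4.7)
The plan is to reduce both implications to Theorem~\ref{t5} applied with the fixed locus $\X:=\X^{t-1}(R)$. First I would verify the standing hypotheses of that theorem for this choice of $\X$. Because $R$ is Cohen-Macaulay, $\depth R_\fp=\hei\fp$ for every prime $\fp$, so $\X^{t-1}(R)=\{\fp\mid\hei\fp\leq t-1\}$ is stable under generalization and every $R_\fp$ with $\fp\in\X^{t-1}(R)$ is Cohen-Macaulay. The hypothesis $\id_{R_\fp}(K_\fp)<\infty$ on this locus makes $K_\fp$ dualizing for $R_\fp$, which in turn forces $\gkkpd_{R_\fp}(M_\fp)<\infty$ for every $\fp\in\X^{t-1}(R)$. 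Finally, the Cohen-Macaulay ring $R$ is formally equidimensional and a homomorphic image of itself, so all the ambient hypotheses of Theorem~\ref{t5} hold.

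The key bridge between the two formulations is the numerical observation that for $\fp\in\X^{t-1}(R)$ one has $\dim M_\fp\leq\dim R_\fp=\depth R_\fp\leq t-1<t$. Consequently $M_\fp$ satisfies Serre's condition $(S_t)$ if and only if $\depth M_\fp\geq\dim M_\fp$, i.e., if and only if $M_\fp$ is Cohen-Macaulay. Thus condition (i) of the corollary is equivalent to $\X^{t-1}(R)\subseteq\Sn_t(M)$, which is exactly condition (i) of Theorem~\ref{t5}. Condition (ii) of the corollary can likewise be rephrased as $\Att_R(\hh^i_\fm(N))\cap\X^{t-1}(R)=\emptyset$ for every $i$ with $0<i<d$.

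For the direction (ii)$\Rightarrow$(i), I would simply restrict the range $0<i<d$ in (ii) to the narrower range $d-t<i<d$; this is condition (ii) of Theorem~\ref{t5} for the value $t$ supplied by the corollary, so the theorem immediately yields condition (i) of the theorem, which by the observation above is (i) of the corollary.

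For (i)$\Rightarrow$(ii), I would exploit the flexibility of Theorem~\ref{t5} in its parameter $t$: being Cohen-Macaulay at $\fp$ implies $(S_s)$ at $\fp$ for every integer $s\geq 1$, so (i) yields $\X^{t-1}(R)\subseteq\Sn_s(M)$ for every $s\geq 1$. The hypotheses of Theorem~\ref{t5} on $\X=\X^{t-1}(R)$ are independent of $s$, so applying the theorem with $s$ in place of the theorem's $t$ gives $\Att_R(\hh^i_\fm(N))\cap\X^{t-1}(R)=\emptyset$ for all $i$ with $d-s<i<d$. Taking $s\geq d$ (or taking the union over all positive $s$) then covers the entire range $0<i<d$ and delivers (ii). I do not anticipate a genuine obstacle: all the hard work is absorbed into Theorem~\ref{t5}, and the only real subtlety is the asymmetry between the corollary's (ii), whose range $0<i<d$ is strictly wider than that of Theorem~\ref{t5}(ii), which is precisely what is resolved by varying the parameter $s$ in the forward direction.
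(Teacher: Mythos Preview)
Your argument is correct, but it takes a longer path than the paper's. You use the corollary's integer $t$ as the parameter in Theorem~\ref{t5} for one direction and then vary the parameter $s$ to sweep out the full range $0<i<d$ for the other. The paper instead observes once that condition~(i) is equivalent to $\X^{t-1}(R)\subseteq\Sn_d(M)$ (since $\dim M_\fp\leq d$ for every $\fp$, the module $M_\fp$ is Cohen--Macaulay precisely when it satisfies $(S_d)$), and then applies Theorem~\ref{t5} a single time with the parameter equal to $d$. That choice makes the theorem's range $d-d<i<d$ coincide exactly with $0<i<d$, so both implications fall out at once with no need to vary parameters or restrict ranges. Your route has the minor wrinkle that ``the narrower range $d-t<i<d$'' is only genuinely narrower when $t\leq d$; this is harmless (when $t>d$ one has $\hh^0_\fm(N)=0$ because $N$ is grade-unmixed of positive dimension), but the paper's choice of parameter sidesteps the issue entirely.
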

\begin{proof}
	The condition (i) is equivalent to say that $\X^{t-1}(R)\subseteq S_d(M)$. Now the assertion follows from Theorem \ref{t5}.
\end{proof}
\begin{cor}
	Let $(R,\fm)$ be a Cohen-Macaulay local ring and let  $\id_\fp(K_\fp)<\infty$ for all $\fp\in\Spec R\setminus\{\fm\}$. Assume that $M$ and $N$
	are $R$-modules which are linked with respect to $\mx_n$ in an
	odd number of steps. The following are equivalent.
	\begin{enumerate}[\rm(i)]
		\item $M_\fp$ satisfies $(S_t)$ for all $\fp\in\Spec R\setminus\{\fm\}$.
		\item $\ell(\hh^i_\fm(N)))<\infty$ for all $i$, $\dim_R(N)-t<i<\dim_R(N)$, where $\ell(-)$ denotes the length function.
	\end{enumerate}
\end{cor}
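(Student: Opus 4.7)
The plan is to deduce this corollary directly from Theorem \ref{t5} by taking $\X := \Spec R \setminus \{\fm\}$, which is open in $\Spec R$ and in particular stable under generalization. I first need to verify the hypotheses of Theorem \ref{t5} hold for this $\X$. Since $R$ is Cohen--Macaulay, $R_\fp$ is Cohen--Macaulay for every $\fp \in \X$. By hypothesis $\id_{R_\fp}(K_\fp) < \infty$ for $\fp \in \X$, so $K_\fp$ is a dualizing module for $R_\fp$; the characterization of dualizing modules recalled in \ref{d3} then gives $\gkkpd_{R_\fp}(M_\fp) < \infty$ for all $\fp \in \X$.

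Next I translate the two conditions. Condition (i) of the corollary is literally the statement $\X \subseteq \Sn_t(M)$, which is condition (i) of Theorem \ref{t5}. For condition (ii), I use the standard fact that for an Artinian $R$-module $A$ over a Noetherian local ring $(R,\fm)$, one has $\ell_R(A) < \infty$ if and only if $\Att_R(A) \subseteq \{\fm\}$: the forward direction is clear, and conversely if $\Att_R(A) \subseteq \{\fm\}$ then $A$ is either zero or $\fm$-secondary, so $\sqrt{\ann_R A} = \fm$, hence $\fm^k A = 0$ for some $k$, which combined with $A$ being Artinian forces finite length. Applying this to the Artinian modules $\hh^i_\fm(N)$ shows that the condition $\ell(\hh^i_\fm(N)) < \infty$ is equivalent to $\Att_R(\hh^i_\fm(N)) \subseteq \{\fm\} = \Spec R \setminus \X$. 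Thus condition (ii) of the corollary is exactly condition (ii) of Theorem \ref{t5} for this choice of $\X$.

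Finally, I note that $R$, being Cohen--Macaulay local, is formally equidimensional, and it is a homomorphic image of itself (a Cohen--Macaulay local ring), so the standing hypotheses on $R$ required by Theorem \ref{t5} are met. Applying Theorem \ref{t5} yields the equivalence (i)$\Leftrightarrow$(ii). There is no real obstacle here; the only non-mechanical ingredient is the translation between finite length of an Artinian module and the containment of its attached primes in $\{\fm\}$, which I would include as a brief sentence in the write-up.
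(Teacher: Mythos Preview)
Your approach is correct and matches the paper's: apply Theorem \ref{t5} with $\X=\Spec R\setminus\{\fm\}$, noting that a Cohen--Macaulay local ring is formally equidimensional and that the hypothesis $\id_{R_\fp}(K_\fp)<\infty$ supplies the required finiteness of $\gkkkd$-dimension. The only difference is cosmetic: the paper cites \cite[Corollary 7.2.12]{BS} for the equivalence between $\ell(\hh^i_\fm(N))<\infty$ and $\Att_R(\hh^i_\fm(N))\subseteq\{\fm\}$, whereas you sketch it directly.
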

\begin{proof}
	This is an immediate consequence of Theorem \ref{t5} and \cite[Corollary 7.2.12]{BS}.
\end{proof}
The following theorem is a generalization of \cite[Theorem 3.7]{Sa1}.
\begin{thm}\label{tc1}
	Let $(R,\fm)$ be a formally equidimensional local ring which is a homomorphic image of a Cohen-Macaulay local ring. Let $\X$ be a subset of $\Spec R$ which is stable under generalization such that $\id_{R_\fp}(K_\fp)<\infty$ for all $\fp\in\X$. Assume that $M$ and $N$ are $R$-modules of dimension $d$ which are linked with respect to $\mx_n$ in an odd number of steps and that $m$ is an integer with $0<m\leq d$. 
	If $\Att_R(\hh^j_\fm(N))\subseteq\Spec R\setminus\X$ for all $j$, $d-m<j<d$, then $\Att_R(\hh^i_\fm(M))\subseteq\Spec R\setminus \X$ for all $i$, $0<i<m$.
\end{thm}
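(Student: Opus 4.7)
The plan is to combine Theorem \ref{t5} with a localization-and-depth argument that exploits the Serre condition it produces.

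First, I would note that for each $\fp\in\X$, the semidualizing module $K_\fp$ has finite injective dimension, so it is a dualizing module for $R_\fp$; thus $R_\fp$ is Cohen-Macaulay and every finitely generated $R_\fp$-module has finite $\gkkpd$-dimension there. Consequently the hypotheses of Theorem \ref{t5} are met with $t=m$, and the equivalence therein translates the assumed condition
\[
\Att_R(\hh^j_\fm(N))\subseteq\Spec R\setminus\X\qquad (d-m<j<d)
\]
into $\X\subseteq\Sn_m(M)$; that is, $M_\fp$ satisfies Serre's condition $(S_m)$ for every $\fp\in\X$.

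Next, I would argue by contradiction: suppose that $\fp_0\in\Att_R(\hh^i_\fm(M))\cap\X$ for some $0<i<m$. Since $R$ is formally equidimensional, it is equidimensional and catenary by \cite[Theorem 31.5]{Ma}, so the shifted-localization result \cite[Theorem 1.1]{NQ} (used already in Theorem \ref{t5}'s proof) yields
\[
\fp_0R_{\fp_0}\in\Att_{R_{\fp_0}}\bigl(\hh^{j_0}_{\fp_0R_{\fp_0}}(M_{\fp_0})\bigr),\qquad j_0:=i-\dim R/\fp_0.
\]
In particular $\hh^{j_0}_{\fp_0R_{\fp_0}}(M_{\fp_0})\neq 0$, so $\depth M_{\fp_0}\leq j_0\leq\dim M_{\fp_0}$. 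Repeating the dimension computation of equation (\ref{t5}.1) in the proof of Theorem \ref{t5} — using Proposition \ref{p2}, the Cohen-Macaulayness of $R_\fq$ for $\fq\in\X$, and the stability of $\X$ under generalization, so that any $\fq\in\Min_R(M)$ contained in $\fp_0$ is in $\X$ and satisfies $\depth R_\fq=n=\dim R_\fq$ — one obtains
\[
\dim M_{\fp_0}+\dim R/\fp_0=d.
\]

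A case analysis then closes the argument. If $\dim M_{\fp_0}\geq m$, the $(S_m)$-condition forces $\depth M_{\fp_0}\geq m$, so $j_0\geq m$ and hence $i=j_0+\dim R/\fp_0\geq m$, contradicting $i<m$. If instead $\dim M_{\fp_0}<m$, then $(S_m)$ gives $\depth M_{\fp_0}\geq\dim M_{\fp_0}$, so $M_{\fp_0}$ is Cohen-Macaulay and its local cohomology vanishes outside degree $\dim M_{\fp_0}$; thus $j_0=\dim M_{\fp_0}$, and the dimension formula above then forces $i=d$, again contradicting $i<m\leq d$. The main delicate point is verifying the dimension equality $\dim M_{\fp_0}+\dim R/\fp_0=d$ in the present setting; but this is handled exactly as in Theorem \ref{t5}'s proof, the argument transferring verbatim from $N$ to $M$ via Proposition \ref{p2}.
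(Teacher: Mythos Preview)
Your proof is correct and follows essentially the same route as the paper: invoke Theorem~\ref{t5} to obtain $\X\subseteq\Sn_m(M)$, localize the offending attached prime via \cite[Theorem 1.1]{NQ}, establish the dimension identity $\dim_{R_{\fp_0}}(M_{\fp_0})+\dim R/\fp_0=d$ as in (\ref{tc1}.3), and close with the same two-case depth analysis. The only cosmetic difference is that you phrase the non-Cohen--Macaulay case as ``$j_0\geq m$ forces $i\geq m$'' whereas the paper argues the contrapositive ``$i<m$ forces $j_0<m$, hence vanishing''; these are identical.
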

\begin{proof}
	Assume contrarily that $\fp\in\Att_R(\hh^i_\fm(M))\cap\X$ for some $i$, $0<i<m$.
	By \cite[Theorem 1.1]{NQ}, $\fp R_\fp\in\Att_{R_\fp}(\hh^{i-\dim R/\fp}_{\fp R_\fp}(M_\fp))$.
	In particular,
	\begin{equation}\tag{\ref{tc1}.1}
	\hh^{i-\dim R/\fp}_{\fp R_\fp}(M_\fp)\neq0.
	\end{equation}
	On the other hand, by Theorem \ref{t5}, $M_\fp$ satisfies the Serre's condition $(S_m)$. In other words, 
	\begin{equation}\tag{\ref{tc1}.2}
	\depth_{R_\fp}(M_\fp)\geq\min\{m, \dim_{R_\fp}(M_\fp)\}.
	\end{equation}
	Note that, by \cite[Theorem 31.5]{Ma}, $R$ is equidimensional and catenary. Now by using the fact that $R_\fp$ is Cohen-Macaulay and Proposition \ref{p2}, it is easy to see that
	\begin{equation}\tag{\ref{tc1}.3}
	\dim_{R_\fp}(M_\fp)+\dim R/\fp=d.
	\end{equation}
	If $m\geq\dim_{R_\fp}(M_\fp)$, then $M_\fp$ is Cohen-Macaulay by (\ref{tc1}.2) and so $\hh^j_{\fp R_\fp}(M_\fp)=0$ for all $j\neq\dim_{R_\fp}(M_\fp)$.
	Therefore, by (\ref{tc1}.1), $i-\dim R/\fp=\dim_{R_\fp}(M_\fp)$ and so $i=d$ by (\ref{tc1}.3) which is a contradiction.
	On the other hand, if $m<\dim_{R_\fp}(M_\fp)$, then $\depth_{R_\fp}(M_\fp)\geq m$ by (\ref{tc1}.2) and so $\hh^j_{\fp R_\fp}(M_\fp)=0$ for all $j<m$.
	In particular, $\hh^{i-\dim R/\fp}_{\fp R_\fp}(M_\fp)=0$ which is a contradiction by (\ref{tc1}.1).
\end{proof}
Recall that a subset $\X$ of $\Spec R$ is called \emph{stable under specialization} if every prime ideal of $R$ which contains
an element of X also belongs to $\X$. Clearly every closed subset of $\Spec R$ is stable under specialization. The following is an immediate consequence of Theorem \ref{tc1}.
\begin{cor}\label{rc}
	Let $(R,\fm)$ be a formally equidimensional local ring which is a homomorphic image of a Cohen-Macaulay local ring. Let $\X$ be a subset of $\Spec R$ which is stable under specialization such that $\id_{R_\fp}(K_\fp)<\infty$ for all $\fp\in\Spec R\setminus\X$. Assume $M$ and $N$ are $R$-modules which are linked with respect to $\mx_n$ in an
	odd number of steps. The following are equivalent.
	\begin{enumerate}[\rm(i)]
		\item{$\Att_R(\hh^i_\fm(M))\subseteq\X$ for all $i$, $0<i<\dim M$.}
		\item{$\Att_R(\hh^i_\fm(N))\subseteq\X$ for all $i$, $0<i<\dim N$.}
	\end{enumerate}
\end{cor}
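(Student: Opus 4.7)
The plan is to deduce this corollary directly from Theorem \ref{tc1} by passing to the complementary subset and invoking the symmetry of linkage. First I would set $\Y := \Spec R \setminus \X$. Since $\X$ is stable under specialization, $\Y$ is stable under generalization, and the assumption $\id_{R_\fp}(K_\fp) < \infty$ for all $\fp \in \Spec R \setminus \X$ translates to exactly the hypothesis of Theorem \ref{tc1} applied with $\Y$ in place of the subset called $\X$ in that theorem.

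Next I would verify that $\dim M = \dim N$. By Proposition \ref{p2}(i), every prime in $\Ass_R(M) \cup \Ass_R(N)$ has $\depth R_\fp = n$. Because $R$ is formally equidimensional and a homomorphic image of a Cohen-Macaulay local ring, it is equidimensional and catenary by \cite[Theorem 31.5]{Ma}; hence $\dim R/\fp = \dim R - \depth R_\fp = \dim R - n$ for each such $\fp$, which gives $\dim M = \dim R - n = \dim N$. Denote this common value by $d$.

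For the implication (ii) $\Rightarrow$ (i), I would apply Theorem \ref{tc1} to the pair $(M,N)$ with the subset $\Y$ and the integer $m := d$. The hypothesis of that theorem then reads $\Att_R(\hh^j_\fm(N)) \subseteq \Spec R \setminus \Y = \X$ for all $j$ with $0 < j < d$, which is precisely (ii). The conclusion yields $\Att_R(\hh^i_\fm(M)) \subseteq \Spec R \setminus \Y = \X$ for all $i$ with $0 < i < d = \dim M$, which is (i). The reverse implication (i) $\Rightarrow$ (ii) follows by the same argument with the roles of $M$ and $N$ interchanged, since being linked in an odd number of steps is a symmetric relation (Definition \ref{def4} is symmetric, so chains of odd length reverse).

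I do not anticipate any real obstacle here, as all the technical work is already encapsulated in Theorem \ref{tc1}; the only nontrivial bookkeeping is the bound on the range of $i$, which needs $\dim M = \dim N$, and that is handled by the dimension computation above.
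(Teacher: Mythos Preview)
Your strategy---pass to the complement $\Y=\Spec R\setminus\X$, which is stable under generalization, apply Theorem~\ref{tc1} with $m=d$, and then interchange $M$ and $N$ for the other direction---is exactly what the paper does (it records the corollary as an immediate consequence of Theorem~\ref{tc1}). One caveat in your extra dimension check: you write $\dim R/\fp=\dim R-\depth R_\fp$ for $\fp\in\Ass_R(M)$, but equidimensionality plus catenarity only gives $\dim R/\fp=\dim R-\hei(\fp)$, so your identity tacitly assumes $R_\fp$ is Cohen--Macaulay, which is not part of the hypotheses. The paper itself does not spell out why $\dim M=\dim N$ (a hypothesis of Theorem~\ref{tc1}), so this is a loose end shared by both treatments rather than a flaw specific to your argument.
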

An $R$-module $M$ is called \emph{self-linked with respect to} $\mx_n$ provided that there exists $\phi\in\Epi(\mx_n)$ such that $M=\im\phi\cong\im\L_K^n(\phi)$.
Recall that the \emph{Cohen-Macaulay locus} of an $R$-module $M$, denoted by $\cm_R(M)$, is defined as $\cm_R(M)=\{\fp\in\Spec R\mid M_\fp \text{ is a Cohen-Macaulay } R_\fp\text{-module}\}. $

The following is an immediate consequence of Theorems \ref{t5} and \ref{tc1},
which is nothing but Corollary \ref{self}, mentioned in the introduction.
\begin{cor}
Let $(R,\fm)$ be a formally equidimensional local ring which is a homomorphic image of a Cohen-Macaulay local ring. Assume that $\X$ is a subset of $\Spec R$ which is stable under specialization such that $\id_{R_\fp}(K_\fp)<\infty$ for all $\fp\in\Spec R\setminus\X$. Let $M$ be an $R$-module of dimension $d$ which is self-linked with respect to $\mx_n$. The following are equivalent.
\begin{enumerate}[\rm(i)]
	\item $\Att_R(\hh^i_{\fm}(M))\subseteq\X$ for all $i$, $\lfloor d/2\rfloor\leq i<d$.
	\item $\Att_R(\hh^i_{\fm}(M))\subseteq\X$ for all $i$, $0<i<d$.
	\item $\Spec R\setminus\X\subseteq\cm_R(M)$.
\end{enumerate}
\end{cor}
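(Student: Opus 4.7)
The plan is to deduce all three equivalences as direct consequences of Theorems~\ref{t5} and~\ref{tc1}. A self-linked module is linked to itself in one (hence odd) step, so both theorems apply with $N=M$. Setting $\Y:=\Spec R\setminus\X$, stability of $\X$ under specialization makes $\Y$ stable under generalization, and the hypothesis that $\id_{R_\fp}(K_\fp)<\infty$ for $\fp\in\Y$ is precisely what is needed to invoke both theorems on $\Y$. The trivial implication (ii)$\Rightarrow$(i) is clear since the range $\lfloor d/2\rfloor\le i<d$ is contained in $0<i<d$.

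For (ii)$\Leftrightarrow$(iii), I would apply Theorem~\ref{t5} with $t=d$, $N=M$, and the stable-under-generalization set $\Y$. The condition $\Y\subseteq S_d(M)$ then says that for each $\fp\in\Y$ the module $M_\fp$ satisfies $(S_d)$; since $\dim_{R_\fp}(M_\fp)\le d$, this is equivalent to $\depth_{R_\fp}(M_\fp)\ge\dim_{R_\fp}(M_\fp)$, i.e.\ $\fp\in\cm_R(M)$. Thus $\Y\subseteq S_d(M)$ is exactly (iii), while the equivalent side furnished by Theorem~\ref{t5} reads $\Att_R(\hh^i_{\fm}(M))\subseteq\Spec R\setminus\Y=\X$ for all $0<i<d$, which is (ii).

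For (i)$\Rightarrow$(ii), I would apply Theorem~\ref{tc1} with $m=\lceil d/2\rceil$, $N=M$, and the same set $\Y$. The hypothesis required, namely $\Att_R(\hh^j_{\fm}(M))\subseteq\X$ for all $j$ with $d-m<j<d$, is implied by (i) because $d-\lceil d/2\rceil+1\ge\lfloor d/2\rfloor$ in both parities of $d$. The conclusion of Theorem~\ref{tc1} then provides the same containment for $0<i<\lceil d/2\rceil$, and combining with (i) covers the full range $0<i<d$, establishing (ii). I do not anticipate any substantive obstacle: the argument reduces to a numerical check of cohomological degrees together with the elementary observation that Serre's condition $(S_d)$ on a module of dimension at most $d$ is the same as Cohen-Macaulayness.
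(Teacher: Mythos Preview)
Your proposal is correct and matches the paper's approach exactly: the paper records only that the corollary is an immediate consequence of Theorems~\ref{t5} and~\ref{tc1}, and your argument supplies precisely the details---passing to the complement $\Y=\Spec R\setminus\X$, invoking Theorem~\ref{t5} with $t=d$ for (ii)$\Leftrightarrow$(iii), and invoking Theorem~\ref{tc1} with $m=\lceil d/2\rceil$ to close the gap in (i)$\Rightarrow$(ii). One tiny remark: your claim that (ii)$\Rightarrow$(i) is trivial by inclusion of index ranges is literally correct only for $d\geq 2$; when $d=1$ the index $i=0$ appears in (i) but not in (ii), yet this causes no trouble since $\Att_R(\hh^0_\fm(M))\subseteq\{\fm\}$ and $\fm\in\X$ whenever $\X\neq\emptyset$ (by stability under specialization), while the case $\X=\emptyset$ forces $M$ to be Cohen--Macaulay via (iii).
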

Noted that the above result is new, even if we are restricted to classical linkage theory.
\begin{cor}\label{cs}
	Let $(R,\fm)$ be a Gorenstein local ring and let $\X$ be an open subset of $\Spec R$. Assume that $I$ is an ideal  of dimension $d$ which is self-linked. The following are equivalent.
	\begin{enumerate}[\rm(i)]
		\item $\Att_R(\hh^i_{\fm}(R/I))\subseteq\Spec R\setminus\X$ for all $i$, $\lfloor d/2\rfloor\leq i<d$.
		\item $\Att_R(\hh^i_{\fm}(R/I))\subseteq\Spec R\setminus\X$ for all $i$, $0<i<d$.
		\item $\X\subseteq\cm_R(R/I)$.
	\end{enumerate}
\end{cor}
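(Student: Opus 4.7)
The plan is to derive this as a direct specialization of the preceding corollary (the module-theoretic version of Corollary~\ref{self}), once two observations are in place: a classically self-linked ideal $I$ gives rise to a self-linked module $R/I$ in the sense of Definition~\ref{def4}, and the hypotheses of that corollary are automatically satisfied in the Gorenstein setting.

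For the first observation, since $I$ is classically self-linked, there is a regular sequence $\mathbf{x}$ of length $n := \gr(I) = \dim R - d$ with $\mathbf{x} \subseteq I$ and $I = ((\mathbf{x}) :_R I)$. The ideal $(\mathbf{x})$ is a complete intersection, hence a Gorenstein ideal of grade $n$. By the Gorenstein case of Corollary~\ref{c5}, $R/I$ is directly linked to $R/((\mathbf{x}) :_R I) = R/I$ with respect to $\cm^n(R)$; in other words, $R/I$ is self-linked with respect to $\cm^n(R)$ in the new sense.

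For the second observation, set $\X' := \Spec R \setminus \X$, which is closed in $\Spec R$ and hence stable under specialization. Since $R$ is Gorenstein, it is Cohen-Macaulay, so its completion is Cohen-Macaulay and therefore equidimensional; thus $R$ is formally equidimensional, and it is trivially a homomorphic image of itself. Taking $K = R$ as the dualizing module, $\id_{R_\fp}(K_\fp) = \dim R_\fp < \infty$ for every $\fp \in \Spec R$, so a fortiori for every $\fp \in \Spec R \setminus \X' = \X$.

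Applying the preceding corollary with $M = R/I$, $K = R$, and with $\X'$ playing the role of the subset called $\X$ there, conditions (i) and (ii) translate to $\Att_R(\hh^i_\fm(R/I)) \subseteq \X' = \Spec R \setminus \X$ for the stated ranges of $i$, while condition (iii) translates to $\X = \Spec R \setminus \X' \subseteq \cm_R(R/I)$. These are exactly the three conditions in the present statement, so the equivalences follow at once. The only conceptually nontrivial step is the translation from classical self-linkage to module self-linkage via Corollary~\ref{c5}; after that, the proof is pure bookkeeping, and I do not anticipate any genuine obstacle.
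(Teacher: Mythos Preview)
Your proof is correct and follows exactly the route the paper intends: Corollary~\ref{cs} is stated without proof as an immediate specialization of the preceding (unnamed) corollary for self-linked modules, and your argument supplies precisely the two verifications needed---that classical self-linkage of $I$ yields self-linkage of $R/I$ with respect to $\cm^n(R)$ via Corollary~\ref{c5}, and that the Gorenstein hypothesis makes all the standing assumptions automatic. Your substitution $\X' = \Spec R \setminus \X$ to switch between the ``stable under specialization'' formulation and the ``open subset'' formulation is the correct bookkeeping.
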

Recall that an $R$-module $M$ of dimension $d\geq1$ is called 
\emph{ generalized Cohen-Macaulay} provided that
$\ell(\hh^i_\fm(M))<\infty$ for all $i$, $0\leq i\leq d-1$, where
$\ell(-)$ denotes the length function. There is an interesting duality between local cohomology modules of generalized Cohen-Macaulay ideals which are linked by a Gorenstein ideal, due to Schenzel. Let $R$ be a Gorenstein local ring and let $\fa$, $\fb$ be ideals of $R$ linked by a Gorenstein ideal
$\fc$. Assume that $R/\fa$ is generalized Cohen-Macaulay. Schenzel proved that $$\hh^i_{\fm}(R/\fa)\cong\Hom_R(\hh^{d-i}_{\fm}(R/\fb),\E_R(R/\fm)),$$
for $0<i<d$, where $d=\dim R/\fa=\dim R/\fb$  \cite[Corollary 3.3]{Sc}. Martsinkovsky and Strooker extended the above result to generalized Cohen-Macaulay modules which are linked by a Gorenstein ideal \cite[Theorem 11]{MS}. The Schenzel's result is generalized by Nagel for locally Cohen-Macaulay modules which are linked by a quasi-Gorenstein module \cite[Corollary 6.1(b)]{Na}. In the following, we extend the above result to modules which satisfy Serre's condition $(S_t)$ on the punctured spectrum of $R$.

Recall that an $R$-module $M$ is called $t$-torsionfree with respect to $K$ provided that $\Ext^i_R(\Tr_{K}M,K)=0$ for all $i$, $1\leq i\leq t$.
For an ideal $\fa$ of $R$, we denote by $\V(\fa)$ the set of all prime ideals of $R$ containing $\fa$.
The following is a generalization of \cite[Theorem 5.1]{Sa1}.
\begin{lem}\label{l7}
Let $\fa$ be an ideal of $R$ and let $M$ be an $R$-module. Assume that $t$ is an integer such that $0<t\leq\gr(\fa)$ and that $M_\fp$ is an $t$-torsionfree $R_\fp$-module with respect to $K_\fp$ for all  $\fp\in\Spec R\setminus\V(\fa)$. Then $\hh^i_{\fa}(M)\cong\Ext^{i+1}_R(\Tr_KM,K)$ for all $i$, $0\leq i\leq t-1$. In particular, $\hh^i_{\fa}(M)$ is finitely generated for all $i$,  $0\leq i\leq t-1$.
\end{lem}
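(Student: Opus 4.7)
My plan is to prove Lemma \ref{l7} in two main phases: first establish the key depth estimate $\hh^i_\fa(M^{\triangledown\triangledown})=0$ for $0\le i\le t-1$, then feed this into a diagram chase with the four-term sequence from Lemma \ref{l5}(i).

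For the first phase, I would select an $R$-regular sequence $x_1,\dots,x_t\in\fa$ (which exists because $t\le\gr(\fa)$) and show that it is $M^{\triangledown\triangledown}$-regular. Since $K$ is semidualizing one has $\Ass_R(K)=\Ass_R(R)$, so every $R$-regular element is automatically $K$-regular, and Theorem \ref{G2} applied inductively shows that $K/(x_1,\dots,x_k)K$ is semidualizing over $R/(x_1,\dots,x_k)R$. Applying $\Hom_R(M^\triangledown,-)$ to the short exact sequence $0\to K\xrightarrow{x_1}K\to K/x_1K\to 0$ yields the injection
$$\Hom_R(M^\triangledown,K)/x_1\Hom_R(M^\triangledown,K)\hookrightarrow\Hom_R(M^\triangledown,K/x_1K)\cong\Hom_{R/(x_1)}(M^\triangledown/x_1M^\triangledown,K/x_1K).$$
Since a regular sequence on a supermodule restricts to one on any submodule, induction on $k$ establishes that $x_1,\dots,x_t$ is $\Hom_R(M^\triangledown,K)=M^{\triangledown\triangledown}$-regular, giving $\depth_\fa(M^{\triangledown\triangledown})\ge t$ and the desired vanishing.

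For the second phase, I would split the four-term exact sequence of Lemma \ref{l5}(i) (taken with $n=0$) into the two short exact sequences
$$0\to\Ext^1_R(\Tr_KM,K)\to M\to L\to 0\quad\text{and}\quad 0\to L\to M^{\triangledown\triangledown}\to\Ext^2_R(\Tr_KM,K)\to 0,$$
where $L=\im\eta_K^R(M)$. The $t$-torsionfree hypothesis localizes to $\Ext^j_R(\Tr_KM,K)_\fp=0$ for $\fp\notin V(\fa)$ and $1\le j\le t$, so each such $\Ext$ is $\fa$-torsion, equal to its own $\Gamma_\fa$ and with vanishing higher local cohomology. Chasing the two long exact sequences of local cohomology against the first-phase vanishing produces $\Gamma_\fa(M)\cong\Ext^1_R(\Tr_KM,K)$ and $\hh^1_\fa(M)\cong\Ext^2_R(\Tr_KM,K)$; for $2\le i\le t-1$, the same chase yields $\hh^i_\fa(M)\cong\hh^i_\fa(M^{\triangledown\triangledown})=0$, and the matching vanishing $\Ext^{i+1}_R(\Tr_KM,K)=0$ follows from the shift $\Ext^{i+1}_R(\Tr_KM,K)\cong\Ext^{i-1}_R(M^\triangledown,K)$ (extracted from the exact sequences in (\ref{d3}.2) using $\Ext^{>0}_R(K,K)=0$) combined with an analogous depth argument applied to $M^\triangledown$.

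The main technical obstacle is the first phase: one must verify carefully that the $R$-regular sequence passes through $K$ via the semidualizing property and then descends through the bidual $M^{\triangledown\triangledown}$ under the change-of-rings. Once this depth estimate is in place, the rest is a mechanical diagram chase, and the finite-generation assertion in the ``in particular'' clause is automatic because the right-hand side is an $\Ext$ of finitely generated modules.
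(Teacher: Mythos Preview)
Your Phase 1 claim---that $\hh^i_\fa(M^{\triangledown\triangledown})=0$ for all $0\le i\le t-1$---is false in general, and this collapses the strategy for $i\ge 2$. Take $R=k[[x,y,z]]$, $K=R$, $\fa=\fm$, $t=3$, and $M=\Omega^2_R k$. Then $M$ is reflexive of depth $2$, and $M_\fp$ is free (hence $t$-torsionfree) for every $\fp\ne\fm$, so the hypotheses of the lemma hold. But $M^{\triangledown\triangledown}=M$ has $\hh^2_\fm(M)\cong k\ne 0$. Indeed the lemma correctly predicts $\hh^2_\fm(M)\cong\Ext^3_R(\Tr M,R)\cong\Ext^3_R(k,R)\cong k$; your argument would force this to vanish.

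The error in your induction is the assertion that ``a regular sequence on a supermodule restricts to one on any submodule.'' This is true for a single non-zerodivisor, and your first two steps are fine: $x_1$ is $M^{\triangledown\triangledown}$-regular, the quotient embeds in $\Hom_R(M^\triangledown,K/x_1K)$, and $x_2$ is regular on that Hom module, hence on the submodule. But to continue you need $M^{\triangledown\triangledown}/(x_1,x_2)M^{\triangledown\triangledown}$ to embed in something on which $x_3$ is visibly regular, and quotienting the inclusion $M^{\triangledown\triangledown}/x_1\hookrightarrow\Hom_R(M^\triangledown,K/x_1K)$ by $x_2$ does not preserve injectivity unless $x_2$ is regular on the cokernel, i.e.\ on the $x_1$-torsion of $\Ext^1_R(M^\triangledown,K)$---information you do not have. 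The same circularity infects the ``analogous depth argument'' you invoke at the end of Phase~2 to force $\Ext^{i-1}_R(M^\triangledown,K)=0$; these modules are only $\fa$-torsion under the hypotheses, not zero.

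The paper's proof avoids the bidual entirely. Setting $X_i=\Omega^{i-1}\Tr_K M$, each $\Tr_K(X_i)$ sits in two short exact sequences---one with the $\fa$-torsion module $\Ext^i_R(\Tr_K M,K)$ on the left, the other with a $K$-projective in the middle and $\Tr_K(X_{i+1})$ on the right. Applying $\Gamma_\fa(-)$ and using $\hh^{<t}_\fa(P\otimes_R K)=0$ shifts the local-cohomology degree down by one at each stage, so after $i$ iterations $\hh^i_\fa(M)\cong\hh^i_\fa(\Tr_K X_1)$ collapses to $\Gamma_\fa(\Tr_K X_{i+1})\cong\Ext^{i+1}_R(\Tr_K M,K)$.
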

\begin{proof}
First note that, by our assumption,	
$\underset{1\leq i\leq t}{\bigcup}\Supp_R(\Ext^i_R(\Tr_K M,K))\subseteq\V(\fa)$and so $\Ext^i_R(\Tr_K M,K)$ is
$\fa$-torsion for $1\leq i\leq t$. For each $i>0$, set $X_i=\Omega^{i-1}\Tr_{K}M$. By definition, there exist exact sequences
\begin{equation}\tag{\ref{l7}.1}
0\to\Ext^i_R(X_1,K)\to\Tr_{K}(X_i)\to Y_i\to0,
\end{equation} 
\begin{equation}\tag{\ref{l7}.2}
0\to Y_i\to P_i\otimes_{R}K\to\Tr_{K}(X_{i+1})\to0,
\end{equation}
where $P_i$ is a projective module. Applying the functor $\Gamma_{\fa}(-)$ to the exact sequences (\ref{l7}.1) and (\ref{l7}.2) and using the fact that $\Ext^i_R(\Tr_K M,K)$ is $\fa$-torsion for $1\leq i\leq t$ and that $\hh^j_{\fa}(P_i\otimes_{R}K)=0$ for all $j<\gr(\fa)$ and $i>0$, we get the isomorphisms
\begin{equation}\tag{\ref{l7}.3}
\hh^j_{\fa}(\Tr_K(X_i))\cong\hh^j_{\fa}(Y_i)\cong\hh^{j-1}_{\fa}(\Tr_{K}(X_{i+1})) \text{ for all }\  0<j<t,\ 1\leq i\leq t,
\end{equation}
\begin{equation}\tag{\ref{l7}.4}
\Ext^i_R(X_1,K)=\Gamma_{\fa}(\Ext^i_R(X_1,K))\cong\Gamma_{\fa} (\Tr_K(X_i)) \text{ for all }\ 1\leq i\leq t.
\end{equation}
By \cite[Lemma 2.12]{Sa}, there exists the exact sequence $0\to M\to\Tr_{K}(\Tr_{K}M)\to Z\to0$, where $Z$ is a totally $K$-reflexive module. Note that every $R$-regular sequence is also $Z$-regular sequence (see for example \cite[Corollary 32]{Ma}). Hence $\hh^i_{\fa}(Z)=0$ for all $i<t$. Applying the functor $\Gamma_{\fa}(-)$ to the above exact sequence implies the isomorphism
\begin{equation}\tag{\ref{l7}.5}
\hh^i_{\fa}(M)\cong\hh^i_{\fa}(\Tr_{K}(X_1)) \text{ for all } \ i,\ 0\leq i\leq t-1.
\end{equation}
Now the assertion follows from (\ref{t4}.3), (\ref{t4}.4) and (\ref{t4}.5). 
\end{proof}
\begin{prop}\label{l8}
	Let $\fa$ be an ideal of $R$ and let $M$, $N$ be $R$-modules that are directly linked with respect to $\mx_n$. Assume that $t$ is an integer with  $n<t\leq\gr(\fa)$ such that $\Supp_R(\Ext^i_R(N,K))\subseteq\V(\fa)$ for all $i$, $n< i\leq n+t$. Then $\hh^i_{\fa}(M)\cong\Ext^{i+n}_R(N,K)$ for all $i$,  $0<i<t$. In particular, $\hh^i_{\fa}(M)$ is finitely generated for all $i$, $0< i< t$.
\end{prop}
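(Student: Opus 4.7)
The strategy is to reduce to the grade-zero case via Lemma \ref{l6}, then invoke Lemma \ref{l7} applied to the transpose module $\Tr_C M$ over a regular-sequence quotient ring. First, using Lemma \ref{l6}, I pick a regular sequence $\bsx$ of length $n$ contained in $\ann_R M\cap\ann_R N$, and set $S=R/\bsx$ together with the semidualizing $S$-module $C=\Ext^n_R(S,K)$ (the latter by Theorem \ref{G2}); by Lemma \ref{l6}, $M$ and $N$ are directly linked as $S$-modules with respect to the category of totally $C$-reflexive $S$-modules. The independence theorem for local cohomology yields $\hh^i_\fa(M)\cong\hh^i_{\fa S}(M)$, while Lemma \ref{G1} identifies $\Ext^{i+n}_R(N,K)\cong\Ext^i_S(N,C)$, so the support hypothesis translates to $\Supp_S\Ext^i_S(N,C)\subseteq\V(\fa S)$ for $0<i\leq t$. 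Thus the proposition becomes the $n=0$ assertion $\hh^i_{\fa S}(M)\cong\Ext^i_S(N,C)$ for $0<i<t$ over $S$.

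In this reduced setting, Lemma \ref{lll1} (applied over $S$ with $C$ in place of $K$) produces a short exact sequence $0\to N\to\lambda_C^\pi M\to Y\to 0$ with $Y$ totally $C$-reflexive. Combined with the defining sequence $0\to\lambda_C^\pi M\to\Hom_S(P_1,C)\to\Tr_C M\to 0$ from (\ref{d3}.2) and the vanishing $\Ext^{\geq 1}_S(Y,C)=\Ext^{\geq 1}_S(\Hom_S(P_1,C),C)=0$ ($C$ being semidualizing and $P_1$ free), I derive
\[
\Ext^i_S(N,C)\cong\Ext^i_S(\lambda_C^\pi M,C)\cong\Ext^{i+1}_S(\Tr_C M,C)\quad\text{for all } i\geq 1.
\]

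Next, Corollary \ref{c} applied to the direct linkage over $S$ gives $\Ext^1_S(\Tr_C M,C)=0$. Combined with the isomorphism of Step~2 and the translated support hypothesis, this forces $\Supp_S\Ext^j_S(\Tr_C M,C)\subseteq\V(\fa S)$ for all $1\leq j\leq t+1$; equivalently, $M_\fp$ is $t$-torsionfree with respect to $C_\fp$ for every $\fp\in\Spec S\setminus\V(\fa S)$. Applying Lemma \ref{l7} over $S$ to $M$ with the semidualizing module $C$, the ideal $\fa S$, and integer $t$ then yields $\hh^i_{\fa S}(M)\cong\Ext^{i+1}_S(\Tr_C M,C)$ for $0\leq i\leq t-1$; splicing this with Step~2 produces $\hh^i_{\fa S}(M)\cong\Ext^i_S(N,C)$ for $1\leq i\leq t-1$, which by Step~1 translates back to the desired isomorphism over $R$. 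The finiteness assertion in the last sentence follows from the fact that $\Ext^i_R(N,K)$ is finitely generated for each $i$.

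The delicate technical point — and the main obstacle — is justifying the grade hypothesis of Lemma \ref{l7} over $S$, namely $t\leq\gr_S(\fa S)$; since $\gr_S(\fa S)=\gr_R(\fa+\bsx)-n$ whenever $\bsx$ is an $R$-regular sequence, this requires bridging the gap from the given bound $t\leq\gr_R(\fa)$. The natural way is to choose $\bsx$ so that its image is regular on $R/\fa$ (giving $\gr_R(\fa+\bsx)\geq\gr_R(\fa)+n$ and hence $\gr_S(\fa S)\geq t$); the flexibility in the construction of $\bsx$ within $\ann_R M\cap\ann_R N$ in Lemma \ref{l6} is what makes such a choice possible.
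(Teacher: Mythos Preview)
Your argument follows the same route as the paper's proof: reduce to grade zero via Lemma~\ref{l6}, use Lemma~\ref{lll1} together with the sequence (\ref{d3}.2) to obtain $\Ext^i_S(N,C)\cong\Ext^{i+1}_S(\Tr_C M,C)$ for $i\geq 1$, invoke Corollary~\ref{c} for the vanishing $\Ext^1_S(\Tr_C M,C)=0$, and then finish with Lemma~\ref{l7}. The paper carries out precisely these steps and, notably, does \emph{not} spell out the verification that $t\leq\gr_S(\fa S)$ after passing to $S=R/\bsx$; you deserve credit for flagging this point explicitly rather than sweeping it under the rug.

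Your proposed patch, however, is not quite right as stated. Requiring $\bsx$ to be regular on $R/\fa$ does not by itself force $\depth_{\fa}(R/\bsx)\geq t$: tracing the long exact sequence for $\Ext^{\bullet}_R(R/\fa,-)$ along $0\to R\xrightarrow{x_1}R\to R/x_1\to0$ shows that what is actually needed at each inductive step is that $x_j$ be a nonzerodivisor on $\Ext^{t}_R\bigl(R/\fa,\,R/(x_1,\dots,x_{j-1})\bigr)$, which is a different (though related) condition from regularity on $R/\fa$. Even with the corrected condition in hand, one must argue via prime avoidance that $\ann_R(X)$ is not contained in any associated prime of these successive Ext modules, and nothing in the hypotheses of the proposition guarantees this in general. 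So the gap you identify is real and is shared with the paper's own proof; closing it cleanly would require either a sharper prime-avoidance argument tailored to the relationship between $\ann_R(X)$ and $\fa$, or an additional hypothesis.
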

\begin{proof}
	By Lemma \ref{l6}, there exists an ideal ${\bf x}\subseteq\ann_R(M)\cap\ann_R(N)$, generated by a regular sequence of length $n$, such that $M$ and $N$ are directly linked with respect to the category of totally $C$-reflexive $R/{\bf x}$-modules, where $C=\Ext^n_R(R/{\bf x},K)$. Hence by Lemma \ref{G1} and the independence Theorem (see \cite[Theorem 4.2.1]{BS}) we may assume that $n=0$ and $M$, $N$ are maximal $R$-modules which are directly linked with respect to $\mx_0$. By Lemma \ref{lll1}, there exists an exact sequence $0\to N\to\lambda_K^{\pi}M\to Y\to0$ where $\pi$ is a projective presentation of $M$ and $Y\in\mx_0$. The above exact sequence and (\ref{d3}.2) induce the isomorphisms
	\begin{equation}\tag{\ref{l8}.1}
	\Ext^i_R(N,K)\cong\Ext^i_R(\lambda_K^{\pi}M,K)\cong\Ext^{i+1}_R(\Tr_{K}M,K), \text{ for all } i>0.
	\end{equation}
	Note that, by Corollary \ref{c}, $\Ext^1_R(\Tr_{K}M,K)=0$.
	Hence, it follows from our assumption and (\ref{l8}.1) that 
	$M_\fp$ is an $t$-torsionfree $R_\fp$-module with respect to $K_\fp$ for all $\fp\in\Spec(R)\setminus\V(\fa)$. Now the assertion follows from Lemma \ref{l7} and (\ref{l8}.1).
\end{proof}
Let $R$ be a Cohen-Macaulay local ring. An $R$-module $M$ is generalized Cohen-Macaulay if and only if $M_\fp$ is a Cohen-Macaulay $R_\fp$-module for all $\fp\in\Spec(R)\setminus\{\fm\}$ (see \cite[Lemmas 1.2, 1.4]{T}). 
Therefore the following result may be seen as a generalization of the Schenzel's result \cite[Corollary 3.3]{Sc} for Gorenstein liaison of ideals as well as the results of Martsinkovsky and Strooker \cite[Theorem 11]{MS}, Nagel \cite[Corollary 6.1(b)]{Na} and Sadeghi \cite[Corollary 5.4]{Sa1} for their smaller module liaison classes.
\begin{cor}\label{c6}
	Let $(R,\fm,k)$ be a Cohen-Macaulay local ring which is a homomorphic image of a Gorenstein local ring. Assume that $M$, $N$ are $R$-modules of dimension $d$ which are linked with respect to $\cm^n(R)$ in an odd number of steps. Let $t$ be a positive integer such that $M_\fp$ satisfies $(S_t)$ for all
	$\fp\in\Spec R\setminus\{\fm\}$. Then
	$\hh^i_{\fm}(M)\cong\Hom_R(\hh^{d-i}_{\fm}(N),\E_R(k))
	\text{ for all } i,\ 0<i<t$.
\end{cor}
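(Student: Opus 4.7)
The strategy is (a) reduce to the case where $M$ and $N$ are directly linked, by exploiting the even-liaison invariance of both local cohomology modules and of the $(S_t)$ condition on the punctured spectrum; (b) apply Proposition~\ref{l8} in conjunction with local duality.

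For (a), write the odd-step linkage as $M = N_0 \sim N_1 \sim \cdots \sim N_{2k+1} = N$. The module $M' := N_{2k}$ is directly linked to $N$, and the portion $M \sim N_1 \sim \cdots \sim M'$ is an even liaison. By Theorem~\ref{it}(iii), $\hh^i_\fm(M) \cong \hh^i_\fm(M')$ for all $0 < i < d$. I claim that $M'$ inherits $(S_t)$ on $\Spec R \setminus \{\fm\}$ from $M$. It suffices to treat a single pattern $U \sim V \sim W$ (a two-step even linkage). Fix $\fp \ne \fm$; since Cohen--Macaulay modules of codimension $n$ localize (on their supports) to Cohen--Macaulay modules of codimension $n$, the two direct links localize to direct links over $R_\fp$ with respect to $\cm^n(R_\fp)$. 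Corollary~\ref{c4} applied twice over $R_\fp$ characterizes both $(S_t)(U_\fp)$ and $(S_t)(W_\fp)$ by the same vanishing of $\hh^\bullet_{\fp R_\fp}(V_\fp)$, and hence they are equivalent. Iterating, $M'_\fp$ satisfies $(S_t)$ for every $\fp \ne \fm$. So we may assume $M$ and $N$ are directly linked.

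For (b), since $R$ is Cohen--Macaulay and Proposition~\ref{p2} gives $\gr_{R_\fp}(M_\fp) = n$ for $\fp \in \Supp_R M$, we have $\dim_{R_\fp} M_\fp = \depth R_\fp - n$, so the $(S_t)$ condition reads $\depth_{R_\fp}(M_\fp) \ge \min\{t,\,\depth R_\fp - n\}$. Because $\omega_R$ is dualizing, the finite $\gkkkd$-dimension hypothesis of Proposition~\ref{t3} holds at every localization. Applying Proposition~\ref{t3} in $R_\fp$ for each $\fp \ne \fm$ yields $\Ext^i_R(N, \omega_R)_\fp = 0$ for all $n+1 \le i \le n+t-1$; equivalently, $\Supp_R \Ext^i_R(N, \omega_R) \subseteq \{\fm\}$ in that range.

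Now Proposition~\ref{l8}, applied with $\fa = \fm$ and $K = \omega_R$ (note $\gr(\fm) = \dim R = d+n$), produces $\hh^i_\fm(M) \cong \Ext^{i+n}_R(N, \omega_R)$ for every $0 < i < t$. Finally, local duality over the Cohen--Macaulay ring $R$ of dimension $D := d+n$ with dualizing module $\omega_R$ gives $\Ext^{D-j}_R(N, \omega_R) \cong \Hom_R(\hh^j_\fm(N), E_R(k))$; specializing $j = d-i$ yields $\Ext^{i+n}_R(N, \omega_R) \cong \Hom_R(\hh^{d-i}_\fm(N), E_R(k))$, and composition with the preceding isomorphism completes the proof. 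The principal obstacle is the even-liaison invariance of the $(S_t)$ condition on the punctured spectrum; once this is established by the symmetric use of Corollary~\ref{c4} above, the remaining steps follow by combining Proposition~\ref{t3}, Proposition~\ref{l8}, and local duality.
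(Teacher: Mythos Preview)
Your argument is correct and follows the paper's approach: reduce to the direct-link case, then combine a localized form of Proposition~\ref{t3} (to see that $\Ext^i_R(N,\omega)$ has finite length for $n<i<n+t$), Proposition~\ref{l8}, and local duality. One caveat: citing Theorem~\ref{it}(iii) is a forward reference---it is restated and proved as Corollary~\ref{c3}, whose proof invokes Corollary~\ref{c6}---but since part~(iii) actually rests only on Theorem~\ref{ttt}(i) together with local duality, both established earlier, you should cite those directly to avoid any appearance of circularity.
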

\begin{proof}
	Without loss of generality we may assume that $M$ and $N$ are directly linked. As $R$ is Cohen-Macaulay, $\dim M_\fp=\dim R_\fp-\gr_{R_\fp}(M_\fp)=\depth R_\fp-n$ for all $\fp\in\Spec R$, where the last equality follows from Proposition \ref{p2}. Hence, by our assumption and Proposition \ref{t3}, we have $\Ext^i_R(N,\omega)$ has finite length for all $i$, $n+1\leq i\leq n+t-1$, where $\omega$ is the dualizing module of $R$. Now the assertion follows from Proposition \ref{l8} and the local duality theorem.
\end{proof}
The following shows that homological dimensions are preserved in even module liaison classes which can be seen as a generalization of
\cite[Proposition 16]{MS}, \cite[Corollary 6.9]{Na} and \cite[Corollary 5.11(i)]{DS}.
\begin{thm}\label{ttt}
	Let $M$ and $N$ be $R$-modules. The following statements hold true.
	\begin{enumerate}[\rm(i)]
		\item If $M$ and $N$ are in the same even liaison class with respect to $\mx_n$, then $\Ext^i_R(M,K)\cong\Ext^i_R(N,K)$ for all $i>n$. Moreover, $\gkkd_R(M)=\gkkd_R(N)$.
		\item If $M$ and $N$ are in the same even liaison class with respect to $\mathcal{P}^n(R)$, then $\pd_R(M)=\pd_R(N)$. Moreover, $\cid_R(M)=\cid_R(N)$.	
	\end{enumerate}
\end{thm}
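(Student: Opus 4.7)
My plan is to reduce to a two-step direct linkage by the transitivity of even linkage and then compare $\Ext^i_R(M,K)$ and $\Ext^i_R(N,K)$ through the intermediate module. If $M$ and $N$ are linked in $2k$ steps via $M=M_0\sim M_1\sim\cdots\sim M_{2k}=N$, it suffices to establish the isomorphism for each triple $M_{2j}\sim M_{2j+1}\sim M_{2j+2}$ and chain the results. Accordingly, assume $M\sim L\sim N$ with each $\sim$ a direct linkage with respect to $\mx_n$.

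For $i>n+1$, Corollary \ref{l1}(ii) applied to the direct linkage $M\sim L$ (with the roles of the two linked modules swapped, which is permissible by the symmetry of direct linkage) yields $\Ext^{i}_R(M,K)\cong\Ext^{i-1}_R(\Ext^n_R(L,K),K)$, and the analogous application to $L\sim N$ gives $\Ext^{i}_R(N,K)\cong\Ext^{i-1}_R(\Ext^n_R(L,K),K)$; combining produces the desired isomorphism. For $i=n+1$, Corollary \ref{l1}(i) applied to $M\sim L$ with roles swapped produces the exact sequence
\[ 0\to L\to \Ext^n_R(\Ext^n_R(L,K),K)\to \Ext^{n+1}_R(M,K)\to 0, \]
so $\Ext^{n+1}_R(M,K)\cong\coker(\eta_K^R(L))$, while the analogous identity from $L\sim N$ gives $\Ext^{n+1}_R(N,K)\cong\coker(\eta_K^R(L))$, whence $\Ext^{n+1}_R(M,K)\cong\Ext^{n+1}_R(N,K)$. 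I expect the case $i=n+1$ to be the main subtlety, as Corollary \ref{l1}(ii) does not cover it; the key observation is that the common cokernel $\coker(\eta_K^R(L))$ simultaneously computes $\Ext^{n+1}$ for both $M$ and $N$.

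For the equality $\gkkd_R(M)=\gkkd_R(N)$, Proposition \ref{p}(ii) applied to the thick subcategory of modules of finite $\gkkkd$-dimension (which contains $\mx_n$ by Example \ref{exaa}(i)) preserves finiteness. When both dimensions are finite, Theorem \ref{G3}(i) gives $\gkkd_R(-)=\sup\{i:\Ext^i_R(-,K)\neq 0\}$; combined with the coincidence of Ext's for $i>n$, the vanishing $\Ext^{<n}_R(-,K)=0$ (from the grade assumption), and $\Ext^n_R(-,K)\neq 0$ for both $M$ and $N$ (since $\gr_R(M)=n=\gr_R(N)$), the suprema coincide.

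For part (ii), the inclusion $\mathcal{P}^n(R)\subseteq\G^n_R(\mathcal{P})$ makes direct linkage with respect to $\mathcal{P}^n(R)$ a fortiori direct linkage with respect to $\G^n_R(\mathcal{P})$, so part (i) with $K=R$ yields $\Ext^i_R(M,R)\cong\Ext^i_R(N,R)$ for $i>n$. Corollary \ref{ccc1}(ii) and (vii) preserve the finiteness of $\pd_R$ and $\cid_R$; when these are finite, the identities $\pd_R(-)=\sup\{i:\Ext^i_R(-,R)\neq 0\}$ (via localization and Auslander--Buchsbaum) and $\cid_R(-)=\gd_R(-)=\sup\{i:\Ext^i_R(-,R)\neq 0\}$ (by \cite{AGP} and Theorem \ref{G3}(i) in the case $K=R$), together with the Ext coincidence above, force $\pd_R(M)=\pd_R(N)$ and $\cid_R(M)=\cid_R(N)$.
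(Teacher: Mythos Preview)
Your proposal is correct and follows essentially the same approach as the paper: reduce to a two-step linkage $M\sim L\sim N$, invoke Corollary \ref{l1}(i) for $i=n+1$ (via $\coker\eta_K^R(L)$) and Corollary \ref{l1}(ii) for $i>n+1$, then deduce the dimension equality from Theorem \ref{G3}(i). The only cosmetic differences are that the paper establishes finiteness of $\gkkd$ directly from the exact sequences $0\to\Ext^n_R(L,K)\to X\to M\to 0$ and $0\to\Ext^n_R(L,K)\to X'\to N\to 0$ (rather than citing Proposition \ref{p}(ii)) and separates the $\gkkkd$-perfect case via Theorem \ref{cc}, whereas your uniform treatment using $\Ext^n_R(M,K)\neq 0\neq\Ext^n_R(N,K)$ handles both cases at once.
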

\begin{proof}
	Without loss of generality, we may assume that $M$ is linked to $N$ in two steps: $M\sim L \sim N$, for some $R$-module $L$. We only prove part (i). The proof of part (ii) is similar. By Corollary \ref{l1}, we have
	\begin{equation}\tag{\ref{ttt}.1}
	\Ext^{n+1}_R(M,K)\cong\coker\eta_K^R(L)\cong\Ext^{n+1}_R(N,K),
	\end{equation}
	\begin{equation}\tag{\ref{ttt}.2}
	\Ext^{n+i}_R(M,K)\cong\Ext^{n+i-1}_R(\Ext^n_R(L,K),K)\cong\Ext^{n+i}_R(N,K) \text{ for all } i>1.
	\end{equation}
	Consider the exact sequences 
	$0\to\Ext^n_R(L,K)\to X\to M\to0 \text{  and   }  0\to\Ext^n_R(L,K)\to X'\to N\to0,$ where $X, X'\in\mx_n=\G^n_K(\mathcal{P})$ (see Theorem \ref{t1}(iv)). It follows from the above exact sequences that
	\begin{equation}\tag{\ref{ttt}.3}
	\gkkd_R(M)<\infty \Leftrightarrow\gkkd_R(\Ext^n_R(L,K))<\infty\Leftrightarrow\gkkd_R(N)<\infty.
	\end{equation}	
	If $M$ is $\mathcal{G}_K$-perfect, then the assertion follows from Theorem \ref{cc}. Hence we may assume that $n<\gkkd_R(M)<\infty$.
	Now the assertion follows from (\ref{ttt}.1), (\ref{ttt}.2), (\ref{ttt}.3) and Theorem \ref{G3}(i).
\end{proof}
Now we can state our first main result, which is nothing but Theorem \ref{it},  mentioned in the introduction.
\begin{cor}\label{c3}
	Let $(R,\fm,k)$ be a Cohen-Macaulay local ring of dimension $d$ with dualizing module and let $M$, $N$ be $R$-modules which are in the same liaison class with respect to $\cm^n(R)$.
	\begin{enumerate}[\rm(i)]
		\item $M\in\cm^n(R)$ if and only if $N\in\cm^n(R)$.
		\item Assume that $\X$ is an open subset of $\Spec R$. If $M$ and $N$ are linked in an odd number of steps, then
		$$\X\subseteq\Sn_t(M) \text{ if and only if } \Att_R(\hh^j_\fm(N))\subseteq\Spec R\setminus\X \text{ for } \dim N-t<j<\dim N.$$
		\item If $M$ and $N$ are in the same even liaison class, then $\hh^i_{\fm}(M)\cong\hh^i_{\fm}(N)$ for $0<i<d-n$.
		\item If $M$ and $N$ are linked in an odd number of steps and $M$ is generalized Cohen-Macaulay, then $$\hh^i_{\fm}(M)\cong\Hom_R(\hh^{d-n-i}_{\fm}(N),\E_R(k)) \text{ for } 0<i<d-n.$$
		In particular, $N$ is generalized Cohen-Macaulay.
	\end{enumerate}
\end{cor}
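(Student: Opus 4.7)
The plan is to derive all four parts from results already proved in the section, after observing that the setting of the corollary is a specialization that satisfies every hypothesis of the general theorems. Specifically, $R$ is Cohen-Macaulay, so formally equidimensional; by assumption it is a homomorphic image of a Gorenstein local ring, so it admits a dualizing module $\omega$. Taking $K = \omega$, one has $\id_{R_\fp}(K_\fp) < \infty$ for every $\fp \in \Spec R$, and a module is $\mathcal{G}_\omega$-perfect of grade $n$ iff it is Cohen-Macaulay of codimension $n$, so $\mx_n = \G_K^n(\mp) = \cm^n(R)$. Thus the preparatory restrictions on the $\gkkkd$-dimension locus that appear in Theorems \ref{cc}, \ref{t5}, \ref{ttt} and Corollary \ref{c6} are automatic here.

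With that in place, part (i) is the statement of Theorem \ref{cc}(i) under the identification above, and nothing else is needed. For part (ii), any open subset of $\Spec R$ is stable under generalization, so the hypotheses of Theorem \ref{t5} are met and the equivalence is exactly its conclusion with $t$ and $\X$ as given.

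For part (iii), I would start from Theorem \ref{ttt}(i), which in our situation gives an isomorphism $\Ext^i_R(M,\omega) \cong \Ext^i_R(N,\omega)$ for every $i > n$. Then invoking the local duality theorem (available since $R$ is Cohen-Macaulay with dualizing module), one has $H^j_\fm(-) \cong \Hom_R(\Ext^{d-j}_R(-,\omega), E_R(k))$. For $0 < j < d-n$ the index $d-j$ is strictly greater than $n$, so the $\Ext$-groups agree and the claimed isomorphism $H^j_\fm(M) \cong H^j_\fm(N)$ follows.

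For part (iv), the plan is to reduce to Corollary \ref{c6}. By Proposition \ref{p2} one has $\dim M = d - n = \dim N$, and since $M$ is generalized Cohen-Macaulay, the standard characterization over a Cohen-Macaulay local ring yields that $M_\fp$ is Cohen-Macaulay for every $\fp \in \Spec R \setminus \{\fm\}$. In particular $M_\fp$ satisfies $(S_t)$ off the maximal ideal for every $t$, so Corollary \ref{c6} applies with $t = d - n$, producing the duality $H^i_\fm(M) \cong \Hom_R(H^{d-n-i}_\fm(N), E_R(k))$ for $0 < i < d-n$. Since the left-hand side has finite length by hypothesis, Matlis duality forces $H^{d-n-i}_\fm(N)$ to have finite length in the indicated range; as $i$ sweeps over $1, \ldots, d-n-1$ the index $d-n-i$ sweeps the same range, so $N$ is generalized Cohen-Macaulay. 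The only real subtlety is dimension bookkeeping: one must identify the ``$d$'' of Corollary \ref{c6} with $\dim M = d - n$ here, and confirm via Proposition \ref{p2} that linked modules share this common dimension; beyond that, the corollary is a direct assembly of the earlier results.
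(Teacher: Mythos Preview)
Your proposal is correct and follows essentially the same assembly-from-prior-results strategy as the paper, which simply cites Theorems~\ref{cc}(i), \ref{ttt}, Proposition~\ref{t3}, Corollary~\ref{c6} and local duality. One small imprecision: the hypothesis of the corollary is that $R$ has a dualizing module, not that it is a homomorphic image of a Gorenstein ring; these are equivalent for Cohen--Macaulay local rings, but you should invoke that equivalence rather than state the latter as ``by assumption'' when you need it to feed into Theorem~\ref{t5}.
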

\begin{proof}
	This is an immediate consequence of Theorems \ref{cc}(i), \ref{ttt}, Proposition \ref{t3}, Corollary \ref{c6} and the local duality theorem.
\end{proof}
An $R$-module $M$ of finite $\gkkkd$-dimension is called \emph{reduced $\gkkkd$-perfect} provided that $\Ext^i_R(M,K)=0$ for all $i\neq\gr_R(M), \gkkd_R(M)$. The following is a generalization of \cite[Theorem 3.3]{DS}.
\begin{thm}\label{t4}
	Let $R$ be a Cohen-Macaulay local ring of dimension $d$ and let $M$ and $N$ be $R$-modules which are linked in an odd number of steps with respect to $\mx_n$. If $M$ is reduced $\gkkkd$-perfect, then
	$$\depth_R(M)+\depth_R(N)=\dim_R(M) +\depth_R(\Ext^{\tiny{\gkkd_R(M)}}_R(M,K)).$$	
\end{thm}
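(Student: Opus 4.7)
The plan is to reduce to the case of direct linkage with $n=0$, then compute $\Ext^i_R(N,K)$ for $i\geq 1$ explicitly in terms of $\Ext^{i+g}_R(\Ext^g_R(M,K),K)$, and conclude via the Auslander--Buchsbaum-type formula (Theorem \ref{G3}(iii)).

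For the reductions, if $M\sim L_1\sim\cdots\sim L_{2k}\sim N$ with $k\geq 1$, then $M$ and $L_{2k}$ are evenly linked, so Theorem \ref{ttt}(i) gives $\Ext^i_R(L_{2k},K)\cong\Ext^i_R(M,K)$ for $i>n$ and $\gkkd_R(L_{2k})=g$; thus $L_{2k}$ is also reduced $\gkkkd$-perfect with the same depth, dimension and $\Ext^g_R(-,K)$ as $M$, and we may assume the linkage is direct. Next, Lemma \ref{l6} provides a regular sequence $\bsx$ of length $n$ contained in $\ann_R(M)\cap\ann_R(N)$ such that $M,N$ are directly linked over $S=R/(\bsx)$ with respect to totally $\overline{K}$-reflexive $S$-modules, where $\overline{K}=\Ext^n_R(S,K)$. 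By Lemma \ref{G1} and Theorem \ref{G2}, $M$ becomes a reduced $\gkbkd$-perfect $S$-module with $\gr_S(M)=0$, $\gkbd_S(M)=g-n$, and $\Ext^{g-n}_S(M,\overline{K})\cong\Ext^g_R(M,K)$; depth and dimension of $M$, $N$ and this last $\Ext$ are unchanged by passing from $R$ to $S$, so we may assume $n=0$ with $\gr_R(M)=0$, $\gkkd_R(M)=g$, $\depth_R(M)=d-g$ and $\dim_R(M)=d$.

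Fix a $\gkkkd$-resolution $0\to T_g\to\cdots\to T_0\to M\to 0$, and set $M^{(i)}:=\Ext^i_R(M,K)$, $(-)^\triangledown:=\Hom_R(-,K)$. The reduced $\gkkkd$-perfect hypothesis gives $M^{(i)}=0$ for $0<i<g$, so the dualized complex
\[
0\to M^{(0)}\to T_0^\triangledown\to\cdots\to T_g^\triangledown\to M^{(g)}\to 0
\]
is exact. Each $T_i^\triangledown$ is totally $K$-reflexive by Lemma \ref{ll1}(i), hence has $\Ext^{>0}_R(T_i^\triangledown,K)=0$. Splitting the above into short exact sequences $0\to L_{i-1}\to T_i^\triangledown\to L_i\to 0$ (with $L_{-1}=M^{(0)}$, $L_g=M^{(g)}$) and iterating $(-)^\triangledown$ yields $\Ext^j_R(M^{(0)},K)\cong\Ext^{j+g+1}_R(M^{(g)},K)$ for all $j\geq 1$, and, at the boundary $j=0$, an exact sequence $T_0=T_0^{\triangledown\triangledown}\to M^{(0)\triangledown}\to\Ext^{g+1}_R(M^{(g)},K)\to 0$. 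Since $T_0\twoheadrightarrow M$ and the map $T_0\to M^{(0)\triangledown}$ factors through $\eta_K^R(M):M\to M^{(0)(0)}=M^{(0)\triangledown}$, one concludes $\coker\eta_K^R(M)\cong\Ext^{g+1}_R(M^{(g)},K)$. Combined with Corollary \ref{l1}(i),(ii) this produces the key isomorphism
\[
\Ext^i_R(N,K)\cong\Ext^{i+g}_R(M^{(g)},K)\qquad\text{for all}\ i\geq 1.
\]

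To conclude, the dualized complex also shows $\gkkd_R(M^{(g)})<\infty$, so Theorem \ref{G3}(iii) gives $\gkkd_R(M^{(g)})=d-\depth_R(M^{(g)})$. By Theorem \ref{G3}(i) and the displayed isomorphism, $\gkkd_R(N)=\max\{0,\gkkd_R(M^{(g)})-g\}$; since $\gr_R(M^{(g)})\geq g$ forces $\gkkd_R(M^{(g)})\geq g$, we have $\gkkd_R(N)=\gkkd_R(M^{(g)})-g$, and hence $\depth_R(N)=d-\gkkd_R(N)=g+\depth_R(M^{(g)})$. Combined with $\depth_R(M)=d-g$ and $\dim_R(M)=d$, this gives
\[
\depth_R(M)+\depth_R(N)=d+\depth_R(\Ext^g_R(M,K))=\dim_R(M)+\depth_R(\Ext^g_R(M,K)),
\]
as required. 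The main technical obstacle is the iterated dualization in the middle step, in particular the identification of $\coker\eta_K^R(M)\cong\Ext^{g+1}_R(M^{(g)},K)$ via the natural map $T_0\to M^{(0)(0)}$; once this is in place, the rest is a clean application of Theorem \ref{G3}.
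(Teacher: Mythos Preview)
Your reductions to direct linkage (via Theorem~\ref{ttt}(i)) and to grade zero (via Lemma~\ref{l6}) are sound, and the computation of the key isomorphism $\Ext^i_R(N,K)\cong\Ext^{i+g}_R(M^{(g)},K)$ for $i\geq 1$ by dualizing a $\gkkkd$-resolution of $M$ and identifying $\coker\eta_K^R(M)\cong\Ext^{g+1}_R(M^{(g)},K)$ is correct and rather elegant.

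The gap is in the final step. You assert that ``the dualized complex also shows $\gkkd_R(M^{(g)})<\infty$'' and then invoke Theorem~\ref{G3}(i),(iii) for both $M^{(g)}$ and $N$. But the exact sequence
\[
0\to M^{(0)}\to T_0^{\triangledown}\to\cdots\to T_g^{\triangledown}\to M^{(g)}\to 0
\]
only shows that $\gkkd_R(M^{(g)})<\infty$ is \emph{equivalent} to $\gkkd_R(M^{(0)})<\infty$, since the $T_i^{\triangledown}$ are totally $K$-reflexive; it does not establish either. For a general semidualizing $K$, finiteness of $\gkkd_R(M)$ does not force finiteness of $\gkkd_R(\Hom_R(M,K))$, and nothing in the linkage hypotheses supplies it. (The linkage sequence $0\to\Ext^0_R(N,K)\to X\to M\to 0$ gives $\gkkd_R(N^{(0)})<\infty$, which is a statement about the wrong module.) Without $\gkkd_R(N)<\infty$ you cannot read off $\gkkd_R(N)$ from the vanishing of $\Ext^i_R(N,K)$ via Theorem~\ref{G3}(i), nor convert it to $\depth_R(N)$ via Theorem~\ref{G3}(iii), so the concluding chain of equalities is unjustified.

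The paper's proof avoids this issue entirely. It never needs $\gkkd_R(N)<\infty$; instead it inducts on $s=t-n$, working with $Z=\Ext^n_R(N,K)$ (whose finite $\gkkkd$-dimension \emph{does} follow at once from $0\to Z\to X\to M\to 0$), links $Z$, and computes all depths by direct depth-lemma arguments on short exact sequences such as $0\to N\to\Ext^n_R(Z,K)\to\Ext^t_R(M,K)\to 0$. If you want to salvage your route, you must either prove $\gkkd_R(M^{(0)})<\infty$ under the reduced-$\gkkkd$-perfect hypothesis, or replace the Auslander--Bridger step by an iterated depth-lemma argument along your dualized complex together with $0\to M^{(0)}\to X\to N\to 0$; carried out carefully, the latter essentially reproduces the paper's induction.
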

\begin{proof}
	Without loss of generality we may assume that $M$ and $N$ are directly linked. Set $t:=\gkkd_R(M)$. If $M$ is $\gkkkd$-perfect, then by using the fact that $R$ is Cohen-Macaulay, we have $\dim M=\dim R-\gr_R(M)=\depth R-t$. Now the assertion follows from Theorems \ref{G3}(iii), \ref{cc} and Lemma \ref{ll1}(i). Hence we may assume that $\gr_R(M)<\gkkd_R(M)$.
	As $M$ is linked to $N$, by Theorem \ref{t1}(iv) and Corollary \ref{c}, one obtains the exact sequence
	\begin{equation}\tag{\ref{t4}.1}
	0\to\Ext^n_R(N,K)\to X\to M\to0,\
	\text{for some}\ X\in\mx_n.\end{equation}
	Hence $\gkkd_R(\Ext^n_R(N,K))<\infty$.
	Note that, by Theorem \ref{G3}(iii) and Proposition \ref{p2},
	\begin{equation}\tag{\ref{t4}.2}
	\dim_R(M)-\depth_R(M)=d-n-\depth_R(M)=t-n.
	\end{equation}
	We argue by induction on $s=t-n$. If $s=1$, then $t=n+1$. By Corollary \ref{l1} and Theorem \ref{G3}(i), $\Ext^i(\Ext^n_R(N,K),K)=0$ for all $i>n$ and one has the exact sequence
	\begin{equation}\tag{\ref{t4}.3}
	0\to N\to\Ext^n_R(\Ext^n_R(N,K),K)\to\Ext^t_R(M,K)\to0.
	\end{equation}
	Therefore, by Lemma \ref{l} and Theorem \ref{G3}(i), $\Ext^n_R(N,K)$
	is $\gkkkd$-perfect of grade $n$ and so is $\Ext^n_R(\Ext^n_R(N,K),K)$ by Lemma \ref{ll1}. Note that $\gr_R(\Ext^t_R(M,K))\geq t$ (see \cite[Corollary 30]{Ma}). Hence 
    \begin{equation}\tag{\ref{t4}.4}
	\begin{array}{rllll}	\depth_R(\Ext^t_R(M,K))&\leq\dim_R(\Ext^t_R(M,K))\\ &\leq d-t<d-n\\
	&=\depth_R(\Ext^n_R(\Ext^n_R(N,K),K)),
	\end{array}
	\end{equation}
	where the last equality follows from the fact that $\Ext^n_R(\Ext^n_R(N,K),K)$ is $\gkkkd$-perfect of grade $n$ and Theorem \ref{G3}(iii). Hence, by (\ref{t4}.2), (\ref{t4}.4) and the exact sequence (\ref{t4}.3), we find that
	\begin{align*}
	\depth_R(N)&=\depth_R(\Ext^t_R(M,K))+1=\depth_R(\Ext^t_R(M,K))+(t-n)\\ &=\depth_R(\Ext^t_R(M,K))+\dim_R(M)-\depth_R(M),
	\end{align*}
	as desired.
	Let $s>1$ and set $Z=\Ext^n_R(N,K)$.
	By Lemma \ref{l} and Example \ref{e}(ii), $Z$ is linked by some $\phi\in\Epi(\mx_n)$. Set $\L^n_K(Z):=\im\L^n_K(\phi)$ and consider the following exact sequence 
	\begin{equation}\tag{\ref{t4}.5}
	0\to\Ext^n_R(Z,K)\to Y\to\L_K^n(Z)\to0,\ \text{ where}\ Y\in\mx_n\  \text{(see \ref{d2}.1)}.
	\end{equation}
	Next, we prove that $\L_K^n(Z)\notin\mx_n$. Assume contrarily that $\L_K^n(Z)\in\mx_n$. Hence $Z\in\mx_n$ by Theorem \ref{cc}(i).
	As $s>1$ and $M$ is reduced $\gkkkd$-perfect, we have $\Ext^{n+1}_R(M,K)=0$. Therefore, it follows from Corollary \ref{l1}(i) that 
	\begin{equation}\tag{\ref{t4}.6}
	N\cong\Ext^n_R(\Ext^n_R(N,K),K)=\Ext^n_R(Z,K).
	\end{equation}
	Thus, by Lemma \ref{ll1}(i) and (\ref{t4}.6), $N\in\mx_n$ and so $M\in\mx_n$ by Theorem \ref{cc} which is a contradiction. Therefore $\L_K^n(Z)\notin\mx_n$ and so $\depth_R(\L_K^n(Z))<d-n=\depth_R(Y)$ by Theorem \ref{G3}(iii). It follows from the exact sequence (\ref{t4}.5) and the isomorphism (\ref{t4}.6) that
	\begin{equation}\tag{\ref{t4}.7}
	\depth_R(N)=\depth_R(\Ext^n_R(Z,K))=\depth_R(\L_K^n(Z))+1.
	\end{equation}
	By the exact sequence (\ref{t4}.1) we have
	\begin{equation}\tag{\ref{t4}.8}
	\depth_R(M)=\depth_R(Z)-1.
	\end{equation}
	Note that by Lemma \ref{l}, Theorem \ref{G3}(iii) and (\ref{t4}.8), $Z$ is a module of $\gkkkd$-dimension $t-1$ and of grade $n$. As $M$ is reduced $\gkkkd$-perfect and $s>1$, by Corollary \ref{l1}(ii), $$\Ext^{i}_R(Z,K)=\Ext^i_R(\Ext^n_R(N,K),K)\cong\Ext^{i+1}_R(M,K)=0 \text{ for all } i,\ n<i<t-1.$$ In other words, $Z$ is a reduced $\gkkkd$-perfect module.	Hence by induction hypothesis we have the equality
	\begin{equation}\tag{\ref{t4}.9}
	\depth_R(Z)+\depth_R(\L_K^n(Z))=\dim_R(Z)+\depth_R(\Ext^{t-1}_R(Z,K)).
	\end{equation}
	Note that Corollary \ref{l1}(ii) implies the isomorphism 
	\begin{equation}\tag{\ref{t4}.10}
	\Ext^t_R(M,K)\cong\Ext^{t-1}_R(Z,K).
	\end{equation}
	It follows from Lemma \ref{l} and Proposition \ref{p2} that $\dim_R(M)=\dim_R(Z)$.
	Now the assertion is clear by (\ref{t4}.7), (\ref{t4}.8), (\ref{t4}.9) and (\ref{t4}.10).
\end{proof}
Let $R$ be a Gorenstein local ring. Following \cite{H}, an $R$-module $M$ is said to be an \emph{Eilenberg-Maclane} module, if $\hh^i_\fm(M)=0$ for all $i\neq\depth_R(M), \dim_R(M)$. Hence
reduced $\gkkkd$-perfect modules can be viewed as a generalization of Eilenberg-Maclane modules.
\begin{cor}
	Let $R$ be a Cohen-Macaulay local ring with dualizing module $K$ and let $M$ and $N$ be $R$-modules which are linked in an odd number of steps with respect to $\cm^n(R)$. If $M$ is an Eilenberg-Maclane module, then $\depth_R(M)+\depth_R(N)=\dim_R(M)+\depth_R(\Ext^{\tiny{\gkkd_R(M)}}_R(M,K)).$
\end{cor}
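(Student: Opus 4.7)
The plan is to reduce the Eilenberg--Maclane hypothesis to the reduced $\mathcal{G}_K$-perfect hypothesis already treated in Theorem \ref{t4}, via local duality.

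First, since $K$ is dualizing, every finitely generated $R$-module has finite $\mathcal{G}_K$-dimension (this is the characterization recalled in \ref{d3}), so in particular $\gkkd_R(M)<\infty$ and Theorem \ref{G3}(iii) applies to $M$. Setting $d=\dim R=\depth R$ (as $R$ is Cohen-Macaulay), we get $\depth_R(M)=d-\gkkd_R(M)$, and since $R$ is Cohen-Macaulay also $\dim_R(M)=d-\gr_R(M)$.

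Next, I invoke the local duality theorem: for each $i$,
\[
\hh^i_\fm(M)\cong\Hom_R\bigl(\Ext^{d-i}_R(M,K),E_R(k)\bigr).
\]
Therefore $\hh^i_\fm(M)=0$ if and only if $\Ext^{d-i}_R(M,K)=0$. Under the substitution $j=d-i$, the Eilenberg--Maclane hypothesis $\hh^i_\fm(M)=0$ for all $i\neq\depth_R(M),\dim_R(M)$ translates, using the two equalities above, to
\[
\Ext^{j}_R(M,K)=0\quad\text{for all }j\neq\gkkd_R(M),\gr_R(M).
\]
This is exactly the definition of $M$ being reduced $\mathcal{G}_K$-perfect.

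Finally, since $K$ is dualizing we have $\mx_n=\G_K^n(\mathcal{P})=\cm^n(R)$, so the hypothesis that $M$ and $N$ are linked in an odd number of steps with respect to $\cm^n(R)$ is precisely the hypothesis of Theorem \ref{t4}. Applying that theorem yields
\[
\depth_R(M)+\depth_R(N)=\dim_R(M)+\depth_R\bigl(\Ext^{\gkkd_R(M)}_R(M,K)\bigr),
\]
as desired. There is no real obstacle here; the only point requiring care is the bookkeeping that identifies the two distinguished indices on the cohomological side ($\depth_R(M)$ and $\dim_R(M)$) with $d-\gkkd_R(M)$ and $d-\gr_R(M)$, which is immediate from the Cohen--Macaulay hypothesis on $R$ together with Theorem \ref{G3}(iii).
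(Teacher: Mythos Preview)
Your proof is correct and follows exactly the same route as the paper: the paper's one-line proof simply cites Theorem \ref{t4} together with the local duality theorem, and your argument is a faithful unpacking of precisely that reduction, translating the Eilenberg--Maclane vanishing into the reduced $\mathcal{G}_K$-perfect condition via the index shift $j=d-i$ and then invoking Theorem \ref{t4} (with $\mx_n=\cm^n(R)$ since $K$ is dualizing).
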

\begin{proof}
	This is an immediate consequence of Theorem \ref{t4} and the local duality theorem.
\end{proof}
\section{Colinkage of modules}
In this section we introduce and study the notion of colinkage of modules. This notion can be seen as the dual of the notion of linkage and enables us to study the theory of linkage for modules in the Bass class with respect to $K$. It is shown that every grade-unmixed module in the Bass class with respect to $K$ can be colinked with respect to the category of $\mp_K$-perfect modules. An adjoint equivalence between the linked modules with respect to the category of perfect modules and the colinked modules with respect to the category of $\mp_K$-perfect modules is established. We start by recalling some definitions, notations and
results which will be used in this section.

\begin{chunk}\textbf{Gorenstein injective dimension.}
The notion of Gorenstein injective dimension of a module has been introduced by Enochs and Jenda,  as a dual version of the notion of Gorenstein projective dimension \cite{EJ}.
An $R$-module $M$ is said to be \emph{Gorenstein injective} if there exists an  exact sequence
$I_{\bullet}:\ \ \cdots \to I_{1} \stackrel{\rm \partial_{1}}{\longrightarrow} I_{0} \stackrel{\rm \partial_{0}}{\longrightarrow}  I_{-1} \to \cdots$ of injective $R$-modules such that $M\cong \ker(\partial_{0})$ and $\Hom_{R}(E,I_{\bullet})$ is exact for any injective $R$-module $E$. The Gorenstein injective dimension of $M$, $\gid_R(M)$, is defined as the infimum of $n$ for which there exists an exact sequence $0 \to M \to J_{0} \to \cdots \to J_{-n}\to0$, where  each $J_{i}$ is Gorenstein injective.

The Gorenstein injective dimension is a refinement of the classical injective dimension, i.e. $\gid(M)\leq \id(M)$, with equality if $\id(M)<\infty$. Note that every module over a Gorenstein ring has finite Gorenstein injective dimension.
\end{chunk}
\begin{chunk}\textbf{Homological dimensions with respect to a semidualizing module} (\cite{TW}, \cite{WSW}).\\
The class of $K$-projective modules is defined as
$\mp_K(R)=\{P\otimes_RK\mid P\in\mp(R) \}.$ 
The $\mp_K$-dimension of $M$, denoted
$\pkd_R(M)$, is less than or equal to $t$ if and
only if there is an exact sequence
$$0\rightarrow X_t\to\cdots\rightarrow X_0\rightarrow M\rightarrow0,$$
where $X_i\in\mp_K(R)$ for each $i$ \cite[Corollary 2.10]{TW}. Note that if $R$ is local and $K$ is a dualizing module, then $\pkd_R(M)<\infty$ if and only if $\id_R(M)<\infty$. As every $K$-projective module is a totally $K$-reflexive module, we have $\gkkd_R(M)\leq\pkd_R(M)$ with equality when the right hand side is finite.

An exact complex in $\mp_K(R)$ is called \emph{totally $\mp_K$-acyclic} if it is $\Hom_R(\mp_K(R),-)$-exact
and $\Hom_R(-,\mp_K(R))$-exact. We denote by $\G(\mp_K)$ the subcategory of $\md R$ with objects of the form
$M\cong\coker(\partial^X_1)$ for some totally $\mp_K$-acyclic complex $X$. The $\G(\mp_K)$-dimension of $M$ is defined as
$$\gpkd_R(M)=\inf\{\sup\{ i\geq0\mid X_i\neq0\}\mid X \text{ is an } \G(\mp_K)\text{-resolution of } M\}.$$
\end{chunk}
\begin{defnot}
	An $R$-module $M$ is called $\mp_K$-\emph{perfect} ( resp. $\G(\mp_K)$-\emph{perfect}) provided that $\gr_R(M)=\pkd_R(M)$ ( resp. $\gr_R(M)=\gpkd_R(M)$).	We denote by $\mp_K^n(R)$ ( resp. $\G^n_K(\mp_K)$ ) the subcategory of $\md R$, consisting of all $\mp_K$-perfect ( resp. $\G(\mp_K)$-perfect ) $R$-modules of grade $n$.
	Note that, in the trivial case, i.e. $K=R$, we omit the subscript and recover the subcategory of perfect 
	( resp. $\mathcal{G}$-perfect) modules of grade $n$, $\mp^n(R)$ ( resp. $\G^n_R(\mp)$).
\end{defnot}
In the following we collect some basic properties and examples of modules in the Auslander class $\ak(R)$ and the Bass class $\bk(R)$ (see Definition \ref{def}) which will be used in the rest of this paper. For an $R$-module $M$, we denote $M^{\blacktriangledown}=\Hom_R(K,M)$.

\begin{thm}\label{example1} Assume that M is an R-module. The following statements hold true.
	\begin{enumerate}[\rm(i)]
		\item If $R$ is local and $K$ is a dualizing module, then we have
		$M\in\mathcal{A}_{K}(R)$ (respectively, $M\in\mathcal{B}_{K}(R)$) if and only if $\gd_R(M)<\infty$ (respectively, $\gid_R(M)<\infty$)
		\cite[Theorem 1]{F1}.
		\item If $M$ has a finite
		$\mp_K$-dimension, then $M\in\mathcal{B}_K(R)$
		\cite[Corollary 2.9]{TW}.
		\item (Foxby equivalence) There are (horizontal) adjoint equivalences
		\[\xymatrix{
			\mp^n(R) \ar@{^(->}[dd]\ar@<1ex>[rrr]^{-\otimes_RK}_{\sim} & & & \pk^n(R)\ar@{^(->}[dd] \ar@<1ex>[lll]^{\Hom_R(K,-)}\\ \\
			\ak(R) \ar@<1ex>[rrr]^{-\otimes_RK}_{\sim} & & & \bk(R) \ar@<1ex>[lll]^{\Hom_R(K,-)}
		}\]
		of categories \cite[Theorem 2.12]{TW}.
		\item $\gpkd_R(M)<\infty$ if and only if $M\in\bk(R)$ and $\gkkd_R(M)<\infty$
		\cite[Lemma 2.9]{WSW}. In particular, if $K$ is dualizing, then $\gpkd_R(M)<\infty$ if and only if
		$\gid_R(M)<\infty$.	
		\item If $M\in\ak(R)$, then $\depth_R(M)=\depth_R(M\otimes_RK)$ and $\dim_R(M)=\dim_R(M\otimes_RK)$. In particular, $M\in\cm^n(R)$ if and only if $M\otimes_RK\in\cm^n(R)$ \cite[Lemma 2.11]{DS1}.
		\item If $M\in\bk(R)$, then $\depth_R(M)=\depth_R(M^{\blacktriangledown})$ and $\dim_R(M)=\dim_R(M^{\blacktriangledown})$. In particular, $M\in\cm^n(R)$ if and only if $M^{\blacktriangledown}\in\cm^n(R)$ \cite[Lemma 3.5]{DS2}.
		\item If $R$ is local and $K$ is a dualizing module, then there is an adjoint equivalence
			\[\xymatrix{
			\G^n_R(\mp)\ar@<1ex>[rrr]^{-\otimes_RK}_{\sim} & & & \G_K^n(\mp_K)\ar@<1ex>[lll]^{\Hom_R(K,-)}
		}\]of categories (which is an immediate consequence of the previous parts of the Theorem).
	\end{enumerate}
\end{thm}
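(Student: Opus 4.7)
The plan is that parts (i)--(vi) each cite a specific result in the literature, so my approach is to verify the hypotheses match and invoke the references. The substance lies in part (vii), which the authors flag as following from the previous parts; I focus my energy there.

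Suppose $M\in\G^n_R(\mp)$, so $n=\gr_R(M)=\gd_R(M)$. Since $K$ is dualizing, part (i) places $M$ in $\ak(R)$, and then part (iii) gives $M\otimes_R K\in\bk(R)$. Because $K$ has full support (being dualizing), $\Supp_R(M\otimes_R K)=\Supp_R(M)$, and by part (v) the two modules have equal depth at every localization; hence $\gr_R(M\otimes_R K)=\gr_R(M)=n$. For the homological dimension, I would apply Theorem~\ref{G3}(iii) with $K$ replaced by $R$ to get $\gd_R(M)=\depth R-\depth_R(M)$, and combine with part (v) to rewrite the right side as $\depth R-\depth_R(M\otimes_R K)$. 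By part (iv), $\gpkd_R(M\otimes_R K)$ is finite, and the Auslander--Bridger formula on the Bass class (analogous to Theorem~\ref{G3}(iii) for $\gpkd$) then gives $\gpkd_R(M\otimes_R K)=\depth R-\depth_R(M\otimes_R K)=n$. Hence $M\otimes_R K\in\G^n_K(\mp_K)$. The reverse direction is formally dual using part (vi) and $\Hom_R(K,-)$, and the adjoint-equivalence structure itself descends from (iii) because the two subcategories are closed under the adjoint functors.

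The main obstacle, and the only step not directly supplied by a cited reference, is the Auslander--Bridger formula $\gpkd_R(N)=\depth R-\depth_R(N)$ for $N\in\bk$ with $\gpkd_R(N)<\infty$. I would derive this from the identification $\gpkd_R(N)=\gd_R(N^\blacktriangledown)$ implied by the Foxby equivalence in part (iii), together with Theorem~\ref{G3}(iii) for $N^\blacktriangledown\in\ak$ and part (vi) to swap depth back across the equivalence.
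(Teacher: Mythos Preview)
Your proposal is correct. The paper gives no proof at all for this theorem: parts (i)--(vi) are simply citations, and (vii) is asserted to be ``an immediate consequence of the previous parts''. Your derivation of (vii) is sound, including your identification of the Auslander--Bridger formula for $\gpkd$ as the one step requiring a short argument.

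There is a slightly cleaner route, which the paper itself uses implicitly just after this theorem (in equation (\ref{example1}.1) and in the proof of Lemma \ref{cll}(ii)). Since $R$ is local with a dualizing module, $R$ is automatically Cohen--Macaulay; combining (i), (iv), and Theorem~\ref{G3}(iii) then gives the characterizations
\[
\G^n_R(\mp)=\ak(R)\cap\cm^n(R)\quad\text{and}\quad \G^n_K(\mp_K)=\bk(R)\cap\cm^n(R).
\]
Part (iii) provides the equivalence $\ak(R)\leftrightarrow\bk(R)$, and parts (v), (vi) show that both functors preserve membership in $\cm^n(R)$, so the equivalence restricts as claimed. This avoids invoking the identity $\gpkd_R(N)=\gd_R(N^{\blacktriangledown})$ directly; your route reaches the same conclusion but has to import that identity from the broader Foxby-equivalence framework rather than from the specific form of (iii) stated here. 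Both arguments are equivalent in substance, and yours has the virtue of making the depth bookkeeping explicit.
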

Note that, by Theorem \ref{example1}, if $R$ is local and $K$ is a dualizing module, then we have
\begin{equation}\tag{\ref{example1}.1}
\G^n_{K}(\mp_K)=\{M\in\cm^n(R)\mid M\text{ has finite Gorenstein injective dimension}\}.
\end{equation}
In the following, we present another equivalence between the category of $\mathcal{G}$-perfect modules and the category of $\G(\mp_K)$-perfect modules.
\begin{lem}\label{cll}
For an integer $n\geq0$, the following statements hold true.
\begin{enumerate}[\rm(i)]
	\item There is an equivalence
	\[\xymatrix{
		\mp^n(R)\ar@<1ex>[rrr]^{\Ext^n_R(-,K)}_{\sim} & & & \mp_K^n(R)\ar@<1ex>[lll]^{\Ext^n_R(-,K)}.
	}\]
	\item If $R$ is local and $K$ is a dualizing module, then there is an equivalence
	\[\xymatrix{
		\G^n_R(\mp)\ar@<1ex>[rrr]^{\Ext^n_R(-,K)}_{\sim} & & & \G_K^n(\mp_K)\ar@<1ex>[lll]^{\Ext^n_R(-,K)}.
	}\]
\end{enumerate}	
\end{lem}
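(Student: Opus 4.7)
My plan is to handle both parts by the same template: given $M$ in the source category, take a resolution of length $n$, apply $\Hom_R(-,K)$, verify the dualized complex is a resolution of $\Ext^n_R(M,K)$ in the target category, and invoke Lemma~\ref{ll1}(ii) for the double-dual natural isomorphism. For part~(i), starting with $M\in\mp^n(R)$ and a projective resolution $0\to P_n\to\cdots\to P_0\to M\to 0$ of length $n$, note that since $K$ is semidualizing the $K$-grade of $M$ equals its $R$-grade, so $\Ext^i_R(M,K)=0$ for $i<n$; vanishing for $i>n$ is automatic from $\pd_R(M)=n$. Dualizing with $\Hom_R(-,K)$ produces a $\mp_K$-resolution
\begin{equation*}
0\to\Hom_R(P_0,K)\to\cdots\to\Hom_R(P_n,K)\to\Ext^n_R(M,K)\to 0,
\end{equation*}
which together with $\gr_R(\Ext^n_R(M,K))=n$ from Lemma~\ref{l} places $\Ext^n_R(M,K)$ in $\mp^n_K(R)$. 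The reverse direction is symmetric: a $\mp_K$-resolution of $N\in\mp^n_K(R)$ dualizes via $\Hom_R(Q\otimes_RK,K)\cong Q^*$ and $\Ext^{>0}_R(K,K)=0$ into a projective resolution of $\Ext^n_R(N,K)$ of length $n$. Both $M$ and $N$ are $\mathcal{G}_K$-perfect of grade $n$ (since $\pd_R(M)<\infty$ and $\pkd_R(N)<\infty$ force $\gkkd=n$), so Lemma~\ref{ll1}(ii) supplies the natural isomorphism $M\cong\Ext^n_R(\Ext^n_R(M,K),K)$ on both sides.

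For part~(ii) I would execute the same strategy one step up the Gorenstein hierarchy. First I identify $\G^n_R(\mp)=\cm^n(R)\cap\ak(R)$ and $\G^n_K(\mp_K)=\cm^n(R)\cap\bk(R)$, using Theorems~\ref{G3}(iii) and~\ref{example1}(i),(iv), the Cohen-Macaulayness of $R$, and equation~(\ref{example1}.1); this also confirms that every module in either class is $\mathcal{G}_K$-perfect (since $K$ dualizing forces $\gkkd<\infty$), so the double-dual isomorphism is again handled by Lemma~\ref{ll1}(ii). For $M\in\G^n_R(\mp)$ I take a $\mathcal{G}_R$-resolution $0\to G_n\to\cdots\to G_0\to M\to 0$. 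Each $G_i$ is maximal Cohen-Macaulay, so local duality with the dualizing module $K$ gives $\Ext^{>0}_R(G_i,K)=0$; combined with $\Ext^i_R(M,K)=0$ for $i\ne n$ (from $M\in\cm^n(R)$), dualizing produces an exact sequence
\begin{equation*}
0\to\Hom_R(G_0,K)\to\cdots\to\Hom_R(G_n,K)\to\Ext^n_R(M,K)\to 0.
\end{equation*}

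The main obstacle is to show that each $\Hom_R(G_i,K)$ belongs to $\G(\mp_K)$, since this is exactly what makes the dualized sequence a $\G(\mp_K)$-resolution of length $n$ and thus certifies $\Ext^n_R(M,K)\in\G^n_K(\mp_K)$ (combined with Lemma~\ref{l} giving $\gr_R=n$). My plan is to dualize a totally $R$-acyclic complex of projectives $P_\bullet$ having $G_i$ as a syzygy: the dual complex $\Hom_R(P_\bullet,K)$ has terms in $\mp_K(R)$, is exact because every syzygy of $P_\bullet$ is maximal Cohen-Macaulay and therefore satisfies $\Ext^{>0}_R(-,K)=0$ by local duality, and its $\Hom_R(\mp_K(R),-)$-exactness reduces via the adjunction $\Hom_R(Q\otimes_RK,\Hom_R(P,K))\cong\Hom_R(P,Q^*)$ (which uses only $\Hom_R(K,K)=R$) to the original $\Hom_R(-,\mp(R))$-exactness of $P_\bullet$. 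This exhibits $\Hom_R(G_i,K)$ as a syzygy of a totally $\mp_K$-acyclic complex. The reverse direction is then symmetric: a totally $\mp_K$-acyclic resolution of $N\in\G^n_K(\mp_K)$ dualizes to a $\mathcal{G}_R$-resolution of $\Ext^n_R(N,K)$ of length $n$, completing the equivalence.
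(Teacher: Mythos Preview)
Your proof is correct, but it follows a genuinely different route from the paper's. The paper avoids dualizing resolutions altogether: for part~(i) it invokes the isomorphism $\Ext^n_R(M,K)\cong\Ext^n_R(M,R)\otimes_RK$ (from \cite[Lemma~2.44]{AB}) for $M\in\mp^n(R)$, and $\Ext^n_R(N,K)\cong\Ext^n_R(N^{\blacktriangledown},R)$ (from \cite[Theorem~4.1, Corollary~4.2]{TW}) for $N\in\mp^n_K(R)$, then reads off membership in the target category directly from the Foxby equivalence of Theorem~\ref{example1}(iii). Part~(ii) is handled the same way: after the identifications $\G^n_R(\mp)=\ak(R)\cap\cm^n(R)$ and $\G^n_K(\mp_K)=\bk(R)\cap\cm^n(R)$, the paper reduces everything to showing that $\Ext^n_R(M,K)$ lands in the correct Foxby class, again via the isomorphisms above together with Theorem~\ref{example1}(vii) and Lemma~\ref{ll1}. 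Your approach is more constructive and self-contained---it does not rely on the external isomorphisms from \cite{AB} and \cite{TW}---whereas the paper's is shorter by delegating the work to already-established equivalences.

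One small omission in your part~(ii): for the dual complex $\Hom_R(P_\bullet,K)$ to be totally $\mp_K$-acyclic you must also verify $\Hom_R(-,\mp_K(R))$-exactness, not just $\Hom_R(\mp_K(R),-)$-exactness. This follows by the same kind of adjunction you already used: writing $\Hom_R(P_i,K)\cong P_i^*\otimes_RK$, one has $\Hom_R(P_i^*\otimes_RK,\,Q\otimes_RK)\cong\Hom_R(P_i^*,\Hom_R(K,Q\otimes_RK))\cong\Hom_R(P_i^*,Q)$ since $Q\in\ak(R)$, and the complex $\Hom_R(P_\bullet^*,Q)$ is exact because $P_\bullet^*$ is itself totally acyclic.
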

\begin{proof}
(i). First note that $\mp^n(R),  \mp_K^n(R)\subseteq\G_K^n(\mp)$. Hence, by Lemma \ref{ll1}, $M\cong\Ext^n_R(\Ext^n_R(M,K),K)$ for all $M\in \mp^n(R)\cup\mp_K^n(R)$. Let $M$ be a perfect module of grade $n$. By \cite[Lemma 2.44]{AB}, we have 
\begin{equation}\tag{\ref{cll}.1}
\Ext^n_R(M,K)\cong\Ext^n_R(M,R)\otimes_RK.
\end{equation}
Note that $\Ext^n_R(M,R)\in\mp^n(R)$ and so,
by (\ref{cll}.1) and Theorem \ref{example1}(iii), we have $\Ext^n_R(M,K)\in\mp_K^n(R)$.

Conversely, assume that $M\in\mp_K^n(R)$. It follows from Theorem \ref{example1}(ii) that $M\in\bk(R)$. Hence, by \cite[Theorem 4.1 , Corollary 4.2]{TW}, we have
\begin{equation}\tag{\ref{cll}.2}
\Ext^n_R(M,K)\cong\Ext^n_R(M^{\blacktriangledown},R).
\end{equation}
By Theorem \ref{example1}(iii), $M^{\blacktriangledown}\in\mp^n(R)$ and so $\Ext^n_R(M^{\blacktriangledown},R)\in\mp^n(R)$. Now the assertion follows from (\ref{cll}.2).

(ii). It follows from Theorem \ref{example1} that	$\G^n_R(\mp)=\ak(R)\cap\cm^n(R)$ and $\G_K^n(\mp_K)=\bk(R)\cap\cm^n(R)$. Note that $M\cong\Ext^n_R(\Ext^n_R(M,K),K)$ for all $M\in\cm^n(R)$ (see for example \cite[Theorem 3.3.10]{BH}). Assume that $M\in\G^n_R(\mp)$. We want to show that $\Ext^n_R(M,K)\in\G_K^n(\mp_K)$. By \cite[Proposition 3.3.3(b)]{BH}, $\Ext^n_R(M,K)\in\cm^n(R)$. Hence it is enough to show that $\Ext^n_R(M,K)\in\bk(R)$. As $\gd_R(M)=n$, it follows from Theorem \ref{G3}(ii) that $\Tr\Omega^nM$ is a totally reflexive module. In other words, by Theorem \ref{example1}(i), $\Tr\Omega^nM\in\ak(R)$. In particular, $\Tor_i^R(\Tr\Omega^nM,K)=0$ for $i>0$. It follows from \cite[Theorem 2.8]{AB} that $\Ext^n_R(M,K)\cong\Ext^n_R(M,R)\otimes_RK$. By Lemma \ref{ll1} and Theorem \ref{example1}(i), $\Ext^n_R(M,R)\in\ak(R)$. Hence $\Ext^n_R(M,K)\in\bk(R)$ by Theorem \ref{example1}(iii).

Conversely, assume that $M\in\G_K^n(\mp_K)$. As $M\in\cm^n(R)$, we have $\Ext^n_R(M,K)\in\cm^n(R)$ by \cite[Proposition 3.3.3(b)]{BH}. Hence we only need to show that $\Ext^n_R(M,K)\in\ak(R)$. By Theorem \ref{example1}(vii), $M^{\blacktriangledown}\in\G_R^n(\mp)$ and so by Lemma \ref{ll1} $\Ext^n_R(M^{\blacktriangledown},R)\in\G_R^n(\mp)$. In other words, by Theorem \ref{example1}(i), $\Ext^n_R(M^{\blacktriangledown},R)\in\ak(R)$.
As $M,K\in\bk(R)$, by \cite[Theorem 4.1 , Corollary 4.2]{TW}, we have
$\Ext^n_R(M,K)\cong\Ext^n_R(M^{\blacktriangledown},R)\in\ak(R)$.
\end{proof}
For an integer $n\geq0$, set $\DnK(-):=\Ext^n_R(\Hom_R(K,-),K)$.
\begin{thm}\label{t2}
	Assume that $M$ is an $R$-module of grade $n$ and that the evaluation $R$-homomorphism $\nu_K^R(M):K\otimes_R\Hom_R(K,M)\to M$ is an isomorphism (e.g. $M\in\bk(R)$). Then there exists an $R$-homomorphism $\xi_K^R(M):M\to\DnK\DnK(M)$ with the following properties.
	\begin{enumerate}[\rm(i)]
		\item There is the following commutative diagram.
		$$\begin{CD}
		\ \ &&&& M@>\eta_K^R(M)>>\Ext^n_R(\Ext^n_R(M,K),K)&\\
		&&&& @VV{\|}V @VV{\cong}V \\
		\ \ &&&&M@>\xi_K^R(M)>>\DnK\DnK(M)&\\
		\end{CD}$$\\	
		In particular, $\ker\eta_K^R(M)\cong\ker\xi_K^R(M)$	and $\coker\eta_K^R(M)\cong\coker\xi_K^R(M)$.	
		\item If $M$ is $\P_K$-perfect, then $\xi_K^R(M)$ is an isomorphism. Moreover, $\DnK(M)$ is $\P_K$-perfect of grade $n$.
		\item If $R$ is Cohen-Macaulay local ring, $K$ is dualizing module and $M$ is Cohen-Macaulay $R$-module of finite Gorenstein injective dimension, then $\xi_K^R(M)$ is an isomorphism. Moreover, $\DnK(M)\in\cm^n(R)$ and has finite Gorenstein dimension.
	\end{enumerate}
\end{thm}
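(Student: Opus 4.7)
My plan is to define $\xi_K^R(M)$ as the composition of $\eta_K^R(M)$ with a natural isomorphism
$$\alpha_M \colon \Ext^n_R(\Ext^n_R(M,K),K) \xrightarrow{\cong} \DnK\DnK(M),$$
whose existence I will establish under the hypothesis that $\nu_K^R(M)$ is an isomorphism. Once $\alpha_M$ is in hand, part (i) is essentially bookkeeping: the left column of the square is the identity, so the square commutes with $\alpha_M$ on the right, and the kernel and cokernel isomorphisms follow immediately from the commutative diagram.

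The construction of $\alpha_M$ is the technical heart. The hypothesis $\nu_K^R(M)$ iso gives $M \cong K\otimes_R M^{\blacktriangledown}$, together with the Tor-vanishing $\Tor_{>0}^R(K, M^{\blacktriangledown}) = 0$ that accompanies it (automatic when $M \in \bk(R)$). I would take a projective resolution $P_\bullet \to M^{\blacktriangledown}$; the Tor vanishing makes $K\otimes_R P_\bullet \to M$ a resolution by modules in $\mp_K(R)$. Applying $\Hom_R(-, K)$ and using the tensor-hom adjunction $\Hom_R(K\otimes_R P_i, K) \cong \Hom_R(P_i, \Hom_R(K, K)) = \Hom_R(P_i, R)$ together with $\Ext^{>0}_R(K, K) = 0$, I obtain a natural identification $\Ext^n_R(M, K) \cong \Ext^n_R(M^{\blacktriangledown}, R)$. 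By the same device, the derived adjunction $\mathbf{R}\Hom_R(K, \mathbf{R}\Hom_R(M^{\blacktriangledown}, K)) \simeq \mathbf{R}\Hom_R(K \otimes_R^{\mathbf{L}} M^{\blacktriangledown}, K) \simeq \mathbf{R}\Hom_R(M, K)$ yields $\Hom_R(K, \Ext^n_R(M^{\blacktriangledown}, K)) \cong \Ext^n_R(M^{\blacktriangledown}, R)$ under the appropriate concentration in degree $n$. Combining these two identifications,
$$\DnK\DnK(M) = \Ext^n_R\bigl(\Hom_R(K, \Ext^n_R(M^{\blacktriangledown}, K)), K\bigr) \cong \Ext^n_R(\Ext^n_R(M^{\blacktriangledown}, R), K) \cong \Ext^n_R(\Ext^n_R(M, K), K),$$
and $\alpha_M$ is the inverse of this composite.

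For part (ii), let $M \in \mp_K^n(R)$. Then $M \in \bk(R)$ by Theorem~\ref{example1}(ii), so $\nu_K^R(M)$ is an isomorphism and part (i) applies. Since $\mp_K^n(R) \subseteq \G_K^n(\mp)$ (because $\gkkd_R \leq \pkd_R$ with equality when the right side is finite, Theorem~\ref{example1}(iv)), the module $M$ is $\gkkkd$-perfect, and Lemma~\ref{ll1}(ii) makes $\eta_K^R(M)$ an isomorphism; by the commutative diagram the same is true of $\xi_K^R(M)$. For $\DnK(M) \in \mp_K^n(R)$: Foxby equivalence (Theorem~\ref{example1}(iii)) gives $M^{\blacktriangledown} \in \mp^n(R)$, and Lemma~\ref{cll}(i) then yields $\DnK(M) = \Ext^n_R(M^{\blacktriangledown}, K) \in \mp_K^n(R)$, with grade equal to $n$ via Lemma~\ref{l}.

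For part (iii), when $R$ is Cohen--Macaulay local with dualizing $K$ and $M$ is Cohen--Macaulay of codimension $n$ with $\gid_R(M) < \infty$, the identity~(\ref{example1}.1) tells us $M \in \G_K^n(\mp_K) \subseteq \bk(R)$, and this class consists of $\gkkkd$-perfect modules. The argument of (ii) then applies verbatim to yield $\xi_K^R(M)$ an isomorphism. For the remaining assertion, Foxby equivalence (Theorem~\ref{example1}(vii)) sends $M$ to $M^{\blacktriangledown} \in \G_R^n(\mp)$, and Lemma~\ref{cll}(ii) gives $\DnK(M) = \Ext^n_R(M^{\blacktriangledown}, K) \in \G_K^n(\mp_K)$, which lies in $\cm^n(R)$ and has the appropriate finite Gorenstein dimension in the sense of the equivalence in Lemma~\ref{cll}. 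The main obstacle I anticipate is not any single step but the resolution-level bookkeeping that confirms $\alpha_M$ is genuinely natural and that the diagram in (i) literally commutes rather than only up to sign: in particular, the identification $\Hom_R(K, \Ext^n_R(M^{\blacktriangledown}, K)) \cong \Ext^n_R(M^{\blacktriangledown}, R)$ requires tracking the collapse of a double-complex spectral sequence built from a projective resolution of $M^{\blacktriangledown}$, and this collapse is most transparent when $M \in \bk(R)$.
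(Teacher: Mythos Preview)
Your treatment of parts (ii) and (iii) is essentially the same as the paper's and is correct.

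For part (i), however, your route is both more elaborate than necessary and has a genuine gap in the stated generality. Your construction of $\alpha_M$ rests on two vanishing conditions that you do not justify from the hypothesis alone: first, you need $\Tor_{>0}^R(K, M^{\blacktriangledown}) = 0$ so that $K\otimes_R P_\bullet \to M$ is a resolution; second, you need enough Ext-vanishing to collapse the spectral sequence yielding $\Hom_R(K, \Ext^n_R(M^{\blacktriangledown}, K)) \cong \Ext^n_R(M^{\blacktriangledown}, R)$. Both hold when $M \in \bk(R)$, but the theorem only assumes that $\nu_K^R(M)$ is an isomorphism (equivalently $M \in \Delta_K(R)$), and this does not imply the Tor-vanishing you invoke. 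You yourself flag the second issue at the end, but the first is already fatal at the level of building the resolution.

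The paper avoids all of this by a direct reduction to grade zero. Choose an ideal ${\bf x}\subseteq\ann_R(M)$ generated by a regular sequence of length $n$, set $S=R/{\bf x}$ and $\overline{K}=K\otimes_R S$. Lemma~\ref{G1} converts $\Ext^n_R(-,K)$ on $S$-modules into $\Hom_S(-,\overline{K})$, so $\Ext^n_R(\Ext^n_R(M,K),K)\cong\Hom_S(\Hom_S(M,\overline{K}),\overline{K})$. The hypothesis gives $M\cong\overline{K}\otimes_S\Hom_S(\overline{K},M)$, and a single ordinary Hom--tensor adjunction (no resolutions, no spectral sequences) turns $\Hom_S(\overline{K}\otimes_S X,\overline{K})$ into $\Hom_S(\overline{K},\Hom_S(X,\overline{K}))$; translating back through Lemma~\ref{G1} yields exactly $\DnK\DnK(M)$. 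This produces $\sigma$ under the bare hypothesis, and one sets $\xi_K^R(M)=\sigma\circ\eta_K^R(M)$. The moral: rather than lifting to derived categories and then fighting to collapse, drop to the quotient $S$ where everything is a $\Hom$ and the adjunction is classical.
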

\begin{proof}
	(i). Let ${\bf x}\subseteq\ann_R(M)$ be an ideal, generated by a regular sequence of length $n$. Set $S=R/{\bf x}$ and $\overline{(-)}:=-\otimes_RS$.
	It follows from standard isomorphism and our assumption that $M\cong K\otimes_R\Hom_R(K,M)\cong\overline{K}\otimes_S\Hom_S(\overline{K},M)$.  Therefore we have the isomorphisms
	\[\begin{array}{rl}\tag{\ref{t2}.1}
	\Ext^n_R(\Ext^n_R(M,K),K)&\cong\Hom_S(\Hom_S(M,\overline{K}),\overline{K})\\
	&\cong \Hom_S(\Hom_S(\overline{K}\otimes_S\Hom_S(\overline{K},M),\overline{K}),\overline{K})\\
	&\cong \Hom_S(\Hom_S(\overline{K},\Hom_S(\Hom_S(\overline{K},M),\overline{K})),\overline{K})\\
	&\cong\Ext^n_R(\Hom_R(K,\Ext^n_R(\Hom_R(K,M),K),K)
	\\
	&=\DnK\DnK(M).\\
	\end{array}\]
	We denote by $\sigma:\Ext^n_R(\Ext^n_R(M,K),K)\to\DnK\DnK(M)$ the isomorphism (\ref{t2}.1) and define the homomorphism $\xi_K^R(M):M\to\DnK\DnK(M)$ to be the composition $\sigma\circ\eta_K^R(M)$. 
	Now the assertion is clear.
	
	(ii). First note that every $\P_K$-perfect module is also $\gkkkd$-perfect. It follows from Corollary \ref{l1} that $\eta_K^R(M)$ is isomorphism and so is $\xi_K^R(M)$ by part (i). Now we show that $\mathcal{D}^n_K(M)\in\mp^n_K(R)$.
	As $M$ is a $\P_K$-perfect module of grade $n$, we have $M^{\blacktriangledown}$ is perfect module of grade $n$ by Theorem \ref{example1}(iii). It follows from Lemma \ref{cll}(i) that $\DnK(M)=\Ext^n_R(M^{\blacktriangledown},K)\in\mp_K^n(R)$.
	
	(iii). It follows from \cite[Theorem 3.3.10(c)]{BH} that $\eta_K^R(M)$ is an isomorphism and so is $\xi_K^R(M)$ by part (i). Note that by
	(\ref{example1}.1),  $M\in\G^n_K(\mp_K)$. Thus we have $M^{\blacktriangledown}\in\G^n_R(\mp)$ by Theorem \ref{example1}(vii). It follows from Lemma \ref{cll}(ii) that $\DnK(M)=\Ext^n_R(M^{\blacktriangledown},K)\in\G^n_K(\mp_K)$.
	In other words, by (\ref{example1}.1), $\DnK(M)$ is Cohen-Macaulay of grade $n$ and has finite Gorenstein injective dimension.	
\end{proof}
\begin{defnot}
We denote by $\corefnk$ the subcategory of $\md R$ consisting of all $R$-modules $M$ with grade $n$ such that $\xi_K^R(M):M\to\Dn_K(\Dn_K(M))$ is an isomorphism.

A subcategory $\mx\subseteq\corefnk$ is called $n$-{\em coreflexive subcategory with respect to $K$} if it is closed under $\Dn_K(-)$ (i.e. for any $R$-module $M$, if $M\in\mx$ then $\Dn_K(M)\in\mx$).	
\end{defnot}
Here are some examples of coreflexive subcategories with respect to $K$ (see Theorem \ref{t2}(ii), (iii)).
\begin{eg}\label{exa}
	The following subcategories of $\md R$ are $n$-coreflexive with respect to $K$.
	\begin{enumerate}[(i)]
		\item $\mathcal{P}^n_K(R)$, i.e. the subcategory of $\md R$ consisting of all $\mathcal{P}_K$-perfect modules of grade $n$. In particular, if $R$ is Cohen-Macaulay local with dualizing module $K$, then $$\my=\{M\in\cm^n(R)\mid M \text{ has finite injective dimension }\}$$ is an $n$-coreflexive subcategory with respect to $K$.
		\item Let $R$ be a Cohen-Macaulay local ring with dualizing module $K$. Then $\G^n_{K}(\mp_K)$ is an $n$-coreflexive subcategory with respect to $K$.
	\end{enumerate}	
\end{eg}
Let $\my$ be an $n$-coreflexive subcategory with respect to $K$ and let $\phi\in\Epi(\my)$. Hence $\phi:Y\twoheadrightarrow M$ is an epimorphism for some $M\in\md R, Y\in\my$ with $\gr_R(M)=\gr_R(Y)$. Such a homomorphism is called a \emph{coreflexive homomorphism}. Given $\phi\in\Epi(\my)$, we want to construct a new coreflexive homomorphism $\mathcal{L}^n_K(\phi)$.

\begin{dfn}\label{cd2}
	Let $\my$ be an $n$-coreflexive subcategory with respect to $K$ and let $\phi\in\Epi(\my)$. Consider the exact sequence $0\rightarrow\ker\phi\overset{i}{\rightarrow} Y\overset{\phi}{\rightarrow}\im\phi \rightarrow 0$, where $Y\in\my$.
	Applying the functor $(-)^{\blacktriangledown}=\Hom_{R}(K,-)$ gives the exact sequence
	$0\rightarrow(\ker\phi)^{\blacktriangledown}\overset{i^{\blacktriangledown}}{\rightarrow} Y^{\blacktriangledown}\rightarrow(\im\phi)^{\blacktriangledown}$.
	Note that $\gr_R(\ker\phi)=n=\gr_R((\ker\phi)^{\blacktriangledown})$. We denote by $\ml_K^n(\phi):\Ext^{n}_{R}(Y^{\blacktriangledown}, K)\twoheadrightarrow\im(\Ext^n_R(i^{\blacktriangledown},K))$ the epimorphism induced by $\Ext^n_R(i^{\blacktriangledown},K)$.
	Therefore we have the exact sequence
	\begin{equation}\tag{\ref{cd2}.1}
	0\rightarrow\ker\ml_K^n(\phi)\longrightarrow\DnK(Y)  \overset{\ml_K^n(\phi)}{\longrightarrow}\im\ml_K^n(\phi)\rightarrow0.
	\end{equation}
	Note that if $\Ext^1_R(K,\ker\phi)=0$, then $\ker\ml_K^n(\phi)\cong\DnK(\im\phi)$.
\end{dfn}
\begin{lem}\label{cl}
	Let $\my\subseteq\bk(R)$ be an $n$-coreflexive subcategory with respect to $K$ and let $\phi\in\Epi(\my)$ be a homomorphism which is not injective. If $\nu_K^R(\im\phi):K\otimes_R\Hom_R(K,\im\phi)\to\im\phi$ is injective, then $\Ext^1_R(K,\ker\phi)=0$. In particular, $\ker\ml_K^n(\phi)\cong\DnK(\im\phi)$.
\end{lem}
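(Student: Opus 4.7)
Set $M=\im\phi$ and $N=\ker\phi$, so the starting data is the short exact sequence
\begin{equation*}
0\to N\to Y\xrightarrow{\phi} M\to0
\end{equation*}
with $Y\in\my\subseteq\bk(R)$. My plan is to reduce the statement to surjectivity of $\phi^{\blacktriangledown}=\Hom_R(K,\phi)$ and then deduce this via naturality of the evaluation map $\nu_K^R(-)$. Applying $\Hom_R(K,-)$ to the above sequence and using $\Ext^1_R(K,Y)=0$ (which holds because $Y\in\bk(R)$) yields
\begin{equation*}
0\to N^{\blacktriangledown}\to Y^{\blacktriangledown}\xrightarrow{\phi^{\blacktriangledown}} M^{\blacktriangledown}\to\Ext^1_R(K,N)\to0,
\end{equation*}
so $\Ext^1_R(K,N)\cong\coker\phi^{\blacktriangledown}$, and the vanishing I want is equivalent to $\phi^{\blacktriangledown}$ being onto.

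The key first step is to exploit the commutative square
\begin{equation*}
\begin{CD}
K\otimes_R Y^{\blacktriangledown} @>{K\otimes_R\phi^{\blacktriangledown}}>> K\otimes_R M^{\blacktriangledown} \\
@V{\nu_K^R(Y)\,\cong}VV @VV{\nu_K^R(M)}V \\
Y @>{\phi}>> M
\end{CD}
\end{equation*}
coming from naturality of $\nu_K^R(-)$. The left vertical map is an isomorphism because $Y\in\bk(R)$, and the bottom arrow is surjective; hence the composite $\nu_K^R(M)\circ(K\otimes_R\phi^{\blacktriangledown})$ is surjective, so $\nu_K^R(M)$ is surjective. Together with the hypothesis that $\nu_K^R(M)$ is injective, it is forced to be an isomorphism, and then $K\otimes_R\phi^{\blacktriangledown}$ must also be surjective. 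By right-exactness of $-\otimes_R K$, applying $K\otimes_R-$ to the presentation $0\to B\to M^{\blacktriangledown}\to\Ext^1_R(K,N)\to 0$ with $B=\im\phi^{\blacktriangledown}$ then annihilates the cokernel: $K\otimes_R\Ext^1_R(K,N)=0$.

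To promote this vanishing of $K\otimes_R\Ext^1_R(K,N)$ to vanishing of $\Ext^1_R(K,N)$ itself, the natural strategy is to show that $\Ext^1_R(K,N)$ belongs to the Auslander class $\ak(R)$, for then the defining isomorphism $\mu_K^R(\Ext^1_R(K,N))\colon\Ext^1_R(K,N)\xrightarrow{\cong}\Hom_R(K,K\otimes_R\Ext^1_R(K,N))=\Hom_R(K,0)=0$ will immediately finish the job. For this I would combine the two auxiliary short exact sequences $0\to N^{\blacktriangledown}\to Y^{\blacktriangledown}\to B\to 0$ and $0\to B\to M^{\blacktriangledown}\to\Ext^1_R(K,N)\to 0$, invoke Foxby equivalence (Theorem \ref{example1}(iii)) to obtain $Y^{\blacktriangledown}\in\ak(R)$, and chase through the corresponding Tor and Ext long exact sequences while leveraging that $\nu_K^R(M)$ is now an isomorphism (which effectively puts $M^{\blacktriangledown}$ into play as the Foxby partner of $M$) together with thickness of $\ak(R)$ recorded in Example \ref{exaa}(vi). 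The main obstacle is this last verification—carefully certifying that $\Ext^1_R(K,N)\in\ak(R)$ by tracking the right vanishings of $\Tor^R_i(K,-)$ and $\Ext^i_R(K,-)$ across the two exact sequences—since this requires the rigidity of the Foxby classes rather than merely the formal naturality used above. Once $\Ext^1_R(K,N)=0$ is established, the ``in particular'' assertion $\ker\ml_K^n(\phi)\cong\DnK(\im\phi)$ is immediate from the remark following (\ref{cd2}.1) in Definition \ref{cd2}.
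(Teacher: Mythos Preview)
Your argument coincides with the paper's up to the conclusion $K\otimes_R\Ext^1_R(K,N)=0$: the same naturality square for $\nu_K^R$ is used, with only a cosmetic difference (the paper reads off surjectivity of $K\otimes_R\phi^{\blacktriangledown}$ directly from injectivity of $\nu_K^R(M)$ via a diagram chase, while you first observe that $\nu_K^R(M)$ is forced to be an isomorphism; both are fine).

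The divergence is in the final step, and here your plan is both unnecessary and not obviously viable. You propose to show $\Ext^1_R(K,N)\in\ak(R)$ by thickness, feeding in $Y^{\blacktriangledown}\in\ak(R)$ and something about $M^{\blacktriangledown}$. But the hypothesis is only that $\nu_K^R(M)$ is injective; you have \emph{not} assumed $M\in\bk(R)$, so you cannot conclude $N\in\bk(R)$, nor $N^{\blacktriangledown}\in\ak(R)$, nor $M^{\blacktriangledown}\in\ak(R)$ (the latter would require the Tor and Ext vanishings in Definition~\ref{def}, not merely the isomorphism $\nu_K^R(M)$). The two short exact sequences you write down therefore have at most one term known to lie in $\ak(R)$, and the thickness argument does not get off the ground.

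The paper closes the gap in one line, using only that $K$ is semidualizing: from $R\cong\Hom_R(K,K)$ one has $\ann_R(K)=0$, hence $\Supp_R(K)=\Spec R$; so for any finitely generated $X$, the vanishing $K\otimes_R X=0$ localises to $K_\fp\otimes_{R_\fp}X_\fp=0$ with $K_\fp\neq0$, and Nakayama gives $X_\fp=0$ for every $\fp$, i.e.\ $X=0$. Apply this with $X=\Ext^1_R(K,N)$. No Foxby-class membership is needed.
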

\begin{proof}
	Consider the exact sequence $0\to\ker\phi\to Y\overset{\phi}{\to}\im\phi\to0$, where $Y\in\my$.
	As $Y\in\bk(R)$, we have $\Ext^1_R(K,Y)=0$. Hence, 
	applying the functor $(-)^{\blacktriangledown}=\Hom_R(K,-)$, implies the exact sequence $Y^{\blacktriangledown}\to(\im\phi)^{\blacktriangledown}\to\Ext^1_R(K,\ker\phi)\to0$, from which we obtain the induced
	commutative diagram 
	$$\begin{CD}
	\ \ &&&&  K\otimes_RY^{\blacktriangledown} @>\phi'>> K\otimes_R(\im\phi)^{\blacktriangledown}@>>> K\otimes_R\Ext^1_R(K,\ker\phi)@>>>0&  \\
	&&&& @VV{\nu_K^R(Y)}V @VV{\nu_K^R(\im\phi)}V \\
	\ \  &&&& Y @>>> \im\phi @>>>0&\\
	\end{CD}$$\\
	with exact rows. By using the fact that $\nu_K^R(\im\phi)$ is injective and that $\nu_K^R(Y)$ is an isomorphism, we see that $\phi'$, in the above diagram, is surjective which is equivalent to say that $K\otimes_R\Ext^1_R(K,\ker\phi)=0$ and so $\Ext^1_R(K,\ker\phi)=0$.	
\end{proof}
We denote by $\Delta_K(R):=\{M\in\md R\mid \nu_K^R(M):K\otimes_R M^{\blacktriangledown}\to M \text{ is an isomorphism }\}$. Note that
$M\in\Delta_K(R)$ if and only if it has a $\mp_K$-presentation, i.e. there exists an exact sequence
$X_1\to X_0\to M\to0$, where $X_i\in\mp_K(R)$ for $i=1,2$ (see for example \cite[Proposition 3.6]{XZ}). Clearly $\bk(R)\subseteq\Delta_K(R)$.

Here are some basic properties of $\ml_K^n(\phi)$.
\begin{thm}\label{ct1}
	Let $\my$ be an $n$-coreflexive subcategory with respect to $K$ and let $\phi\in\Epi(\my)$ be a homomorphism
	which is not injective. Then the following statements hold true.
	\begin{enumerate}[\rm(i)]
		\item $\ml_K^n(\phi)\in\Epi(\my)$.
		\item $\im\ml_K^n(\phi)$ is a grade-unmixed $R$-module.
		\item Assume that $\im\phi\in\Delta_K(R)$. If $\Ext^1_R(K,\ker\phi)=0$ (e.g. $\my\subseteq \bk(R)$, see Lemma \ref{cl}), then
		the image of map $\xi_K^R(\im\phi)$ is isomorphic to
		$\im\ml_K^n(\ml_K^n(\phi))$.
	\end{enumerate}	
\end{thm}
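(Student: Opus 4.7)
My plan is to mirror the proof of Theorem \ref{t1}, with the pair $\bigl(\Hom_R(-,K),\Hom_R(K,-)\bigr)$ replacing the single functor $\Hom_R(-,K)$. For parts (i) and (ii), closure of $\my$ under $\DnK$ gives $\DnK(Y)\in\my$, so it suffices to verify that $\im\ml_K^n(\phi)$ is a grade-unmixed $R$-module of grade $n$. By construction, $\im\ml_K^n(\phi)$ embeds into $\Ext^n_R((\ker\phi)^{\blacktriangledown},K)$; since $\phi$ is not injective and $\ker\phi\subseteq Y$ has grade $n$ (inherited from $Y\in\corefnk$), the module $(\ker\phi)^{\blacktriangledown}$ also has grade $n$, and Lemma \ref{l} forces $\Ext^n_R((\ker\phi)^{\blacktriangledown},K)$ to be grade-unmixed of grade $n$. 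A submodule of it, namely $\im\ml_K^n(\phi)$, inherits the same property, settling (i) and (ii) at once.

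For part (iii), set $M=\im\phi$. The hypothesis $\Ext^1_R(K,\ker\phi)=0$ makes $0\to(\ker\phi)^{\blacktriangledown}\to Y^{\blacktriangledown}\xrightarrow{\phi^{\blacktriangledown}} M^{\blacktriangledown}\to 0$ exact; applying $\Ext^n_R(-,K)$ identifies the inclusion $\ker\ml_K^n(\phi)\hookrightarrow\DnK(Y)$ with $\Ext^n_R(\phi^{\blacktriangledown},K)\colon\DnK(M)\hookrightarrow\DnK(Y)$, recovering the isomorphism $\ker\ml_K^n(\phi)\cong\DnK(M)$ already noted after Definition \ref{cd2}. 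Unwinding Definition \ref{cd2} one more time, $\ml_K^n(\ml_K^n(\phi))$ is then the surjection from $\DnK(\DnK(Y))$ onto the image of the iterated map $\DnK\DnK(\phi)\colon\DnK(\DnK(Y))\to\DnK(\DnK(M))$ obtained by applying $\Ext^n_R((-)^{\blacktriangledown},K)$ to $\phi$ twice in a row.

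To close, invoke naturality. The transformation $\eta_K^R$ is natural, so there is a commutative square
$$
\begin{CD}
Y @>\phi>> M \\
@VV\eta_K^R(Y)V @VV\eta_K^R(M)V \\
\Ext^n_R(\Ext^n_R(Y,K),K) @>\phi''>> \Ext^n_R(\Ext^n_R(M,K),K)
\end{CD}
$$
with $\phi''=\Ext^n_R(\Ext^n_R(\phi,K),K)$. The chain of isomorphisms $\sigma$ defining the right-hand identification in Theorem \ref{t2}(i) is built entirely from the hom-tensor adjunction and the evaluation map $\nu_K^R$, hence assembles into a natural isomorphism of functors on $\Delta_K(R)$. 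Since $Y\in\my\subseteq\corefnk$ forces $\nu_K^R(Y)$ to be an isomorphism, and $M\in\Delta_K(R)$ by hypothesis, composing the square above with $\sigma$ produces the commutative diagram
$$
\begin{CD}
Y @>\phi>> M \\
@VV\xi_K^R(Y)V @VV\xi_K^R(M)V \\
\DnK(\DnK(Y)) @>\DnK\DnK(\phi)>> \DnK(\DnK(M)).
\end{CD}
$$
Because $Y\in\corefnk$ makes $\xi_K^R(Y)$ an isomorphism and $\phi$ is surjective, we conclude
$$\im\ml_K^n(\ml_K^n(\phi))=\im\bigl(\DnK\DnK(\phi)\bigr)=\xi_K^R(M)(\phi(Y))=\im\xi_K^R(M),$$
as claimed.

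The main obstacle is the identification of $\ml_K^n(\ml_K^n(\phi))$ with (the surjection onto the image of) $\DnK\DnK(\phi)$ together with the upgrade of the pointwise isomorphism $\sigma$ of Theorem \ref{t2}(i) to a genuine natural transformation on $\Delta_K(R)$; both reduce to checking that each step of the chain (\ref{t2}.1) is induced by standard adjunctions and the evaluation map $\nu_K^R$, so naturality propagates automatically once every module involved lies in $\Delta_K(R)$.
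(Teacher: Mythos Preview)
Your argument is correct and follows essentially the same route as the paper: for (i) and (ii) you embed $\im\ml_K^n(\phi)$ into $\Ext^n_R((\ker\phi)^{\blacktriangledown},K)$ and invoke Lemma \ref{l}, exactly as the paper does; for (iii) you identify $\ker\ml_K^n(\phi)\cong\DnK(M)$ via the vanishing of $\Ext^1_R(K,\ker\phi)$, recognise $\im\ml_K^n(\ml_K^n(\phi))$ as $\im\bigl(\DnK\DnK(\phi)\bigr)$, and then read off the result from the commutative square relating $\phi$ to $\DnK\DnK(\phi)$ via $\xi_K^R$. The paper writes down that square directly, whereas you take the extra (and worthwhile) step of justifying its commutativity by passing through the naturality of $\eta_K^R$ together with the naturality of the isomorphism $\sigma$ from Theorem \ref{t2}(i); this is a point the paper leaves implicit.
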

\begin{proof}
	(i) and (ii). First note that $\gr_R(\ker\phi)=n=\gr_R((\ker\phi)^{\blacktriangledown})$. Hence, by Lemma \ref{l}, $\DnK(\ker\phi)=\Ext^n_R((\ker\phi)^{\blacktriangledown},K)$ is grade-unmixed of grade $n$. It follows from the monomorphism $\im\ml_K^n(M)\hookrightarrow\DnK(\ker\phi)$ that $\im\ml_K^n(M)$ is a grade-unmixed $R$-module of grade $n$. As $\my$ is $n$-coreflexive with respect to $K$, we have $\DnK(Y)\in\my$ for all $Y\in\my$ and so $\ml_K^n(\phi)\in\Epi(\my)$.	
	
	(iii). Consider the exact sequence $0\to\ker\phi\to Y\overset{\phi}{\to}\im\phi\to0$, where  $Y\in\my$.
	Applying the functor $\DnK(-)$ and using the fact that $\Ext^1(K,\ker\phi)=0$, we get the exact sequence
	\begin{equation}\tag{\ref{ct1}.1}
	0\to\DnK(\im\phi)\longrightarrow\DnK(Y)\overset{\ml_K^n(\phi)}{\longrightarrow}\im\ml_K^n(\phi)\to 0, (\text{ see } \ref{cd2}.1)
	\end{equation}
	which induces the commutative diagram
	$$\begin{CD}
	\ \ &&&& Y @>\phi>>\im\phi &  \\
	&&&& @VV{\xi_K^R(Y)}V @VV{\xi_K^R(\im\phi)}V \\
	\ \  &&&& \DnK\DnK(Y)@>\phi''>>\DnK\DnK(\im\phi),&\\
	\end{CD}$$\\
	where $\phi''=\DnK\DnK(\phi)$. As $Y\in\my$, we have $\xi_K^R(Y)$ is an isomorphism. It follows from the above commutative diagram that $\im(\xi_K^R(\im\phi))=\im(\phi'')=\im(\ml_K^n(\ml_K^n(\phi)))$.
\end{proof}
\begin{lem}\label{clll}
	Let $\my$ be an $n$-coreflexive subcategory and let $\phi, \psi\in\Epi(\my)$. The following statements hold true (see Definition \ref{def2}).
	\begin{enumerate}[\rm(i)]
		\item{If $\phi\equiv\psi$, then
			$\ml_K^n(\phi)\equiv\ml_K^n(\psi)$. In particular, $\im\ml_K^n(\phi)\cong\im\ml_K^n(\psi)$.}
		\item{$\phi\equiv\ml_K^n(\ml_K^n(\phi))$ if and only if there exists 
			$\mu\in\Epi(\my)$ such that $\phi\equiv\ml_K^n(\mu)$ and $\mu\equiv\ml_K^n(\phi)$.}
	\end{enumerate}
\end{lem}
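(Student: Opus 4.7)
The plan is to mirror the proof of Lemma \ref{lll} verbatim, substituting the composite functor $\DnK(-)=\Ext^n_R(\Hom_R(K,-),K)$ for the duality $\Ext^n_R(-,K)$ at each step.

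For part (i), starting from $\phi:Y\twoheadrightarrow M$ and $\psi:Z\twoheadrightarrow N$ in $\Epi(\my)$, equivalent via isomorphisms $\alpha:M\to N$ and $\beta:Y\to Z$ with $\alpha\circ\phi=\psi\circ\beta$, I first apply the snake lemma to the induced diagram of short exact sequences $0\to\ker\phi\to Y\to M\to0$ and $0\to\ker\psi\to Z\to N\to0$ to obtain an isomorphism $\gamma:\ker\phi\to\ker\psi$ making the diagram commute. Next, I apply the covariant functor $(-)^{\blacktriangledown}=\Hom_R(K,-)$, producing vertical isomorphisms $\gamma^{\blacktriangledown}$ and $\beta^{\blacktriangledown}$ relating the monomorphisms $i^{\blacktriangledown}:(\ker\phi)^{\blacktriangledown}\hookrightarrow Y^{\blacktriangledown}$ and $j^{\blacktriangledown}:(\ker\psi)^{\blacktriangledown}\hookrightarrow Z^{\blacktriangledown}$. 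Applying the contravariant functor $\Ext^n_R(-,K)$ to this diagram yields the commutative square
$$\begin{CD}
\DnK(Z) @>{\Ext^n_R(j^{\blacktriangledown},K)}>> \DnK(\ker\psi)\\
@V{\DnK(\beta)}VV @VV{\DnK(\gamma)}V\\
\DnK(Y) @>>{\Ext^n_R(i^{\blacktriangledown},K)}> \DnK(\ker\phi)
\end{CD}$$
with vertical isomorphisms. Since by Definition \ref{cd2} the maps $\ml_K^n(\phi)$ and $\ml_K^n(\psi)$ are, respectively, the canonical surjections of $\DnK(Y)$ and $\DnK(Z)$ onto the images of the horizontal arrows, this square restricts to a commutative square of surjections with $\DnK(\beta)$ on the left and the induced restriction of $\DnK(\gamma)$ on the right, both isomorphisms, exhibiting the equivalence $\ml_K^n(\phi)\equiv\ml_K^n(\psi)$.

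For part (ii), the forward direction is immediate by choosing $\mu:=\ml_K^n(\phi)$. For the converse, given $\mu\in\Epi(\my)$ with $\phi\equiv\ml_K^n(\mu)$ and $\mu\equiv\ml_K^n(\phi)$, part (i) applied to the second equivalence gives $\ml_K^n(\mu)\equiv\ml_K^n(\ml_K^n(\phi))$, whence transitivity of $\equiv$ yields $\phi\equiv\ml_K^n(\ml_K^n(\phi))$.

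The main technical point to verify is that the restriction of $\DnK(\gamma)$ to $\im(\Ext^n_R(j^{\blacktriangledown},K))$ maps bijectively onto $\im(\Ext^n_R(i^{\blacktriangledown},K))$; this follows at once from commutativity of the large square together with the fact that $\DnK(\gamma)$ is an isomorphism. Beyond the bookkeeping of the nested contravariances, no genuinely new obstacle arises relative to Lemma \ref{lll}.
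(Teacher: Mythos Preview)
Your proposal is correct and follows essentially the same approach as the paper's proof. The only cosmetic difference is that the paper, after applying $(-)^{\blacktriangledown}$, records the cokernels $C=\im(\phi^{\blacktriangledown})$ and $C'=\im(\psi^{\blacktriangledown})$ and then applies $\Ext^n_R(-,K)$ to the resulting short exact sequences (so the isomorphism on images comes via the induced isomorphism $\Ext^n_R(\alpha',K)$ on kernels), whereas you work directly with the commutative square of monomorphisms $i^{\blacktriangledown},j^{\blacktriangledown}$ and restrict the vertical isomorphism $\DnK(\gamma)$ to images; these are two descriptions of the same diagram chase.
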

\begin{proof}
	(i). Let $\phi:X\twoheadrightarrow M$ and $\psi:Y\twoheadrightarrow N$, where $X,Y\in\my$ and $M, N\in\md R$. There exist isomorphisms $\alpha:M\overset{\cong}{\to} N$ and $\beta:X\overset{\cong}{\to} Y$ such that $\psi\circ\beta=\alpha\circ\phi$. Hence we obtain the commutative diagram
	$$\begin{CD}
	&&&&&&&&\\
	\ \ &&&&0@>>>\ker\phi@>>> X @>\phi>> M@>>>0 &  \\
	&&&&&&@VV{\gamma}V @VV{\beta}V @VV{\alpha}V \\
	\ \  &&&&0@>>>\ker\psi@>>> Y@>\psi>>N@>>>0,&\\
	\end{CD}$$\\
which, by the snake lemma, $\gamma$ is an isomorphism. Applying the functor $(-)^{\blacktriangledown}=\Hom_R(K,-)$ to the above diagram implies the commutative diagram
	$$\begin{CD}
	&&&&&&&&\\
	\ \ &&&&0@>>>(\ker\phi)^{\blacktriangledown}@>>>X^{\blacktriangledown} @>>>C@>>>0 &  \\
	&&&&&&@VV{\gamma^{\blacktriangledown}}V @VV{\beta^{\blacktriangledown}}V @VV{\alpha'}V \\
	\ \  &&&&0@>>>(\ker\psi)^{\blacktriangledown}@>>>Y^{\blacktriangledown}@>>>C'@>>>0&\\
	\end{CD}$$\\
	with exact rows, where $C=\im(\phi^{\blacktriangledown})$ and $C'=\im(\psi^{\blacktriangledown})$.
	As $\gamma^{\blacktriangledown}$, $\beta^{\blacktriangledown}$ are isomorphisms, so is $\alpha'$. The above diagram induces the 
	commutative diagram
	$$\begin{CD}
	&&&&&&&&\\
	\ \ &&&&0@>>>\Ext^n_R(C',K)@>>>\DnK(Y) @>\ml_K^n(\psi)>>\im\ml_K^n(\psi)@>>>0 &  \\
	&&&&&&@VV{\alpha''}V @VV{\DnK(\beta)}V @VV{\pi}V \\
	\ \  &&&&0@>>>\Ext^n_R(C,K)@>>>\DnK(X)@>\ml_K^n(\phi)>>\im\ml_K^n(\phi)@>>>0.&\\
	\end{CD}$$\\
	As $\alpha''=\Ext^n_R(\alpha',K)$ and $\DnK(\beta)$ are isomorphisms, so is $\pi$.
	Hence $\ml_K^n(\phi)\equiv\ml_K^n(\psi)$.
	
	(ii). follows from part (i).
\end{proof}
\begin{dfn}
	Let $\my$ be an $n$-coreflexive subcategory with respect to $K$ and let $\phi,\psi\in\Epi(\my)$. We say $R$-modules
	$M$ and $N$ are \emph{colinked with respect to} $\my$, in one step (directly), by the pair $(\phi,\psi)$ provided that the following conditions hold.
	\begin{enumerate}[\rm(i)]
		\item $M=\im\phi$ and $N=\im\psi$.
		\item $\phi\equiv\ml_K^n(\psi)$ and $\psi\equiv\ml_K^n(\phi)$.
	\end{enumerate}
    In this situation we write $M\overset{c}{\underset{(\phi,\psi)}{\sim}}N$,  or simply $M\overset{c}{\sim}N$.
	Equivalently, an $R$-module $M$ is said to be \emph{colinked} by $\phi$, if $M=\im\phi$ and $M\cong\im\ml_K^n(\ml_K^n(\phi))$ (see Lemma \ref{clll}).
\end{dfn}
Here is a characterization of a colinked module in terms of the homomorphism  $\xi_K^R(-).$
\begin{cor}\label{cc1}
	Let $\my\subseteq\bk(R)$ be an $n$-coreflexive subcategory with respect to $K$ and let $\phi$ be a non-injective homomorphism in $\Epi(\my)$.
	Assume that $M$ is an $R$-module such that $\im(\phi)=M$. If $M\in\Delta_K(R)$, then
	the following statements are equivalent.
	\begin{enumerate}[\rm(i)]
		\item $M$ is colinked by $\phi$.
		\item $\xi_K^R(M)$ is injective.
		\item $\eta_K^R(M)$ is injective.
	\end{enumerate}
\end{cor}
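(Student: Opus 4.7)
The plan is to mirror the argument used for Corollary \ref{c} in the linkage setting. The equivalence (ii)$\Leftrightarrow$(iii) comes essentially for free from Theorem \ref{t2}(i): since $M\in\Delta_K(R)$, the evaluation map $\nu_K^R(M)$ is an isomorphism, so the commutative diagram in that theorem yields an identification $\ker\eta_K^R(M)\cong\ker\xi_K^R(M)$. Therefore one map is injective precisely when the other is.

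To handle (i)$\Leftrightarrow$(ii), I would first set the stage by verifying that the hypotheses of Theorem \ref{ct1}(iii) are in force. We have $\im\phi=M\in\Delta_K(R)$, and since $\my\subseteq\bk(R)$ and $\nu_K^R(M)$ is an isomorphism (so in particular injective), Lemma \ref{cl} delivers $\Ext^1_R(K,\ker\phi)=0$. Hence Theorem \ref{ct1}(iii) applies and gives the key identification
$$\im\xi_K^R(M)\cong\im\ml_K^n(\ml_K^n(\phi)).$$

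With this in hand, (ii)$\Rightarrow$(i) is immediate: if $\xi_K^R(M)$ is injective then $M\cong\im\xi_K^R(M)\cong\im\ml_K^n(\ml_K^n(\phi))$, which is exactly the condition that $M$ be colinked by $\phi$. For the reverse implication (i)$\Rightarrow$(ii), the colinkage hypothesis gives $M\cong\im\ml_K^n(\ml_K^n(\phi))\cong\im\xi_K^R(M)$; composing the canonical surjection $M\twoheadrightarrow\im\xi_K^R(M)$ with such an isomorphism produces a surjective endomorphism of the finitely generated (Noetherian) module $M$, which must then be an isomorphism. Consequently the original surjection $M\twoheadrightarrow\im\xi_K^R(M)$ is itself an isomorphism, so $\xi_K^R(M)$ is injective.

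The proof has no really hard step; the only point requiring care is confirming that the hypothesis $\Ext^1_R(K,\ker\phi)=0$ of Theorem \ref{ct1}(iii) holds, which is precisely where the assumptions $\my\subseteq\bk(R)$ and $M\in\Delta_K(R)$ are used through Lemma \ref{cl}. Everything else reduces to the universal identification of $\im\xi_K^R(M)$ with the image of the double-colink operator and the standard fact that surjective endomorphisms of finitely generated modules are bijective.
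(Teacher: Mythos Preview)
Your proof is correct and follows essentially the same route as the paper, which cites Theorem \ref{t2}(i), Lemma \ref{cl}, Theorem \ref{ct1}(iii), and \cite[Theorem 2.4]{Ma} (the surjective-endomorphism fact you invoke). You have simply made explicit the steps that the paper compresses into a single sentence.
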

\begin{proof}
	This is an immediate consequence of Theorem \ref{t2}(i), Lemma \ref{cl}, Theorem \ref{ct1}.(iii) and \cite[Theorem 2.4]{Ma}.
\end{proof}
\begin{cor}\label{c11}
	Let $M\in\bk(R)$ be an $R$-module of grade $n$. Assume that $\gkkpd_{R_\fp}(M_\fp)<\infty$ for all $\fp\in\X^n(R)$ (e.g. $\id_{R_\fp}(K_\fp)<\infty$ for all $\fp\in\X^n(R)$). Then the following statements are equivalent.
	\begin{enumerate}[\rm(i)]
		\item $M$ is colinked with respect to $\mp^n_K(R)$.
		\item $M$ is grade-unmixed.	
		\item $\depth_{R_\fp}(M_\fp)\geq\min\{1,\depth R_\fp-n\}$ for all $\fp\in\Spec(R)$.
	\end{enumerate}	
\end{cor}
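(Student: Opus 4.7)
The plan is to prove the cycle (i)$\Rightarrow$(ii)$\Rightarrow$(iii)$\Rightarrow$(i), paralleling the proof of Corollary \ref{c1} but using the dual ingredients Theorem \ref{ct1} and Corollary \ref{cc1} in place of Theorem \ref{t1} and Corollary \ref{c}. For (i)$\Rightarrow$(ii): if $M$ is colinked by some $\phi\in\Epi(\mp^n_K(R))$, then $M\cong\im\ml_K^n(\ml_K^n(\phi))$, and Theorem \ref{ct1}(ii) makes the right-hand side grade-unmixed. For (ii)$\Rightarrow$(iii): any prime $\fp$ with $\depth_{R_\fp}(M_\fp)=0$ lies in $\Ass_R(M)$, so grade-unmixedness together with $\gr_R(M)=n$ forces $\depth R_\fp=n$ and hence $\min\{1,\depth R_\fp-n\}=0$.

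For (iii)$\Rightarrow$(i), the task is to exhibit a non-injective $\phi\in\Epi(\mp^n_K(R))$ with $\im\phi=M$ and then invoke Corollary \ref{cc1}. First, the finite $\gkkkd$-dimension hypothesis on $\X^n(R)$ together with (iii) is precisely the input of Corollary \ref{l4}(i), which gives that $\eta_K^R(M)$ is injective. To construct $\phi$, choose an $R$-regular sequence ${\bf x}\subseteq\ann_R(M)$ of length $n$; since $\ann_R(M)\subseteq\ann_R(M^{\blacktriangledown})$, $M^{\blacktriangledown}$ is an $R/{\bf x}$-module, and Theorem \ref{example1}(iii) gives $M^{\blacktriangledown}\in\ak(R)$ with $K\otimes_R M^{\blacktriangledown}\cong M$. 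Take a finitely generated free $R/{\bf x}$-module $F$ surjecting onto $M^{\blacktriangledown}$ with nonzero kernel $N$ (if necessary, enlarge $F$ by replacing it with $F\oplus R/{\bf x}$). Thickness of $\ak(R)$ (Example \ref{exaa}(vi)) puts $N$ in $\ak(R)$, and then $N\otimes_R K\neq 0$ by the definition of $\ak(R)$, since $\mu_K^R(N)$ is an isomorphism. Tensoring the sequence $0\to N\to F\to M^{\blacktriangledown}\to 0$ with $K$ (exact by $\Tor_1^R(M^{\blacktriangledown},K)=0$) produces a non-injective surjection $\phi:F\otimes_R K\twoheadrightarrow M$, with $F\otimes_R K\in\mp^n_K(R)$ by the Foxby equivalence of Theorem \ref{example1}(iii).

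To close, $\mp^n_K(R)$ is an $n$-coreflexive subcategory (Example \ref{exa}(i)) contained in $\bk(R)$ (Theorem \ref{example1}(ii)), and $M\in\bk(R)\subseteq\Delta_K(R)$ with $\eta_K^R(M)$ injective, so Corollary \ref{cc1} concludes that $M$ is colinked by $\phi$. The main obstacle is the construction of $\phi$: one must translate a free $R/{\bf x}$-presentation of $M^{\blacktriangledown}$ on the Auslander side into a $\mp_K$-perfect presentation of $M$ on the Bass side through Foxby equivalence, and ensure non-injectivity. This is precisely what the ambient hypothesis $M\in\bk(R)$ enables, while every other step is a formal consequence of the dual machinery built earlier in this section.
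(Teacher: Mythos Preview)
Your proof is correct and follows essentially the same approach as the paper. The paper's proof is terser: it constructs the same epimorphism $\nu_K^R(M)\circ(\phi\otimes_R K):F\otimes_R K\twoheadrightarrow M$ from a free $R/{\bf x}$-cover $\phi:F\twoheadrightarrow M^{\blacktriangledown}$ and then simply cites Corollaries~\ref{l4} and~\ref{cc1} to obtain (i)$\Leftrightarrow$(iii) through the injectivity of $\eta_K^R(M)$, leaving the implications (i)$\Rightarrow$(ii)$\Rightarrow$(iii) implicit (they are immediate from Theorem~\ref{ct1} and the definition of grade-unmixedness, exactly as you wrote). Your extra care in verifying non-injectivity of the constructed map via $N\in\ak(R)$ and $N\otimes_R K\neq 0$ is a detail the paper subsumes under ``straightforward.''
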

\begin{proof}
	Let ${\bf x}\subseteq\ann_R(M)$ be an ideal generated by a regular sequence of length $n$ and set $S=R/{\bf x}$.
	Choose a non-injective epimorphism $\phi:F\twoheadrightarrow M^{\blacktriangledown}$ where $F$ is a free $S$-module. As $M\in\bk(R)$, we have $\nu_{K}^R(M):K\otimes_{R}M^{\blacktriangledown}\to M$ is isomorphism. It is straightforward to see that $\nu_{K}^R(M)\circ(\phi\otimes_R K):F\otimes_{R}K\twoheadrightarrow M$ is a non-injective epimorphism in $\Epi(\mp^n_K(R))$. Now the assertion follows from Corollaries \ref{l4}, \ref{cc1}.
\end{proof}
The following corollary is an immediate consequence of Corollaries \ref{c1}, \ref{c11} and Theorem \ref{example1}(iv).
\begin{cor}\label{cc3}
	Let $M$ be an $R$-module of finite $\G(\mp_K)$-dimension. The following are equivalent.
	\begin{enumerate}[\rm(i)]
		\item $M$ is linked with respect to $\mp^n(R)$.
		\item $M$ is colinked with respect to $\mp^n_K(R)$.
		\item $M$ is grade-unmixed of grade $n$.
	\end{enumerate}
\end{cor}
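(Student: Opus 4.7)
The plan is to combine the three cited results as bookkeeping, with no new computation required. Start by unpacking the hypothesis via Theorem \ref{example1}(iv): the assumption $\gpkd_R(M)<\infty$ is equivalent to the conjunction $M\in\bk(R)$ and $\gkkd_R(M)<\infty$. In particular, $\gkdp_{R_\fp}(M_\fp)<\infty$ for every $\fp\in\Spec R$, so the finite-$\mathcal{G}_K$-dimension hypothesis on $\X^n(R)$ that is required by both Corollary \ref{c1} and Corollary \ref{c11} is automatically in force.

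For the equivalence (ii) $\Leftrightarrow$ (iii), I would invoke Corollary \ref{c11} directly, applied to $M$: its hypotheses ($M\in\bk(R)$ and finite $\mathcal{G}_K$-dimension on $\X^n(R)$) are exactly those we just verified, and its conclusion yields the equivalence between being colinked with respect to $\mp_K^n(R)$ and being grade-unmixed (the grade being $n$ is built into the definitions of $\Epi(\mp_K^n(R))$ and of colinkage). For the equivalence (i) $\Leftrightarrow$ (iii), I would apply Corollary \ref{c1} with $\mx=\mp^n(R)$, which is an $n$-reflexive subcategory (Example \ref{ex}(i)); again the finite-$\mathcal{G}_K$-dimension on $\X^n(R)$ is the requisite hypothesis, and the corollary's equivalence between linkage and grade-unmixedness of grade $n$ closes that side.

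The only step that requires any care—and what I would present as the hinge of the argument—is the reduction via Theorem \ref{example1}(iv) to the common hypothesis that feeds into both Corollaries \ref{c1} and \ref{c11}; the fact that the two equivalences then both pass through the same middle condition (iii) is what makes all three statements equivalent. Once this observation is in place, the proof is a two-line invocation of the cited corollaries, which is consistent with the paper's announced one-line argument.
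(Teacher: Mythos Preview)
Your proposal is correct and matches the paper's one-line proof exactly: unpack the hypothesis via Theorem~\ref{example1}(iv), then invoke Corollary~\ref{c11} for (ii)$\Leftrightarrow$(iii) and Corollary~\ref{c1} for (i)$\Leftrightarrow$(iii). One tiny citation slip: $\mp^n(R)$ being $n$-reflexive with respect to $R$ is stated in the text immediately preceding Example~\ref{ex}, not in Example~\ref{ex}(i) (which concerns $\G^n_K(\mathcal{P})$).
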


\begin{eg}\label{cee}
	
 Here we collect some examples of colinked modules.
		\begin{enumerate}[(i)]
			\item {\it Let $\my\subseteq\bk(R)$ be an $n$-coreflexive subcategory with respect to $K$ which is closed under direct sum. Every two modules in $\my$ are colinked directly. In particular, $M$ is directly colinked to $\overset{t}{\oplus}M$ for every $M\in\my$ and an integer $t$. Also, every $M\in\my$ is self-colinked.}
				
				This is an immediate consequence of the exact sequence
	$0\rightarrow\DnK(N) \rightarrow M\oplus\DnK(N) \rightarrow M\rightarrow0$ (see Theorem \ref{t2} and Corollary \ref{cc1}). 
	\item {\it Let $M$ be a grade-unmixed $R$-module of finite $G(\mp_K)$-dimension with grade $n$. Assume that $I\subseteq\ann_R(M)$ is a complete intersection ideal in $R$ of height $n$. If $\phi:F\twoheadrightarrow M^{\blacktriangledown}$ is a non-injective epimorphism where $F$ is a free $R/I$-module, then $M$ is colinked by $\psi$ with respect to $\mp^n_K(R)$ where $\psi=\nu_K^R(M)\circ(\phi\otimes K)$.}
	
	 This is an immediate consequence of Corollary \ref{c11} and Theorem \ref{example1}(iv).
	\item {\it Let $R$ be a Cohen-Macaulay local ring with dualizing module $K$. Let $M$ be an unmixed $R$-module of finite Gorenstein injective dimension with grade $n$. Then $M$ is colinked with respect to $\mp_{K}^n(R)$.}
	
	 This is a special case of part (ii).
	\item  {\it 	
	Let $R$ be a Cohen-Macaulay local ring with dualizing module $K$ and let $M$ be an unmixed $R$-module of finite Gorenstein injective dimension with grade $n$. Assume that $0\to Y\to X\overset{\phi}{\to}M\to0$ is a maximal Cohen-Macaulay approximation of $M$, where $\id_R(Y)<\infty$ and $X$ is a maximal Cohen-Macaulay module of finite Gorenstein injective dimension (see \cite{AuBu}). If $I\subseteq\ann_R(M)$ is a complete intersection ideal in $R$ of height $n$ such that $\phi\otimes_RR/I$ is non-injective,
	then $M$ is colinked by $\phi\otimes_RR/I$ with respect to $\G^n_K(\mp_{K})$. }

 This is an immediate consequence of Corollaries \ref{cc1}, \ref{l4}(i) and Example \ref{exa}(ii).
	\end{enumerate}	
\end{eg}
As a generalization of the notion of horizontal linkage due to Martsinkovsky and Strooker ( Remark \ref{r}(I)), an $R$-module $M$ is defined, by Dibaei-Sadeghi, to be \emph{horizontally linked with respect to} $K$ provided that $M\cong\lambda^2_R(K,M)(=\lambda_R(K,\lambda_R(K,M)))$, where  
$\lambda_R(K,-)=\Omega_K\Tr_{K}\Hom_R(K,-)$ \cite[Definition 3.1]{DS2}. 
Note that if $P_1\to P_0\overset{f}{\to} M^{\blacktriangledown}\to0$ is a minimal projective presentation of $M^{\blacktriangledown}$, then $\lambda_R(K,M)=\coker(\Hom(f,K))$ (see \cite[Remark 2.11]{DS2}).

Let $\fa$ be an ideal of $R$ and let $C$ be a semidualizing $R/\fa$-module, $M$ and $N$ are $R$-module. Then  $M$ is said
to be linked to  $N$ by the ideal $\fa$ with respect to $C$ if $\fa\subseteq\ann_R(M)\cap\ann_R(N)$ and $M$ and
$N$ are horizontally linked with respect to $C$ as $R/\fa$-modules \cite[Definition 3.2]{DS2}.
In the following, we show that the notion of linkage with respect to a semidualizing module is a special case of the notion of colinkage.

Recall that an ideal $\fa$ of finite Gorenstein dimension over a local ring $R$ is called \emph{quasi-Gorenstein} provided that there is equality between
Bass numbers $\mu^{i+\depth R}_R(R)=\mu^{i+\depth R/\fa}_{R/\fa}(R/\fa)$ for all $i\geq0$ ( see \cite{AvF} for more details).
\begin{prop}\label{ccp}
Let $R$ be a Cohen-Macaulay local ring with dualizing module $K$ and let $\fc$ be a Cohen-Macaulay quasi-Gorenstein ideal of grade $n$. Assume that $M$ is a Cohen-Macaulay $R$-module of finite Gorenstein injective dimension. If $M$ is linked by $\fc$ with respect to $K$, then it is colinked with respect to $\G_K^n(\mp_{K})$. Moreover, there exists $\phi\in\Epi(\G_K^n(\mp_{K}))$ such that $\im\phi=M$ and $\lambda_{\overline{R}}(\overline{K},M)\cong\im\ml_K^n(\phi)$ where $\overline{R}=R/\fc$ and $\overline{K}=\Ext^n_R(\overline{R},K)$.
\end{prop}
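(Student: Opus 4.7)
The plan is to construct $\phi$ explicitly from a projective presentation of $\Hom_{\overline{R}}(\overline{K},M)$ over $\overline{R}$ and then to identify $\im\ml_K^n(\phi)$ with $\lambda_{\overline{R}}(\overline{K},M)$ via Golod's change-of-rings isomorphism (Lemma \ref{G1}).

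First I would establish the preliminary facts that $M,\overline{K}\in\G_K^n(\mp_K)$, together with the key identification $\overline{K}\cong K/\fc K$ as $\overline{R}$-modules. The first is immediate: $\fc\subseteq\ann_R(M)$ and the Cohen-Macaulay hypothesis give $\cod(M)=\gr_R(M)=n$, and finite Gorenstein injective dimension places $M\in\G_K^n(\mp_K)$ via (\ref{example1}.1); in particular $M\in\bk(R)$, so $\nu_K^R(M)$ is an isomorphism. For $\overline{K}$: since $\fc$ is Cohen-Macaulay quasi-Gorenstein of grade $n$, the ring $\overline{R}$ is Gorenstein and $\overline{R}\in\G_R^n(\mp)$ (Cohen-Macaulay of grade $n$ with $\pd_R(\overline{R})=n$), so Lemma \ref{cll}(ii) gives $\overline{K}=\Ext^n_R(\overline{R},K)\in\G_K^n(\mp_K)$; in particular $\overline{K}\in\bk(R)$. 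Using the quasi-Gorenstein identity $\Ext^n_R(\overline{R},R)\cong\overline{R}$ together with standard derived-category manipulations (adjunction and $\R\Hom_R(K,K)\simeq R$), one computes $\Hom_R(K,\overline{K})\cong\overline{R}$ and $\Ext^{i>0}_R(K,\overline{K})=0$; on the other hand Foxby equivalence (Theorem \ref{example1}(iii)) gives $K/\fc K\in\bk(R)$ with $\Hom_R(K,K/\fc K)\cong\overline{R}$. Thus $\overline{K}$ and $K/\fc K$ both lie in $\bk(R)$ with the same $(-)^{\blacktriangledown}$-dual $\overline{R}$, so the inverse equivalence $-\otimes_R K\colon\ak(R)\to\bk(R)$ forces $\overline{K}\cong K/\fc K$. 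In particular, for any $\overline{R}$-module $X$ one has $\Hom_R(K,X)\cong\Hom_{\overline{R}}(K/\fc K,X)\cong\Hom_{\overline{R}}(\overline{K},X)$.

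Next, I would take a projective presentation $\overline{R}^{m}\xrightarrow{\overline{\partial}}\overline{R}^{q}\to\Hom_{\overline{R}}(\overline{K},M)\to 0$ over $\overline{R}$ and apply $\overline{K}\otimes_{\overline{R}}-$. Since $\overline{R}$ is Gorenstein and $\overline{K}\cong\overline{R}$, $M$ lies in the Bass class of $\overline{R}$ with respect to $\overline{K}$, so $\overline{K}\otimes_{\overline{R}}\Hom_{\overline{R}}(\overline{K},M)\to M$ is an isomorphism; the result is an exact sequence $\overline{K}^{m}\to\overline{K}^{q}\xrightarrow{\phi}M\to 0$ with $\overline{K}^{q}\in\G_K^n(\mp_K)$, so $\phi\in\Epi(\G_K^n(\mp_K))$ and $\im\phi=M$. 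That $M$ is colinked by $\phi$ (and hence colinked with respect to $\G_K^n(\mp_K)$) then follows from Corollary \ref{cc1}, since $\xi_K^R(M)$ is an isomorphism by Theorem \ref{t2}(iii).

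Finally, to compute $\im\ml_K^n(\phi)$: apply $(-)^{\blacktriangledown}$ to $0\to\ker\phi\to\overline{K}^{q}\to M\to 0$. Lemma \ref{cl} yields $\Ext^1_R(K,\ker\phi)=0$, and the identifications $\Hom_R(K,\overline{K})\cong\overline{R}$ and $\Hom_R(K,M)\cong\Hom_{\overline{R}}(\overline{K},M)$ from paragraph two give a short exact sequence $0\to(\ker\phi)^{\blacktriangledown}\to\overline{R}^{q}\to\Hom_{\overline{R}}(\overline{K},M)\to 0$. Applying $\Ext^n_R(-,K)$ and invoking Lemma \ref{G1} to rewrite $\Ext^{n+i}_R(X,K)\cong\Ext^i_{\overline{R}}(X,\overline{K})$ for $\overline{R}$-modules $X$, one finds $\im\ml_K^n(\phi)\cong\coker\bigl(\Hom_{\overline{R}}(\Hom_{\overline{R}}(\overline{K},M),\overline{K})\to\overline{K}^{q}\bigr)$, which by the defining formula (\ref{d3}.2) is precisely $\lambda_{\overline{R}}(\overline{K},M)$. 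The principal technical obstacle is the Foxby-uniqueness identification $\overline{K}\cong K/\fc K$ under the quasi-Gorenstein hypothesis, which is what allows the two different Hom-functors to be compared on $\overline{R}$-modules and so to match the definition of $\lambda_{\overline{R}}(\overline{K},M)$.
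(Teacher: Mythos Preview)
Your overall strategy coincides with the paper's: build $\phi$ by tensoring a projective presentation of $\Hom_{\overline{R}}(\overline{K},M)$ with $\overline{K}$, verify $M$ is colinked via Corollary~\ref{cc1}, and then identify $\im\ml_K^n(\phi)$ with $\lambda_{\overline{R}}(\overline{K},M)$ using $\overline{K}\cong K\otimes_R\overline{R}$ and Lemma~\ref{G1}. Two points need repair, however.

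First, the assertions ``$\overline{R}$ is Gorenstein'' and ``$\overline{K}\cong\overline{R}$'' are false in general: the quasi-Gorenstein condition yields $\Ext^n_R(\overline{R},R)\cong\overline{R}$ and (by \cite[Theorem~7.8]{AvF}) $\overline{K}\cong K\otimes_R\overline{R}$, but $\overline{R}$ is Gorenstein only when $R$ already is, since the Bass numbers of $R$ and $\overline{R}$ match. Likewise ``$\pd_R(\overline{R})=n$'' should read $\gd_R(\overline{R})=n$. The fact you actually need, namely $M\in\mathcal{B}_{\overline{K}}(\overline{R})$ so that $\overline{K}\otimes_{\overline{R}}\Hom_{\overline{R}}(\overline{K},M)\to M$ is an isomorphism, follows instead from Theorem~\ref{example1}(i) applied over $\overline{R}$ (where $\overline{K}$ is dualizing) together with \cite[Corollary~7.9]{AvF} for descent of finite Gorenstein injective dimension along $R\to\overline{R}$; this is how the paper argues.

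Second, and more substantively, you never verify that $\phi$ is \emph{non-injective}, which is an explicit hypothesis of Corollary~\ref{cc1}. Since $\lambda_{\overline{R}}(\overline{K},M)$ is computed from the \emph{minimal} presentation of $M^{\dag}:=\Hom_{\overline{R}}(\overline{K},M)$, you must take $f$ minimal; but then $\phi=\nu\circ(f\otimes\overline{K})$ is an isomorphism exactly when $M^{\dag}$ is free (for instance $M=\overline{K}$). The paper excludes this by invoking \cite[Theorem~3.13]{DS2} to show that $M^{\dag}$ is horizontally linked over $\overline{R}$, hence stable by \cite[Proposition~3]{MS}, so that the minimal $f$ cannot be an isomorphism. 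This step is precisely where the hypothesis ``$M$ is linked by $\fc$ with respect to $K$'' is used beyond the bare containment $\fc\subseteq\ann_R(M)$; your argument, as written, uses that hypothesis only for the containment, so it cannot be complete.
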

\begin{proof}
By definition, $\fc\subseteq\ann_R(M)$ and $M$ is horizontally linked $\overline{R}$-module with respect to $\overline{K}$. Set $(-)^{\dag}=\Hom_{\overline{R}}(\overline{K},-)$. It follows from \cite[Theorem 3.13]{DS2} that $M^{\dag}$ is linked by the ideal $\fc$. In other words, $M^{\dag}$ is a horizontally linked $\overline{R}$-module and so it is stable as an $\overline{R}$-module by \cite[Proposition 3]{MS}. By Theorem \ref{example1}(i) and \cite[Corollary 7.9]{AvF}, $M\in\mathcal{B}_{\overline{K}}(\overline{R})$. In particular, $\nu_{\overline{K}}^{\overline{R}}(M) :\overline{K}\otimes_{\overline{R}} M^{\dag}\to M$ is an isomorphism. 
Next we show that, if $P_1\to P_0\overset{f}{\to}M^{\dag}\to0$ is a minimal $\overline{R}$-projective presentation of $M^{\dag}$, then $M$ is colinked by  $\phi=\nu_{\overline{K}}^{\overline{R}}(M)\circ(f\otimes_{\overline{R}}\overline{K})$. First we show that $\phi\in\Epi(\G_K^n(\mp_{K}))$. 
As $\fc$ is a Cohen-Macaulay ideal of finite Gorenstein dimension, it is $\mathcal{G}$-perfect. It follows from \cite[Lemma 3.16]{Sa} that $\gr_R(M^{\dag})=\gr_R(\fc)=\gr_R(P_0\otimes_RK)$.
As $R/\fc\in\G^n_R(\mp)$, it follows from Lemma \ref{cll}(ii) that $\overline{K}=\Ext^n_R(R/\fc,K)\in\G^n_K(\mp_K)$. Therefore $\phi\in\Epi(\G_K^n(\mp_{K}))$. Note that $\phi$ is not injective.
Otherwise, $\phi$ is an isomorphism and so is $f\otimes_{\overline{R}}\overline{K}$. The commutative diagram
$$\begin{CD}
&&&&P_0@>f>>M^{\dag} &  \\
&&&&@VV{\mu_{\overline{K}}^{\overline{R}}(P_0)}V @VV{\mu_{\overline{K}}^{\overline{R}}(M^\dag)}V  \\
\ \  &&&&\Hom_{\overline{R}}(\overline{K},\overline{K}\otimes_{\overline{R}}P_0)@>g>>\Hom_{\overline{R}}(\overline{K},\overline{K}\otimes_{\overline{R}}M^{\dag}),&\\
\end{CD}$$\\
where $g=\Hom_{\overline{R}}(\overline{K},f\otimes_{\overline{R}}\overline{K})$ is an isomorphism, and the facts that $\mu_{\overline{K}}^{\overline{R}}(P_0)$ and $\mu_{\overline{K}}^{\overline{R}}(M^\dag)$ are isomorphisms, imply that $f$ is an isomorphism. This is a contradiction,  because $M^\dag$ is a stable $\overline{R}$-module. As $K$ is a dualizing module and $M$ is Cohen-Macaulay, $M\in\G^n_K(\mp)$. In particular, by Lemma \ref{ll1}, $\eta_{K}^R(M)$ is an isomorphism.
It follows from Corollary \ref{cc1} that $M$ is colinked by $\phi$ with respect to $\G^n_K(\mp_K)$.

Next we want to show that $\lambda_{\overline{R}}(\overline{K},M)\cong\im\ml_K^n(\phi)$. By \cite[Theorem 7.8]{AvF}, $\overline{K}\cong K\otimes_{R}\overline{R}$. It follows that $\Hom_R(K,X)\cong\Hom_{\overline{R}}(\overline{K},X)$ and $X\otimes_{\overline{R}}\overline{K}\cong X\otimes_RK$ for all $\overline{R}$-module $X$. Hence, by Lemma \ref{G1}, we get the isomorphism 
$\DnK(X)=\Ext^n_R(\Hom_R(K,X),K)\cong\Hom_{\overline{R}}(\Hom_{\overline{R}}(\overline{K},X),\overline{K})$ for all $\overline{R}$-module $X$. Therefore we obtain the following commutative diagram
$$\begin{CD}
&&&&&&&&\\
\ \ &&&&0@>>>\Hom_{\overline{R}}(M^{\dag},\overline{K})@>>> \Hom_{\overline{R}}(P_0,\overline{K}) @>>>\lambda_{\overline{R}}(\overline{K},M) @>>>0 &  \\
&&&&&&@VV{\cong}V @VV{\cong}V \\
\ \  &&&&0@>>>\Ext^n_R(\Hom_R(K,M),K)@>>>\Ext^n_R(\Hom_R(K,P_0\otimes_{\overline{R}}\overline{K}),K)@>>>\im\ml_K^n(\phi)@>>>0.&\\
\end{CD}$$\\
It follows from the above diagram that $\lambda_{\overline{R}}(\overline{K},M)\cong\im\ml_K^n(\phi)$.
\end{proof}
\begin{lem}\label{cle}
	Let $\my\subseteq\bk(R)$ be an $n$-coreflexive subcategory with respect to $K$ and let $\phi\in\Epi(\my)$. Assume that $M$ is an $R$-module which is colinked by $\phi$ with respect to $\my$. If $M\in\Delta_K(R)$, then there exists the exact sequence $0\to\DnK(M)\to Y\to\im\ml_K^n(\phi)\to0,$
	for some $Y\in\my$.
\end{lem}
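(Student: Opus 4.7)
The plan is to show that the desired exact sequence is essentially the defining sequence of $\ml_K^n(\phi)$ itself, after identifying its kernel via Lemma~\ref{cl}.

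Since $M$ is colinked by $\phi$ with respect to $\my$, in particular $M = \im\phi$, so there is a short exact sequence
\[
0 \to \ker\phi \to X \xrightarrow{\phi} M \to 0
\]
with $X \in \my$. Applying Definition~\ref{cd2} directly to $\phi$ produces (see (\ref{cd2}.1)) the short exact sequence
\[
0 \to \ker\ml_K^n(\phi) \to \DnK(X) \xrightarrow{\ml_K^n(\phi)} \im\ml_K^n(\phi) \to 0.
\]
The goal reduces to identifying $\ker\ml_K^n(\phi)$ with $\DnK(M)$ and checking that $\DnK(X)$ lies in $\my$.

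For the first identification, I will invoke Lemma~\ref{cl}. Its hypotheses are satisfied here: $\my \subseteq \bk(R)$ by assumption, and since $M \in \Delta_K(R)$ the evaluation map $\nu_K^R(M)$ is by definition an isomorphism, hence in particular injective. Lemma~\ref{cl} then gives $\Ext^1_R(K,\ker\phi)=0$ and the canonical isomorphism $\ker\ml_K^n(\phi) \cong \DnK(\im\phi) = \DnK(M)$. For the second point, set $Y := \DnK(X)$; since $\my$ is $n$-coreflexive with respect to $K$, it is closed under $\DnK(-)$, so $Y \in \my$. Substituting these into the displayed sequence yields
\[
0 \to \DnK(M) \to Y \to \im\ml_K^n(\phi) \to 0,
\]
as required.

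There is essentially no hard step: the statement is a clean repackaging of the construction in Definition~\ref{cd2} combined with Lemma~\ref{cl}. The only thing to be careful about is that the colinkage hypothesis is used only via the equality $M = \im\phi$ (the stronger condition $M \cong \im\ml_K^n(\ml_K^n(\phi))$ is not needed for this lemma), and that the non-injectivity of $\phi$ — implicit in the statement as elsewhere in this section — ensures we are in the nontrivial setting where Lemma~\ref{cl} is meaningful.
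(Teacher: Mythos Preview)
Your proof is correct and follows exactly the same route as the paper, which simply states that the result is an immediate consequence of the exact sequence (\ref{cd2}.1) and Lemma~\ref{cl}. You have just spelled out the details: identifying $\ker\ml_K^n(\phi)$ with $\DnK(M)$ via Lemma~\ref{cl} (using $M\in\Delta_K(R)$ to get injectivity of $\nu_K^R(M)$), and noting that $\DnK(X)\in\my$ by $n$-coreflexivity.
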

\begin{proof}
This is an immediate consequence of the exact sequence (\ref{cd2}.1) and Lemma \ref{cl}.	
\end{proof}
\begin{dfn} Let $\my$ be an $n$-coreflexive subcategory with respect to $K$ and let $m>0$ be an integer. We say that $R$-modules $M$ and $N$ are {\it colinked in $m$ steps with respect to} $\my$ if there are modules $N_0 = M, N_1, \cdots, N_{m-1},N_m=N$ such that $N_i$ and $N_{i+1}$ are directly colinked for all $i=0,\cdots,m-1$. If $m$ is even, then $M$ and $N$ are said to be {\it evenly colinked}. {\it Module coliaison} is the equivalence relation generated by directly colinkage. Its equivalence classes are called coliaison classes. Even colinkage also generates an equivalence relation. Its equivalence classes are called \emph{even coliaison classes}.
\end{dfn}
The proof of the following result is analogous to the proof of Proposition \ref{p}.
\begin{thm}\label{cp}
	Let $\my\subseteq\bk(R)$ be an $n$-coreflexive subcategory with respect to $K$ and let $M, N\in\Delta_K(R)$ be $R$-modules. Then the following statements hold true.
	\begin{enumerate}[\rm(i)]
		\item Let $\my$ be a thick subcategory, $M$ and  $N$ in the same coliaison class. Then $ M\in \my$ if and only if $N\in\my.$
		\item Assume that $\mathcal{Z}$ is a thick subcategory of $\md R$ containing $\my$ and that $M$, $N$ are in the same even coliaison class. Then $M\in \mathcal{Z}$ if and only if $N\in\mathcal{Z}$.
	\end{enumerate}
\end{thm}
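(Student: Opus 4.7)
The plan is to follow the blueprint of Proposition \ref{p} verbatim, with Lemma \ref{cle} playing the role of Theorem \ref{t1}(iv) combined with Corollary \ref{c}, and the symmetry of the colinkage relation doing the work that the symmetry of linkage did there. At each directly colinked pair, Lemma \ref{cle} will manufacture a short exact sequence whose kernel is a $\DnK$-dual and whose middle term lies in $\my$; these sequences are then fed through thickness of $\my$ (for (i)) or of $\mathcal{Z}$ (for (ii)).

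For (i), reduce to $M\overset{c}{\sim} N$ directly, via $\phi,\psi\in\Epi(\my)$ with $M=\im\phi$, $N=\im\psi$, $\phi\equiv\ml_K^n(\psi)$ and $\psi\equiv\ml_K^n(\phi)$. Since $M\in\Delta_K(R)$, Lemma \ref{cle} gives an exact sequence
\[0\to\DnK(M)\to Y_1\to N\to 0,\qquad Y_1\in\my,\]
using $\im\ml_K^n(\phi)\cong\im\psi=N$. If $M\in\my$, then $\DnK(M)\in\my$ by the $n$-coreflexivity of $\my$, and thickness of $\my$ forces $N\in\my$. The reverse implication comes from the symmetric sequence $0\to\DnK(N)\to Y_2\to M\to 0$ with $Y_2\in\my$, obtained from Lemma \ref{cle} using $N\in\Delta_K(R)$.

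For (ii), reduce to a two-step chain $M\overset{c}{\sim} C\overset{c}{\sim} N$. To mirror Proposition \ref{p}(ii), one wants parallel sequences
\[0\to\DnK(C)\to Y_1\to M\to 0\quad\text{and}\quad 0\to\DnK(C)\to Y_2\to N\to 0\]
with $Y_1,Y_2\in\my\subseteq\mathcal{Z}$; thickness of $\mathcal{Z}$ then yields
\[M\in\mathcal{Z}\ \Longleftrightarrow\ \DnK(C)\in\mathcal{Z}\ \Longleftrightarrow\ N\in\mathcal{Z}.\]
These arise from Lemma \ref{cle} applied to each directly colinked pair viewed from $C$'s side, provided $C\in\Delta_K(R)$.

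The main obstacle is precisely this verification that the intermediate module $C$ still lies in $\Delta_K(R)$, which is not immediate from $M,N\in\Delta_K(R)$ alone. The strategy is to extract it from the sequence $0\to\DnK(M)\to\DnK(Y_\phi)\to C\to 0$ furnished by Lemma \ref{cle}, in which $\DnK(Y_\phi)\in\my\subseteq\bk(R)$ so that $\nu_K^R(\DnK(Y_\phi))$ is an isomorphism: comparing the evaluation maps $\nu_K^R$ along this sequence and exploiting that $M\in\Delta_K(R)$ should force $\nu_K^R(C)$ to be an isomorphism, i.e. $C\in\Delta_K(R)$. Once this point is settled, the remainder of (ii) is a direct transcription of Proposition \ref{p}(ii) via the display above.
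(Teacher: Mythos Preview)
Your argument for (i) is correct and matches the paper's intended approach: Lemma \ref{cle} produces the sequence $0\to\DnK(M)\to Y_1\to N\to 0$, coreflexivity places $\DnK(M)$ in $\my$, and thickness of $\my$ finishes. The reduction from a general coliaison chain to the direct case is sound because at each step the previously established membership in $\my\subseteq\bk(R)\subseteq\Delta_K(R)$ supplies the hypothesis Lemma \ref{cle} needs for the next application.

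For (ii) you have isolated a real difficulty that the paper's one-line proof (``This follows from Lemma \ref{cle}'') does not address: to obtain the parallel sequences $0\to\DnK(C)\to Y_i\to M\to 0$ and $0\to\DnK(C)\to Y_i'\to N\to 0$ via Lemma \ref{cle}, one must apply it with $C$ in the role of the colinked module, which requires $C\in\Delta_K(R)$, and the hypotheses grant this only for $M$ and $N$. Your proposed remedy, however, does not close the gap as written. The exact sequence you invoke features $\DnK(M)$, not $M$; the condition $M\in\Delta_K(R)$ governs $\nu_K^R(M)$ and says nothing direct about $\nu_K^R(\DnK(M))$, so a comparison of evaluation maps along $0\to\DnK(M)\to\DnK(Y_\phi)\to C\to 0$ does not by itself force $\nu_K^R(C)$ to be an isomorphism. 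The same obstruction recurs in the inductive reduction to the two-step case: for chains of even length greater than two the even-indexed intermediates must also lie in $\Delta_K(R)$, and that is again not available from the stated hypotheses. Your diagnosis of the obstacle is apt, but the sketch offered for overcoming it does not go through.
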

\begin{proof}
This follows from Lemma \ref{cle}.	
\end{proof}
\begin{cor}\label{cccc1}
	Let $M$, $N\in\Delta_K(R)$ be $R$-modules in the same even coliaison class with respect to $\mathcal{P}^n_K(R)$. Assume that $X$ is an $R$-module. Then the following statements hold true.
	\begin{enumerate}[\rm(i)]
    	\item $M\in\bk(R)$ if and only if  $N\in\bk(R)$.
		\item $\pkd_R(M)<\infty$ if and only if $\pkd_R(N)<\infty$. In particular, if $K$ is a dualizing module, then $\id_R(M)<\infty$ if and only if $\id_R(N)<\infty$.
		\item $\Ext^{i\gg0}_{\mp_K}(M,X)=0$ if and only if $\Ext^{i\gg0}_{\mp_K}(N,X)=0$ (see \cite{TW}).
		\item $\gpkd_R(M)<\infty$  if and only if $\gpkd_R(M)<\infty$.
	\end{enumerate}
\end{cor}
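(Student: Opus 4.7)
The overall strategy is identical for all four items: invoke Theorem~\ref{cp}(ii) with $\my = \mp^n_K(R)$ and, for each property, identify a thick subcategory $\mathcal{Z}\subseteq\md R$ containing $\mp^n_K(R)$ such that membership in $\mathcal{Z}$ encodes the property. Since $\mp_K$-perfect modules have finite $\mp_K$-dimension, Theorem~\ref{example1}(ii) gives $\mp^n_K(R)\subseteq\bk(R)$, which ensures the hypothesis $\my\subseteq\bk(R)$ of Theorem~\ref{cp} is satisfied. Given this setup, the conclusion of Theorem~\ref{cp}(ii) will immediately deliver the ``if and only if'' for each item, applied along a chain $M=N_0\overset{c}{\sim}N_1\overset{c}{\sim}N_2=N$.

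For (i), take $\mathcal{Z} = \bk(R)$: it is thick by Example~\ref{exaa}(vii) and contains $\mp^n_K(R)$ as noted above. For (ii), take $\mathcal{Z}_2 = \{M\in\md R \mid \pkd_R(M)<\infty\}$. By Foxby equivalence (Theorem~\ref{example1}(iii)), a module $M$ lies in $\mathcal{Z}_2$ precisely when $M\in\bk(R)$ and $\pd_R(M^{\blacktriangledown})<\infty$; thickness of $\mathcal{Z}_2$ then follows from thickness of $\bk(R)$ together with thickness of the finite-projective-dimension class (Example~\ref{exaa}(ii)), using that $\Hom_R(K,-)$ is exact on short exact sequences in $\bk(R)$ (since $\Ext^1_R(K,-)=0$ there). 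Clearly $\mp^n_K(R)\subseteq\mathcal{Z}_2$. The special case when $K$ is dualizing follows because then $\pkd_R(-)<\infty$ coincides with $\id_R(-)<\infty$ (recorded just before Definition~\ref{perfect} in the ``homological dimensions with respect to a semidualizing module'' chunk).

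For (iv), take $\mathcal{Z}_4 = \{M\in\md R \mid \gpkd_R(M)<\infty\}$. By Theorem~\ref{example1}(iv), this equals $\bk(R)\cap\{M \mid \gkkd_R(M)<\infty\}$, an intersection of two thick subcategories by Example~\ref{exaa}(i),(vii), hence thick; it contains $\mp^n_K(R)$ since $\pkd_R(-)<\infty\Rightarrow\gpkd_R(-)<\infty$. For (iii), take $\mathcal{Z}_3 = \{M\in\md R \mid \Ext^{i}_{\mp_K}(M,X)=0\text{ for }i\gg0\}$. Any $M\in\mp^n_K(R)$ admits a finite $\mp_K$-resolution and therefore lies in $\mathcal{Z}_3$; thickness follows from the long exact sequence in relative cohomology $\Ext^{*}_{\mp_K}(-,X)$ associated to a short exact sequence in $\bk(R)$, which exists because $\mp_K$-resolutions can be compared via the horseshoe lemma restricted to $\bk(R)$.

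The only step that requires any real verification is the thickness of $\mathcal{Z}_3$, since Example~\ref{exaa} records the Ext/Tor analogues only for absolute cohomology. I would handle this either by quoting the standard long exact sequence for $\Ext_{\mp_K}$ established in \cite{TW}, or by passing under $\Hom_R(K,-)$ to reduce to the absolute Ext statement in $\ak(R)$ via Foxby equivalence, where Example~\ref{exaa}(iii) directly applies. No further nontrivial work is needed; once these thickness verifications are in place, Theorem~\ref{cp}(ii) closes each item in one line.
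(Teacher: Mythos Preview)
Your approach is correct and matches the paper's own proof, which is the single sentence: ``As every module of finite $\mp_K$-projective dimension satisfies in all of the above conditions (see Theorem~\ref{example1}), the assertion follows from Theorem~\ref{cp} and Example~\ref{exaa}.'' You have in fact supplied more care than the paper does: Example~\ref{exaa} does not literally list the classes $\{\pkd_R<\infty\}$, $\{\gpkd_R<\infty\}$, or $\{\Ext^{i\gg0}_{\mp_K}(-,X)=0\}$, and your reductions (via Foxby equivalence and Theorem~\ref{example1}(iv)) to the classes that \emph{are} listed there are exactly what is needed to make the paper's citation honest. Your flag on item~(iii) is also appropriate---the long exact sequence for $\Ext_{\mp_K}$ requires the short exact sequence to be $\Hom_R(\mp_K,-)$-exact, which holds here because all terms lie in $\bk(R)$; alternatively your Foxby-equivalence reduction to Example~\ref{exaa}(iii) works cleanly.
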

\begin{proof}
	As every module of finite $\mp_K$-projective dimension satisfies in all of the above conditions (see Theorem \ref{example1}), the assertion follows from Theorem \ref{cp} and Example \ref{exaa}.
\end{proof}
The following is the dual version of Theorem \ref{cc}.
\begin{thm}\label{ct}
	Let $M$ and $N$ be $R$-modules in the same coliaison class
	with respect to $\mp^n_K(R)$. The following statements hold true.
	\begin{enumerate}[\rm(i)]
		\item $M$ is $\mp_K$-perfect if and only if $N$ is $\mp_K$-perfect.
		\item If $K$ is dualizing, then $M$ is $\G(\mp_K)$-perfect if and only if $N$ is $\G(\mp_K)$-perfect.
	\end{enumerate}	
\end{thm}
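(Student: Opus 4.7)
The strategy is to reduce the colinkage statement to a linkage statement via the Foxby equivalence $\Hom_R(K,-)\colon\bk(R)\to\ak(R)$, and then invoke Theorem~\ref{cc}. By induction on the number of steps in the coliaison, and by the symmetry of direct colinkage, it suffices to assume that $M$ is directly colinked to $N$ and to prove the forward direction.

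Assume $M$ is $\mp_K$-perfect (part~(i)) or $\G(\mp_K)$-perfect (part~(ii)). Then $M\in\bk(R)\subseteq\Delta_K(R)$, by Theorem~\ref{example1}(ii) and $(\ref{example1}.1)$. Lemma~\ref{cle} yields an exact sequence
\[
0\to\DnK(M)\to Y\to N\to 0
\]
with $Y\in\mp^n_K(R)$, and Theorem~\ref{t2}(ii) (respectively (iii)) gives $\DnK(M)\in\mp^n_K(R)$ (respectively $\G^n_K(\mp_K)$). All three terms lie in $\bk(R)$, so thickness of $\bk(R)$ (Example~\ref{exaa}(vii)) yields $N\in\bk(R)$, and applying $(-)^\blacktriangledown=\Hom_R(K,-)$ preserves exactness, producing
\[
0\to\DnK(M)^\blacktriangledown\to Y^\blacktriangledown\to N^\blacktriangledown\to 0,
\]
whose first two terms lie in $\mp^n(R)$ (respectively $\G^n_R(\mp)$) by Theorem~\ref{example1}(iii)/(vii) together with Lemma~\ref{cll}.

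The key step is to show that this sequence exhibits a direct linkage of $M^\blacktriangledown$ and $N^\blacktriangledown$ with respect to $\mp^n(R)$ (respectively $\G^n_R(\mp)$). Let $\psi\colon Y^\blacktriangledown\twoheadrightarrow N^\blacktriangledown$ be the canonical surjection, which belongs to $\Epi(\mp^n(R))$ (respectively $\Epi(\G^n_R(\mp))$). By Corollary~\ref{c} with $K=R$, it suffices that $\eta_R(N^\blacktriangledown)$ be injective. Corollary~\ref{cc1} applied to $N$ (which is colinked and lies in $\bk(R)\subseteq\Delta_K(R)$) gives $\eta_K^R(N)$ injective; Foxby naturality identifies $\eta_K^R(N)$ with $K\otimes_R\eta_R(N^\blacktriangledown)$, so injectivity transfers since $-\otimes_R K$ is an equivalence on $\ak(R)$. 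Tracing the constructions of $\ml_K^n(\phi)$ and $\L^n_R(\psi)$ through the dualities $\Hom_R(K,-)$, $\Ext^n_R(-,K)$, and $\Ext^n_R(-,R)$ identifies $\L^n_R(\psi)$ with $\phi^\blacktriangledown\colon\widetilde{Y}^\blacktriangledown\twoheadrightarrow M^\blacktriangledown$, where $\phi\colon\widetilde{Y}\twoheadrightarrow M$ witnesses the colinkage; this establishes the desired direct linkage.

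Finally, Theorem~\ref{cc}(ii) for part~(i), respectively Theorem~\ref{cc}(i) specialized to $K=R$ for part~(ii), transports perfection (respectively $\mathcal{G}$-perfection) from $M^\blacktriangledown$ to $N^\blacktriangledown$. The inverse Foxby functor $-\otimes_R K$ then carries $N^\blacktriangledown\in\mp^n(R)$ (respectively $\G^n_R(\mp)$) back to $N\in\mp^n_K(R)$ (respectively $\G^n_K(\mp_K)$), as required. The main obstacle lies in the third paragraph: verifying that the chain of dualities $\Hom_R(K,-)$, $\Ext^n_R(-,K)$, $\Ext^n_R(-,R)$ is compatible with the operators $\ml_K^n$ and $\L^n_R$ and their double-dual natural transformations $\xi_K^R$ and $\eta_R$. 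This essentially establishes, for the specific subcategories at hand, the adjoint equivalence that will be formalized in Theorem~\ref{ctt}.
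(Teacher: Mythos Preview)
Your overall strategy---pass to $M^\blacktriangledown$ and $N^\blacktriangledown$ via Foxby equivalence and invoke Theorem~\ref{cc}---is sound in principle, and indeed anticipates the content of Theorem~\ref{ctt}. But the paper takes a much shorter and more direct route that avoids the obstacle you flag.

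The paper does not attempt to show that $M^\blacktriangledown$ and $N^\blacktriangledown$ are linked. Instead it works with the \emph{original} colinkage datum $0\to Z\to Y\overset{\phi}{\to}M\to 0$ (with $Z=\ker\phi$), rather than with the sequence $0\to\DnK(M)\to Y'\to N\to 0$ from Lemma~\ref{cle}. Since $Y,M\in\mp^n_K(R)$, a two-out-of-three argument gives $Z\in\mp^n_K(R)$, hence $Z\in\bk(R)$ and the sequence stays exact after applying $(-)^\blacktriangledown$. Applying $\Ext^n_R(-,K)$ to $0\to Z^\blacktriangledown\to Y^\blacktriangledown\to M^\blacktriangledown\to 0$ and using $\pd_R(M^\blacktriangledown)=n$ (so $\Ext^{n+1}_R(M^\blacktriangledown,K)=0$) shows directly that $N\cong\im\ml_K^n(\phi)=\Ext^n_R(Z^\blacktriangledown,K)=\DnK(Z)$, which lies in $\mp^n_K(R)$ by Theorem~\ref{t2}(ii). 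Part~(ii) is analogous, using Theorem~\ref{t2}(iii).

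The gap in your argument is precisely where you locate it. You take $\psi:Y^\blacktriangledown\twoheadrightarrow N^\blacktriangledown$ and assert $\im\L^n_R(\psi)\cong M^\blacktriangledown$, but the long exact sequence only gives that $\im\L^n_R(\psi)$ embeds in $\Ext^n_R(\DnK(M)^\blacktriangledown,R)\cong M^\blacktriangledown$ with cokernel in $\Ext^{n+1}_R(N^\blacktriangledown,R)$; to kill that cokernel you would need to know $N^\blacktriangledown$ is perfect, which is exactly what you are trying to prove. Your route can be repaired by instead starting from $\phi^\blacktriangledown:\widetilde{Y}^\blacktriangledown\twoheadrightarrow M^\blacktriangledown$: since $M^\blacktriangledown$ is already perfect, $\Ext^{n+1}_R(M^\blacktriangledown,R)=0$ and one obtains $\im\L^n_R(\phi^\blacktriangledown)\cong\Ext^n_R((\ker\phi)^\blacktriangledown,R)\cong N^\blacktriangledown$. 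But once you write this down, you see it is essentially the paper's computation (tensored with $K$), only now burdened with the extra bookkeeping of verifying the linkage axioms. The paper's direct identification $N\cong\DnK(\ker\phi)$ is simply the cleaner path.
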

\begin{proof}
	(i). Without loss of generality, we may assume that $M$ is directly colinked to $N$. Hence there exists $\phi\in\Epi(\mp^n_K(R))$ such that
	$M=\im\phi$ and $N\cong\im\ml_K^n(\phi)$. Assume that $M$ is $\mp_K$-perfect of grade $n$ and consider the exact sequence
	\begin{equation}\tag{\ref{ct}.1}
	0\to Z\to Y\overset{\phi}{\to} M\to0,\ \text{where}\ Y\in\mp^n_K(R). 
	\end{equation}
	 It follows that $Z$ is $\mp_K$-perfect of grade $n$ and so $Z\in\bk(R)$ by Theorem \ref{example1}(ii). In particular, $\Ext^1_R(K,Z)=0$. Applying the functor $(-)^{\blacktriangledown}=\Hom_R(K,-)$ to (\ref{ct}.1) implies the exact sequence $0\to Z^{\blacktriangledown}\to Y^{\blacktriangledown}\overset{\phi^{\blacktriangledown}}{\to} M^{\blacktriangledown}\to0$ which induces the long exact sequence
	\begin{equation}\tag{\ref{ct}.2}
	 0\to\Ext^n_R(M^{\blacktriangledown},K)\to\Ext^n_R(Y^{\blacktriangledown},K)\rightarrow\Ext^n_R(Z^{\blacktriangledown},K)\to\Ext^{n+1}_R(M^{\blacktriangledown},K)\to\cdots.
	\end{equation}
	As $\pd_R(M^{\blacktriangledown})=\pkd_R(M)=n$ by \cite[Theorem 2.11]{TW},  $\Ext^{n+1}_R(M^{\blacktriangledown},K)=0$. It follows from  the exact sequence (\ref{ct}.2) and Theorem \ref{t2}(ii) that $$N\cong\im\ml_K^n(\phi)\cong\Ext^n_R(Z^{\blacktriangledown},K)=\mathcal{D}^n_K(Z)\in\mp^n_K(R).$$ The converse follows from the symmetry. The second part can be proved analogously.
\end{proof}
For a subcategory $\mathcal{Z}$ of $\md R$, we denote by $\mathcal{Z}^{\blacktriangledown}=\{M^{\blacktriangledown}\mid M\in\mathcal{Z}\}$ and $\mathcal{Z}^{\otimes_K}=\{M\otimes_RK\mid M\in\mathcal{Z}\}$.

\begin{dfn}
Let $\mx, \my$ be subcategories of $\md R$. The ordered pair $(\mx,\my)$ is called an $n$-\emph{adjoint pair of subcategories with respect to} $K$, provided that the following conditions hold.
\begin{enumerate}[\rm(i)]
\item $\mx$ is an $n$-reflexive subcategory with respect to $R$.
\item $\my$ is an $n$-coreflexive subcategory with respect to $K$.
\item $\mx^{\otimes_K}\subseteq\my\subseteq\bk(R)$ and $\my^{\blacktriangledown}\subseteq\mx\subseteq\ak(R)$.
\end{enumerate}
\end{dfn}
Here are some examples of $n$-adjoint pairs of subcategories with respect to $K$.
\begin{ex}\label{ex1}
The following statements hold true.
\begin{enumerate}[\rm(i)]
	\item The ordered pair $(\mp^n(R),\mp_K^n(R))$ is an $n$-adjoint pair of subcategories with respect to $K$.
	\item If $R$ is local and $K$ is a dualizing module, then $(\G^n_R(\mp),\G^n_K(\mp_K))$ is an $n$-adjoint pair of subcategories with respect to $K$.
\end{enumerate}
\end{ex}
\begin{proof}
Part (i) follows from Theorem \ref{example1}(ii), (iii) and Example \ref{exa}(i). Part (ii) follows from Theorem \ref{example1}(i), (iv), (vii), Example \ref{ex}(i) and  Example \ref{exa}(ii). 	
\end{proof}
We denote by $\nabla_K(R):=\{M\in\md R\mid \mu_K^R(M):M\to\Hom_R(K, M\otimes_RK) \text{ is an isomorphism }\}$. Note that $\ak(R)\subseteq\nabla_K(R)$.
The following theorem is the main result of this section which is a  generalization of \cite[Theorem B]{DS2}.
\begin{thm}\label{ctt} Let $(\mx,\my)$ be an $n$-adjoint pair of subcategories with respect to $K$.
	There is an adjoint equivalence of categories
	\begin{center}
		$\left\{\begin{array}{llll}
		M\in\nabla_K(R) &{\bigg |} \begin{array}{lll} M\mathrm{\ is\ linked\ with } \\
		\mathrm{\ \ \  respect\ to } \ \mx \\
		\end{array} \end{array}
		\right\}
		\begin{array}{ll}\overset{-\otimes_{R}K}{\xrightarrow{\hspace*{2cm}}}\\ \underset{\Hom_{R}(K,-)}{\xleftarrow{\hspace*{2cm}}}\\ \end{array}
		\left\{\begin{array}{lll}
		N\in\Delta_K(R) &{\bigg |} \begin{array}{lll} N\mathrm{\ is\ colinked\ with }\\
		\mathrm{\ \ \ respect\ to }	\ \my\\ \end{array}\end{array}
		\right\}.$
	\end{center}	
\end{thm}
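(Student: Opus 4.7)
The plan is to show that the Foxby adjunction $-\otimes_R K \dashv \Hom_R(K,-)$, whose unit $\mu_K^R$ and counit $\nu_K^R$ are isomorphisms precisely on $\nabla_K(R)$ and $\Delta_K(R)$, restricts to the claimed equivalence. The core technical step is a compatibility lemma: for every $\phi \in \Epi(\mx)$ one has $\phi \otimes_R K \in \Epi(\my)$ and an equivalence of coreflexive homomorphisms
$$\ml_K^n(\phi \otimes_R K) \equiv \L_R^n(\phi) \otimes_R K,$$
and symmetrically, for every $\psi \in \Epi(\my)$ one has $\psi^{\blacktriangledown} \in \Epi(\mx)$ together with $\L_R^n(\psi^{\blacktriangledown}) \equiv \ml_K^n(\psi)^{\blacktriangledown}$. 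The $\Epi$-part follows from $\mx^{\otimes_K} \subseteq \my$ and $\my^{\blacktriangledown} \subseteq \mx$, combined with exactness of $-\otimes_R K$ (respectively $\Hom_R(K,-)$) on the defining short exact sequence $0\to\ker\phi\to X\to M\to 0$, which holds because $\mx\subseteq\ak(R)$ is closed under two-out-of-three in short exact sequences (and dually for $\my\subseteq\bk(R)$). The morphism-level statement is obtained by unwinding Definitions \ref{d2} and \ref{cd2} and invoking the natural isomorphisms $\Ext^n_R(Z, R) \otimes_R K \cong \Ext^n_R(Z, K)$ and $\Hom_R(K, X \otimes_R K) \cong X$ valid on the relevant Foxby class.

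With this compatibility in hand, both halves of the equivalence follow. Given $M \in \nabla_K(R)$ linked by $\phi$ with respect to $\mx$, applying $-\otimes_R K$ and using the compatibility yields
$$M \otimes_R K \;\cong\; \im\L_R^n(\L_R^n(\phi)) \otimes_R K \;\cong\; \im\ml_K^n(\ml_K^n(\phi \otimes_R K)),$$
so $M \otimes_R K$ is colinked by $\phi \otimes_R K$ with respect to $\my$. Membership $M \otimes_R K \in \bk(R) \subseteq \Delta_K(R)$ is inherited from the short exact sequence whose outer terms lie in $\bk(R)$ (since $\my\subseteq\bk(R)$ and $\bk(R)$ is thick). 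The reverse direction is completely dual: if $N\in\Delta_K(R)$ is colinked by $\psi$ with respect to $\my$, then $N^{\blacktriangledown}$ is linked by $\psi^{\blacktriangledown}$ with respect to $\mx$ and lies in $\ak(R)\subseteq\nabla_K(R)$. Independence from the particular choice of $\phi$ and $\psi$ within an equivalence class is provided by Lemmas \ref{lll} and \ref{clll}. Finally, the unit $\mu_K^R$ and counit $\nu_K^R$ of the Foxby adjunction are isomorphisms on $\nabla_K(R)$ and $\Delta_K(R)$ by definition, which promotes the restricted adjunction to an adjoint equivalence.

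The main obstacle will be the careful bookkeeping in the compatibility $\ml_K^n(\phi \otimes_R K) \equiv \L_R^n(\phi) \otimes_R K$. One must identify $\Ext^n_R(Z, R) \otimes_R K$ with $\Ext^n_R(Z, K)$ for the kernel $Z = \ker\phi$, which is forced into $\ak(R)$ by thickness; this interchange reduces via standard Foxby-class arguments to the vanishing of $\Tor_j^R(\Ext^n_R(Z, R), K)$ for $j>0$ and of $\Ext^i_R(Z, K)$ for $i<n$, both of which follow from $Z \in \ak(R)$ and the grade hypothesis $\gr_R(Z) = n$ built into the $n$-reflexivity of $\mx$. Once this interchange is in place, a direct diagram chase of the defining squares of $\L_R^n$ and $\ml_K^n$, together with the canonical isomorphisms $\Hom_R(K, X\otimes_R K)\cong X$ recovering the kernel side of Definition \ref{cd2}, completes the argument.
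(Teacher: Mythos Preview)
Your approach has a genuine gap in the thickness step. You claim that from the short exact sequence $0\to\ker\phi\to X\to M\to0$ with $X\in\mx\subseteq\ak(R)$, the kernel $\ker\phi$ is ``forced into $\ak(R)$ by thickness,'' and you use this both for the exactness of $-\otimes_RK$ on that sequence and for the interchange $\Ext^n_R(Z,R)\otimes_RK\cong\Ext^n_R(Z,K)$. But the two-out-of-three property of $\ak(R)$ needs \emph{two} of the three terms in the class, and the hypothesis gives only $X\in\ak(R)$: the module $M$ is assumed merely to lie in $\nabla_K(R)$, which does not entail $\Tor_{>0}^R(M,K)=0$. So neither $\ker\phi\in\ak(R)$ nor the exactness of $-\otimes_RK$ on the sequence is available, and your compatibility lemma $\ml_K^n(\phi\otimes_RK)\equiv\L_R^n(\phi)\otimes_RK$ is left unproved at this generality. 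The same objection hits your claim that $M\otimes_RK\in\bk(R)$; under the stated hypothesis you only get $M\otimes_RK\in\Delta_K(R)$.

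The paper avoids this problem by not attempting to commute the linkage operator past $-\otimes_RK$. It only uses right-exactness to produce $\phi'=\phi\otimes_RK\in\Epi(\my)$ and then invokes the criterion of Corollary~\ref{cc1}: it suffices to show that $\eta_K^R(M\otimes_RK)$ is injective. This is obtained from the linkage of $M$ (which gives $\Ext^1_S(\Tr_S M,S)=0$, i.e.\ $M$ is a first $S$-syzygy for $S=R/{\bf x}$) together with a diagram chase using $\mu_{\overline K}^S(M)$ and $\mu_{\overline K}^S(P)$ to kill the offending $\Tor_1^S(Z,\overline K)$; no Foxby-class membership of $M$ or $\ker\phi$ is needed. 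If you restrict your argument to $M\in\ak(R)$ and $N\in\bk(R)$ it goes through and recovers Corollary~\ref{cctt}, but as written it does not prove the theorem for the larger classes $\nabla_K(R)$ and $\Delta_K(R)$.
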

\begin{proof}
	Let $M\in\nabla_K(R)$ be an $R$-module which is linked by a reflexive homomorphism $\phi\in\Epi(\mx)$.
	Consider the exact sequence $0\to C\to X\overset{\phi}{\to} M\to0$, where $X\in\mx$. It induces the exact sequence
	$X\otimes_{R}K\overset{\phi'}{\to} M\otimes_RK\to0$,
	where $\phi'=\phi\otimes_RK$. As $(\mx,\my)$ is an $n$-adjoint pair with respect to $K$, we have $X\otimes_{R}K\in\my$. It follows that $\phi'\in\Epi(\my)$. Note that neither $\phi$ nor $\phi'$ is injective.
	Set $N=M\otimes_RK$. As $M\in\nabla_K(R)$, we have $N\in\Delta_K(R)$. Next we prove that $N$ is colinked by $\phi'$ with respect to $\my$. By Corollary \ref{cc1}, it is enough to show that $\eta_{K}^R(N)$ is injective. Let ${\bf x}\subseteq\ann_R(M)$ be an ideal generated by a regular sequence of length $n$. Set $S=R/{\bf x}$ and $\overline{K}=K\otimes_RS$. As $M$ is linked by $\phi$, we have $\eta_{R}^R(M)$ is injective by Corollary \ref{c}. It follows from Lemma \ref{l5}(ii) that $\Ext^1_S(\Tr_S M,S)=0$. In other words, $M$ is a first syzygy $S$-module, i.e. there exists an exact sequence $0\to M\to P\to Z\to0$, where $P$ is a projective $S$-module.
	Applying the functor $-\otimes_S\overline{K}$ gives the following exact sequence
    \begin{equation}\tag{\ref{ctt}.1}
	0\to\Tor_1^S(Z,\overline{K})\to M\otimes_S\overline{K}\to P\otimes_S\overline{K}\to Z\otimes_RK\to0.
	\end{equation}
 Applying the functor $\Hom_S(\overline{K},-)$ to (\ref{ctt}.1) implies the commutative diagram
		$$\begin{CD}
	&&&&&&&&\\
	\ \ &&&&0@>>>\Hom_S(\overline{K},\Tor_1^S(Z,\overline{K})) @>>> \Hom_S(\overline{K},M\otimes_S\overline{K})@>>>\Hom_S(\overline{K},P\otimes_S\overline{K})&  \\
	&&&&&&&&@VV{\mu_K^S(M)}V @VV{\mu_K^S(P)}V \\
	\ \  &&&&&&0@>>> M@>>>P &\\
	\end{CD}$$\\
	with exact rows. As $M\in\nabla_K(R)$, we have $\mu_K^R(M)$ is an isomorphism and so is $\mu_{\overline{K}}^S(M)$. Also $P\in\mathcal{A}_{\overline{K}}(S)$ and so $\mu_{\overline{K}}^S(P)$ is an isomorphism. It follows from the above commutative diagram that $\Hom_S(\overline{K},\Tor_1^S(Z,\overline{K}))=0$. Therefore $\Tor_1^S(Z,\overline{K})=0$ and, by the exact sequence (\ref{ctt}.1),
	 we get the exact sequence $0\to M\otimes_S\overline{K}\to P\otimes_S\overline{K}$.
	Note that $N\cong M\otimes_S\overline{K}$. Set $Y=P\otimes_S\overline{K}$. As $\overline{K}$ is a semidualizing $S$-module, we have $\eta_{\overline{K}}^S(Y)$ is an isomorphism. Hence, it follows from the commutative diagram
	\[\xymatrix{
	 &N\ar@{->}[d]^{\eta_{\overline{K}}^S(N)}\ar@{^{(}->}[r] &Y\ar@{->}[d]^{\eta_{\overline{K}}^S(Y)} 	\\
	 &\Hom_S(\Hom_S(N,\overline{K}),\overline{K})\ar[r] &\Hom_S(\Hom_S(Y,\overline{K}),\overline{K}),	
	}\]
	that $\eta_{\overline{K}}^S(N)$ is injective and so is
    $\eta_{K}^R(N)$ (see Lemma \ref{l5}).

	Conversely, assume that $N\in\Delta_K(R)$ is a module which is linked by a coreflexive homomorphism $\psi\in\Epi(\my)$. Consider the exact sequence $0\to D\to Y\overset{\psi}{\to} N\to0$, where $Y\in\my\subseteq\bk(R)$. It follows that $\Ext^1_R(K,Y)=0$. Hence, 
	applying the functor $(-)^{\blacktriangledown}=\Hom_R(K,-)$ gives the exact sequence
	\begin{equation}\tag{\ref{ctt}.2}
	0\to D^{\blacktriangledown}\to Y^{\blacktriangledown}\overset{\psi^{\blacktriangledown}}{\to} N^{\blacktriangledown}\to\Ext^1_R(K,D)\to0.
	\end{equation}
	Applying the functor $K\otimes_R-$ to the exact sequence (\ref{ctt}.2) implies the commutative diagram
	$$\begin{CD}
	&&&&&&&&\\
	\ \ &&&&K\otimes_RY^{\blacktriangledown} @>>> K\otimes_RN^{\blacktriangledown}@>>>K\otimes_R\Ext^1_R(K,D)@>>>0&  \\
	&&&&@VV{\nu_K^R(Y)}V @VV{\nu_{K}^R(N)}V \\
	\ \  &&&&Y@>>>N @>>>0,&\\
	\end{CD}$$\\
	with exact rows. As $N\in\Delta_K(R)$, we have $\nu_K^R(N)$ is an isomorphism. Also, since $Y\in\bk(R)$, we have $\nu_K^R(Y)$ is an isomorphism. It follows from the above commutative diagram that $K\otimes_R\Ext^1_R(K,D)=0$ and so $\Ext^1_R(K,D)=0$. Hence, by
	the exact sequence (\ref{ctt}.2) we obtain the following exact sequence $Y^{\blacktriangledown}\overset{\psi^{\blacktriangledown}}{\to}N^{\blacktriangledown}\to0$.
	As the pair $(\mx,\my)$ is an $n$-adjoint pair of subcategories with respect to $K$, we have $Y^{\blacktriangledown}\in\mx$.
	It follows that $\psi^{\blacktriangledown}\in\Epi(\mx)$. Since $\psi$ is not injective neither is $\psi^{\blacktriangledown}$.
	Set $M=N^{\blacktriangledown}$. Note that
	$M\in\nabla_K(R)$.
	Next we prove that $M$ is linked by $\psi^{\blacktriangledown}$ with respect to $\mx$. By Corollary \ref{c}, it is enough to show that $\eta_{R}^R(M)$ is injective. Let ${\bf y}\subseteq\ann_R(M)$ be an ideal generated by a regular sequence of length $n$. Set $T=R/{\bf y}$ and $K'=K\otimes_RT$. As $N$ is colinked by $\psi$, we have $\eta_{K}^R(N)$ is injective by Corollary \ref{cc1}. It follows from Lemma \ref{l5}(ii) that $\Ext^1_T(\Tr_{K'} N,K')=0$. In other words, $N$ is a first $K'$-syzygy $T$-module, i.e. there exists an exact sequence $0\to N\to Y\to Z\to0$, where $Y=P\otimes_TK'$ for a projective $T$-module $P$.  Applying the functor $\Hom_T(K',-)$ to the above exact sequence gives the exact sequence $0\to \Hom_T(K',N)\to\Hom_T(K',Y)$.
	Note that $M=\Hom_R(K,N)\cong\Hom_T(K',N)$
	by the adjoint isomorphism. Also, since $P\in\mathcal{A}_{K'}(T)$, we have $P\cong\Hom_T(K',K'\otimes_TP)$.
	Hence we obtain the exact sequence $0\to M\to P$. 
	It follows that $\eta_{T}^T(M)$ is injective and so is $\eta_R^R(M)$ by Lemma \ref{l5}.	
\end{proof}
The following result is an immediate consequence of
Theorem \ref{ctt} and Example \ref{ex1}.
\begin{cor}\label{cctt}
	The following statements hold true.
	\begin{enumerate}[\rm(i)]
		\item There is an adjoint equivalence of categories
		\begin{center}		
			$\left\{\begin{array}{llll}
			M\in\ak(R){\bigg |} \begin{array}{lll} M\mathrm{\ is\ linked\ with } \\
			\mathrm{\ \ respect\ to } \ \mp^n(R) \\
			\end{array} \end{array}
			\right\}
			\begin{array}{ll}\overset{-\otimes_{R}K}{\xrightarrow{\hspace*{2cm}}}\\ \underset{\Hom_{R}(K,-)}{\xleftarrow{\hspace*{2cm}}}\\ \end{array}
			\left\{\begin{array}{lll}
			N\in\bk(R){\bigg |} \begin{array}{lll} N\mathrm{\ is\ colinked\ with }\\
			\mathrm{\ \ respect\ to }	\ \mp_K^n(R)\\ \end{array}\end{array}
			\right\}.$
		\end{center}	
		\item If $K$ is a dualizing module, then there is an adjoint equivalence of categories
		\begin{center}		
			$\left\{\begin{array}{llll}
			M\in\ak(R){\bigg |} \begin{array}{lll} M\mathrm{\ is\ linked\ with } \\
			\mathrm{\ \ respect\ to } \ \G^n_R(\mp) \\
			\end{array} \end{array}
			\right\}
			\begin{array}{ll}\overset{-\otimes_{R}K}{\xrightarrow{\hspace*{2cm}}}\\ \underset{\Hom_{R}(K,-)}{\xleftarrow{\hspace*{2cm}}}\\ \end{array}
			\left\{\begin{array}{lll}
			N\in\bk(R){\bigg |} \begin{array}{lll} N\mathrm{\ is\ colinked\ with }\\
			\mathrm{\ \ respect\ to }	\ \G^n_K(\mp_K)\\ \end{array}\end{array}
			\right\}.$
		\end{center}	
	\end{enumerate}
\end{cor}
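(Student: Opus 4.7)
The plan is to derive Corollary \ref{cctt} directly from Theorem \ref{ctt} by invoking Example \ref{ex1}, which supplies the two required $n$-adjoint pairs of subcategories with respect to $K$, namely $(\mp^n(R),\mp_K^n(R))$ for part (i) and $(\G^n_R(\mp),\G^n_K(\mp_K))$ for part (ii). In both cases, Theorem \ref{ctt} immediately produces an adjoint equivalence of the desired form; the only thing to verify is that the Auslander and Bass classes can replace the larger classes $\nabla_K(R)$ and $\Delta_K(R)$ that appear in the general statement.

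For the forward direction of (i), I would start with an $R$-module $M\in\ak(R)$ that is linked with respect to $\mp^n(R)$. Since $\ak(R)\subseteq\nabla_K(R)$ by definition, Theorem \ref{ctt} applied to the pair $(\mp^n(R),\mp_K^n(R))$ produces $N=M\otimes_RK$, which is colinked with respect to $\mp_K^n(R)$ and lies in $\Delta_K(R)$. Foxby equivalence (Theorem \ref{example1}(iii)) upgrades the membership $M\in\ak(R)$ to $N=M\otimes_RK\in\bk(R)$, so the image indeed lands in the smaller class appearing on the right of the corollary. Conversely, if $N\in\bk(R)$ is colinked with respect to $\mp_K^n(R)$, then $\bk(R)\subseteq\Delta_K(R)$ allows Theorem \ref{ctt} to produce $M=\Hom_R(K,N)$ linked with respect to $\mp^n(R)$, and Foxby equivalence again ensures $M\in\ak(R)$. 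The natural unit and counit of the $(-\otimes_RK,\Hom_R(K,-))$-adjunction become isomorphisms exactly on the Foxby classes, so the restriction is genuinely an adjoint equivalence, not merely a pair of mutually inverse assignments.

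For part (ii), the same argument runs using Example \ref{ex1}(ii), which requires $K$ to be dualizing. In this setting Foxby equivalence refines to the equivalence between $\G^n_R(\mp)$ and $\G^n_K(\mp_K)$ recorded in Theorem \ref{example1}(vii), and the containments $\G^n_R(\mp)\subseteq\ak(R)$ and $\G^n_K(\mp_K)\subseteq\bk(R)$ again guarantee the compatibility with the membership in the Auslander and Bass classes that is stated in the corollary.

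The only mildly subtle point, and the step I would be most careful about, is checking that the restriction in the forward and backward directions really preserves the linkage/colinkage data (and not just the class of objects): the proof of Theorem \ref{ctt} constructs the linked/colinked structure on $M\otimes_RK$ from the linked structure on $M$ through a specific epimorphism obtained by tensoring, and one must verify that this construction commutes with the Foxby equivalence functors on the level of the homomorphism $\L^n_R(-)$ versus $\ml^n_K(-)$. This is essentially routine because Foxby equivalence is exact on Auslander and Bass classes, so diagrams defining $\L^n_R(\phi)$ transport to diagrams defining $\ml^n_K(\phi\otimes_RK)$, but it is the place where the proof has content beyond merely quoting Theorem \ref{ctt} and Example \ref{ex1}.
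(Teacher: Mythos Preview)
Your proposal is correct and follows the same approach as the paper, which simply states that the corollary is an immediate consequence of Theorem~\ref{ctt} and Example~\ref{ex1}. Your additional explanation of why $\ak(R)$ and $\bk(R)$ may replace $\nabla_K(R)$ and $\Delta_K(R)$ via Foxby equivalence (Theorem~\ref{example1}(iii)) is the content the paper leaves implicit. The ``mildly subtle point'' you raise at the end is not actually an additional verification needed for the corollary: the proof of Theorem~\ref{ctt} itself already constructs the colinked structure on $M\otimes_R K$ from the linked structure on $M$ (and vice versa), so once Theorem~\ref{ctt} is in hand there is nothing further to check about compatibility of $\L^n_R(-)$ with $\ml^n_K(-)$.
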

The following is a dual version of Proposition \ref{t3}.
\begin{thm}\label{ct2}
Let $(\mx,\my)$ be an $n$-adjoint pair of subcategories with respect to $K$ such that $\mx\subseteq\G^n_K(\mp)$.
Assume that $t>1$ is an integer such that $\id_{R_\fp} (K_\fp)<\infty$ for all $\fp\in X^{n+t-1}(R)$ and that  $M$, $N$ are $R$-modules which are directly colinked with respect to $\my$. If $M\in\bk(R)$, then the following statements are equivalent.
	\begin{enumerate}[\rm(i)]
		\item $\depth_{R_\fp}(M_\fp)\geq\min\{t,\depth R_\fp-n\}$ for all $\fp\in\Spec R$.
		\item $\Ext^i_R(N,K)=0$ for all $i$, $n+1\leq i\leq n+t-1.$
	\end{enumerate}	
\end{thm}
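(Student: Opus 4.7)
The plan is to reduce Theorem~\ref{ct2} to Proposition~\ref{t3} via the $K$-dual functor $(-)^\blacktriangledown$, converting the given colinkage of $(M,N)$ with respect to $\my$ into a direct $\mx_n$-linkage of $(M^\blacktriangledown,N)$ where $\mx_n=\G^n_K(\mp)$, and then translating the resulting depth condition back to $M$ through Foxby equivalence.

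I would first pick a non-injective $\phi\in\Epi(\my)$ with $\im\phi=M$ realizing the colinkage, so $N\cong\im\ml_K^n(\phi)$. Since $M\in\bk(R)\subseteq\Delta_K(R)$, Lemma~\ref{cl} gives $\Ext^1_R(K,\ker\phi)=0$; hence $\phi^\blacktriangledown:Y^\blacktriangledown\twoheadrightarrow M^\blacktriangledown$ is surjective with kernel $(\ker\phi)^\blacktriangledown$, and the $n$-adjoint pair hypothesis places $Y^\blacktriangledown$ in $\mx\subseteq\mx_n$. Comparing Definitions~\ref{d2} and~\ref{cd2} then shows $\L_K^n(\phi^\blacktriangledown)=\ml_K^n(\phi)$ as maps of abelian groups, so $\im\L_K^n(\phi^\blacktriangledown)\cong N$. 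To certify that $(M^\blacktriangledown,N)$ forms a direct linkage pair with respect to $\mx_n$, I would apply Corollary~\ref{c1}: since $M$ is itself the image of $\ml_K^n(\psi)$ for the companion $\psi$ in the colinkage, Theorem~\ref{ct1}(ii) shows $M$ is grade-unmixed of grade $n$; Foxby equivalence (Theorem~\ref{example1}(vi)), together with $\Supp_R(M^\blacktriangledown)=\Supp_R(M)$ arising from the isomorphism $K\otimes_R M^\blacktriangledown\cong M$, transfers grade-unmixedness to $M^\blacktriangledown$ via the equality $\Ass_R(M^\blacktriangledown)=\Ass_R(M)$.

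Both the hypothesis of Corollary~\ref{c1} (finite $\gkkd$-dimension on $\X^n(R)$) and that of Proposition~\ref{t3} (the same on $\X^{n+t-1}(R)$) are supplied by the following observation: for $\fp\in\X^{n+t-1}(R)$, the assumption $\id_{R_\fp}(K_\fp)<\infty$ forces $K_\fp$ to be a dualizing module for $R_\fp$, so by \cite[Proposition~1.3]{Ge} every finitely generated $R_\fp$-module --- in particular $(M^\blacktriangledown)_\fp$ --- has finite $\gkdp$-dimension.

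Applying Proposition~\ref{t3} to the directly linked pair $(M^\blacktriangledown,N)$ then yields that condition (ii) of Theorem~\ref{ct2} is equivalent to
$$\depth_{R_\fp}\!\bigl((M^\blacktriangledown)_\fp\bigr)\geq\min\{t,\,\depth R_\fp-n\}\quad\text{for all }\fp\in\Spec R.$$
Since $M\in\bk(R)$ localizes to $M_\fp\in\mathcal{B}_{K_\fp}(R_\fp)$, Theorem~\ref{example1}(vi) applied locally at each $\fp$ gives $\depth_{R_\fp}(M_\fp)=\depth_{R_\fp}((M^\blacktriangledown)_\fp)$, identifying the displayed condition with condition (i). The central technical point is the identification $\L_K^n(\phi^\blacktriangledown)=\ml_K^n(\phi)$, which rests on the equality $\ker(\phi^\blacktriangledown)=(\ker\phi)^\blacktriangledown$ ensured by Lemma~\ref{cl}; the secondary subtle step is verifying that $M^\blacktriangledown$ is grade-unmixed so Corollary~\ref{c1} is applicable.
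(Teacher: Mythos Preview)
Your proposal is correct and follows essentially the same route as the paper's proof: reduce to Proposition~\ref{t3} applied to $M^{\blacktriangledown}$ by identifying $\ml_K^n(\phi)$ with $\L_K^n(\phi^{\blacktriangledown})$, then translate the depth condition back to $M$ via Theorem~\ref{example1}(vi). The only difference is that the paper quotes Theorem~\ref{ctt} as a black box to conclude that $M^{\blacktriangledown}$ is linked by $\phi^{\blacktriangledown}$ with respect to $\mx$ (and hence with respect to $\G^n_K(\mp)$), whereas you re-derive this step directly from Lemma~\ref{cl}, grade-unmixedness of $M$, and Corollary~\ref{c1}; both arguments land in the same place.
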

\begin{proof}
	By definition, there exists $\phi\in\Epi(\my)$ such that $M=\im\phi$ and $N\cong\im\ml_K^n(\phi)$.
	It follows from Theorem \ref{ctt} that $M^{\blacktriangledown}$ is linked by $\phi^{\blacktriangledown}$ with respect to $\mx$.
	As $\mx\subseteq\G^n_K(\mp)$, it follows from Corollary \ref{c1} that $M^{\blacktriangledown}$ is also linked by $\phi^{\blacktriangledown}$ with respect to $\G^n_K(\mp)$. By Proposition \ref{t3}, 
	\[\begin{array}{rl}\tag{\ref{ct2}.1}
	\depth_{R_\fp}(M^{\blacktriangledown}_\fp)\geq\min\{t,\depth R_\fp-n\} \text{ for all } \fp\in\Spec R\\
	\Longleftrightarrow\Ext^i_R(\im\L_K^n(\phi^\blacktriangledown),K)=0 \text{ for all } i,\ n<i<n+t.
	\end{array}\]
	By using the fact that $M\in\bk(R)$ and Definitions \ref{d2} and \ref{cd2}, it is easy to see that $\ml_K^n(\phi)=\L_K^n(\phi^{\blacktriangledown})$.
	Hence we obtain the following isomorphisms
	\begin{equation}\tag{\ref{ct2}.2}
	\Ext^i_R(N,K)\cong\Ext^i_R(\im\ml_K^n(\phi),K)\cong\Ext^i_R(\im\L_K^n(\phi^{\blacktriangledown}),K) \text{ for all } i.
	\end{equation}
	Now the assertion follows from (\ref{ct2}.1), (\ref{ct2}.2) and Theorem \ref{example1}(vi).
\end{proof}
\begin{cor}
Assume that $t>1$ is an integer such that $\id_{R_\fp} (K_\fp)<\infty$ for all $\fp\in \X^{n+t-1}(R)$ and that  $M$, $N$ are $R$-modules which are directly colinked with respect to $\mp_K^n(R)$. If $M\in\bk(R)$, then the following statements are equivalent.
\begin{enumerate}[\rm(i)]
	\item $\depth_{R_\fp}(M_\fp)\geq\min\{t,\depth R_\fp-n\}$ for all $\fp\in\Spec R$.
	\item $\Ext^i_R(N,K)=0$ for all $i$, $n+1\leq i\leq n+t-1.$
\end{enumerate}		
\end{cor}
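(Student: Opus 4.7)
The plan is to derive this corollary as an immediate instance of Theorem \ref{ct2}, applied to the $n$-adjoint pair $(\mx,\my) = (\mp^n(R), \mp_K^n(R))$. The hypotheses of Theorem \ref{ct2} that need to be discharged are: (a) $(\mx,\my)$ is an $n$-adjoint pair of subcategories with respect to $K$; (b) $\mx \subseteq \G^n_K(\mp)$; (c) the integer and the idealization assumption on $\id_{R_\fp}(K_\fp)$; (d) $M \in \bk(R)$ and $M$, $N$ are directly colinked with respect to $\my$. Hypotheses (c) and (d) are already present in the statement of the corollary, so what remains is (a) and (b).

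For (a), I would cite Example \ref{ex1}(i), which records precisely that $(\mp^n(R), \mp_K^n(R))$ is an $n$-adjoint pair of subcategories with respect to $K$. For (b), the key point is that every perfect $R$-module of grade $n$ is $\gkkkd$-perfect of grade $n$: if $M \in \mp^n(R)$, then $\pd_R(M) = n = \gr_R(M)$, and since projective modules are totally $K$-reflexive and $\gkkd_R(-) \le \pd_R(-)$ with equality when the right-hand side is finite, we conclude $\gkkd_R(M) = n = \gr_R(M)$, i.e.\ $M \in \G^n_K(\mp)$. Thus $\mp^n(R) \subseteq \G^n_K(\mp)$, as required.

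With (a)--(d) all verified, the equivalence of (i) and (ii) is the direct output of Theorem \ref{ct2}. No genuine obstacle arises, since the corollary is essentially a rebranding of Theorem \ref{ct2} for the most classical adjoint pair $(\mp^n(R), \mp_K^n(R))$; the only things to verify are the inclusion $\mp^n(R) \subseteq \G^n_K(\mp)$ and the adjoint-pair property, both of which are either elementary or already recorded in the preceding material.
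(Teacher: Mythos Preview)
Your proposal is correct and follows essentially the same approach as the paper's own proof: observe that every perfect module is $\gkkkd$-perfect (so $\mp^n(R)\subseteq\G^n_K(\mp)$), invoke Example~\ref{ex1}(i) for the $n$-adjoint pair $(\mp^n(R),\mp_K^n(R))$, and apply Theorem~\ref{ct2}.
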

\begin{proof}
First note that every perfect module is also $\gkkkd$-perfect. In particular, $\mp^n(R)\subseteq\G^n_K(\mp)$. Now the assertion follows from Theorem \ref{ct2} and Example \ref{ex1}(i).	
\end{proof}
The following is a generalization of \cite[Theorems A, C]{DS2}.
\begin{cor}\label{ccc2}
Let $(R,\fm)$ be a Cohen-Macaulay local ring of dimension $d$ and let $K$ be a dualizing module. Assume that $M$ and $N$ are $R$-modules of finite Gorenstein injective dimension which are in the same coliaison class with respect to $\G^n_K(\mp_K)$. Then the following statements hold true.
\begin{enumerate}[\rm(i)]
	\item $M$ satisfies the Serre's condition $(S_t)$ if and only if
	$\hh^i_{\fm}(N)=0$ for all $i$, $d-n-t<i<d-n.$
	\item $M$ is Cohen-Macaulay if and only if $N$ is so.
	\item $M$ is generalized Cohen-Macaulay if and only if $N$ is so.
\end{enumerate}
 \end{cor}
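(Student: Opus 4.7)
The plan is to reduce each statement to its linkage analogue via the adjoint equivalence $(-)^{\blacktriangledown}=\Hom_R(K,-)$ of Corollary~\ref{cctt}(ii). By Theorem~\ref{example1}(i), the hypothesis of finite Gorenstein injective dimension is equivalent to $M,N\in\bk(R)$, so the equivalence transports the coliaison chain for $M,N$ in $\G^n_K(\mp_K)$ to a liaison chain for $M^{\blacktriangledown},N^{\blacktriangledown}$ in $\G^n_R(\mp)$. Over the Cohen-Macaulay ring $R$ any $X\in\G^n_R(\mp)$ satisfies $\depth_R(X)=d-n=\dim_R(X)$, so $\G^n_R(\mp)\subseteq\cm^n(R)$, and Theorem~\ref{it} is directly applicable to $(M^{\blacktriangledown},N^{\blacktriangledown})$.

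For (ii), Theorem~\ref{it}(i) gives $M^{\blacktriangledown}\in\cm^n(R)\Leftrightarrow N^{\blacktriangledown}\in\cm^n(R)$, and Theorem~\ref{example1}(vi) upgrades this to the equivalence for $M,N$ themselves; alternatively, by~(\ref{example1}.1) the statement is already contained in Theorem~\ref{ct}(ii). For (iii), I would use the characterisation that a module of positive dimension is generalized Cohen-Macaulay iff it is locally Cohen-Macaulay on the punctured spectrum; Theorem~\ref{example1}(vi) applied at each $\fp\ne\fm$ transfers this property between $M$ and $M^{\blacktriangledown}$ (and between $N$ and $N^{\blacktriangledown}$), while Theorem~\ref{it}(iii)--(iv) propagates it along the liaison of $M^{\blacktriangledown},N^{\blacktriangledown}$ in both the even case (via the local-cohomology isomorphisms) and the odd case (via the Matlis-dual relation, which preserves finite length).

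Part (i), which as stated is naturally an odd-length (and in particular direct) colinkage assertion paralleling Theorem~\ref{it}(ii), is the main difficulty. My plan is to apply Theorem~\ref{ct2} to the $n$-adjoint pair $(\G^n_R(\mp),\G^n_K(\mp_K))$ of Example~\ref{ex1}(ii): this yields that $\depth_{R_\fp}(M_\fp)\ge\min\{t,\depth R_\fp-n\}$ for every $\fp$ is equivalent to $\Ext^i_R(N,K)=0$ for $n<i<n+t$. Over Cohen-Macaulay $R$ the depth hypothesis is equivalent to $(S_t)$ on $M$ once one knows $\dim_{R_\fp}(M_\fp)=\depth R_\fp-n$ for all $\fp\in\Supp M$; this grade-unmixedness at every supported prime follows from Corollary~\ref{cc3} together with Proposition~\ref{p2} applied to $M^{\blacktriangledown}$ across the adjoint equivalence. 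Finally, local duality (available because $K$ is dualizing) converts vanishing of $\Ext^i_R(N,K)$ in the range $n<i<n+t$ into vanishing of $\hh^j_\fm(N)$ in $d-n-t<j<d-n$, completing (i).

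The principal obstacle is exactly this last bridge in (i): matching the depth inequality supplied by Theorem~\ref{ct2} with the genuine Serre condition $(S_t)$ on $M$. Verifying that the codimension of $M$ localises correctly at every supported prime requires transporting structural results (most notably Proposition~\ref{p2}) from the linkage side via the adjoint equivalence, after which local duality does the remaining work. Once this step is secured, parts (ii) and (iii) follow as routine consequences of Theorems~\ref{it} and~\ref{example1}.
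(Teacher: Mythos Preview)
Your proposal is correct and follows essentially the same strategy as the paper: transport the colinkage data for $M,N$ to linkage data for $M^{\blacktriangledown},N^{\blacktriangledown}$ via Corollary~\ref{cctt}(ii), use Proposition~\ref{p2} together with Theorem~\ref{example1}(vi) to obtain $\dim_{R_\fp}(M_\fp)=\depth R_\fp-n$ on $\Supp_R(M)$, and then invoke Theorem~\ref{ct2} (for the $n$-adjoint pair of Example~\ref{ex1}(ii)) and local duality for part~(i). The only difference is in economy: the paper obtains (ii) immediately as the special case $t=\dim_R(M)$ of (i), and deduces (iii) by localizing the characterization of generalized Cohen--Macaulay modules on the punctured spectrum and applying (ii) at each $\fp\neq\fm$, rather than routing through Theorem~\ref{it}(iii)--(iv) as you propose.
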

\begin{proof}
(i). By Corollary \ref{cctt}(ii), $M^{\blacktriangledown}$ is linked with respect to $\G^n_R(\mp)$. It follows from Proposition \ref{p2} and Theorem \ref{example1}(vi) that $\dim_{R_\fp}(M_\fp)=\depth R_\fp-n$ for all $\fp\in\Supp_R(M)$. Now the assertion follows from Theorem \ref{ct2}, Example \ref{ex1}(ii) and the local duality theorem. Part (ii) is a special case of part (i).

(iii). By \cite[Lemmas 1.2, 1.4]{T}, $M$ is generalized Cohen-Macaulay if and only if $M_\fp$ is a maximal Cohen-Macaulay $R_\fp$-module for all $\fp\in\Spec(R)\setminus\{\fm\}.$ Now the assertion is clear by part (ii).
\end{proof}
\begin{prop}\label{cttt}
	Let $(\mx,\my)$ be an $n$-adjoint pair of subcategories with respect to $K$ such that $\mx\subseteq\G^n_K(\mp)$ and $\my\subseteq\G^n_K(\mp_K)$.
	Assume that $M, N\in\bk(R)$ are $R$-modules in the same even coliaison class with respect to $\my$. Then 
     $\mathcal{D}^i_K(M)\cong\mathcal{D}^i_K(N)$ 
	for all $i>n$.
    \end{prop}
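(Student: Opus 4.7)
The plan is to transport the even colinkage of $M$ and $N$ across the adjoint equivalence of Theorem \ref{ctt} into an even liaison at the level of the Bass duals $M^{\blacktriangledown}, N^{\blacktriangledown}$, and then invoke Theorem \ref{ttt}(i) to conclude the desired isomorphism.

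First, I would check that every intermediate term in a chain $M = N_0 \overset{c}{\sim} N_1 \overset{c}{\sim} \cdots \overset{c}{\sim} N_{2k} = N$ of direct colinkages with respect to $\my$ remains in $\bk(R)$. Since $\my \subseteq \bk(R) \subseteq \Delta_K(R)$, each $N_i$ is the image of a coreflexive homomorphism whose source lies in $\bk(R)$, so $N_i \in \Delta_K(R)$. Because $\bk(R)$ is thick (Example \ref{exaa}(vii)) and contains $\my$, Theorem \ref{cp}(ii) applied to consecutive pairs propagates the hypothesis $M \in \bk(R)$ to every $N_i$, and hence $N_i^{\blacktriangledown} \in \ak(R)$ for all $i$ by Foxby equivalence (Theorem \ref{example1}(iii)).

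Next, I would apply Theorem \ref{ctt} to each direct colinkage step. The equivalence sends the pair $(\phi,\psi)$ realising $N_i \overset{c}{\sim} N_{i+1}$ to the pair $(\phi^{\blacktriangledown}, \psi^{\blacktriangledown})$ realising $N_i^{\blacktriangledown} \sim N_{i+1}^{\blacktriangledown}$ with respect to $\mx$, so the whole chain becomes an even liaison chain $M^{\blacktriangledown} \sim N_1^{\blacktriangledown} \sim \cdots \sim N^{\blacktriangledown}$ in $\ak(R)$ with respect to $\mx$. By hypothesis $\mx \subseteq \G^n_K(\mp) = \mx_n$, so this is also an even liaison chain with respect to $\mx_n$, and Theorem \ref{ttt}(i) yields $\Ext^i_R(M^{\blacktriangledown}, K) \cong \Ext^i_R(N^{\blacktriangledown}, K)$ for all $i > n$. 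Unwinding the definition $\mathcal{D}^i_K(-) = \Ext^i_R(\Hom_R(K,-), K)$ gives $\mathcal{D}^i_K(M) \cong \mathcal{D}^i_K(N)$ for all $i > n$, as claimed. The main obstacle I anticipate is the careful verification that Theorem \ref{ctt} genuinely lifts to a bijection on \emph{pairs} (not just single modules) realising direct linkage/colinkage, which amounts to tracking that the adjunction isomorphisms $\nu_K^R, \mu_K^R$ interact well with the operators $\L_K^n$ and $\ml_K^n$ — something essentially established inside the proof of Theorem \ref{ctt}, but which needs to be applied inductively along the entire chain.
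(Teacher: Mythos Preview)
Your overall strategy matches the paper's: pass to $(-)^{\blacktriangledown}$, land in an even liaison class with respect to $\mx_n=\G^n_K(\mp)$, and invoke Theorem \ref{ttt}(i). There is, however, one inaccuracy worth flagging. You claim that Theorem \ref{ctt} transports $N_i\overset{c}{\sim}N_{i+1}$ to $N_i^{\blacktriangledown}\sim N_{i+1}^{\blacktriangledown}$. It does not: what the proof of Theorem \ref{ctt} actually yields (once one checks $\Ext^1_R(K,\ker\phi)=0$ via Lemma \ref{cl}) is the identity of maps $\L_K^n(\phi^{\blacktriangledown})=\ml_K^n(\phi)$, so that $\im\L_K^n(\phi^{\blacktriangledown})\cong\im\ml_K^n(\phi)\cong N_{i+1}$, not $N_{i+1}^{\blacktriangledown}$. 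Thus the transported chain has the form $N_0^{\blacktriangledown}\sim N_1\sim N_2^{\blacktriangledown}\sim N_3\sim\cdots$, with $(-)^{\blacktriangledown}$ applied only at even positions.

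The paper sidesteps this bookkeeping by reducing without loss of generality to the two-step case $M\overset{c}{\sim}L\overset{c}{\sim}N$, and then directly verifying $\im\L_K^n(\phi^{\blacktriangledown})\cong L\cong\im\L_K^n(\psi^{\blacktriangledown})$, which gives the even liaison $M^{\blacktriangledown}\sim L\sim N^{\blacktriangledown}$ with respect to $\G^n_K(\mp)$. Your thickness argument for keeping intermediate $N_i$ in $\bk(R)$ is fine and is implicitly what allows the paper to assert $\ker\phi\in\bk(R)$; with the corrected form of the transported chain your argument goes through and is essentially the paper's proof.
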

\begin{proof}
	Without loss of generality, we may assume that $M$ is colinked to $N$ in two steps: $M\overset{c}{\sim}L \overset{c}{\sim}N$, for some $R$-module $L$. Hence there exist non-injective epimorphisms $\phi:X\twoheadrightarrow M$ and $\psi:Y\twoheadrightarrow N$ in $\Epi(\my)$ 
    such that $\im\ml_K^n(\phi)\cong L\cong \im\ml_K^n(\psi)$.
    By Theorem \ref{ctt}, $M^{\blacktriangledown}$ is linked by $\phi^{\blacktriangledown}$ with respect to $\mx$.
    As $\mx\subseteq\G^n_K(\mp)$, it follows from Corollary \ref{c1} that $M^{\blacktriangledown}$ is also linked by $\phi^{\blacktriangledown}$ with respect to $\G^n_K(\mp)$.
	Applying the functor $(-)^{\blacktriangledown}=\Hom_R(K,-)$ to the exact sequence $0\to\ker\phi\to X\overset{\phi}{\to} M\to0$ and using the fact that $\ker\phi\in\bk(R)$ give the following exact sequence 
	$0\to(\ker\phi)^{\blacktriangledown}\to X^{\blacktriangledown}\overset{\phi^\blacktriangledown}{\to} M^{\blacktriangledown}\to0$. Now it is easy to see that  $\L^n_K(\phi^{\blacktriangledown})=\ml_K^n(\phi)$ (see Definitions \ref{d2} and \ref{cd2}). Therefore  $\im\L^n_K(\phi^\blacktriangledown)=\im\ml^n_K(\phi)\cong L$. Similarly, one can prove that $N^{\blacktriangledown}$ is linked by $\psi^{\blacktriangledown}$ with respect to $\G^n_K(\mp)$ and $\im\L^n_K(\psi^\blacktriangledown)=\im\ml^n_K(\psi)\cong L$. Thus we have 
	$M^{\blacktriangledown}\sim L\sim N^{\blacktriangledown}$ and so by Theorem \ref{ttt}(i) we have the following isomorphism
	$$\mathcal{D}^i_K(M)=\Ext^i_R(M^{\blacktriangledown},K)\cong\Ext^i_R(N^{\blacktriangledown},K)=\mathcal{D}^i_K(N) \text{ for all } i>n.$$
	\end{proof}
We end this section by proving the fact that homological dimensions with respect to $K$ are preserved in even module coliaison classes.
\begin{cor}\label{ccc3} Let $M$ and $N$ be $R$-modules contained in $\bk(R)$ which are in the same even coliaison class with respect to $\mp_K^n(R)$. Then the following statements hold true.
	\begin{enumerate}[\rm(i)]
		\item $\pkd_R(M)=\pkd_R(N)$.	
		\item If $R$ is local and $K$ is a dualizing module, then $\gpkd_R(M)=\gpkd_R(N)$.
	\end{enumerate}
\end{cor}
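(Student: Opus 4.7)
The plan is to transfer both statements across the adjoint equivalence of Corollary \ref{cctt}(i) and then invoke the corresponding results for linkage from Theorem \ref{ttt}. The core inputs are the Foxby identities $\pkd_R(M) = \pd_R(M^{\blacktriangledown})$ for $M \in \bk(R)$ — this is \cite[Theorem 2.11]{TW}, already invoked in the proof of Theorem \ref{ct} — and, when $R$ is local and $K$ is dualizing, $\gpkd_R(M) = \gd_R(M^{\blacktriangledown})$, which is the resolution-level upgrade of the equivalence $\G^n_R(\mp) \simeq \G^n_K(\mp_K)$ from Lemma \ref{cll}(ii) and Theorem \ref{example1}(vii). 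Given these identities, it suffices to show that $M^{\blacktriangledown}$ and $N^{\blacktriangledown}$ lie in the same even liaison class with respect to $\mp^n(R)$.

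To produce such a liaison chain, I would take an even coliaison $M = M_0 \overset{c}{\sim} M_1 \overset{c}{\sim} \cdots \overset{c}{\sim} M_{2m} = N$ with respect to $\mp_K^n(R)$ and apply the two-step translation established in the proof of Proposition \ref{cttt}. Using the key identity $\L_K^n(\phi^{\blacktriangledown}) = \ml_K^n(\phi)$ proved there, each piece $M_{2i} \overset{c}{\sim} M_{2i+1} \overset{c}{\sim} M_{2i+2}$ (with $M_{2i}, M_{2i+2} \in \bk(R)$) converts into a two-step direct liaison $M_{2i}^{\blacktriangledown} \sim M_{2i+1} \sim M_{2i+2}^{\blacktriangledown}$ with respect to $\mp^n(R)$. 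Concatenating these two-step pieces yields an even liaison $M^{\blacktriangledown} \sim \cdots \sim N^{\blacktriangledown}$ of length $2m$ with respect to $\mp^n(R)$. For (i), Theorem \ref{ttt}(ii) then gives $\pd_R(M^{\blacktriangledown}) = \pd_R(N^{\blacktriangledown})$, and the Foxby identity finishes the argument. For (ii), since $\mp^n(R) \subseteq \G^n_R(\mp)$, the same chain is also an even liaison with respect to $\mx_n = \G^n_R(\mp)$ taking the semidualizing module to be $R$ itself; Theorem \ref{ttt}(i) yields $\gd_R(M^{\blacktriangledown}) = \gd_R(N^{\blacktriangledown})$, and the second Foxby identity concludes (ii).

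The main obstacle is ensuring that every even-indexed $M_{2i}$ in the coliaison chain lies in $\bk(R)$, so that $M_{2i}^{\blacktriangledown}$ is the correct Foxby partner and the arguments of Proposition \ref{cttt} apply verbatim. Starting from $M_0 = M \in \bk(R)$, each $M_{2i+2}$ arises as the image of $\ml_K^n$ of a coreflexive homomorphism from $\mp_K^n(R) \subseteq \bk(R)$, hence lies in $\Delta_K(R)$; then Corollary \ref{cccc1}(i), applied to the two-step chain $M_{2i} \overset{c}{\sim} M_{2i+1} \overset{c}{\sim} M_{2i+2}$ in $\Delta_K(R)$, propagates membership in $\bk(R)$ from $M_{2i}$ to $M_{2i+2}$. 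Inductively, $M_{2i} \in \bk(R)$ for every $i$, which licenses the concatenation and completes the reduction to Theorem \ref{ttt}.
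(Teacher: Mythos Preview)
Your strategy is essentially the paper's: both routes pass to the linkage side via $(-)^{\blacktriangledown}$ and then invoke the results of Section~4. The paper does this by quoting Proposition~\ref{cttt} (whose proof carries out exactly the two-step translation $M^{\blacktriangledown}\sim L\sim N^{\blacktriangledown}$ you describe) together with Corollary~\ref{cccc1} and the Foxby identity from \cite{TW}, and then reads off the dimensions as $\sup\{i:\Ext^i_R((-)^{\blacktriangledown},K)\neq 0\}$; you instead unpack \ref{cttt} explicitly into an even liaison chain and apply Theorem~\ref{ttt} directly. That is a minor stylistic difference, not a different method.

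There is, however, a soft spot in your handling of the obstacle. You assert that $M_{2i+2}$, being $\im\ml_K^n(\phi)$ for a coreflexive $\phi$ with domain in $\mp_K^n(R)\subseteq\bk(R)$, must lie in $\Delta_K(R)$. But the description only exhibits $M_{2i+2}$ as a \emph{quotient} of $\DnK(Y)\in\mp_K^n(R)$; to land in $\Delta_K(R)$ you need a full $\mp_K$-presentation, which in turn requires the kernel of that surjection to admit a $\mp_K$-cover, and you have not argued this. Without $M_{2i+2}\in\Delta_K(R)$ you cannot appeal to Corollary~\ref{cccc1}(i), and the induction that propagates $\bk(R)$-membership along the even indices stalls. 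The paper avoids this wrinkle at the level of Corollary~\ref{ccc3} simply by treating Proposition~\ref{cttt} as a black box; the same difficulty is tucked into the ``without loss of generality'' reduction in the proof of \ref{cttt}, so you are not worse off than the paper---but the justification you offer for the $\Delta_K(R)$ claim does not stand on its own.
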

\begin{proof}
(i). By Corollary \ref{cccc1}(ii), we may assume that $M$ and $N$ have finite $\mp_K$-dimensions. Hence, by \cite[Theorem B]{TW} and Proposition \ref{cttt} we obtain the equalities
\[\begin{array}{rllll}
\pkd_R(M)&=\pd_R(M^{\blacktriangledown})\\
&=\sup\{i\mid\Ext^i_R(M^\blacktriangledown,K)\neq0\}\\&=\sup\{i\mid\Ext^i_R(N^{\blacktriangledown},K)\neq0\}\\
&=\pd_R(N^{\blacktriangledown})\\
&=\pkd_R(N).
\end{array}\]
(ii). By Corollary \ref{cccc1}(iv), we may assume that $M$ and $N$ have finite $\G(\mp_K)$-dimension. Set $d=\dim R$. It follows from  Proposition \ref{cttt} and \cite[Corollary 3.5.11]{BH} that
\[\begin{array}{rlll}
d-\depth_R(M^\blacktriangledown)&=\sup\{i\mid\Ext^i_R(M^\blacktriangledown,K)\neq0\}\\
&=\sup\{i\mid\Ext^i_R(N^{\blacktriangledown},K)\neq0\}\\&=d-\depth_R(N^\blacktriangledown).
\end{array}\]
Now the assertion follows from Theorems \ref{G3}(iii) and \ref{example1}(iv), (vi) and the above equality.
\end{proof}

\section*{Acknowledgment}
The authors are grateful to the anonymous referee for his/her careful reading and helpful comments.
\bibliographystyle{amsplain}

\begin{thebibliography}{9}
	
	\bibitem{A}
	~M. Auslander, \emph{Anneaux de Gorenstein, et torsion en alg\`{e}bre commutative}, in: S\'{e}minaire d'Alg\`{e}bre
	Commutative dirig\'{e} par Pierre Samuel, vol. 1966/67, Secr\'{e}tariat math\'{e}matique, Paris, 1967.
	
	\bibitem{AB}
	~M. Auslander and ~M. Bridger, \emph{Stable module theory}, Mem. of
	the AMS 94, Amer. Math. Soc., Providence 1969.
	
    \bibitem{AuBu}
    ~M. Auslander and ~R.-O. Buchweitz. \emph{The homological theory of maximal Cohen-Macaulay
	approximations}. Mem. Soc. Math. France (N.S.) {\bf 38} (1989), 5--37.

	\bibitem{AvF}
	~L.~L. Avramov, H.~B.~Foxby, \emph{Ring homomorphisms and finite Gorenstein dimension}. Proc. London Math. Soc. {\bf 75} (1997)	241--270. 	
	
	\bibitem{AGP}
	~L.~L. Avramov, ~V. N. Gasharov and ~I. V. Peeva, \emph{Complete intersection dimension}, Publ. Math.
	I.H.E.S. {\bf 86} (1997), 67--114.
	
	\bibitem{B}
	~H. Bass, \emph{On the ubiquity of Gorenstein rings}, Math. Z. {\bf 82} (1963), 8--28.
	
	\bibitem{BY}
	~J.~P. Brennan, ~A. York, \emph{Linkage and intermediate C-Gorenstein dimensions}, Preprint, posted at arXiv:	1810.01399, 2018.	
	
	\bibitem{BJ}
	~P. A. Bergh, ~D. A. Jorgensen, \emph{On the vanishing of homology for modules of finite complete intersection dimension},
	J. Pure Appl. Algebra {\bf 215} (2011) 242--252.
	
	\bibitem{BS}
	~M.~P. Brodmann and R.~Y. Sharp, \emph{Local cohomology: an
		algebraic introduction with geometric applications}, Cambridge
	Studies in Advanced Mathematics, 60. Cambridge University Press,
	Cambridge, 1998.
	
	\bibitem{BH}
	~W. Bruns and ~J. Herzog, \emph{Cohen-Macaulay Rings}, Cambridge
	Studies in Advanced Mathematics, 39. Cambridge University Press,
	Cambridge, 1993.
	
	\bibitem{CDGST}
	~O. Celikbas, M. T. Dibaei, M. Gheibi, A. Sadeghi and R. Takahashi, \emph{Associated primes and syzygies of linked modules}. J. Commut. Algebra {\bf 11} (3) (2019), 301--323.
	
	\bibitem{CST}
	~O. Celikbas, A. Sadeghi, and R. Takahashi, \em{Bounds on depth of tensor products of modules}, J. Pure Appl. Algebra {\bf 219} (2015) 1670--1684.
	
	\bibitem{Ch}
	~L.~W. Christensen, \em{Semi-dualizing complexes and their Auslander categories}, Trans. Amer. Math. Soc. {\bf 353} (2001) 1839--1883.
	
	\bibitem{DGHS}
	~M. T. Dibaei, ~M. Gheibi, ~S. ~H. Hassanzadeh and ~A. Sadeghi, \emph{Linkage of modules over Cohen-Macaulay rings}, J. Algebra {\bf 335} (2011) 177--187.
	
	\bibitem{DS}
	~M.~T. Dibaei,~A. Sadeghi, \emph{Linkage of finite Gorenstein dimension modules }, J. Algebra {\bf 376} (2013) 261--278.
	
	\bibitem{DS1}
	~M.~T. Dibaei,~A. Sadeghi, \emph{Linkage of modules and the Serre conditions}, J. Pure Appl. Algebra {\bf 219} (2015) 4458--4478.
	
	\bibitem{DS2}
	~M.~T. Dibaei,~A. Sadeghi, \emph{Linkage of modules with respect to a semidualizing module}, Pacific J. Math. {\bf 294} (2018) 307--328.
	
	\bibitem{EJ}
	~E.~E. Enochs, O.~M.~G. Jenda. \emph{Gorenstein injective and projective modules}, Math. Z. {\bf 220} (1995) 611--633.	
	
	\bibitem{Fos}
	~R.~M. Fossum, \emph{Duality over Gorenstein rings}, Math. Scand. {\bf 26} (1970), 165--176.
	
	\bibitem{F}
	~H.~B. Foxby, \emph{Gorenstein modules and related modules}, Math. Scand. {\bf 31} (1972) 267--285.
	
	\bibitem{F1}
	~H.~B. Foxby, \emph{Quasi-perfect modules over Cohen-Macaulay
		rings}, Math. Nachr. {\bf 66} (1975) 103--110.
	
	\bibitem{G1}
	~E.~S. Golod, \emph{A note on perfect ideals}, in: A.I. Kostrikin (Ed.), Algebra Collection, Nauka, Moscow, 1980, 37--39.	
	
	\bibitem{G}
	~E.~S. Golod, \emph{G-dimension and generalized perfect ideals},
	Trudy Mat. Inst. Steklov. {\bf 165} (1984), 62--66; English transl. in
	Proc. Steklov Inst. Math. {\bf 165} (1985).
	
	\bibitem{Ge}
	~A.~A. Gerko, \emph{On homological dimensions}(Russian) Mat. Sb. 192
	(2001), no. 8, 79--94; translation in Sb. Math. 192 (2001), no. 7-8,
	1165--1179.
	
	\bibitem{HVV} ~J. Herzog, ~W. ~V. Vasconcelos and ~R. Villarreal, \emph{ Ideals with sliding depth}, Nagoya Math.
	J. {\bf 99} (1985) 159--172.
	
    \bibitem{H}
    ~G. Horrocks, \emph{Construction of bundles on $P^n$},
    Ast\'{e}risque 71-72 (1980) 197--203.

	\bibitem{Hu}
	~C. Huneke, \emph{Linkage and the Koszul homology of ideals}, Amer. J. Math. {\bf 104} (1982) 1043--1062.
	
	\bibitem{Hu1}
	~C. Huneke, \emph{Numerical invariants of liaison classes}, Invent. Math. {\bf 75} (1984) 301--325.
	
	\bibitem{HuUl}
	~C. Huneke, ~B. Ulrich, \emph{The Structure of Linkage}, Ann. of Math. {\bf 126} (1987), 277--334.
	
	\bibitem{HuU}
	~C. Huneke, ~B. Ulrich, \emph{Algebraic linkage}, Duke Math. J. {\bf 56} (1988) 415--429.
	
	\bibitem{HuU1}
	 ~C. Huneke, ~B. Ulrich, \emph{Minimal linkage and the Gorenstein locus of an ideal}, Nagoya Math. J. {\bf 109} (1988) 159--167.
	
	\bibitem{HuU2}
	~C. Huneke, ~B. Ulrich, \emph{Liaison of monomial ideals}, Bull. London Math. Soc. {\bf 39} (2007) 384--392.	
		
	\bibitem{IT}
	~K.-i. Iima and ~R. Takahashi, \emph{Perfect linkages of modules}, J. Algebra {\bf 458} (2016) 134--155.
	
    \bibitem{KMMNP}
     ~J.~O. Kleppe, ~J.~C. Migliore, ~R. Mir\'{o}-Roig, ~U. Nagel, ~C. Peterson, \emph{Gorenstein
     liaison, complete intersection liaison invariants and unobstructedness}. Mem. Amer. Math. Soc. {\bf 154} (2001).

	\bibitem{KM}
     ~A. Kustin and ~M. Miller, \emph{Deformation and linkage of Gorenstein	algebras}, Trans. Amer. Math. Soc. {\bf 284} (1984) 501--534.
	
	\bibitem{KM1}
	 ~A. Kustin and ~M. Miller, \emph{Tight double linkage of Gorenstein algebras}, J. Algebra {\bf 95} (1985) 384--397.
	
	\bibitem{KMU}
	 ~A. Kustin, ~M. Miller, and ~B. Ulrich, \emph{Linkage theory for algebras with pure resolutions}, J. Algebra {\bf 102} (1986) 199--228.
	
	\bibitem{IGM}
	~I.~G. Macdonald, \emph{Secondary representation of modules over a commutative ring}, Symp. Math. {\bf 11} (1973) 23--43.
	
	\bibitem{Mar1}
	~H.~M. Martin, \emph{Linkage by generically Gorenstein, Cohen-Macaulay ideals}, J. Algebra {\bf 207} (1998) 43--71.
	
	\bibitem{Mar}
	~H.~M. Martin, \emph{Linkage and the generic homology of modules}, Comm. Algebra {\bf 28} (2000) 199--213.
	
	\bibitem{MS}
	~A. Martsinkovsky and ~J.~R. Strooker, \emph{Linkage of modules}, J. Algebra {\bf 271} (2004) 587--626.
	
	\bibitem{M1}
	~V. Ma\c{s}iek, \emph{Gorenstein dimension and torsion of modules over commutative Noetherian rings}, Comm. Algebra {\bf 28} (2000) 5783--5812.
	
	\bibitem{Ma1}
	~H. Matsumura, \emph{Commutative algebra}, Second Edition, Benjamin 1980.
	
	\bibitem{Ma}
	~H. Matsumura, \emph{Commutative ring theory}, Cambridge University
	Press 1986.
	
	\bibitem{Ml}
	~L. Melkersson, \emph{On asymptotic stability for sets of prime ideals connected with the powers of an ideal}, Math.
	Proc. Cambridge Philos. Soc. {\bf 107} (1990) 267--271.
	
	\bibitem{Mi}
	~J.~C. Migliore, \emph{Introduction to liaison theory and deficiency modules}, Progress in Mathematics, {\bf 165}, Birkh\"{a}user, 1998.
	
    \bibitem{MiNa}
	~J.~C. Migliore and ~U. Nagel, \emph{Monomial ideals and the Gorenstein liaison class of a complete intersection}, Compositio
	Math. {\bf 133} (2002) 25--36.
	
	\bibitem{MiNa1}
	~J.~C. Migliore and ~U. Nagel. \emph{Glicci ideals}. Compositio Math. {\bf 149} (2013) 1583--1591.
	
	\bibitem{Na1}
	~U. Nagel, \emph{Even liaison classes generated by Gorenstein linkage}, J. Algebra {\bf 209} (1998) 543--584.
	
	\bibitem{Na}
	~U. Nagel, \emph{Liaison classes of modules}, J. Algebra {\bf 284} (2005)  236--272.
	
	\bibitem{NQ}
	~L.T. Nhan and ~P.H. Quy, \emph{Attached primes of local cohomology modules under localization and completion}, J.
	Algebra {\bf 420} (2014) 475--485.
	
	\bibitem{N}
	~K. Nishida, \emph{Linkage and duality of modules}, Math. J. Okayama Univ. {\bf 51} (2009) 71--81.
	
	\bibitem{PS}
	~C. Peskine and ~L. Szpiro, \emph{Liasion des vari\'{e}t\'{e}s
		alg\'{e}briques} I, Invent. Math, {\bf 26} (1974) 271--302.
	
	\bibitem{TJP}
	~T.~J. Puthenpurakal, \emph{A function on the set of isomorphism classes in the stable category of maximal Cohen-Macaulay modules over a Gorenstein ring: with applications to liaison theory}, Math. Scand. {\bf 120} (2017) 161--180.
	
	\bibitem{TP}
	~T.~J. Puthenpurakal, \emph{Invariants of linkage of modules}, Preprint 2015; posted at arXiv:1512.05105.
	
	\bibitem{Ra1}
	~A.~P. Rao, \emph{Liaison among curves in $\mathbb{P}^3$}, Invent. Math. {\bf 50} (1979) 205--217.
	
	\bibitem{Ra}
	~A.~P. Rao, \emph{Liaison equivalence classes}, Math. Ann. {\bf 258} (1981) 169--173. 
		
	\bibitem{Ra2}
	~A.~P. Rao, \emph{On self-linked curves}, Duke Math. J. {\bf 49} (1982) 251--273.
		
	\bibitem{Roo}
	~J.~-E. Roos, \emph{Bidualit\'{e} et structure des foncteurs d\'{e}riv\'{e}s de $\underset{\longleftarrow}{\lim}$ dans la cat\'{e}gorie
	des modules sur un anneau r\'{e}gulier}, C. R. Acad. Sci. Paris {\bf 254} (1962) 1556--1558.

	\bibitem{Sa}
	~A. Sadeghi, \emph{Linkage of finite $G_C$-dimension modules}. J. Pure Appl. Algebra {\bf 221} (2017) 1344--1365.
	
    \bibitem{Sa1}
    ~A. Sadeghi, \emph{Notes on linkage of modules}, Proc. Edinb. Math. Soc. (2) {\bf 62} (2019) 1045--1062.
    
	\bibitem{Sc}
	~P. Schenzel, \emph{Notes on liaison and duality }, J. Math. Kyoto
	Univ. {\bf 22} (1982/83) 485--498.
	
	\bibitem{Sh}
	~R. Y. Sharp, \emph{Secondary representations for injective modules over commutative Noetherian rings}, Proc. Edinb. Math. Soc. (2) {\bf 20} (1976) 143--151.
	
	\bibitem{TW}
	~R. Takahashi and ~D. White, \emph{Homological aspects of
		semidualizing modules}, Math. Scand. {\bf 106} (2010) 5--22.
	
	\bibitem{T}
	~N. ~V. Trung, \emph{Toward a theory of generalized Cohen-Macaulay modules}, Nagoya Math. J.
	{\bf 102} (1986) 1--49.
	
	\bibitem{V}
	~W. ~V. Vasconcelos, \emph{Divisor theory in module categories}, North-Holland Math. Studies 14,
	Notas de Matem\'{a}tica 53, North-Holland, Amsterdam, 1974.

	\bibitem{WSW}
    ~S. Sather-Wagstaff, ~T.~ Sharif, ~D.~ White,
	\emph{Tate cohomology with respect to semidualizing modules}, J. Algebra {\bf 324} (2010) 2336--2368.
	
    \bibitem{Wa}
    ~S. Sather-Wagstaff, \emph{Complete intersection dimensions for complexes},
    J. Pure Appl. Algebra {\bf 190} (2004) 267--290. 

	\bibitem{XZ}
	~X. Tang and ~Z. Huang, \emph{Homological aspects of the dual Auslander transpose}, Forum
	Math. {\bf 27} (2015) 3717--3743.
	
	\bibitem{YI}
	~Y. Yoshino and ~S. Isogawa, \emph{Linkage of Cohen-Macaulay modules over a Gorenstein ring}, J. Pure Appl. Algebra {\bf 149}
	(2000) 305--318.
	
\end{thebibliography}

\end{document}